\newcommand{\NN}{\mathbb{N}}
\newcommand{\RR}{\mathbb{R}}
\newcommand{\CC}{\mathbb{C}}
\newcommand{\QQ}{\mathbb{Q}}
\newcommand{\ZZ}{\mathbb{Z}}
\newcommand{\abs}[1]{\lvert#1\rvert}
\DeclareMathOperator{\GL}{GL} 
\newtheorem{theorem}{Theorem}
\newtheorem{corollary}{Corollary}[section]
\newtheorem{lemma}{Lemma}[section]
\newtheorem{proposition}{Proposition}[section]
\theoremstyle{definition}
\newtheorem{definition}{Definition}[section]
\newtheorem{remark}{Remark}[section]
\newtheorem{fact}{Fact}[section]
\numberwithin{equation}{section}
\DeclareMathOperator{\diag}{diag}
\begin{document}

\title{LOCAL RIGIDITY OF PARTIALLY HYPERBOLIC
ACTIONS}
\author{Zhenqi WANG}
\address{Department of Mathematics\\ The Pennsylvania State
University\\ University Park, PA,16802}
\email{wang\_z@math.psu.edu}

\maketitle
\begin{abstract}
We consider partially hyperbolic abelian algebraic high-rank actions
on compact homogeneous spaces obtained from simple indefinite
orthogonal and unitary groups. In the first part of the paper, we
show local differentiable rigidity for such actions. The conclusions
are based on progress towards computations of the Schur multipliers
of these non-split groups, which is the main aim of the second part.
\end{abstract}
\tableofcontents
\section{Introduction}

In this paper we extend
 results of D.Damjanovic and A.Katok  about rigidity of  certain diagonal actions on compact homogeneous spaces \cite{Damjanovic1,Damjanovic2,Damjanovic3} from split to some non-split Lie groups.

A. Katok and R.Spatzier considered the differentiable rigidity of
Weyl chamber flows on symmetric spaces: let $G$ be a connected
semisimple Lie group of real rank $\ge 2$,
$\Gamma$ a cocompact torsion-free lattice in
$G$; let $A$ be a maximal split Cartan subgroup of $G$, and let $K$
be the compact part of the centralizer of $A$ which intersects with
$A$ trivially. If $G$ is of $\RR$-rank greater than one, the action
of $A$ on the space $K\backslash G/\Gamma$ is an Anosov (normally hyperbolic)
action, and
it is locally differentiably rigid \cite{Spatzier}. The method of proof  can be called {\em a priori} regularity since it is  based on showing  smoothness of the Hirsch-Pugh-Shub orbit equivalence \cite{shub}. Another ingredient  in the Katok-Spatzier method is cocycle rigidity used to
``straighten out''  a time change; it is proved by a harmonic analysis method.

In \cite{Damjanovic1,Damjanovic2}, a special case
$G=SL(n,\RR)(n\geq
 3)$  is considered.
 In this case, rather than the full $A$ action by left
 translations on $SL(n,\RR)/\Gamma$, they considered the restrictions of the full
 diagonal action to subgroups  in the acting group $\RR^{n-1}$ that contain lattices in  2-planes in general position. Those actions are
 partially hyperbolic rather than Anosov and {\em a priori} regularity
 methods is not applicable.
 The proof of cocycle rigidity and differentiable
 rigidity
 in \cite{Damjanovic1,Damjanovic2} is ``geometric'', in contrast with earlier  proofs in \cite{Spatzier}.
 The totally new manner is based on geometry and
combinatorics of invariant foliations and using insights from
algebraic $K$-theory as an essential tool.

The approach of \cite{Damjanovic1,Damjanovic2} was further employed
in \cite{Damjanovic3}, for extending cocycle rigidity and
differentiable
 rigidity from
$SL(n,\RR)/\Gamma$ and $SL(n,\CC)/\Gamma$ to compact homogeneous
spaces obtained from some simple split Lie groups of nonsymplectic type.

The purpose of this paper is to further extend the results of the
above-mentioned authors to higher rank partially hyperbolic actions
on compact homogeneous spaces obtained from indefinite orthognal and
unitary groups. In the present work we derive information about
generators and relations in    these
non-split groups which are not readily available from the
literature. As soon as this algebraic information is obtained, the  geometric scheme  developed by Katok and Damjanovic
essentially applies to non-split cases. However, there are still at
some places significant
 technical differences which require some new arguments to
handle them.

In Section \ref{sec:10}, based on the conclusions about Schur
multipliers proved in Section
\ref{sec:12}-\ref{sec:15}, we apply the approach of \cite{Damjanovic1} to prove
trivialization of small, non-abelian, group valued cocycles over
partially hyperbolic abelian algebraic acrion described in Section
\ref{sec:9}. The key points are:  (i)
 local transitivity of Lyapunov foliations, (ii) the fact  that some
Lyapunov cycle can be approximated by a composition of conjugates to
stable cycles, and (iii) vanishing of  periodic cycle functionals on broken paths
along leaves of stable and unstable foliations generated by
multiple-dimension root spaces.
 Once the cocycle rigidity is
obtained in Section \ref{sec:10} via geometric method, proving that
cocycle rigidity is robust under $C^2$-small perturbations is
similar to the case of $G=SL(n,\RR)$, which is treated in
\cite{Damjanovic2} (See Section~\ref{sec:8}).

In Section \ref{sec:12}-\ref{sec:15}, we make sufficient progress
towards the computations of the Schur multipliers of $SO^+(m,n)$ and
$SU(m,n)$ where $m\geq n\geq 3$ to obtain information needed for the
proofs in Section \ref{sec:10}. This work is extension of the ideas
of R. Steinberg \cite{Steinberg} and V. Deodhar \cite{Deodhar}.
Steinberg considered the so-called Schur multipliers (cf. Steinberg
[\cite{Steinberg2}, Section 7]) of rational points of simply
connected Chevalley groups and obtained results of importance,
especially in the case of split semisimple algebraic groups. Deodhar
extended Steinberg's theory to a more general class  of quasi-split
groups. In the present work, our main aim is to obtain similar results
for some non-split groups that are not quasi-split. Deodhar's construction carries carry over to
the non-split groups with minor changes. However,  it does not provide sufficient information  and needs to be  supplemented by  some new method.

Let $k$ denote an arbitrary local field, and $G$ denote a connected,
simply connected algebraic group which is defined and absolutely
almost simple over $k$. In Section \ref{sec:12} we use Deodhar's
results freely in the general case: We construct explicitly, in
terms of generators and relations, a ``universal central extension"
for $G^+_k$ generated by $k$-rational unipotent elements which
belong to the radical of a parabolic subgroup defined over $k$ in
$G$.

In Section \ref{sec:16} and \ref{sec:15}, our main aim is to make
some progress towards the computation of the fundamental group
$\pi_1(G_k)$ (= Schur multiplier) of $G_k$ when
$G_\RR=SO^+(m,n)(m\geq n\geq 3)$ and $G_\RR=SU(m,n)(m\geq n\geq 3)$.
For this purpose, we introduce a way to tackle the ``rotation" and
``reflection" in root spaces with dimensions greater than 1.

For definitions and general background on
partially hyperbolic dynamical systems see \cite{pesin2}; all
necessary background on algebraic actions can be found in
\cite{Nitica}. We will also strongly rely on definitions,
constructions and results from the earlier papers on the subject
\cite{Damjanovic1,Damjanovic2,Damjanovic3}. In the present paper we
consider algebraic actions of $\ZZ^{k}\times \RR^{\ell}, k+\ell\geq
2$. We treat generic restrictions of full split Cartan actions on
$SO^+(m,n)/\Gamma$ and on $SU(m,n)/\Gamma$(where $\Gamma$ is a
cocompact lattice).

I'd like to thank Nigel Higson for pointing out to some relevant
sources and results in algebraic $K$-theory and  many stimulating
discussions. I am grateful to Grigory Margulis and Yuri Zarhin who
kindly helped me in algebraic area. My sincere thanks are also due
to my advisor Anatole Katok, who suggested the problem to me. He
also looked through several preliminary versions of this paper,
suggested some changes, made important comments and encouraged me a
lot.

\section{Setting and results}
\subsection{Generic restrictions of split Cartan actions}\label{sec:6} Let $Q$ be a
non-degenerate standard bilinear form on  $\RR^{m+n}$
 of signature
$(n,m)$. The group $SO^+(m,n)$ is then the connected Lie group of
$(m+n)\times(m+n)$ matrices that preserve $Q$ with determinant 1.
Then we can choose a base of $\RR^{m+n}$ in terms of which the
quadratic form $Q$ is given by $Q(e_i,e_{i+n})=1$, if $1\leq i\leq
n$; $Q(e_j,e_j)=1$, if $2n+1\leq j\leq m+n$ and $Q(e_i,e_j)=0$
otherwise.

Using  this base,  the  Lie algebra $so(m,n)$ of $SO^+(m,n)$   can be
represented  as $(m+n)\times (m+n)$ matrices
$$ \begin{pmatrix}A_1 & A_2 & \vline & B_1\\
A_3 & A_4 & \vline & B_2\\
 \hline C_1& C_2 &\vline &D \end{pmatrix},$$
  where $A_1, A_2, A_3, A_4$ are $n\times n$ matrices, $B_1, B_2$
  are $n\times (m-n)$ matrices, $C_1, C_2$ are $(m-n)\times n$
  matrices and $D$ is a $(m-n)\times (m-n)$ matrix satisfying
\begin{alignat*}{3}
A_1&=-A^{\tau}_4, &\qquad A^{\tau}_2&=-A_2, &\qquad
A^{\tau}_3&=-A_3, \\
D^{\tau}&=-D, &\qquad B_1&=-C^{\tau}_2, &\qquad B_2&=-C^{\tau}_1.
\end{alignat*}
Here $M^\tau$ denotes the transpose of the matrix $M$.

 Let $H$ be a non-degenerate standard Hermitian form of signature $(n,m)$. Then we can choose a base of $\CC^{m+n}$(under a
linear transformation with real coefficients) 
in terms of which the quadratic form $H$ is given by
$H(e_i,e_{i+n})=1$, $1\leq i\leq n$, $H(e_j,e_j)=1$, $2n+1\leq j\leq
m+n$ and $H(e_i,e_j)=0$, otherwise 0.
The group $SU(m,n)$ is then the connected Lie group of
$(m+n)\times(m+n)$ matrices that preserve $H$ with determinant 1.

Using  this base  the Lie algebra   $su(m,n)$ of $SU(m,n)$ can be expressed as $(m+n)\times (m+n)$ matrices
$$ \begin{pmatrix}A_1 & A_2 & \vline & B_1\\
A_3 & A_4 & \vline & B_2\\
 \hline C_1& C_2 &\vline &D \end{pmatrix},$$
  where $A_1, A_2, A_3, A_4$ are $n\times n$ matrices, $B_1, B_2$
  are $n\times (m-n)$ matrices, $C_1, C_2$ are $(m-n)\times n$
  matrices and $D$ is a $(m-n)\times (m-n)$ matrix satisfying
  \begin{alignat*}{3}
A_1&=-\overline{A}^{\tau}_4, &\qquad \overline{A}^{\tau}_2&=-A_2,
&\qquad
\overline{A}^{\tau}_3&=-A_3, \\
\overline{D}^{\tau}&=-D, &\qquad B_1&=-\overline{C}^{\tau}_2,
&\qquad B_2&=-\overline{C}^{\tau}_1.
\end{alignat*}
Here $\overline{M}^\tau$ denotes the complex conjugate transpose of
the matrix $M$.

 Let $X:= SO^+(m,n)/\Gamma,$ (corr. $SU(m,n)/\Gamma$) with $m\geq
n\geq 3$ and $\Gamma$ a cocompact lattice in $X$. Let
\begin{align*}
D_+ = \exp\mathbb{D_+}= &\{\text{diag}\bigl(\exp
t_1,\exp t_2, \dots ,\exp t_n,\exp(-t_1),\exp(-t_2),\dots,
\\&\text{exp}(-t_{n}),1,\dots,1\bigl) :(t_1, \dots, t_{n})\in\RR^{n}\}
\end{align*}
 be the group of diagonal matrices with lower $(m-n)\times(m-n)$ matrix
 identity. In fact, $D_+$ is
 the maximal split Cartan subgroup both in
 $SO^+(m,n)$ and $SU(m,n)$.

We denote the action of $D^+$  on $X$ by left translations
 by $\alpha_0$ and call it {\em the split Cartan action}.

For $1\leq i\neq j\leq n$ the hyperplanes in $\mathbb{D_+}$ defined
by $$\mathbb{H}_{i-j} = \{(t_1, \dots , t_{n}) \in \mathbb{D_+} : t_i
= t_j\},$$
$$\mathbb{H}_{i+j} = \{(t_1, \dots , t_{n}) \in
\mathbb{D_+} : t_i+t_j=0\}\quad \text{and}$$   $$\mathbb{H}_{i} = \{(t_1, \dots,
t_{n}) \in \mathbb{D_+} : t_i=0\}$$
(exist if $m-n\geq 1$) are
\emph{Lyapunov hyperplanes} for the action $\alpha_0$, i.e. kernels
of Lyapunov exponents of $\alpha_0$. Elements of
$\mathbb{D_+}\backslash \bigcup \mathbb{H}_{r}$(where $r=i\pm j,i$)
are \emph{regular} elements of the action. Connected components of
the set of regular elements are $Weyl$ $chambers$.

The smallest non-trivial intersections of stable foliations of
various elements of the action $\alpha_0$ are $Lyapunov$
$foliations$. Each regular element either exponentially expands or
exponentially contracts each of those leaves. For more details, see
Section \ref{sec:7}.

\begin{definition} A two-dimensional plane $\mathbb{P}\subset\mathbb{D_+}$ is in
$general$ $position$ if it intersects any two distinct Lyapunov
hyperplanes
along distinct lines.
\end{definition}
 Let $\mathbb{G}\subset
\mathbb{D_+}$ be a closed subgroup which contains a lattice
$\mathbb{L}$ in a plane in general position and let $G=\exp\mathbb{G}$.
One can naturally think of $G$ as the image of an
injective homomorphism
\\$i_0 : \ZZ^k\times\RR^{\ell}\rightarrow D_+$
(where $k+\ell\geq 2)$.
\begin{definition}
The  action $\alpha_{0,G}$ of $G$ by
left translations on $X$ is given by
\begin{align}\label{for:15}
\alpha_{0,G}(a,x)=i_0(a)\cdot x
\end{align}
and will be referred to as a {\em higher-rank generic restriction of split Cartan actions} or just a
{\em generic restriction} for short.
\end{definition}

\subsection{Cocycles and rigidity}\label{sec:9} Let $\alpha:A\times
M\rightarrow M$ be an action of a topological group $A$ on a compact
Riemannian manifold M by diffeomorphisms. For a topological group
$Y$ a $Y$-valued {\em cocycle} (or {\em an one-cocycle})
over $\alpha$ is a
continuous function $\beta : A\times M\rightarrow Y$ satisfying:
\begin{align}
\beta(ab, x) = \beta(a, \alpha(b, x))\beta(b, x)
\end{align}
 for any $a, b \in A$. A cocycle is
{\em cohomologous to a constant cocycle} (cocycle not depending on $x$) if
there exists a homomorphism $s : A\rightarrow Y$ and a continuous
transfer map $H : M\rightarrow Y$ such that for all $a \in A $
\begin{align}\label{for:13}
 \beta(a, x) = H(\alpha(a, x))s(a)H(x)^{-1}
\end{align}
In particular, a cocycle is a {\em coboundary} if it is cohomologous to
the trivial cocycle $\pi(a) = id_Y$, $a \in A$, i.e. if for all $a
\in A$ the following equation holds:
\begin{align}
 \beta(a, x) = H(\alpha(a, x))H(x)^{-1}.
\end{align}
For more detailed information on cocycles adapted to the present
setting see [3]. Let
$$ Y=\begin{pmatrix}A_1 & 0 \\
0 & A_2 \\
  \end{pmatrix},$$
be the subgroup of $SU(m,n)$ with \begin{align*}
A_1=&\{\diag\bigl(\exp z_1,\dots,\exp z_n,\exp
(-\overline{z_1}),\dots,\exp (-\overline{z_n})\bigl):\\
&(z_1,\dots,z_n)\in\CC^n\}
\end{align*} and $A_2$ $(m-n)\times(m-n)$ unitary
matrices. 
Let $Y_X=Y\cap
SO^+(m,n)$ if $X=SO^+(m,n)/\Gamma$(corr. $Y_X=Y$ if
$X=SU(m,n)/\Gamma$). $Y_X$ is isomorphic to $\RR^{n}\times SO(m-n)$
when $X=SO^+(m,n)/\Gamma$ and isomorphic to $\RR^{n}\times
\mathbb{T}^{n-1}\times U(m-n)$ if $X=SU(m,n)/\Gamma$.

Let $\mathbb{P}\subset\mathbb{D_+}$ be a 2-dimensional plane in
general position. We will show (Theorem \ref{th:5} in Section
\ref{sec:10}) that every small H\"older cocycle with values
in $Y_X$ over the action $\alpha_{0,G}$, where $G$ is any subgroup
of $D_+$ which contains $\exp\mathbb{P}$, is cohomologous to a
constant cocycle. Similarly to the proofs in
\cite{Damjanovic1,Damjanovic2} we use the geometric structure of
Lyapunov foliations of the action. By applying the method of
\cite{Damjanovic1,Kononenko} we show that a cocycle over a partially
hyperbolic action with locally transitive Lyapunov foliations is
cohomologous to a constant cocycle if and only if the periodic cycle
functional (PCF) vanishes on all closed broken paths whose pieces
lie in leaves of Lyapunov foliations of the action. Furthermore, the
presentations of Schur multipliers of $SO^+(m,n)$ and $SU(m,n)$ that
we will construct in Sections \ref{sec:12}-\ref{sec:15}, give explicit description
of closed broken paths along Lyapunov foliations which leads to
vanishing of the PCF on all such paths and to cocycle rigidity.
Smoothness of the transfer map for smooth cocycles is a consequence
of the fact that for a generic restriction, the Lyapunov
distributions along with their Lie brackets generate the tangent
space at every point.

\subsection{Formulation of results}
Our main results are contained in the following two
theorems.
\begin{theorem}[Differentiable rigidity of generic restrictions]\label{th:1}  Let $\alpha_{0,G}$ be a high rank generic restriction of the action of a maximal split Cartan subgroup on $SO^+(m,n)/\Gamma$ or $SU(m,n)/\Gamma$ where $m\ge n\ge 3$.

If $\tilde{\alpha}$ is $C^{\infty}$ action of $\ZZ^k\times \RR^{\ell}$
sufficiently $C^2$-close to $\alpha_{0,G}$, then there exists a
homomorphism $i : \ZZ^k\times \RR^{\ell} \rightarrow Y_X$ close to
$i_0$ and a $C^\infty$ diffeomorphism $h : X \rightarrow X$ such
that $\tilde{\alpha}(a, h(x)) = h(i(a)\cdot x)$ for all $\ZZ^k\times
\RR^{\ell}$.
\end{theorem}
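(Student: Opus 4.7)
My plan is to deduce Theorem~\ref{th:1} from the cocycle rigidity statement (Theorem~\ref{th:5}) of Section~\ref{sec:10}, via the perturbative reduction that Damjanovic and Katok carry out for $SL(n,\RR)$ in \cite{Damjanovic2} and which Section~\ref{sec:8} of this paper adapts to the present non-split setting. The geometric input is that the neutral distribution of $\alpha_{0,G}$ at every point of $X$ is the tangent distribution to the foliation by $Y_X$-orbits, so small perturbations $\tilde\alpha$ of $\alpha_{0,G}$ can move orbits only inside leaves of a nearby center foliation; this makes $Y_X$ the natural target group for the comparison cocycle, and ``correction'' of $i_0$ by an element of $Y_X$ the natural form of the new homomorphism $i$.

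First, I would use partial hyperbolicity of regular elements of $\alpha_{0,G}$ (equivalently, the fact that $\mathbb{P}\subset\mathbb{G}$ is in general position) to produce, for $C^2$-small $\tilde\alpha$, an $\tilde\alpha$-invariant H\"older center foliation $\mathcal{F}^c$ on $X$ that is $C^0$-close to the $Y_X$-orbit foliation. Comparing $\tilde\alpha(a,\cdot)$ with $\alpha_{0,G}(a,\cdot)$ along this central foliation yields a H\"older assignment $\beta(a,x)\in Y_X$ recording the ``$Y_X$-component'' of the deviation between the two actions. Because $Y_X$ centralizes $i_0(\mathbb{G})\subset D_+$ and the acting group is abelian, $\beta$ is a genuine $Y_X$-valued one-cocycle over $\alpha_{0,G}$; it is small in $C^0$ by the closeness hypothesis.

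Applying Theorem~\ref{th:5} to $\beta$ yields a homomorphism $s:\ZZ^k\times\RR^\ell\to Y_X$ close to the trivial one and a H\"older transfer $H:X\to Y_X$ with $\beta(a,x)=H(\alpha_{0,G}(a,x))\,s(a)\,H(x)^{-1}$. Setting $i(a):=s(a)\,i_0(a)$ and twisting the preliminary continuous conjugacy provided by $\mathcal{F}^c$ by $H$ produces a H\"older homeomorphism $h:X\to X$ satisfying $\tilde\alpha(a,h(x))=h(i(a)\cdot x)$ for every $a$. To upgrade $h$ from H\"older to $C^\infty$, I would argue that $\beta$ is smooth along each stable and unstable Lyapunov leaf (because $\tilde\alpha$ is $C^\infty$), whence $H$ is leafwise smooth, and then invoke the observation recorded at the end of Section~\ref{sec:9}: for a generic restriction, the Lyapunov distributions together with their Lie brackets span $TX$ at every point. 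A non-stationary Journ\'e-type bootstrap then promotes leafwise smoothness of $H$ to global $C^\infty$ regularity.

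The hardest step is the very first one --- realizing the deviation between $\tilde\alpha$ and $\alpha_{0,G}$ as an \emph{honest} $Y_X$-valued cocycle rather than as a section of some nontrivial principal $Y_X$-bundle. Because $Y_X$ is non-abelian --- it contains $SO(m-n)$, respectively $\mathbb{T}^{n-1}\times U(m-n)$ --- the ``rotation/reflection'' part of the holonomy along multi-dimensional Lyapunov leaves must be tracked exactly, and this is where the non-split geometry genuinely departs from the split cases in \cite{Damjanovic2,Damjanovic3}. Once $\beta$ is in place, the Schur multiplier information of Sections~\ref{sec:12}--\ref{sec:15} is what makes Theorem~\ref{th:5} applicable, and the remaining steps proceed by the methods already well established for the split case.
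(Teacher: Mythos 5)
The overall skeleton you propose (HPS center foliation, comparison cocycle into $Y_X$, cocycle rigidity, twist $i_0$ by the constant part, conjugacy, smoothness upgrade) is the right one, but there is a genuine gap at the pivotal step. The deviation map $\beta$ defined by $\alpha_G(a,x)=\beta(a,x)\cdot\alpha_{0,G}(a,x)$ (where $\alpha_G$ is the HPS-conjugated perturbation preserving the neutral foliation) is \emph{not} a cocycle over $\alpha_{0,G}$: writing out $\beta(ab,x)$ and using that $Y_X$ centralizes $i_0(\mathbb{G})$ gives $\beta(ab,x)=\beta(a,\alpha_G(b,x))\beta(b,x)$, i.e.\ a cocycle over the \emph{perturbed} action. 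So Theorem~\ref{th:5}, which concerns cocycles over the algebraic action $\alpha_{0,G}$, cannot be applied to $\beta$ as you claim. This is exactly why the paper states and proves Theorem~\ref{th:2} (cocycle rigidity over perturbations) as the principal ingredient: its proof requires the additional geometric work of Section~\ref{sec:8} --- lifting to the universal cover, Brin--Pesin persistence of local transitivity for the perturbed Lyapunov foliations, and the projection of $T$-cycles to $U$-cycles along the neutral foliation (following the arguments of Sections~6.2--6.4 of \cite{Damjanovic2} and 5.3--5.4 of \cite{Damjanovic3}), after which the periodic cycle functional again descends to a homomorphism of $p^{-1}(\Gamma)$ killed by Lemma~\ref{le:38}. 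Your paragraph identifying ``the hardest step'' worries about the non-abelian bundle structure, but misses this over-which-action issue, which is where the actual difficulty (and a large part of the paper's Section~\ref{sec:8}) lies.

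Two further steps are asserted rather than proved. First, you never show that the map $h'(x)=H^{-1}(x)\cdot x$ (equivalently your ``twisted'' conjugacy) is injective; surjectivity follows from $C^0$-closeness to the identity, but injectivity requires the argument the paper quotes from \cite{Damjanovic2} (simple transitivity of the $U$-holonomy together with the fact that $Y_X$ has no nontrivial element all of whose powers are small). Second, the smoothness upgrade cannot be run the way you describe: the transfer map produced by Theorem~\ref{th:2} lives over the perturbed action, whose Lyapunov foliations are only H\"older, so ``leafwise smooth plus Journ\'e-type bootstrap'' does not apply to it; the paper instead obtains smoothness of the final conjugacy $h=h'\widetilde{h}^{-1}$ from the Katok--Spatzier theory of non-stationary normal forms for partially hyperbolic abelian actions (as in \cite{Spatzier}, \cite{Damjanovic2}, \cite{Margulis2}). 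The Journ\'e/bracket-generation argument you invoke is the right tool for the smooth-cocycle case of Theorem~\ref{th:5}, not for Theorem~\ref{th:1}.
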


The principal ingredient in the proof of Theorem \ref{th:1} is the
next theorem which is the main technical result of the present
paper. It extends the cocycle rigidity result from
\cite{Damjanovic2,Damjanovic3}.

\begin{theorem}[Cocycle rigidity for perturbations]\label{th:2} Let $\alpha_{0,G}$ be a generic restriction of the action of a maximal split Cartan subgroup on $SO^+(m,n)/\Gamma$ or $SU(m,n)/\Gamma$ where $m\ge n\ge 3$.
Let $\tilde{\alpha}$ be a sufficiently
$C^2$-small $C^1$ perturbation of $\alpha_{0,G}$.

If $\beta$ is a
$H\tilde{o}lder$ cocycle over $\tilde{\alpha}$ with values in $Y_X$
then $\beta$ is cohomologous to a constant cocycle given by a
homomorphism $s : \ZZ^k\times \RR^{\ell}\rightarrow Y_X$ via a
continuous transfer function. Furthermore, if the cocycle $\beta$ is
sufficiently small in a $H\tilde{o}lder$ norm the transfer map is
$C^0$ arbitrary small.
\end{theorem}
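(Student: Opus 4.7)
The plan is to bootstrap from the cocycle rigidity for the unperturbed action $\alpha_{0,G}$ (Theorem~\ref{th:5} of Section~\ref{sec:10}) following the strategy of \cite{Damjanovic2}, by using persistence of the geometric structure that underlies its proof. The unperturbed result proceeds by reducing cohomological triviality of a H\"older cocycle to the vanishing of the periodic cycle functional (PCF) on closed broken paths along Lyapunov leaves, and then deducing this vanishing from generators and relations in the Schur multipliers of $SO^+(m,n)$ and $SU(m,n)$ obtained in Sections~\ref{sec:12}--\ref{sec:15}. Both ingredients survive $C^2$-small $C^1$ perturbations, but not without work.

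First I would show that the geometric set-up persists. Because the plane in general position inside $\mathbb{D_+}$ contains regular Anosov elements, the Hirsch--Pugh--Shub theorem \cite{shub} gives, for each such element, persistent stable and unstable foliations for $\tilde\alpha$ that are H\"older close to the unperturbed ones and still have smooth leaves. Taking smallest non-trivial intersections produces the perturbed Lyapunov foliations. The Lyapunov distributions of $\alpha_{0,G}$ together with their iterated Lie brackets span $TX$ (where the generic-restriction hypothesis enters), so the same accessibility property survives the perturbation, and the perturbed Lyapunov foliations remain locally transitive. The framework of \cite{Damjanovic1,Kononenko} then reduces the theorem to showing that the PCF of $\beta$ vanishes on every closed broken path whose pieces lie in leaves of Lyapunov foliations of $\tilde\alpha$.

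The core step---and the main obstacle---is to transfer the PCF-vanishing relations from $\alpha_{0,G}$ to $\tilde\alpha$. In Section~\ref{sec:10} each generating closed broken path corresponds to a relation in the Schur-multiplier presentation of unipotent subgroups built from root spaces: commutators of summable root groups, trivial commutators across non-summable roots, and the ``rotation/reflection'' relations in higher-dimensional root spaces proved in Sections~\ref{sec:16}--\ref{sec:15}. Each such relation is a combinatorial statement about how Lyapunov leaves fit together, and it produces a closed broken path in $\tilde\alpha$ once one replaces the unperturbed stable holonomies by their perturbed counterparts. Since the perturbed leaves are $C^0$-close to the original ones and the PCF is continuous in both the cocycle (in H\"older norm) and the foliation, vanishing of the PCF on these generating cycles transfers from $\alpha_{0,G}$ to $\tilde\alpha$. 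The technical difficulty here is to verify that the algebraic identities really produce \emph{exactly} closed paths in the perturbed geometry rather than paths with small non-closure, so that the PCF of $\beta$ is well defined on the perturbed Lyapunov complex.

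Once this vanishing is established, the transfer function $H$ is built in the standard way by integrating $\beta$ along broken paths from a fixed base point, and the homomorphism $s:\ZZ^k\times\RR^{\ell}\to Y_X$ is read off from the cohomology equation on $\tilde\alpha$-orbits; continuity of $H$ is automatic from local transitivity together with H\"older continuity of $\beta$, and the fact that the H\"older norm of $\beta$ controls the $C^0$-oscillation of the broken-path integral on bounded paths yields the final clause that $H$ is $C^0$-small whenever $\beta$ is H\"older-small.
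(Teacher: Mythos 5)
Your skeleton (persistence of the Lyapunov foliations via Hirsch--Pugh--Shub and Brin--Pesin, reduction to vanishing of the PCF by the Katok--Kononenko criterion, and input of the Schur-multiplier relations) is the same as the paper's, but the step you yourself flag as ``the technical difficulty'' is exactly where the real content lies, and your proposal does not supply the missing idea. For a fixed perturbation there is no continuity mechanism available: the cocycle $\beta$ lives over $\tilde\alpha$ only, so ``the PCF is continuous in the cocycle and the foliation'' gives you nothing — there is no family of cocycles interpolating between the algebraic and the perturbed situation along which to pass to a limit. Moreover, when you project a closed broken path for the perturbed foliations ($T$-cycle) along the neutral foliation to a broken path for the homogeneous foliations ($U$-path), the resulting path is in general \emph{not} closed on $X$: it closes only up to an element of the fundamental group, i.e.\ up to a deck transformation in $p^{-1}(\Gamma)$ on the universal cover. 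This non-closure is not a small defect that can be argued away by $C^0$-closeness; it is a genuine topological obstruction.

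The paper resolves it as follows: lift everything to $\widetilde{SO}^+(m,n)$ (resp.\ $\widetilde{SU}(m,n)$), show (Proposition \ref{po:2}, following the cycle-projection arguments of Damjanovic--Katok) that $T$-cycles and $U$-cycles project to one another along the neutral foliation, and observe that the PCF value of a cycle then depends only on the element of $p^{-1}(\Gamma)$ it represents, so that the PCF defines a homomorphism $p^{-1}(\Gamma)\to Y_X$. One then needs Lemma \ref{le:38} — which rests on the Margulis normal subgroup theorem, superrigidity, and Jordan's theorem — to conclude that any such homomorphism has image of uniformly bounded finite order; combined with the smallness of the cocycle this forces the homomorphism to be trivial, hence all periodic cycle functionals vanish and the Katok--Kononenko criterion applies. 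This lattice-homomorphism step (and the bounded-image statement needed because $Y_X$ has a compact factor $SO(m-n)$ or $U(m-n)$) is indispensable and absent from your argument; without it the claim that ``vanishing of the PCF on these generating cycles transfers from $\alpha_{0,G}$ to $\tilde\alpha$'' is unsupported. The final clause about $C^0$-smallness of the transfer map is fine once the above is in place, but as written the core of the proof is missing.
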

Let $X_1:= M\backslash SO^+(m,n)/\Gamma,$ with $m\geq n\geq 3$,
where $M=SO(m-n)$ and $\Gamma$ a cocompact lattice in $SO^+(m,n)$.
\begin{corollary}\label{cor:3}  Let $\alpha_{0,G}$ be a high rank generic restriction of the action of a maximal split Cartan subgroup
on $M\backslash SO^+(m,n)/\Gamma$ where $m\ge n\ge 3$.

If $\tilde{\alpha}$ is $C^{\infty}$ action of $\ZZ^k\times
\RR^{\ell}$ sufficiently $C^2$-close to $\alpha_{0,G}$, then there
exists a homomorphism $i : \ZZ^k\times \RR^{\ell} \rightarrow Y_X$
close to $i_0$ and a $C^\infty$ diffeomorphism $h : X \rightarrow X$
such that $\tilde{\alpha}(a, h(x)) = h(i(a)\cdot x)$ for all
$\ZZ^k\times \RR^{\ell}$.
\end{corollary}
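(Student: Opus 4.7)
The plan is to reduce Corollary~\ref{cor:3} to Theorem~\ref{th:1} by lifting the perturbation $\tilde\alpha$ from $X_1=M\backslash X$ to an $M$-equivariant $C^2$-small perturbation $\hat\alpha$ of $\alpha_{0,G}$ on $X=SO^+(m,n)/\Gamma$, then applying Theorem~\ref{th:1} upstairs, and finally descending the resulting conjugacy. The block description of $so(m,n)$ in Section~\ref{sec:6} makes it immediate that $M=SO(m-n)$, embedded in $SO^+(m,n)$ as the stabilizer of the first $2n$ basis vectors, commutes with the split Cartan $D_+$. Hence left translation by $M$ on $X$ commutes with $\alpha_{0,G}$, and (after passage to a finite cover of $X$ to ensure $M$ acts freely, as we may) the projection $\pi:X\to X_1$ is an equivariant principal $M$-bundle.

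For the lifting step, fix a smooth $M$-invariant connection on the bundle $\pi:X\to X_1$. For any $\psi\in\mathrm{Diff}^\infty(X_1)$ sufficiently $C^1$-close to the identity, horizontal parallel transport produces a canonical $M$-equivariant lift $\hat\psi\in\mathrm{Diff}^\infty(X)$ with $\pi\hat\psi=\psi\pi$. Applied to each $\psi_a:=\alpha_{0,G}(a)^{-1}\tilde\alpha(a)$, this gives $M$-equivariant diffeomorphisms $\hat\psi_a$; set $\hat\alpha(a):=\alpha_{0,G}(a)\circ\hat\psi_a$. To assemble the $\hat\alpha(a)$ into a genuine smooth action of $\ZZ^k\times\RR^\ell$, apply an implicit function theorem to the natural smooth map from $M$-equivariant $C^\infty$ actions on $X$ to $C^\infty$ actions on $X_1$, based at $\alpha_{0,G}$; surjectivity of the linearization follows from compactness of $M$ by averaging, so every smooth infinitesimal deformation of the quotient action comes from an $M$-invariant infinitesimal deformation upstairs. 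This yields the required $M$-equivariant $C^\infty$ action $\hat\alpha$, $C^2$-close to $\alpha_{0,G}$, satisfying $\pi\circ\hat\alpha(a)=\tilde\alpha(a)\circ\pi$.

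Now Theorem~\ref{th:1} applied to $\hat\alpha$ produces a homomorphism $i:\ZZ^k\times\RR^\ell\to Y_X$ close to $i_0$ and a $C^\infty$ diffeomorphism $h:X\to X$ with $\hat\alpha(a,h(x))=h(i(a)\cdot x)$. For each $m\in M$ the map $h_m(x):=m^{-1}h(mx)$ is another such conjugacy implementing the same $i$, since $M$ commutes with both $\hat\alpha$ and $\alpha_{0,G}$. Uniqueness of the conjugacy within a small $C^0$-neighborhood of the identity---modulo centralizer elements of $\alpha_{0,G}$, which already commute with $M$---forces $h_m=h$ for all $m$, so $h$ is $M$-equivariant and descends to the desired $\bar h:X_1\to X_1$. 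I expect the main obstacle to be the lifting step: producing $M$-equivariant lifts of individual diffeomorphisms via a connection is routine, but lifting an entire group action requires the implicit-function argument above, which ultimately rests on the vanishing of the relevant smooth $M$-cohomology provided by averaging over a compact group. A secondary subtlety is the uniqueness used in the descent, which leans on cocycle rigidity (Theorem~\ref{th:2}) together with an explicit description of the centralizer of $\alpha_{0,G}$.
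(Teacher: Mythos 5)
There is a genuine gap, and it is exactly where you predicted trouble: the lifting step. Parallel transport along an $M$-invariant connection does give an $M$-equivariant lift $\hat\psi_a$ of each individual diffeomorphism $\psi_a$, but these lifts have no reason to satisfy the group law: the discrepancy $\hat\alpha(a)\hat\alpha(b)\hat\alpha(ab)^{-1}$ covers the identity of $X_1$ and is therefore a gauge transformation, i.e. an $M$-valued map on the base, and assembling the lifts into a genuine action of $\ZZ^k\times\RR^\ell$ means trivializing this family of discrepancies over the \emph{noncompact} acting group. Averaging over the compact group $M$ only addresses equivariance in the fiber direction; it says nothing about compatibility over $\ZZ^k\times\RR^\ell$. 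Your proposed ``implicit function theorem on the space of actions'' is not available: spaces of smooth actions are not Banach manifolds, and even formally the surjectivity of the linearization you invoke is precisely a rigidity statement for compact-group-valued cocycles over the (perturbed) action --- which is of the same order of difficulty as the theorem being proved, not a consequence of compactness of $M$. Indeed the paper explicitly lists actions on homogeneous spaces of compact extensions, and cocycles with values in extensions of abelian groups by compact groups, as a further direction rather than something obtainable by a soft averaging argument; so a perturbation of the factor action on $M\backslash SO^+(m,n)/\Gamma$ cannot simply be assumed to arise from an $M$-equivariant perturbation upstairs. The descent/uniqueness step at the end is also only sketched, but the lifting is the fatal point.

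The paper's route is different and avoids lifting the perturbation altogether: it reruns the proofs of Theorems \ref{th:1} and \ref{th:2} directly on $X_1=M\backslash SO^+(m,n)/\Gamma$. The only new issue arises when perturbed Lyapunov cycles ($T$-cycles) are projected to unperturbed $U$-paths: on the double coset space the projected path need not close up, and one must show that after adding a $U$-path of bounded length connecting the two endpoints the resulting $U$-cycle is contractible, or becomes reducible after taking its fourth power. This is where the algebraic input enters: Theorem \ref{th:5} together with the fact that the elements $h^j_{L_n}(\sqrt{2}a,\sqrt{2}b)$, $(a,b)\in S^1$, $j\leq m-n-1$, generate $M=SO(m-n)$, so the ambiguity coming from the $M$-fiber can be absorbed into cycles on which the periodic cycle functionals are already known to vanish. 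If you want to salvage your strategy, you would have to prove a cocycle rigidity statement for $M$-valued (more generally, compact-extension-valued) cocycles over the perturbed action on $X_1$ first, which is essentially the content you were trying to bypass.
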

The action by left translations of $D_+$ on $X_1$ is the Weyl
chamber flow (WCF) and we denote this action by $\alpha_0$. The
following result is a special case of Corollary \ref{cor:3}.
\begin{corollary}\label{cor:1}
Let $\alpha_{0}$ be the WCF on $M\backslash SO^+(m,n)/\Gamma$ where
$m\ge n\ge 3$. If $\tilde{\alpha}$ is $C^{\infty}$ action
sufficiently $C^2$-close to $\alpha_{0}$, then there exists a
homomorphism $i : \ZZ^k\times \RR^{\ell} \rightarrow D_+$ close to
identity and a $C^\infty$ diffeomorphism $h : X \rightarrow X$ such
that $\tilde{\alpha}(a, h(x)) = h(i(a)\cdot x)$.
\end{corollary}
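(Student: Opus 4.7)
The plan is to derive Corollary \ref{cor:1} as an almost immediate consequence of Corollary \ref{cor:3}. Two facts drive the reduction: the Weyl chamber flow $\alpha_0$ is itself a high-rank generic restriction (the ``trivial'' one, with acting subgroup equal to all of $D_+$), and passing from $SO^+(m,n)/\Gamma$ to its $M$-quotient $X_1$ kills the $SO(m-n)$-factor of $Y_X$.

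First, I would recognize $\alpha_0$ as a generic restriction in the sense of Section \ref{sec:6}: take $\mathbb{G}=\mathbb{D_+}$, with $i_0:\RR^n\to D_+$ the canonical isomorphism. Since $n\geq 3$, the space $\mathbb{D_+}$ has dimension at least three, so it certainly contains a $2$-plane in general position together with a lattice inside it. Hence $\alpha_{0,D_+}=\alpha_0$ satisfies the hypotheses of Corollary \ref{cor:3}, which applies to any $C^2$-small perturbation $\tilde{\alpha}$.

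Next, I would apply Corollary \ref{cor:3} to obtain a homomorphism $\tilde{i}:\ZZ^k\times\RR^\ell\to Y_X$ close to $i_0$ and a $C^\infty$ diffeomorphism $h:X_1\to X_1$ satisfying $\tilde{\alpha}(a,h(x))=h(\tilde{i}(a)\cdot x)$. Using the direct-product decomposition $Y_X\cong \RR^n\times SO(m-n)\cong D_+\times M$ (valid in the $SO^+(m,n)$ case), I would decompose $\tilde{i}=(i,j)$ with $i:\ZZ^k\times\RR^\ell\to D_+$ and $j:\ZZ^k\times\RR^\ell\to M$; both $i$ and $j$ are homomorphisms because the target splits as a direct product. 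Because $X_1=M\backslash SO^+(m,n)/\Gamma$ is the $M$-quotient, every element of the form $j(a)\in M$ acts trivially on $X_1$, so on $X_1$ the conjugacy identity reduces to $\tilde{\alpha}(a,h(x))=h(i(a)\cdot x)$. Since $i_0$ takes values entirely in $D_+\subset Y_X$, closeness of $\tilde{i}$ to $i_0$ in $Y_X$ forces $j$ to be uniformly small in $M$ and $i$ to be close to the identity inclusion of $\RR^n$ into $D_+$. This is exactly the conclusion of Corollary \ref{cor:1}.

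The reduction itself presents no genuine obstacle; it is bookkeeping about how the $M$-quotient interacts with the direct-product structure of $Y_X$. The difficulty is absorbed entirely into Corollary \ref{cor:3}, whose proof rests in turn on Theorem \ref{th:1}, the cocycle rigidity Theorem \ref{th:2} for perturbed actions, and the Schur multiplier computations of Sections \ref{sec:12}--\ref{sec:15}.
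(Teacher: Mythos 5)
Your reduction is correct and matches the paper, which simply observes that Corollary \ref{cor:1} is a special case of Corollary \ref{cor:3}: the WCF is the generic restriction with $\mathbb{G}=\mathbb{D_+}$, and on $X_1=M\backslash SO^+(m,n)/\Gamma$ the $SO(m-n)$-factor of $Y_X\cong D_+\times M$ acts trivially, so the homomorphism may be taken into $D_+$. Your write-up just makes this bookkeeping explicit, which the paper leaves implicit.
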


\subsection{Comments on related problems}
Results of this paper belong to the general program of establishing  various flavors of local differentiable rigidity for  partially hyperbolic algebraic actions of  higher rank abelian groups.  For general comments on that program see \cite[Section 1]{Damjanovic4}. We do not attempt a comprehensive  overview of the current state of the program but  restrict ourselves to few comments on those aspects that are closely related to our results.

First let us discuss generic restrictions of Cartan actions on other higher rank
simple Lie groups.

 The condition $n\geq 3$  for $SO^+(m,n)$ and $SU(m,n)$ is necessary for the method used in  this
paper.  Algebraically, the complications in these
cases are similar  to those encountered in
the case $n=2$ for the linear Steinberg groups (\cite{Meara} Section
1.4E). Geometrically, homotopy
classes can't be reduced to each other using allowable substitution.
Thus the cases of groups $SO^+(m,2)$ and $SU(m,2)$ remain open.

Using the technique in dealing with an infinite fundamental group
similar to that we use in case of $SU(m,n)$, we can solve the
cocycle rigidity and differentiable rigidity problem of generic
restrictions for Cartan action on the split groups  $Sp(n,\RR)$ that
is left out in \cite{Damjanovic3}. This result  will appear in a
separate paper.

Extension to generic restrictions of split Cartan actions for other
classical non-split simple groups requires information about
generators and relations not  available from the literature. This is not surprising that this is already the case with the groups $SO(m,n)$ and $SU(m,n)$.
While our general approach may (and probably should) work
more techniques are needed for calculations of generators and
relations of matrix groups of $SL(n,\mathbb{H})$, $SP(m,n)$ and
$SO^*(2n)$. Those are defined over quaternions and one has to
double the sizes of matrices to represent them by   complex matrices
which makes the ``rotations'' much more complex.

We have not looked into  exceptional
groups  and are not aware of any  effective way to solve the generators and relations problem for those groups. Similar to the quaternionic cases,  one hopes that with sufficiently hard work those groups (with the possible exception of the rank two case) may be amenable to our method.

A  necessary condition for applicability of the Damjanovic--Katok  geometric method  (although not for local rigidity) is that   contracting distributions of various action elements and their brackets of all orders   generate  the tangent space to the
phase space.  Generic restrictions  for Cartan actions satisfy that condition.
Naturally one  may look at  non-generic restrictions of Cartan actions.
Some of those still satisfy it  but nothing is  known even for  the $SL(n, \RR)$ case
since one cannot use the full force of the algebraic $K$-theory machinery.
More detailed analysis of  generators and relations may   help resolve some of those cases.

The next  natural step is to consider  a similar problem on   products of simple groups factored by irreducible lattices. There are certain cases  that  look amenable to the method.  Finally, one may consider  actions  of higher-rank abelian subgroups on homogeneous spaces of compact  extensions of simple or semisimple Lie groups.
Since the compact fibers  are included into  the neutral directions one should consider cocycles with values in  more general groups that are extensions of abelian groups by compact groups.  Our methods can be extended to at least some of those cases.

\section{Cocycle rigidity for the  actions $\alpha_{0,G}$}\label{sec:10}
The purpose of this section is to describe a geometric method for
proving cocycle rigidity for this action following \cite{Damjanovic1, Damjanovic2}. \begin{theorem}\label{th:5}
Any $Y_X$-valued H\"older cocycle over the generic
restriction of the split Cartan action on $X$ is cohomologous to a
constant cocycle via a  H\"older $C^\infty$ transfer function.

Any $Y_X$-valued $C^\infty$ cocycle over the generic
restriction of the split Cartan action on $X$ is cohomologous to a
constant cocycle via a $C^\infty$ transfer function.

\end{theorem}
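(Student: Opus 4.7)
The plan is to implement the Damjanovic--Katok geometric scheme (cf.\ \cite{Damjanovic1,Damjanovic2}) in the non-split setting of $SO^+(m,n)$ and $SU(m,n)$, using the Schur multiplier presentations obtained in Sections \ref{sec:12}--\ref{sec:15} as the crucial algebraic input needed to handle multidimensional root spaces. I would proceed in four steps: set up the periodic cycle functional (PCF); reduce cocycle triviality to PCF vanishing on closed broken paths along Lyapunov leaves; verify that vanishing by exploiting the Steinberg-type relations coming from the Schur multiplier computations; and finally lift leafwise regularity of the transfer function to global regularity on $X$.

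First I would record the geometric features of $\alpha_{0,G}$. Because $\mathbb{P}\subset \mathbb{D_+}$ is in general position, every regular element of $G$ distinguishes the Lyapunov hyperplanes $\mathbb{H}_{i\pm j}$ and $\mathbb{H}_i$; this gives a decomposition of $TX$ into Lyapunov distributions and makes each Lyapunov foliation locally transitive inside its Lyapunov hyperplane. Following \cite{Damjanovic1,Kononenko}, to a H\"older cocycle $\beta$ I can associate a PCF $\Phi_\beta$ on closed broken paths whose pieces lie in leaves of stable and unstable Lyapunov foliations. The standard argument then reduces Theorem \ref{th:5} to showing that $\Phi_\beta$ is trivial on every such closed path: once this is established, integrating $\beta$ along broken paths from a base point produces the required transfer function $H\colon X\to Y_X$, and the homomorphism $s$ arises from evaluating $\beta$ after straightening.

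The core of the argument is the PCF vanishing, and this is precisely where the non-split algebra intervenes. For the ``easy'' cycles supported inside a single stable (resp.\ unstable) leaf one uses a contracting (resp.\ expanding) element of $G$ to shrink the cycle, and $\Phi_\beta$ vanishes by continuity. For mixed cycles one uses the three ingredients highlighted in the introduction: (i) local transitivity lets one replace abstract cycles by concrete broken paths, (ii) a Lyapunov cycle living in a multidimensional root space can be approximated by a composition of conjugates of stable cycles (the conjugating elements drawn from $G$), so that the PCF on the multidimensional cycle inherits vanishing from the stable case, and (iii) the Steinberg-type relations from Sections \ref{sec:12}--\ref{sec:15} provide a finite list of generating closed paths that suffice to control $\Phi_\beta$. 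The new point beyond \cite{Damjanovic3} is that for $SO^+(m,n)$ and $SU(m,n)$ the root spaces attached to $\pm e_i$ are higher-dimensional, so ``rotation'' and ``reflection'' inside those root groups must be generated from stable commutator relations; the explicit presentations of the Schur multipliers constructed in Sections \ref{sec:16} and \ref{sec:15} supply exactly the relations needed.

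With PCF vanishing in hand, the transfer function $H$ is H\"older along every Lyapunov leaf. The assumption $m\ge n\ge 3$ (via the general position condition) ensures that the stable and unstable Lyapunov distributions together with their iterated Lie brackets span $T_xX$ at every $x$; a Journ\'e-type regularity lemma, applied inductively along this bracket-generating filtration, then upgrades leafwise H\"older regularity to H\"older regularity on $X$. For the $C^\infty$ statement one bootstraps: a continuous $H$ satisfying a smooth cohomological equation along each Lyapunov leaf is automatically $C^\infty$ along that leaf, and iterating Journ\'e's lemma along the bracket-generating distributions yields $H\in C^\infty(X,Y_X)$. The main obstacle I expect is Step 3 in the multidimensional root directions: in the split $SL(n,\RR)$ case each root space is one-dimensional, so the Steinberg relations linearize and the geometric cycles are essentially commutators of rank-one unipotents; here one must genuinely realise rotations of $SO(m-n)$ and $U(m-n)$ blocks as combinations of stable cycles, and this is the obstruction that the Schur multiplier computations of Sections \ref{sec:12}--\ref{sec:15} are designed to resolve.
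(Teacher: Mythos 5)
Your outline reproduces the general Damjanovic--Katok scheme that the paper indeed follows (PCF, local transitivity from bracket generation, reduction of cycles via the presentations of Theorems \ref{th:3} and \ref{th:4}, smoothness along foliations plus bracket generation for the $C^\infty$ statement), but it has a genuine gap at the heart of the PCF-vanishing step. A closed broken path along Lyapunov leaves in $X=SO^+(m,n)/\Gamma$ (or $SU(m,n)/\Gamma$) is a word in the root-group generators whose product is an element of $\Gamma$, not necessarily the identity; and even words equal to the identity in $G$ may lift to nontrivial central elements of the universal cover, since the relations \eqref{for:6}, \eqref{for:24}, \eqref{for:7}, \eqref{for:17}, \eqref{for:18}, \eqref{for:22} are exactly presentations of the Schur multiplier. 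So knowing the presentation does not by itself give vanishing of the PCF on all cycles. The paper handles this by lifting the action and the foliations to the universal cover, showing that the stable-type relations \eqref{for:26}--\eqref{for:5} are reducible, that the symbol relations become reducible after doubling by Milnor's argument on continuous Steinberg symbols (Theorem \ref{th:6}), that the relation \eqref{for:22} for $SU(m,n)$ need not be considered because $\bigl(h_{2L_n}(-1)\bigr)^{2k}$ generates the infinite fundamental group $\ZZ$, and then observing that the residual PCF values define a homomorphism from the lifted lattice $p^{-1}(\Gamma)$ to $Y_X$. Killing that homomorphism is where Lemma \ref{le:38} (Margulis normal subgroup theorem, superrigidity, and Jordan's theorem, giving a uniform bound on the image) is indispensable. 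Your proposal never confronts either the nontrivial $\pi_1$/central-extension contribution or the lattice contribution, so as written the claim ``PCF vanishes on every closed path'' does not follow from the ingredients you list.

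A secondary imprecision: in step (iii) you say the Steinberg-type relations ``provide a finite list of generating closed paths that suffice to control $\Phi_\beta$,'' but the reason the PCF vanishes on those particular cycles is not that they are stable or that they follow from commutator relations; it is the contractibility-after-doubling argument for continuous symbols (on $\RR^*$ and on $S^1$, the latter being the genuinely new non-split feature coming from the $SO(m-n)$ and $U(m-n)$ rotations), combined with allowable substitutions to cancel conjugations. Your regularity step (Journ\'e-type bootstrapping along bracket-generating Lyapunov distributions) is consistent with the paper, which invokes the same principle via subelliptic estimates, so that part is fine.
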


\subsection{Preliminaries} Let $\alpha: A \rightarrow
\text{Diff}(M)$ be an action of $A := \RR^k$, $k\in\NN$ on a compact
manifold $M$ by diffeomorphisms of $M$ preserving an ergodic
probability measure $¦Ì$. Then there are finitely many linear
functionals $\lambda$ on $A$, called $Lyapunov$ $exponents$, a set
of full measure $\Lambda$ and a measurable splitting of the tangent
bundle $T_\Lambda M =\bigoplus_\lambda E^\lambda$, such that for
$v\in E^\lambda$ and $a\in A$ the Lyapunov exponent of $v$ with
respect to $\alpha(a)$ is $\lambda(a)$.

If $\chi$ is a non-zero Lyapunov exponent then we define its
$coarse$ $Lyapunov$ $subspace$ by $$E_\chi:=
\bigoplus_{\lambda=c\chi:c>0}E^\lambda.$$ For every $a\in A$ one can
define stable, unstable and neutral subspaces for a by: $E^s_a
=\bigoplus_{\lambda(a)<0}E^\lambda$, $E^u_a
=\bigoplus_{\lambda(a)>0}E^\lambda$ and $E^0_a
=\bigoplus_{\lambda(a)=0}E^\lambda$. In particular, for any $a\in A
:= \bigcap_{\chi\neq 0}(Ker\chi)^c$ the subspace $E^0_a$  is the
same and thus can be denoted simply by $E^0$. Hence we have for any
such $a:$ $$TM = E^s_a\oplus E^0\oplus E^u_a .$$ See
[\cite{Kalinin2}, Section 5.2] for more details.
If in addition $E^0$ is a continuous distribution uniquely
integrable to a foliation $\mathcal{N}$ with smooth leaves, and if
there exists $a\in A$ such that $\alpha(a)$ is uniformly normally
hyperbolic with respect to $\mathcal{N}$ (in the sense of the
Hirsch-Pugh-Shub \cite{shub}) then $\alpha$ is a $partially$
$hyperbolic$ $action$. Elements in $A$ which are uniformly normally
hyperbolic with respect to $\mathcal{N}$ are called regular. Let
$\widetilde{A}$ be the set of regular elements. We call an action a
partially hyperbolic $A$ action if the set $\widetilde{A}$ is dense
in $A$. In particular, if $E^0$ is the tangent distribution to the
orbit foliation of a normally hyperbolic action, then the action is
called Anosov.

If the set $\widetilde{A}$ is dense in $A$, then for each non-zero
Lyapunov exponent $\chi$ and every $p\in M$ the coarse Lyapunov
distribution is: $$E_\chi(p) = \bigcap_{a\in \widetilde{A},
\chi(a)<0}E^s_a(p).$$ The right-hand side is H\"older
continuous and $E^\chi$ can be extended to a H\"older
distribution tangent to the foliation $\mathcal{T}_\chi:=
\bigcap_{a\in \widetilde{A}, \chi(a)<0} \mathcal{W}^s_a(p)$ with
$C^\infty$ leaves. This is the $coarse$ $Lyapunov$ $foliation$
corresponding to $\chi$ (See [\cite{Damjanovic1}, Section 2] and
\cite{Kalinin}).

We denote by $\chi_1,\dots,\chi_r$ a maximal collection of non-zero
Lyapunov exponents that are not positive multiples of one another
and by $\mathcal{T}_1,\dots, \mathcal{T}_r$ the corresponding coarse
Lyapunov foliations.

Given a foliation $\mathcal{T}_i$ and $x\in M$ we denote by
$\mathcal{T}_i(x)$ the leaf of $\mathcal{T}_i$ through $x$.
\subsection{Paths and cycles for a collection of foliations}. In this section we recall some notation and results from
\cite{Damjanovic2}. Let $\mathcal{T}_1, \dots, \mathcal{T}_r$ be a
collection of mutually transversal continuous foliations on $M$,
with smooth simply connected leaves.

For $N\in\NN$ and $j_k\in\{1, \dots , r\}, k\in\{1, \dots ,N-1\}$ an
ordered set of points $p(j_1,\dots , j_{N-1}) : x_1, \dots , x_N \in
M$ is called an $\mathcal{T}$-path of length $N$ if for every $k\in
\{1, \dots ,N-1\}, x_{i+1}\in \mathcal{T}_{j_k}(x_k)$. A closed
$\mathcal{T}$-path(i.e., when $x_N=x_1$) is a $\mathcal{T}$-cycle.

A $\mathcal{T}$-cycle $p(j_1, \dots , j_{N-1}) : x_1, \dots ,
x_N=x_1 \in M$ is called $stable$ for the $A$ action $\alpha$ if
there exists a regular element $a\in A$  such that the whole cycle
$p$ is contained in a leaf of the stable foliations for the map
$\alpha(a,\cdot)$, i.e., if
\begin{align*}
\bigcap_{k=1}^N\{a:\chi_{j_k}(a)<0\}\neq \phi.
\end{align*}
\begin{definition}\label{def:1}
Let $p(j_1, \dots , j_{N-1}) : x_1, \dots , x_N$ and $p_n(j_1,
\dots, j_{N-1}) : x^n_1, \dots , x^n_N$ be two $\mathcal{T}$-paths.
Then $p=\lim_{n\rightarrow\infty}p_n$ if for all $k\in
\{1,\dots,N\}$
\begin{align*}
x_k=\lim_{n\rightarrow\infty}x^n_k.
\end{align*}
Limits of $\mathcal{T}$-cycles are defined similarly.

Two $\mathcal{T}$-cycles, $p(j_1, \dots , j_{N+1}) : x_1, \dots
,x_k,y,x_k,\dots, x_{N}=x_1$ and $p(j_1, \dots , j_{N-1}) :
x_1,\dots , x_k, x_{k+1},\dots x_N$ are said to be conjugate if
$y\in \mathcal{T}_i(x_k)$ for some $i\in \{1,\dots,r\}$.
For $\mathcal{T}$-cycles, $p(j_1, \dots , j_{N-1}) : x_1, \dots
,x_{N}=x_1$ and $p'(j_1', \dots , j_{K-1}') :x_1= x_1', \dots,
x_{K}'=x_1$ define their composition or concatenation $p\ast p'$ by
$$p\ast p'(j_1,\dots, j_{N-1}, j_1', \dots , j_{K-1}') : x_1, \dots x_N,
x_1', \dots , x_K'= x_1.$$ Let
$\mathcal{A}\mathcal{S}^s_{\mathcal{T}}(\alpha)$ denote the
collection of stable $\mathcal{T}$-cycles. Let
$\mathcal{A}\mathcal{S}_{\mathcal{T}}(\alpha)$ denote the collection
of $\mathcal{T}$-cycles which contains
$\mathcal{A}\mathcal{S}^s_{\mathcal{T}}(\alpha)$ and is closed under
conjugation, concatenation of cycles, and under the limitation
procedure defined above.
$\mathcal{A}\mathcal{S}_{\mathcal{T}}^x(\alpha)$ denotes the subset
of $\mathcal{A}\mathcal{S}_{\mathcal{T}}(\alpha)$ which contain
point $x$.

A path $p : x_1,\dots ,x_k,\dots, x_N$ reduces to a path $p^{'}:
x_1, x^{'}_2,\dots ,x^{'}_k,\dots, x_N^{'}$ via an
$\alpha$-$allowable$ $\mathcal{T}$-substitution if the
$\mathcal{T}$-cycle
\begin{align*}
p\ast p^{'}: x_1, \dots ,x_k,\dots, x_{N-1},x_N,
x_{N-1}^{'},\dots,x^{'}_2,x_1
\end{align*}
obtained by concatenation of $p$ and $p^{'}$ is in the collection
$\mathcal{A}\mathcal{S}_{\mathcal{T}}(\alpha)$.

Two $\mathcal{T}$-cycle $c_1$ and $c_2$ are $\alpha$-equivalent if
$c_1$ reduces to $c_2$ via a finite sequence of $\alpha$-allowable
$\mathcal{T}$ -substitutions. A $\mathcal{T}$-cycle we call
$\alpha$-reducible if it is in
$\mathcal{A}\mathcal{S}_{\mathcal{T}}(\alpha)$.
\end{definition}

\begin{definition}
For $N\in\NN$ and $j_k\in\{1, \dots , r\}, k\in\{1, \dots ,N\}$ an
ordered set of points $p(j_1, \dots, j_{N}) : x_1, \dots , x_N,
x_{N+1}=x_1 \in M$ is called an $\mathcal{T}$-cycle of length $N$ if
for every $k\in \{1, \dots ,N\}, x_{i+1}\in \mathcal{T}_{j_k}(x_k)$.
A $\mathcal{T}$ cycle which consists of a single point is a trivial
$\mathcal{T}$-cycle.
\end{definition}
\begin{definition}
Foliations $\mathcal{T}_1, \dots, \mathcal{T}_r$ are locally
transitive if there exists $N\in\NN$ such that for any $\varepsilon
> 0$ there exists $\delta> 0$ such that for every $x\in M$ and for every
$y\in  B_X(x, \delta)$ (where $B_M(x, \delta)$ is a $\delta$ ball in
$M$) there is a $\mathcal{T}$-path $p(j_1, \dots , j_{N-1}) : x =
x_1, x_2, \dots, x_{N-1}, x_N = y$ in the ball $B_M(x, \varepsilon)$
such that $x_{k+1}\in \mathcal{T}_{j_k}(x_k)$ and
$d_{\mathcal{T}_{j_k}(x_k)}(x_{k+1}, x_k) < 2\varepsilon$.
\end{definition}
In other words, any two sufficiently close points can be connected
by a $\mathcal{T}$-path of not more than $N$ pieces of a given
bounded length. Here, for a submanifold $Y$ in $M$, $d_Y (x, y)$
denotes the infimum of lengths of smooth curves in $Y$ connecting
$x$ and $y$.

\begin{definition}
For a partially hyperbolic $A$-action $\alpha$ on a compact manifold
$M$ with coarse Lyapunov foliations $\mathcal{T}_1, \dots,
\mathcal{T}_r$ and for a cocycle $\beta : A ¡Á M \rightarrow Y$ over
$\alpha$, where $Y$ is a Lie group, we define $Y$-valued potential
of $\beta$ as
\begin{align*}\label{for:1}
&\left\{\begin{aligned} &P^j_a(y,x)=\lim_{n\rightarrow
+\infty}\beta(na,
y)^{-1}\beta(na,x),\qquad \chi_j(a)<0\\
&P^j_a(y,x)=\lim_{n\rightarrow -\infty}\beta(na,
y)^{-1}\beta(na,x),\qquad \chi_j(a)>0
 \end{aligned}
 \right.
\end{align*}
\end{definition}
Now for any $\mathcal{T}$-cycle $\mathfrak{c}: x_1, \dots, x_{N+1} =
x_1$ on $M$, we can define the corresponding periodic cycle
functional:
\begin{align}
\text{PCF}(\mathfrak{c})(\beta)=\prod_{i=1}^{N}P^{j(i)}_a(x_i,x_{i+1})(\beta).
\end{align}
It is proved in \cite{Damjanovic1} that the expression for (PCF)
does not depend on the choice of $a$. For a general Lie group H
there may be a ``competition" between the exponential speed of decay
for the distance between $nax$ and $nay$ on the one hand, and the
exponential growth of the cocycle norm on the other. More
information about guaranteeing convergence of non-abelian potentials
can be found in \cite{Damjanovic1}.

In this paper, we only consider $Y=Y_X$ which possesses a
bi-invariant metric, so the limits in the right hand part of
\ref{for:1} always exist.

Two essential properties of the PCF which are crucial for our
purpose are that PCF is continuous and that it is invariant under
the operation of moving cycles around by elements of the action
$\alpha$. We end this section with an important proposition which is
the base of our further proof.
\begin{proposition}\emph{(Proposition 4. \cite{Damjanovic1})} \label{le:8}
Let $\alpha$ be an $\RR^k$ action by diffeomorphisms on a compact
Riemannian manifold $M$ such that a dense set of elements of $\RR^k$
acts normally hyperbolically with respect to an invariant foliation.
If the foliations $\mathcal{F}_1,\dots,\mathcal{F}_r$ are locally
transitive and if $\beta$ is a H\"older cocycle over the
action $\alpha$ such that $F(\mathcal{C})(\beta) = 0$ for any cycle
$\mathcal{C}$ then: $\beta$ is cohomologous to a constant cocycle
via a continuous map $h:M\rightarrow Y$.
\end{proposition}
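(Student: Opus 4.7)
The plan is to construct the transfer function $h:M\rightarrow Y$ as an accumulated product of potentials along $\mathcal{T}$-paths emanating from a fixed basepoint. Fix $x_0\in M$, and for each $x\in M$ choose a $\mathcal{T}$-path $p_x:x_0=y_1,\dots,y_N=x$, whose existence is guaranteed by chaining together locally transitive neighborhoods along a curve from $x_0$ to $x$. Along each leg $y_i\to y_{i+1}$ in $\mathcal{T}_{j_i}$, pick a regular element $a_i\in\RR^k$ with $\chi_{j_i}(a_i)<0$; the potential $P^{j_i}_{a_i}(y_i,y_{i+1})$ is well defined in $Y$ because the bi-invariant metric on $Y$ together with exponential contraction of $\alpha(na_i,\cdot)$ on leaves of $\mathcal{T}_{j_i}$ forces the defining limit to converge. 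Set
\begin{align*}
h(x)=\prod_{i=1}^{N-1}P^{j_i}_{a_i}(y_i,y_{i+1})=\text{PCF}(p_x)(\beta).
\end{align*}
If $p_x'$ is a second such path, then $p_x\ast (p_x')^{-1}$ is a $\mathcal{T}$-cycle and by hypothesis its PCF is the identity, so $h(x)$ is independent of the choice of path; independence from the auxiliary regular elements $a_i$ is the content of the base-point lemma in \cite{Damjanovic1}.

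Continuity of $h$ uses local transitivity and continuity of potentials on leaves: for $y$ close to $x$ there is a short $\mathcal{T}$-path $\sigma(x,y)$ of bounded combinatorial length from $x$ to $y$ in a small tube around $x$, so $p_x\ast\sigma(x,y)$ is a valid path for $y$ and $h(y)=h(x)\cdot\text{PCF}(\sigma(x,y))(\beta)$. As $y\to x$ each leg of $\sigma(x,y)$ shrinks on its leaf and the potentials are H\"older on leaves, hence $\text{PCF}(\sigma(x,y))\to e_Y$.

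To verify the coboundary equation, one derives from the cocycle identity for $\beta$ the transformation rule
\begin{align*}
P^j_a(\alpha(c,y),\alpha(c,x))=\beta(c,y)\,P^j_a(y,x)\,\beta(c,x)^{-1},
\end{align*}
and telescopes it along the shifted path $\alpha(c,p_x)$ to obtain $\text{PCF}(\alpha(c,p_x))(\beta)=\beta(c,x_0)\,h(x)\,\beta(c,x)^{-1}$. Choosing a $\mathcal{T}$-path $q_c$ from $x_0$ to $\alpha(c,x_0)$ and concatenating with $\alpha(c,p_x)$ yields a path from $x_0$ to $\alpha(c,x)$, and hence
\begin{align*}
h(\alpha(c,x))=s(c)\,h(x)\,\beta(c,x)^{-1},\qquad s(c):=\text{PCF}(q_c)(\beta)\cdot\beta(c,x_0).
\end{align*}
Any two choices of $q_c$ differ by a cycle through $x_0$ on which PCF vanishes, so $s(c)$ depends only on $c$; feeding the relation back into the cocycle identity $\beta(cd,x)=\beta(c,\alpha(d,x))\beta(d,x)$ forces $s$ to be a homomorphism. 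Setting $H:=h^{-1}$ recovers the coboundary form \eqref{for:13}.

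The main technical obstacle is making the continuity step quantitative and uniform: one must show $\text{PCF}(\sigma(x,y))\to e_Y$ uniformly in $x$ as $y\to x$. This requires the uniform bound $N$ on the combinatorial length of connecting paths (supplied by local transitivity) together with a uniform H\"older estimate on the potentials along leaves, i.e. equicontinuity of $(y,x)\mapsto P^j_a(y,x)$ as $(y,x)$ varies over $M$. Compactness of $M$ and H\"older continuity of $\beta$ make such uniform control available, but this is the step where every hypothesis in the proposition is used in full force.
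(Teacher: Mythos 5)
This proposition is imported verbatim from \cite{Damjanovic1} and the paper offers no proof of its own, so the only comparison available is with the standard argument of that source; your reconstruction (transfer function defined as the PCF along a path from a fixed basepoint, well-definedness from vanishing of PCF on all cycles, continuity from local transitivity plus uniform H\"older control of the potentials, and the cohomology equation with homomorphism $s$ obtained from the equivariance rule $P^j_a(\alpha(c,y),\alpha(c,x))=\beta(c,y)P^j_a(y,x)\beta(c,x)^{-1}$) is exactly that argument and is correct.
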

\subsection{Split Cartan actions on $SO^+(m,n)/\Gamma$ and
$SU(m,n)/\Gamma$}\label{sec:7}
We use  notations from
Section
\ref{sec:6}. Let $d(\cdot , \cdot)$ denote a right invariant metric
on $SO^+(m,n)$ and the induced metric on $SO^+(m,n)/\Gamma$. We use
$e_{k,\ell}$ to denote the matrix with the
 $(k,\ell)$ element  equal to 1, and all other elements equal to  0.  Let $1\leq i,j\leq
n, i\neq j$ be two distinct indices, $\ell \leq
 m-n$, and let $\exp$ be the
exponentiation map for matrices.

Let $\Phi$ be the root system of $SO^+(m,n)$ with respect to $D_+$.
The roots are $\pm L_i \pm L_j(i<j\leq n) $, whose dimensions are
one and $\pm L_i(1\leq i\leq n)$ are also roots if $m\geq n+1$ with
dimensions
 $m-n$.
 The corresponding root spaces are
\begin{align*}
 &\mathfrak{g}_{L_i+L_j}=\RR(e_{i,j+n}-e_{j,i+n})_{i<j}, \qquad
 \mathfrak{g}_{L_i-L_j}=\RR(e_{i,j}-e_{j+n,i+n})_{i\neq j},\\
 &\mathfrak{g}_{-L_i-L_j}=\RR(e_{j+n,i}-e_{i+n,j})_{i<j},\\
 & \mathfrak{g}_{L_i}=\bigoplus_{\ell\leq m-n}\RR f_{L_i}^\ell,\text{ where }f_{L_i}^\ell=e_{i,2n+\ell}-e_{2n+\ell,i+n},\\
 &\mathfrak{g}_{-L_i}=\bigoplus_{\ell\leq m-n}\RR f_{-L_i}^\ell,\text{ where }f_{-L_i}^\ell=e_{i+n,2n+\ell}-e_{2n+\ell,i}.
\end{align*}
Let
\begin{align*}
&f_{L_i+L_j}=(e_{i,j+n}-e_{j,i+n})_{i<j},\qquad
f_{L_i-L_j}=(e_{i,j}-e_{j+n,i+n})_{i\neq j},\\
&f_{-L_i-L_j}=(e_{j+n,i}-e_{i+n,j})_{i<j}.
\end{align*}
 With these notations
for $t\in \RR$, $a=(a_1,...,a_{m-n})\in \RR^{m-n}$ we define
foliations $F_{r}$ for $r=\pm L_i\pm L_j,i\neq j$ and $F_{\rho}$ for
$\rho=\pm L_i$ for which the leaf through $x$
\begin{align}\label{for:2}
&F_{r}(x)=\exp (tf_{r})x,\qquad F_{\rho}(x)=\bigl(\prod_j\exp
(a_jf^{j}_{\rho})\bigl)x
\end{align}
consists of all left multiples of $x$   by matrices of the form
$F_{r}(t)$ or $F_{\rho}(a)$.

The foliations $F_r$ and $F_{\rho}$ are invariant under
$\alpha_0$. In fact, let
$\mathfrak{t}=(t_1,t_2,\dots,t_n)\in\mathbb{D_+},$ for $\forall
t\in\RR$ we have Lie bracket relations
$$[\mathfrak{t}, tf_r]=r(\mathfrak{t})tf_r,\qquad [\mathfrak{t},tf_\rho^\ell]=\alpha(\mathfrak{t})tf_\rho^\ell$$
where $r(\mathfrak{t})=\pm t_i\pm t_j$ if $r=\pm L_i\pm L_j$;
$\rho(\mathfrak{t})=\pm t_i$ if $\rho=\pm L_i$.

Using the basic identity for any square matrices $X,Y$:
$$\exp X\exp
Y=\exp(e^sY)\exp X, \text{ if } [X,Y]=sY,$$ it follows
\begin{align}
&\alpha_0(\mathfrak{t})\bigl(\exp (tf_{r})\bigl)x=\exp (te^{r(\mathfrak{t})}f_{r})\alpha_0(\mathfrak{t})x,\\
&\alpha_0(\mathfrak{t})\bigl(\prod_j\exp
(a_jf^{j}_{\rho})\bigl)x=\bigl(\prod_j\exp
(a_je^{\rho(\mathfrak{t})}f^{j}_{\rho})\bigl)\alpha_0(\mathfrak{t})x
\end{align}
where $r(\mathfrak{t})=\pm t_i\pm t_j$ if $r=\pm L_i\pm L_j$;
$\rho(\mathfrak{t})=\pm t_i$ if $\rho=\pm L_i$. Hence the leaf
$F_r(x)$ is mapped into $F_r(\alpha_0(\mathfrak{t})x)$ and
$F_\rho(x)$ is mapped into $F_\rho(\alpha_0(\mathfrak{t})x)$.
Consequently the foliation $F_r$ and $F_\rho$ are contracted
(corr. expanded or neutral) under $\mathfrak{t}$ if
$r(\mathfrak{t})<0$ (corr. $r(\mathfrak{t})>0$ or
$r(\mathfrak{t})=0$). If the foliation $F_r$ and $F_\rho$ are
neutral under $\alpha_0(\mathfrak{t})$, it is in fact isometric
under $\alpha_0(\mathfrak{t})$. The leaves of the orbit foliation is
$\mathcal{O}(x) = \{\alpha_0(\mathfrak{t})x : \mathfrak{t}\in
\mathbb{D_+}\}$.

The tangent vectors to the leaves in \eqref{for:2} for various $r$ and
$\rho$ together with their length one Lie brackets form a basis of
the tangent space at every $x\in X$.

 Let $\Phi'$ be the root system of $SU(m,n)$ with respect to
$D_+$. The roots are $\pm L_i \pm L_j(i<j\leq n)$, whose dimensions
are 2 and $\pm 2 L_i(i\leq n)$ whose dimension is 1. Also the $\pm
L_i(i\leq n)$ are roots if $m\neq n$ with dimensions
 $2(m-n)$. The corresponding root spaces are
\begin{align*}
 &\mathfrak{g}_{L_i+L_j}=\RR(e_{i,j+n}-e_{j,i+n})_{i<j}\oplus\RR \textrm{i}(e_{i,j+n}+e_{j,i+n})_{i<j},
 \\
 &\mathfrak{g}_{L_i-L_j}=\RR(e_{i,j}-e_{j+n,i+n})_{i\neq j}\oplus\RR\textrm{i}(e_{i,j}+e_{j+n,i+n})_{i\neq j},\\
 &\mathfrak{g}_{-L_i-L_j}=\RR(e_{j+n,i}-e_{i+n,j})_{i<j}\oplus\RR\textrm{i}(e_{j+n,i}+e_{i+n,j})_{i<j},\\
 & \mathfrak{g}_{L_i}=\bigoplus_{\ell\leq m-n}\bigl(\RR(e_{i,2n+\ell}-e_{2n+\ell,i+n})\oplus\RR \textrm{i}(e_{i,2n+\ell}+e_{2n+\ell,i+n})\bigl),\\
 &\mathfrak{g}_{-L_i}=\bigoplus_{\ell\leq m-n}\bigl(\RR(e_{i+n,2n+\ell}-e_{2n+\ell,i})\oplus\RR
 \textrm{i}(e_{i+n,2n+\ell}+e_{2n+\ell,i})\bigl),\\
 &\mathfrak{g}_{-2L_i}=\RR\textrm{i} e_{i+n,i},\qquad
 \mathfrak{g}_{2L_i}=\RR\textrm{i} e_{i,i+n}.
\end{align*}

 For $z\in \CC$ and $t\in \RR$, let
\begin{alignat*}{3}
 f_{L_i+L_j}(z)&=(ze_{i,j+n}-\overline{z}e_{j,i+n})_{i<j}, \qquad
 &f_{L_i-L_j}(z)&=(ze_{i,j}-\overline{z}e_{j+n,i+n})_{i\neq j},\\
 f_{-L_i-L_j}(z)&=(ze_{j+n,i}-\overline{z}e_{i+n,j})_{i<j},\qquad & f_{L_i}^\ell(z)&=ze_{i,2n+\ell}-\overline{z}e_{2n+\ell,i+n},\\
 f_{-L_i}^\ell(z)&=ze_{i+n,2n+\ell}-\overline{z}e_{2n+\ell,i},\qquad &
 f_{2L_i}(t)&=t \textrm{i} e_{i,i+n},\\
 f_{-2L_i}(t)&=t \textrm{i} e_{i+n,i}.
\end{alignat*}

 With these notations, for $z\in \CC$, $t\in \RR$,
$a=(a_1,\dots,a_{m-n})\in \CC^{m-n}$ individual expanding and
contracting foliations are similarly given by $F_{r}$ for $r=\pm
L_i\pm L_j,i\neq j$ and $F_{\rho}$ for $\rho=\pm L_i$ for which
the leaf through $x$
\begin{align}\label{for:25}
F_{r}(z)=\exp \bigl(f_{r}(z)\bigl)x,\qquad
F_{\rho}(t,a)=\exp\bigl(f_{2\rho}(t)\bigl)\exp \bigl(\sum_j
f^{j}_{\rho}(a_j)\bigl)x
\end{align}
consists of all left multiples of $x$ by matrices of the form
$F_{r}(t)$ or $F_{\rho}(a)$.

The foliations $F_r$ and $F_{\rho}$ are invariant under
$\alpha_0$. In fact, let
$\mathfrak{t}=(t_1,t_2,\dots,t_n)\in\mathbb{D_+},$ for $\forall
z\in\CC$ and $t\in\RR$ we have Lie bracket relations
\begin{align*}
[\mathfrak{t}, f_r(z)]&=r(\mathfrak{t})f_r(z),
\qquad [\mathfrak{t},f_\rho^\ell(z)]=\alpha(\mathfrak{t})f_\rho^\ell(z),\\
[\mathfrak{t},f_{2\rho}(t)]&=2\rho(\mathfrak{t})f_{2\rho}(t)
\end{align*}
where $r(\mathfrak{t})=\pm t_i\pm t_j$ if $\rho=\pm L_i\pm L_j$;
$\rho(\mathfrak{t})=\pm t_i$ if $\rho=\pm L_i$.

Using the basic identity for any square matrices $X,Y$:
$$\exp X\exp
Y=\exp(e^sY)\exp X, \text{ if } [X,Y]=sY,$$ it follows
\begin{align}
&\alpha_0(\mathfrak{t})\exp \bigl(f_{r}(z)\bigl)x=\exp \bigl(f_{r}(e^{r(\mathfrak{t})}z)\bigl)\alpha_0(\mathfrak{t})x,\\
&\alpha_0(\mathfrak{t})\exp\bigl(f_{2\rho}(t)\bigl)\exp
\bigl(\sum_jf^{j}_{\rho}(a_j)\bigl)x\notag\\
&=\exp\bigl(f_{2\rho}(e^{2\rho(\mathfrak{t})}t)\bigl)\exp
\bigl(\sum_j
f^{j}_{\rho}(e^{\rho(\mathfrak{t})}a_j)\bigl)\alpha_0(\mathfrak{t})x
\end{align}
where $r(\mathfrak{t})=\pm t_i\pm t_j$ if $r=\pm L_i\pm L_j$;
$\rho(\mathfrak{t})=\pm t_i$ if $\rho=\pm L_i$. Hence the leaf
$F_r(x)$ is mapped into $F_r(\alpha_0(\mathfrak{t})x)$ and
$F_\rho(x)$ is mapped into $F_\rho(\alpha_0(\mathfrak{t})x)$.
Consequently the foliation $F_r$ and $F_\rho$ are contracted (corr. expanded or
neutral) under $\mathfrak{t}$ if $r(\mathfrak{t})<0$ (corr.
$r(\mathfrak{t})>0$ or $r(\mathfrak{t})=0$). If the foliation $F_r$
and $F_\rho$ are neutral under $\alpha_0(\mathfrak{t})$, they are
in fact isometric under $\alpha_0(\mathfrak{t})$. The leaves of the
orbit foliation are  $\mathcal{O}(x) =
\{\alpha_0(\mathfrak{t})x : \mathfrak{t}\in \mathbb{D_+}\}$.

The tangent vectors to the leaves in \eqref{for:25} for various $r$
and $\rho$ together with their length one Lie   brackets form a
basis of the tangent space at every $x\in X$.

If $\mathbb{P}$ is a $2$-plane in general position then the
foliations $F_r$ and $F_\rho$ are also Lyapunov foliations for
$\alpha_{0,\mathbb{P}}$. The leaves  of  $F_r$ and $F_\rho$ are
intersections of the leaves of stable manifolds of the action by
different elements of $\mathbb{P}$. The same holds for the action by
any regular lattice in $\mathbb{P}$ and thus for any generic
restriction $\alpha_{0,G}$. The neutral foliation for a generic
restriction $\alpha_{0,G}$ will be denoted by $\mathcal{N}_0$.
\begin{remark}
If $m\geq n+1$, neutral foliation of the full split Cartan action,
as well as any generic restriction contains not only the orbit
foliation, but also compact part of the centralizer of the maximal
split Cartan subgroup. In fact, the neutral foliation of the
Cartan action is given by
$$\mathcal{N}_0(x)=\{Y_X\cdot x: x\in X\}.$$
\end{remark}

The above discussion can be summarized as follows.
\begin{proposition}
(1) Non-zero Lyapunov exponents for the full Cartan action on
$SO^+(m,n)/\Gamma$ are $\pm t_i\pm t_j$(each has multiplicity 1)and
$\pm t_i$(each has multiplicity $m-n$) where $i\neq j$ and $1\leq
i,j\leq n$. Zero Lyapunov exponent comes from the  neutral foliation and has
multiplicity $n+\frac{(m-n-1)(m-n)}{2}$. Consequently any matrix
$d\in D_+$ whose first $n$ diagonal entries are pairwise different
acts normally hyperbolically on $SO^+(m,n)/\Gamma$ with respect to
the neutral foliation and hence is only partially hyperbolic.

(2) If  $m\geq n+1$, non-zero Lyapunov exponents for the full Cartan
action on $SU(m,n)/\Gamma$ are $\pm t_i\pm t_j$(each has
multiplicity 2) and $\pm t_i$(each has multiplicity $2m$) where
$i\neq j$ and $1\leq i,j\leq n$; if $m=n$, non-zero Lyapunov
exponents are $\pm t_i\pm t_j$, $i\neq j$(each has multiplicity 2)
and $\pm 2t_i$(each has multiplicity 1) where $i\neq j$ and $1\leq
i,j\leq n$. Zero Lyapunov exponent comes from the neutral foliation
and has multiplicity $2n+(m-n)^2-1$. Any matrix $d\in D_+$ whose
first $n$ diagonal entries are pairwise different acts normally
hyperbolically on $SU(m,n)/\Gamma$ with respect to the neutral
foliation.
\end{proposition}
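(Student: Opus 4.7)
The proposition is essentially a bookkeeping consequence of the fact that the root space decomposition of $\mathfrak{so}(m,n)$ or $\mathfrak{su}(m,n)$ with respect to the split Cartan $\mathbb{D_+}$ is already the Lyapunov decomposition of the tangent bundle to $X$, a fact implicit in the invariance computations done earlier in Section~\ref{sec:7}. The plan has three steps: read off the non-zero exponents from the commutation relations, identify the zero-exponent part with the Lie algebra of $Y_X$ by a dimension count, and verify normal hyperbolicity by excluding zeros of the exponents.

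First I would observe that the relations $[\mathfrak{t},f_r]=r(\mathfrak{t})f_r$, $[\mathfrak{t},f_\rho^\ell]=\rho(\mathfrak{t})f_\rho^\ell$, and (in the unitary case) $[\mathfrak{t},f_{2\rho}]=2\rho(\mathfrak{t})f_{2\rho}$, combined with $\mathrm{Ad}(\exp\mathfrak{t})=\exp\mathrm{ad}(\mathfrak{t})$, imply that the left-invariant vector field on $X$ generated by any element of a root space is scaled by $e^{r(\mathfrak{t})}$ under the differential of $\alpha_0(\exp\mathfrak{t})$. Hence each non-zero root contributes a Lyapunov exponent whose multiplicity equals the root space dimension recorded in Section~\ref{sec:7}. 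For part~(1) this gives $\pm t_i\pm t_j$ with multiplicity $1$ and, if $m>n$, $\pm t_i$ with multiplicity $m-n$; for part~(2) the complex structure doubles the count for $\pm L_i\pm L_j$, and when $m>n$ one obtains $\pm t_i$ with multiplicity $2(m-n)$ and $\pm 2t_i$ with multiplicity $1$, whereas if $m=n$ the $\pm L_i$ contributions disappear but the $\pm 2L_i$ roots remain.

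The zero-exponent subspace is then obtained by subtraction. For $\mathfrak{so}(m,n)$ a direct computation gives $\frac{(m+n)(m+n-1)}{2}-2n(m-1)=n+\frac{(m-n)(m-n-1)}{2}$; for $\mathfrak{su}(m,n)$ one gets $(m+n)^{2}-1-2n(2m-1)=2n-1+(m-n)^{2}$. Both match the dimensions of $Y_X$, and since $\mathrm{Lie}(Y_X)$ centralizes $\mathbb{D_+}$ the corresponding left-invariant fields automatically carry zero Lyapunov exponent; by the dimension match they must exhaust the neutral subspace, so the neutral foliation $\mathcal{N}_0$ is exactly the orbit foliation of $Y_X$ acting by left translation, as noted in the Remark.

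Finally, normal hyperbolicity at $d=\exp\mathfrak{t}$ amounts to checking $r(\mathfrak{t})\neq 0$ for every non-zero Lyapunov exponent, namely $t_i\neq\pm t_j$ for $i\neq j$ together with (in the $m>n$ case or throughout the unitary case) $t_i\neq 0$. Interpreting the hypothesis on pairwise distinctness of the first $n$ diagonal entries so as to also exclude $t_i=-t_j$ and $t_i=0$, these inequalities all hold; the Hirsch--Pugh--Shub criterion then applies with respect to $\mathcal{N}_0$, and since $T\mathcal{N}_0$ is a non-trivial invariant subbundle (it contains at least the split Cartan direction) the action is only partially, not fully, hyperbolic. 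The proof has no substantive obstacle; the point needing the most care is simply checking the dimension arithmetic, and confirming that no root spaces are missed from the lists in Section~\ref{sec:7} so that the listed root spaces together with $\mathrm{Lie}(Y_X)$ really do exhaust the tangent space.
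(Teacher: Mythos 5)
Your argument is essentially the paper's own: the Proposition is stated there as a summary of the Section~\ref{sec:7} discussion, i.e.\ reading the exponents $\pm t_i\pm t_j$, $\pm t_i$ (and $\pm 2t_i$) off the root space decomposition via the bracket relations, identifying the zero-exponent part with $\mathrm{Lie}(Y_X)$ by the same dimension count, and checking nonvanishing of the exponents for regularity, so your proposal matches in both substance and route. The only points of divergence are in your favor: your multiplicities $2(m-n)$ for $\pm t_i$ together with $1$ for $\pm 2t_i$ in the $SU(m,n)$, $m\ge n+1$ case are the ones consistent with $\dim su(m,n)$ (the statement's ``$2m$'' is evidently a misprint), and your observation that ``pairwise different first $n$ entries'' must be read so as to also exclude $t_i=-t_j$ and $t_i=0$ is a legitimate, correctly handled sharpening of the hypothesis.
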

\subsection{ Generating relations and Steinberg symbols}\label{relations-Steinberg}  In this section
we state two theorems which play a crucial role in proofs of Theorem
\ref{th:5}. The proof of those theorems are given in Section
\ref{sec:12}--7 which comprise the algebraic part of the paper.

We use  notation set in Section
\ref{sec:7}. Since $\RR$ is embedded in $\RR^{m-n}$ in a obvious
way, there is no confusion if we write $F_{r}(t,0,\dots,0)=F_{r}(t)$
for $r=\pm L_i\pm L_j$. On the other hand, if we write $F_{r}(a)$
where $a\in\RR^{m-n}$, then $a=(a_1,0,\dots,0)$ for some
$a_1\in\RR$.
\begin{theorem}\label{th:3}
$SO^+(m,n)$, $3\leq n\leq m$ is generated by  $F_{r}(a)$, where
$r=\pm L_i\pm L_j, \pm L_i$, $0\leq i\neq j\leq n$ and $a\in
\RR^{m-n}$ subject to the relations:
\begin{align}
&F_r(a)F_r(b)=F_r (a+b),\label{for:26}\\
&[F_{r} (a), F_p (b)]=\prod_{ir+jp\in \Phi, i,j>0}
F_{ir+jp}(g_{ijpr}(a,b)), r+p\neq 0\label{for:27}\\
&[F_{r} (a), F_p (b)]=\emph{id}, \qquad 0\neq r+p\notin
\Phi,\label{for:5}
\end{align}
here $a,b\in\RR^{m-n}$ and $g_{ijpr}$ are functions of $a,b$
depending only on the structure of $SO^+(m,n)$;

\begin{align}
&h_{L_1-L_2}(t)h_{L_1-L_2}(s)=h_{L_1-L_2}(ts),\label{for:6}
\end{align}
where
$h_{L_1-L_2}(t)=F_{L_1-L_2}(t)F_{L_2-L_1}(-t^{-1})F_{L_1-L_2}(t)F_{L_1-L_2}(-1)F_{L_2-L_1}(1)F_{L_1-L_2}(-1)$
for each $t\in \RR^{*}$;
\begin{align}
&h_{L_1-L_2}(-1)h_{L_1+L_2}(-1)=\emph{id},\label{for:24}
\end{align}
where
$h_{L_1+L_2}(t)=F_{L_1+L_2}(t)F_{-L_1-L_2}(-t^{-1})F_{L_1+L_2}(t)F_{L_1+L_2}(-1)F_{-L_1-L_2}(1)F_{L_1+L_2}(-1)$
for each $t\in \RR^{*}$;
\begin{align}\label{for:7}
&h^1_{L_n}(\sqrt{2}a,\sqrt{2}b)h^{1}_{L_n}(\sqrt{2}c,\sqrt{2}d)=h^{1}_{L_n}\bigl(\sqrt{2}(ac-bd),\sqrt{2}(ad+bc)\bigl),
\end{align}
where
\begin{align*}
&h^1_{L_n}(\sqrt{2}a,\sqrt{2}b)=F^{1}_{L_n}(\sqrt{2}a)F^{2}_{L_n}(\sqrt{2}b)F^{1}_{
-L_n}(\sqrt{2}a)F^{2}_{
-L_n}(\sqrt{2}b)F^{1}_{L_n}(\sqrt{2}a)F^{2}_{L_n}(\sqrt{2}b)\notag\\
&\cdot
F^{1}_{L_n}(-\sqrt{2})F^{1}_{-L_n}(-\sqrt{2})F^{1}_{L_n}(-\sqrt{2})
\end{align*}
for each $(a,b)\in S^1$.

If $n\leq m\leq n+1$, there is no relation \ref{for:7}.
\end{theorem}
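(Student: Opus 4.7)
The plan is to show that the abstract group $\tilde{G}$ defined by these generators and relations is isomorphic to $SO^+(m,n)$ via the natural homomorphism $\pi:\tilde{G}\to SO^+(m,n)$ sending each symbol $F_r(a)$ to the matrix with the same name. The argument splits into two parts: verifying that $\pi$ is well-defined (i.e.\ that the listed relations hold in the matrix group), and verifying that $\pi$ is injective.

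First I would check the relations directly. Additivity \eqref{for:26} is immediate since each $f_r(a)$ is nilpotent of order two for $r=\pm L_i\pm L_j$ and commutes with $f_r(b)$ in general. The Chevalley commutator relations \eqref{for:27}, \eqref{for:5} are computed from the explicit root vectors $f_r$ in Section \ref{sec:7}, with the functions $g_{ijpr}$ read off the Lie bracket structure constants. Relations \eqref{for:6} and \eqref{for:24} are verified inside the $SL(2,\RR)$-subgroups generated by $F_{\pm(L_1-L_2)}$ and $F_{\pm(L_1+L_2)}$; the identity $h_{L_1-L_2}(-1)h_{L_1+L_2}(-1)=\mathrm{id}$ reflects the coincidence of these two coroots modulo the center in the rank-two subsystem $SO(2,2)$. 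Relation \eqref{for:7} is a direct computation inside the $SO(2,m-n)$ subgroup associated to the long root $L_n$, where the ``$(a,b)\in S^1$'' parameterization encodes the composition law of unit complex numbers $(a+\textrm{i}b)(c+\textrm{i}d)=(ac-bd)+\textrm{i}(ad+bc)$.

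For injectivity I would follow the Steinberg--Deodhar scheme, extended to the non-split setting. Let $\tilde{U}^\pm\subset\tilde{G}$ be the subgroups generated by positive (resp.\ negative) root elements, and let $\tilde{H}$ be generated by the elements $h_{L_1\pm L_2}(t)$ and $h^1_{L_n}(\sqrt{2}a,\sqrt{2}b)$. Using \eqref{for:26}--\eqref{for:5}, a standard inductive argument on root height shows that the maps $\tilde{U}^\pm\to U^\pm$ are isomorphisms. Weyl-group lifts $w_r\in\tilde{G}$ are constructed from $F_r$ and $F_{-r}$, and one proves a Bruhat-style decomposition $\tilde{G}=\tilde{U}^-\tilde{H}\tilde{W}\tilde{U}^+$ with the expected uniqueness up to the torus relations. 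Matching this against the genuine Bruhat decomposition of $SO^+(m,n)$ reduces injectivity of $\pi$ to injectivity of $\tilde{H}\to H$, for which \eqref{for:6}, \eqref{for:24}, \eqref{for:7} are precisely the relations one needs.

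The main obstacle is the multi-dimensional root spaces $\mathfrak{g}_{\pm L_i}$ of dimension $m-n$, which make $SO^+(m,n)$ non-quasi-split when $m>n+1$. Deodhar's framework handles roots of dimension at most two, but here the Weyl lift attached to $\pm L_i$ also involves a ``rotation'' inside an $(m-n)$-dimensional anisotropic kernel. Relation \eqref{for:7} supplies the missing information: it says that the $h^1_{L_n}(\sqrt{2}a,\sqrt{2}b)$, as $(a,b)$ ranges over $S^1$, form a one-parameter subgroup isomorphic to $SO(2)\subset SO(m-n+1)$, with multiplication law given by complex multiplication. The delicate point is to prove that no further independent loops in $\tilde{H}$ survive; I would handle this by a rank-two analysis inside the $SO(2,m-n)$ subsystem, reducing every potential loop to \eqref{for:7} by means of \eqref{for:27}. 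The standing hypothesis $n\ge 3$ is used critically here to provide enough allowable substitutions for the reduction, paralleling the failure of such arguments when $n=2$ (cf.\ \cite{Meara}, Section~1.4E).
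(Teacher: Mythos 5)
Your overall frame is the same as the paper's: relations \eqref{for:26}--\eqref{for:5} are precisely the defining relations of the Deodhar-style universal central extension $\widetilde{G}$ of $SO^+(m,n)$, injectivity reduces (via $\ker\pi_1\subset\tilde H$ and $\tilde H=\prod_{\alpha\in\Delta}\tilde H_\alpha$, i.e.\ Lemma \ref{le:1} and Remark \ref{re:4}, rather than a Bruhat decomposition proved from scratch) to identifying the central kernel, and the $\RR$-Steinberg-symbol part together with the role of \eqref{for:6} and \eqref{for:24} is as you describe (Lemmas \ref{le:23}, \ref{le:24}, and the identity $\tilde h^1_{L_n}(-1,0)=\tilde h_{L_{n-1}-L_n}(-1)\tilde h_{L_{n-1}+L_n}(-1)$ used in the final step).

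The gap is in your treatment of the $(m-n)$-dimensional root spaces, which is exactly where the paper has to do new work. The central loops that must be controlled do not sit in a rank-two subsystem spanned by split roots: they arise from products of the Weyl elements $\tilde w_{L_n}(\sqrt2 a)$, $a\in S^{m-n-1}$, whose images are reflections of $\RR^{m-n}$, so the group $\tilde H_{s_0}$ they generate is a central extension of a group containing all of $SO(m-n)$, with generators $\tilde h^j_{L_n}$ in every coordinate plane $j\le m-n-1$. Relation \eqref{for:7} only kills symbols in the plane $j=1$, and the commutator relations \eqref{for:27} give essentially no leverage here, since all of these elements are built from the single root pair $\pm L_n$; what relates different planes is the conjugation action inside $\tilde W_s$ (Lemma \ref{le:14}), not Chevalley commutators. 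The missing step is precisely Theorem \ref{th:8}, $\ker(\pi_1)\cap\tilde H_{s_0}=\ker(\pi_1)\cap H^1$, which the paper proves via the exchange relation $H^iH^{i+1}H^i=H^{i+1}H^iH^{i+1}$ (Proposition \ref{po:1}, resting on the spherical-geometry Lemma \ref{le:4}), the normal form for $\tilde H_{s_0}$ obtained by induction on $m-n$ (Proposition \ref{le:15}), and Lemma \ref{le:35} to dispose of $\tilde h^1_{L_n}(-1,0)$, together with the reductions of Lemma \ref{le:6} and Corollary \ref{le:9}. Your proposed ``rank-two analysis inside the $SO(2,m-n)$ subsystem, reducing every potential loop to \eqref{for:7} by means of \eqref{for:27}'' skips this entirely, so the key claim that no further independent central elements survive is left unproved. (A minor point: the hypothesis $n\ge 3$ enters this algebraic theorem through the rank hypotheses of the Deodhar machinery; the ``allowable substitution'' issue you invoke concerns the dynamical application in Section \ref{sec:10}, not the presentation itself.)
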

Now we consider the  group $SU(m,n)$.  We write
$F_{r}(0,z,0,\dots,0)=F_{r}(z)$ for $r=\pm L_i\pm L_j$ where
$z\in\CC$. On the other hand, if we write $F_{r}(a)$ where
$a\in\RR\times\CC^{m-n}$, then $a=(0,a_1,0,\dots,0)$ for some
$a_1\in\CC$.
 \begin{theorem}\label{th:4}
Let $\Phi$ be the root system of $SU(m,n)$ $3\leq n\leq m$. Then
$SU(m,n)$ is generated by $F_{r}(a)$ where $r=\pm L_i\pm L_j,\pm
L_i$, $i\neq j$ and $a\in\RR\times\CC^{m-n}$ subject to the
relations:
\begin{align}
&F_r (a)F_r(b)=F_r (a+b)\label{for:3}, \\
&[F_{r} (a), F_p (b)]=\prod_{\substack{ i,j>0\\ir+jp\in
\Phi}}F_{ir+jp}(N_{r,p,i,j}(a,b)),r+p\neq 0\label{for:4}\\
&[F_{r} (a), F_p (b)]=e, \qquad 0\neq r+p\notin \Phi,\label{for:21}
\end{align}
here $a,b\in\CC^{m-n}$, $N_{r,p,i,j}$ are funtions depend only on
the structure of $SU(m,n)$;

 and the following relations:
\begin{align}\label{for:10}
h_{L_1-L_2}(z_1)h_{L_1-L_2}(z_2)=h_{L_1-L_2}(z_1z_2)
\end{align}
where
\begin{align*}
h_{L_1-L_2}(z)&=F_{L_1-L_2}(z)F_{L_2-L_1}(-z^{-1})F_{L_1-L_2}(z)\\
&\cdot F_{L_1-L_2}(-1)F_{L_2-L_1}(1)F_{L_1-L_2}(-1)
\end{align*}
for each $z\in \CC^*$;
\begin{align}\label{for:22}
h_{2L_{n}}(-1)h_{2L_{n}}(-1)=\emph{id},
\end{align}
where
\begin{align*}
h_{2L_{n}}(-1)&=\bigl(F_{L_{n}}(-1,0,\dots,0)F_{-L_{n}}(-1,0,\dots,0)F_{L_{n}}(-1,0,\dots,0)\bigl)^2;
\end{align*}
and
\begin{align}
h^1_{L_n}(\sqrt{2}a,\sqrt{2}b)h^1_{L_n}(\sqrt{2}c,\sqrt{2}d)&=h^1_{L_n}\bigl(\sqrt{2}(ac-bd),\sqrt{2}(ad+bc)\bigl)\label{for:17}\\
h^1_{L_n}(\sqrt{2}a,\sqrt{2}b
\emph{i})h^1_{L_n}\bigl(\sqrt{2}c,\sqrt{2}d
\emph{i})&=h^1_{L_n}(\sqrt{2}(ac-bd),\sqrt{2}(ad+bc)
\emph{i}\bigl)\label{for:18}
\end{align}
for each $(a,b),(c,d)\in S^1$ where
\begin{align*}
h^1_{L_n}(\sqrt{2}z,\sqrt{2}w)&=F_{L_{n}}(0,\sqrt{2}z,\sqrt{2}w,0,\dots,0)F_{-L_{n}}(0,\sqrt{2}z,\sqrt{2}w,0,\dots,0)\\
&\cdot F_{L_{n}}(0,\sqrt{2}z,\sqrt{2}w,0,\dots,0)F_{L_{n}}(0,-\sqrt{2},0,\dots,0)\\
&\cdot F_{-L_{n}}(0,-\sqrt{2},0,\dots,0)\cdot
F_{L_{n}}(0,-\sqrt{2},0,\dots,0)
\end{align*}
for $(z,w)\in\CC^2$ with $\abs{z}^2+\abs{w}^2=1$.

If $n\leq m\leq n+1$ there are no relations \ref{for:17}  and
\ref{for:18}.
\end{theorem}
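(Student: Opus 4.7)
\textbf{Proof plan for Theorem \ref{th:4}.} The plan is to follow the classical Steinberg--Matsumoto strategy, adapted via Deodhar's extension to the non-split setting and then supplemented by direct computations in the multi-dimensional root subgroups, exactly along the lines of the proof of Theorem \ref{th:3}.

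First I would verify that every stated relation actually holds in $SU(m,n)$. The additive relation (\ref{for:3}) is immediate from the fact that each root space of a fixed $r$ is abelian. The Chevalley commutator formulas (\ref{for:4}) and the orthogonality commutator (\ref{for:21}) come from the Baker--Campbell--Hausdorff expansion together with the explicit description of the root system given in Section~\ref{sec:7}; the coefficients $N_{r,p,i,j}$ are read off from the structure constants of $su(m,n)$. Each $h$-element in (\ref{for:10}), (\ref{for:22}), (\ref{for:17}), (\ref{for:18}) is the Weyl-type element attached to a rank-one $\SL_2$- or $\SU_2$-subgroup of $SU(m,n)$, so the stated identities reduce, after matrix multiplication, to classical identities in $\SL_2(\CC)$ and $\SU_2$.

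Next I would invoke the universal central extension construction from Section~\ref{sec:12}. Let $\widetilde{G}$ be the abstract group on generators $F_r(a)$ subject only to (\ref{for:3})--(\ref{for:21}). By Deodhar's theory together with the modifications for non-split groups developed in Section~\ref{sec:12}, the natural map $\pi\colon \widetilde{G}\to SU(m,n)$ is surjective (since the root subgroups generate $SU(m,n)$, a standard consequence of the fact that their tangent vectors together with length-one Lie brackets span the Lie algebra at the identity, as recorded at the end of Section~\ref{sec:7}), and its kernel is central and surjects onto $\pi_1(SU(m,n))$. The remaining task is therefore to show that imposing the $h$-relations kills this kernel entirely.

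Third, and this is the heart of the argument, I would analyze the $h$-relations rank-by-rank. The $\SL_2$-type relation (\ref{for:10}) together with (\ref{for:22}) handles the contribution from the roots $\pm L_i\pm L_j$ and $\pm 2L_i$, via Steinberg's Hilbert symbol argument in $\SL_2(\CC)$: the $h_{L_1-L_2}(z)$ are a full set of representatives of the torus modulo the image of the unipotent generators, and bimultiplicativity combined with (\ref{for:22}) forces the corresponding Steinberg symbol to be trivial. The main obstacle, and the reason Sections~\ref{sec:16}--\ref{sec:15} are needed, is the root $L_n$ whose root space has real dimension $2(m-n)$: the elements $h^1_{L_n}(\sqrt{2}z,\sqrt{2}w)$ parametrize a two-dimensional $\SU_2$-orbit inside this root group, and (\ref{for:17})--(\ref{for:18}) record that these elements compose as rotations of $S^1$. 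The delicate step is to show that any element of $\ker\pi$ which arises from the multi-dimensional root groups $F_{\pm L_i}$ can be rewritten, using the Chevalley relations (\ref{for:4}) together with the $h$-relations on proper sub-$\SU_2$'s, as a product of elements appearing on the left-hand sides of (\ref{for:17})--(\ref{for:18}). This reduction is precisely the "rotation and reflection" analysis announced in the introduction, and it is the place where the hypothesis $n\geq 3$ is essential, since the reduction to proper sub-$\SU_2$'s needs a third index to conjugate by an element of the Weyl group, exactly as in the linear Steinberg group obstruction for $n=2$ mentioned in the Comments subsection. Once this reduction is accomplished, the two scalar identities (\ref{for:17})--(\ref{for:18}) suffice to trivialize the entire contribution of the short roots to $\ker\pi$, and combining with the previous step completes the proof.
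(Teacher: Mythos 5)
Your plan follows essentially the same route as the paper: verify the relations in $SU(m,n)$, realize the group on relations \ref{for:3}--\ref{for:21} as Deodhar's universal central extension, and then show the central kernel is exhausted by the Steinberg $\CC$-symbols attached to $h_{L_1-L_2}$, the central element $h_{2L_n}(-1)^2$, and the $S^1$-symbols coming from the $2(m-n)$-dimensional root space of $L_n$, which is exactly what imposing \ref{for:10}, \ref{for:22}, and \ref{for:17}--\ref{for:18} kills. The only difference is one of completeness rather than of approach: the ``rotation and reflection'' reduction you leave as a black box is precisely where the paper works hardest, namely the decomposition $\ker\pi_1=\bigl(\ker\pi_1\cap \tilde{H}_{L_1-L_2}\bigr)\cdot \tilde{H}_0\cdot\bigl(\ker\pi_1\cap \tilde{H}_{s_0}\bigr)$ (which for the Heisenberg-type root group also invokes Deodhar's computation in the quasi-split case $m=n+1$), the construction of two distinct $S^1_{\RR}$-symbols (explaining why both \ref{for:17} and \ref{for:18} are needed), the braid-type interchange $H^{j}H^{j+1}H^{j}=H^{j+1}H^{j}H^{j+1}$ with the resulting ordered-product decomposition of $\tilde{H}_{s_0}$, and the identification of $\ker\pi_1\cap H^{j}$ with the symbol subgroup.
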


Relations \ref{for:26}--\ref{for:24} in Theorem \ref{th:3} and
\ref{for:3}--\ref{for:22} in Theorem \ref{th:4} are similar to those
in split groups \cite{Steinberg}; while relation \ref{for:17} and
\ref{for:18} are dealing with ``rotations'' inside the compact part
of the centralizer of the maximal split Cartan subgroups.

Any bi-multiplicative map $c:\mathbb{K}^*\times
\mathbb{K}^*\rightarrow B$ into an abelian group $B$ satisfying
$c(t,1-t) = 1_B$ is called a {\em Steinberg symbol} on the field
$\mathbb{K}$. We will use the following result about continuous
Steinberg symbols for the field $\RR$ and $\CC$ \cite{Milnor}:
\begin{theorem}[Milnor]\label{th:6}
a)Every continuous Steinberg symbol on the field $\CC$ of complex
numbers is trivial.\\
b) If $c(t,s)$ is a continuous Steinberg symbol on the field $\RR$,
then $c(t,s) = 1$ if $s$ or $t$ are positive, and $c(t, s) =
c(-1,-1)$ has order at most 2 if $s$ and $t$ are both negative.
\end{theorem}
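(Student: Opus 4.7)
The plan is to follow Milnor's classical argument, whose heart is algebraic and whose topological input enters only at the final step. First I would establish the standard identities satisfied by every Steinberg symbol $c$. From bi-multiplicativity and $c(t, 1-t) = 1_B$ one deduces $c(u, -u) = 1_B$ for every $u \in \mathbb{K}^*$: write $-u = (1-u)\cdot(1-u^{-1})^{-1}$ and apply the Steinberg relation to both $u$ and $u^{-1}$. Combined with bi-multiplicativity this gives skew-symmetry $c(u,v) = c(v,u)^{-1}$, the identity $c(u,u) = c(u,-1)$, and the 2-torsion fact $c(u,-1)^2 = c(u,1) = 1_B$. These four identities are all I would need in what follows.

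For part (a), I exploit that $\CC^*$ is divisible (in fact every element is an $n$-th power for every $n \geq 1$). Concretely $-1 = \textrm{i}^2$, so that $c(-1,-1) = c(\textrm{i}^2,-1) = c(\textrm{i},-1)^2 = c(\textrm{i},(-1)^2) = c(\textrm{i},1) = 1_B$. For a general pair $(t,s) \in \CC^* \times \CC^*$, I would fix $s$ and study the continuous homomorphism $\phi_s \colon \CC^* \rightarrow B$ given by $\phi_s(t) = c(t,s)$. Divisibility lets me write $\phi_s(t) = \phi_s(t^{1/2^n})^{2^n}$ for arbitrary $n$; combined with continuity of $\phi_s$, the Steinberg relation forcing $\phi_s(t) = 1_B$ along the real-analytic curve $\{t : 1 - t = s\}$, and the Hausdorff assumption on the receiving topological group, I would conclude $\phi_s \equiv 1_B$.

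For part (b), I decompose $\RR^* = \{\pm 1\} \cdot \RR_{>0}$. Since every positive real is a square, the same divisibility and continuity argument as over $\CC$ shows $c(r,s) = 1_B$ whenever $r, s > 0$. To handle a mixed sign, bi-multiplicativity reduces the question to computing $c(-1, r)$ with $r > 0$; writing $r = (\sqrt{r})^2$ and iterating, $c(-1, r) = c(-1, r^{1/2^n})^{2^n}$, and continuity annihilates this limit. The only value left unconstrained is $c(-1,-1)$, and bi-multiplicativity gives $c(-1,-1)^2 = c(-1,(-1)^2) = c(-1,1) = 1_B$, so $c(-1,-1)$ has order at most two.

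The main obstacle is converting the purely algebraic statement that the image $c(\mathbb{K}^* \times \mathbb{K}^*) \subset B$ is a divisible subgroup into the conclusion that it is trivial, since divisible abelian groups need not vanish. The topology of $B$ together with continuity of $c$ must be used non-trivially. The cleanest way to package this is to view continuous Steinberg symbols as classifying continuous central extensions of $\mathbb{K}^*$, i.e.\ as elements of a topological Milnor $K$-group $K_2^{\mathrm{top}}(\mathbb{K})$, for which $K_2^{\mathrm{top}}(\CC) = 0$ and $K_2^{\mathrm{top}}(\RR) = \ZZ/2\ZZ$; in concrete terms one proves that a continuous homomorphism from a connected, divisible topological group into a Hausdorff topological group whose kernel contains a neighborhood of the identity must be trivial, and then patches this together with the four algebraic identities above to obtain the two assertions of Theorem~\ref{th:6}.
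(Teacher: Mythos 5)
The paper does not actually prove Theorem \ref{th:6}; it is quoted from Milnor's book and used as a black box, so the only question is whether your sketch would stand on its own, and it would not. Your algebraic preliminaries are correct, and the genuinely algebraic parts of (b) are fine (indeed for $r>0$ one has $c(-1,r)=c(-1,\sqrt{r})^{2}=c((-1)^{2},\sqrt{r})=1$ and $c(-1,-1)^{2}=c(1,-1)=1$ with no continuity at all). The gap is the analytic core: triviality of $c$ on $\CC^{*}\times\CC^{*}$ and on pairs of positive reals. The step ``$\phi_s(t)=\phi_s(t^{1/2^{n}})^{2^{n}}$, $t^{1/2^{n}}\to 1$, so continuity gives $\phi_s(t)=1$'' is false: a sequence tending to the identity, raised to the power $2^{n}$, need not tend to the identity (in the circle group take $a_n=\exp(2\pi\textrm{i}\,3^{-1}2^{-n})$, so $a_n\to 1$ while $a_n^{2^{n}}=\exp(2\pi\textrm{i}/3)$ for every $n$). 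Moreover, for fixed $s$ the set $\{t:1-t=s\}$ is a single point, so the only way the Steinberg relation enters your argument is through the one value $\phi_s(1-s)=1$; everything else you use (bimultiplicativity, divisibility, continuity, Hausdorff target) is also satisfied by nontrivial continuous bicharacters such as $(x,y)\mapsto\exp(\textrm{i}\lambda\log x\log y)$ on $\RR_{>0}\times\RR_{>0}$, which shows that an argument of this shape cannot succeed: the relation $c(t,1-t)=1$ must be exploited for a whole interval of $t$, which is exactly what Milnor's appendix argument does and what your sketch never does.

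Your proposed repair does not close this gap either. Citing $K_2^{\mathrm{top}}(\CC)=0$ and $K_2^{\mathrm{top}}(\RR)=\ZZ/2\ZZ$ is circular, since those computations are precisely the content of the theorem; and the ``concrete'' lemma you fall back on (a continuous homomorphism of a connected divisible group is trivial once its kernel contains a neighborhood of the identity) presupposes that $\ker\phi_s$ contains a neighborhood of the identity, which is the hard statement and is nowhere established. What is missing, and what you would have to supply to make the proof complete, is an argument using $c(t,1-t)=1$ for all $t\in(0,1)$ together with bimultiplicativity and continuity to show that $c$ vanishes identically on positive pairs (for instance, that it vanishes on $U\times U$ for some neighborhood $U$ of $1$ in $\RR_{>0}$, after which the fact that $U$ generates $\RR_{>0}$, and the analogous statement over $\CC^{*}$, finishes both parts); as written, your proposal proves the easy algebraic assertions but not the theorem.
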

The following Lemma treats a case which occurs in the proof of
Theorem \ref{sec:10} when one can study reducible classes within
homotopy classes of Lyapunov-cycles.
\begin{lemma}\label{le:38}
Let $L$ be an irreducible lattice in $\widetilde{SO}^+(m,n)$(corr.
$\widetilde{SU}(m,n)$) where $\widetilde{SO}^+(m,n)$ is the
universal cover of $SO^+(m,n)$(corr. $SU(m,n)$. Then for any
homomorphism from $L$ to $Y_X$, the order of image $h(L)$ is bounded
by a number only dependent on $m-n$.
\end{lemma}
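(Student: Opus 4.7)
\emph{Proof proposal.} The plan is to combine Margulis' super-rigidity theorem with Kazhdan's property $(T)$ to show that $h(L)$ is finite, and then to bound its order in terms of $m-n$ using Jordan's theorem on finite linear groups together with the explicit algebraic structure of the simply connected Lie group $\widetilde{G}$ (which denotes $\widetilde{SO}^+(m,n)$ or $\widetilde{SU}(m,n)$ throughout).

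First I would write $Y_X=\RR^n\times K$, where $K$ is the compact factor: $K=SO(m-n)$ in the $SO$ case, and $K=\mathbb{T}^{n-1}\times U(m-n)$ in the $SU$ case. Since $L$ is an irreducible lattice in a higher-rank simple Lie group, it has Kazhdan's property $(T)$, so $L^{\mathrm{ab}}$ is finite; composing $h$ with the projection to the torsion-free group $\RR^n$ then factors through $L^{\mathrm{ab}}$ and must vanish. Hence $h(L)\subseteq K$.

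To see that $h(L)$ is finite, let $H$ be the Zariski closure of $h(L)$ in $K$ (viewed as a real algebraic group), and let $H^0$ be its identity component. On the finite-index subgroup $L_0:=h^{-1}(H^0)$, the restricted map $h|_{L_0}$ has Zariski-dense image in the connected compact real algebraic group $H^0$. If $\dim H^0>0$, Margulis' super-rigidity would extend $h|_{L_0}$ to a continuous Lie-group homomorphism $\widetilde G\to H^0$. But a simply connected, simple, non-compact Lie group is perfect and admits no non-trivial continuous homomorphism into any compact Lie group: its kernel, being closed normal, is either all of $\widetilde G$ (giving a trivial map) or discrete central, which would force a non-compact simple quotient of $\widetilde G$ to embed continuously into a compact group---impossible. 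This contradicts the assumed Zariski-density, so $\dim H^0=0$ and $h(L)=H$ is finite.

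Finally, to bound $|h(L)|$ only in terms of $m-n$, I would apply Jordan's theorem to the compact semisimple factor: any finite subgroup of $SO(m-n)$ or of $U(m-n)$ contains an abelian normal subgroup of index at most some Jordan-type constant $J(m-n)$ depending only on $m-n$. The residual abelian finite subgroup lies inside a maximal torus of the semisimple factor (together with the torus factor $\mathbb T^{n-1}$ in the $SU$ case), and its preimage in $L$ is a finite-index subgroup on which $h$ factors through the (finite) abelianization. Using the simply connected structure of $\widetilde G$ together with the explicit presentations and Schur-multiplier computations developed in Sections~\ref{sec:12}--\ref{sec:15}, this abelian residue is bounded by a universal constant depending only on $m-n$, completing the proof.

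The principal obstacle is the last step: bounding the finite abelian part of $h(L)$ independently of the particular $L$ and of $n$. Neither super-rigidity nor property $(T)$ alone achieves this---each yields only finiteness of the image, and the size of $L^{\mathrm{ab}}$ (which controls the abelian image) a priori varies with $L$. One must therefore invoke the explicit algebraic input of the later sections, which effectively controls the possible central homomorphism content of $\widetilde G$ (the Schur multiplier and the center of the universal cover) in terms of $m-n$ alone; this is where the algebraic $K$-theoretic computations of the second half of the paper enter in an essential way.
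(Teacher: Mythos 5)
Your finiteness step misapplies superrigidity. You propose: the Zariski closure $H^{0}$ of $h(L_0)$ is a \emph{compact} connected group, and if $\dim H^{0}>0$ then Margulis superrigidity extends $h|_{L_0}$ to a continuous homomorphism $\widetilde G\to H^{0}$, which is impossible. But superrigidity does not apply to homomorphisms whose image is relatively compact (Zariski density alone is not enough), and the statement you would deduce from it is false in general: an irreducible lattice in a single higher-rank simple group can have infinite, even dense, image in a compact group. For instance, take a quadratic form over a real quadratic field with signature $(m,n)$ at one infinite place and definite at the other; the resulting cocompact lattice in $SO^{+}(m,n)$ maps with dense image into the compact group $SO(m+n)$ via Galois conjugation. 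So ``no nontrivial homomorphism of $\widetilde G$ into a compact group'' cannot by itself rule out infinite $h(L)$. What the paper does instead is precisely designed to get around this: it first notes (by Margulis) that the Zariski closure of $h(L)$ is a semisimple $\QQ$-group, then composes $h$ with a Galois automorphism of $\CC$ over $\QQ$ to replace the precompact image by an \emph{unbounded} one inside the same complex algebraic group, and only then applies superrigidity to extend to a continuous homomorphism of $\widetilde{SO}^{+}(m,n)$ (resp.\ $\widetilde{SU}(m,n)$) into $SO(m-n,\CC)$ (resp.\ $SL(m-n,\CC)$). The contradiction then comes from comparing maximal compact subgroups of source and target --- and this is exactly where the smallness of $m-n$ relative to $m+n$ enters; your argument never uses it, which is a sign it proves too much.

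The uniform bound is also not established. You correctly reach for Jordan's theorem, but then defer the bound on the abelian residue to ``the simply connected structure of $\widetilde G$ together with the Schur-multiplier computations of the later sections''; those computations concern central extensions of the groups $SO^{+}(m,n)$, $SU(m,n)$ themselves and say nothing about abelian quotients of finite-index subgroups of the lattice $L$, so they cannot close this step. The paper's route is elementary: Jordan's theorem gives a normal abelian $A\le h(L)$ with $[h(L):A]\le j(m-n)$, hence $[L:h^{-1}(A)]\le j(m-n)$; since $L$ is finitely generated there are only finitely many subgroups of index below $j(m-n)$ (each contains a normal subgroup of index dividing $j(m-n)!$, and kernels of maps onto groups of bounded order are finite in number); each such subgroup has finite abelianization by the normal subgroup theorem (or property $(T)$, as you invoke), and $A$, being abelian, is a quotient of the abelianization of $h^{-1}(A)$, giving a bound on $|A|$ and hence on $|h(L)|$ independent of the homomorphism. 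In the $SU$ case the $\mathbb{T}^{n-1}$ factor is likewise handled by finiteness of $L/[L,L]$, not by the $K$-theoretic material. As written, your proposal establishes neither the finiteness of $h(L)$ nor the uniform bound.
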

\begin{proof}  We first consider the case of $\widetilde{SO}^+(m,n)$. Let $h$
be a homomorphism from  $L$ to $Y_X$. We decompose $h=(h_1,h_2)$
where $h_1$ maps $L$ to $\RR^{n}$ and $h_2$ maps $L$ to $SO(m-n)$.
By the Margulis Normal Subgroup Theorem \cite[4' Theorem]{Margulis},
$h_1$ is trivial. Thus $h$ can be considered as a homomorphism from
$L$ to $SO(m-n)$. By \cite[(3)Theorem]{Margulis}, Zariski closure of
$\overline{h(L)}$ is a semisimple $\QQ$-algebraic group. We first
show that $h(L)$ is finite. Suppose it is not finite. Since
$\overline{h(L)}/\overline{h(L)}^0$ is finite, we can assume
$\overline{h(L)}$ is connected. Then $\overline{h(L)}$ decomposes
uniquely into (up to permutation of the factors) a direct product of
$\QQ$-almost simple algebraic linear groups. We can assume
$\overline{h(L)}$ is almost simple. Compose $h$ with a Galois
automorphism $\sigma$ of $\CC$ over $\QQ$ to matrix coefficients of
elements from $h(L)$, then $\sigma h(L)$ is a non-compact subgroup
of $\overline{h(L)}=\sigma\overline{h(L)}$. By finiteness of
$Z(\overline{h(L)})$, we can assume $(\sigma h)'$ is from $L$ to
$\overline{h(L)}/Z(\overline{h(L)})$. By Margulis lattice
superrigidity Theorem \cite[2'Theorem]{Margulis}, $(\sigma h)'$ can
be extended to a continuous homomorphism $\widetilde{h}$ from
$\widetilde{SO}^+(m,n)$ to $\overline{h(L)}/Z(\overline{h(L)})$. We
can assume $\widetilde{h}$ is from $\widetilde{SO}^+(m,n)$ to
$\overline{h(L)}$. By simplicity of $\widetilde{SO}^+(m,n)$(in the
meaning of Lie groups), $\ker(\tilde{h})\subseteq
Z\bigl(\widetilde{SO}^+(m,n)\bigl)$ or
$\ker(\tilde{h})=\widetilde{SO}^+(m,n)$. While
$\ker(\tilde{h})=\widetilde{SO}^+(m,n)$ contradicts the infiniteness
of $h(L)$. Notice $\overline{h(L)}\subseteq SO(m-n,\CC)$. Thus we
get a continues isomorphism from
$\widetilde{SO}^+(m,n)/\ker(\tilde{h})$ to its image, which
contradicts the fact the the maximal connected compact subgroup in
$\widetilde{SO}^+(m,n)$ is the universal cover of $SO(n)\times
SO(m)$ while in $SO(m-n,\CC)$ is $SO(m-n)$. Hence we proved that
$h(L)$ is finite.

Next, we show this number is bounded independence of the
homomorphism. By Jordan's theorem which claims that any finite group
$G\subseteq GL(\ell,\CC)$ contains a normal abelian subgroup whose
index is at most $j(\ell)$, we let the biggest normal abelian
subgroup in $h(L)$ be $A$. Consider the restriction of $h$ from
$h^{-1}(A)$ to $A$. The index of $[L:h^{-1}(A)]$ is bounded by
$j(m-n)$. There are only finitely many sublattices $L'$ in $L$ with
the index smaller than $j(m-n)$, the arguments go as follows: first,
every subgroup of finite index in $L$ contains a normal subgroup of
$L$ with index dividing $j(m-n)!$. So, it suffices to check that the
number of normal subgroups of index smaller than $j(m-n)$ is finite.
Such normal subgroups are exactly the kernels of (surjective)
homomorphisms of $L$ into a finite group of order smaller than
$j(m-n)$. Recall that the set of finite groups of order small than
$j(m-n)$ is finite (up to an isomorphism). On the other hand, since
$L$ is finitely generated (as a group)\cite{Margulis}, the set of
homomorphisms of $L$ into a given finite group is finite.

Since $[h^{-1}(A):[h^{-1}(A),h^{-1}(A)]]$ is finite\cite[4'
Theorem]{Margulis}, it is bounded by a number $i(m-n)$ which depends
only on $m-n$ by above analysis. The order of $A$ is bounded by
$i(m-n)$ by its abelian property. Hence the order of $h(L)$ is
bounded by $i(m-n)j(m-n)$.

For $\widetilde{SU}(m,n)$,  Notice here $h=(h_1,h_2,h_3)$ where
$h_1$ maps $L$ to $\RR^{n}$, $h_2$ maps $L$ to $\mathbb{T}^{n-1}$
and $h_3$ maps $L$ to $U(m-n)$. $h_1$ is trivial by Margulis Normal
Subgroup Theorem; order of $h_2(L)$ is bounded by $[L,[L,L]]$. To
prove finiteness of $h_3(L)$, we can assume it is from $L$ to
$SU(m-n)$. Similar arguments hold to get a continuous isomorphism
from $\widetilde{SU}(m-n)/D$ to a subgroup inside $SL(m-n,\CC)$,
since the real locus of $SL(m-n,\CC)$ is $SU(m-n)$, where $D$ is
inside the center of $\widetilde{SU}(m-n)$. Thus
$[Z(\widetilde{SU}(m-n)):D]$ is finite by the fact that every simple
matrix group has finite center, which contradicts the fact that the
maximal connected compact subgroups in $\widetilde{SU}(m,n)/D$ is a
finite lift of $S\bigl(U(m)\times U(n)\bigl)$, while in
$SL(m-n,\CC)$ is $SU(m-n)$. The next step to prove the uniform bound
of order of $h_3(L)$ is exactly the same as previous case.
\end{proof}

\subsection{Proof of Theorem \ref{th:5}} Notice $\alpha_{0,G}$ can be
lifted to a $G$-action on $\widetilde{SO}^+(m,n)$(corr.
$\widetilde{SU}(m,n)$) where $\widetilde{SO}^+(m,n)$ is the
universal cover of $SO^+(m,n)$(corr. $\widetilde{SU}(m,n)$ is the
universal cover of $SU(m,n)$). We denote the new action by
$\widetilde{\alpha}_{0,G}$ and the projection from
$\widetilde{SO}^+(m,n)$(corr. $\widetilde{SU}(m,n)$) to
$SO^+(m,n)$(corr. $SU(m,n)$) by $p$. We proceed in exactly the same
manner as in \cite{Damjanovic1}.

At first we show the cocycle rigidity for H\"older cocycles.   The
invariant foliations that we considered in section \ref{sec:7} are
$F_{r}$ and $F_\rho$ where $r=\pm L_i\pm L_j, i\neq j$ and $\rho=\pm
L_i$. Notice that those foliations are smooth and their Lie brackets
at length one generate the whole tangent space. This implies that
this system of foliations is locally $1/2-H\ddot{o}lder$ transitive
([\cite{Kononenko}, Section 4, Proposition 1]). Hence the lifted
foliations which we still denote by $F_{r}$ and $F_\rho$ without
confusion are locally transitive on the universal covering spaces.
Every such cycle represents a relation in the group. The word
represented by this cycle can be written as a product of conjugates
of basic relations in Theorem \ref{th:3} and \ref{th:4} that can be
lifted to closed cycles in the universal covering spaces.

 At first we consider $\widetilde{SO}^+(m,n)$ which is a 4-fold covering space of
$SO^+(m,n)$ if $m\geq n\geq 3$. Since $F_{r}$ and $F_\rho$ are
Lyapunov foliations for the full Cartan action and therefore for any
generic restriction $\alpha_{0,\mathbb{P}}$(see Propostion 7 in
\cite{Damjanovic1}), which implies these relations of the type
\eqref{for:26}, \eqref{for:27} and \eqref{for:5} are contained in a
leaf of the stable manifold for some element of the action.

For relation \eqref{for:24}, in proof of Theorem \eqref{th:3}, we
showed that  if doubled, it is lifted to a closed cycle in the universal
cover and afteran  allowable substitution, it is  reducible.

For relation \eqref{for:6} follow exactly the same way as in
Milnor¡¯s proof in [\cite{Milnor}, Theorem A1] or in
\cite{Damjanovic2}, we can show that if doubled, they are
contractble and in $\mathcal{A}\mathcal{S}_{F}(\alpha)$(defined in
Definition \ref{def:1}), thus these doubled relation is lifted to
closed cycles in $\widetilde{SO}^+(m,n)$ and after an allowable
substitution, it is reducible.

 For relations \eqref{for:7}, since it is a symbol defined on $S^1$,
follow exactly the same way as in Milnor¡¯s proof in [\cite{Milnor},
Theorem A1], we can show they are contractble and in
$\mathcal{A}\mathcal{S}_{F}(\alpha)$, thus these relations are
lifted to closed cycles in $\widetilde{SO}^+(m,n)$ and after
allowable substitution, they are reducible.

Now we consider $\widetilde{SU}(m,n)$. Follow exactly the same way
as in the proof of the previous case,  we are though all the
relations except \eqref{for:22}. Notice
$h_{2L_{n}}(-1)=$diag($1,\dots,\underset{n}-1,1,\dots,\underset{2n}-1,\dots,1$),
thus homotopy classes of
$\bigl(h_{2L_{n}}(-1)h_{2L_{n}}(-1)\bigl)^{k}(k\in\ZZ)$ generate the
fundermental group of $SU(m,n)$ which is isomorphic to $\ZZ$. Hence
we don't need to consider this relation in $\widetilde{SU}(m,n)$.

Finally, to cancel conjugations one notices that canceling
$F_r(t)F_r(t)^{-1}=\textrm{id}$ or
$F_\rho(a)F_\rho(a)^{-1}=\textrm{id}$ are also an allowed
substitution and each conjugation can be canceled inductively using
that.

Thus, the value of the periodic cycle functional for any H\"older
cocycle $\beta$ depends only on the element of $p^{-1}(\Gamma)$ this
cycle represents.  Notice $p^{-1}(\Gamma)$ is an irreducible lattice
also. Furthermore, these values provide a homomorphism from
$p^{-1}(\Gamma)$ to $Y_X$. By Lemma \ref{le:38}, orders of images of
any homomorphism are bounded by a number depending only on $m-n$, 
which means no non-trivial homomorphism or it contradicts the
smallness of the cocyle.

 Hence all periodic cycle
functionals vanish on $\beta$. Now Proposition \ref{le:8} implies
that $\beta$ is cohomologous to a constant cocycle via a
H$\ddot{o}$lder transfer function.

Now consider the case of $C^{\infty}$ cocycles.
Notice that  the transfer function $H$ constructed using periodic cycle
functionals is $C^\infty$ along the  stable foliations  of  various elements of the action. Now a general result
stating that in case the smooth distributions along with their Lie
brackets generate the tangent space at any point of a manifold a
function smooth along corresponding foliations is necessarily smooth
(see \cite{Spatzier2} for a detailed discussion and references to
proofs), implies that the transfer map $H$ is $C^\infty$.

\section{Proof of Theorems \ref{th:1} and \ref{th:2}}\label{sec:8}
 The neutral foliation for a
generic restriction $\alpha_{0,G}$ is a smooth foliation, we may use
the Hirsch-Pugh-Shub structural stability theorem [\cite{shub},
Chapter 6]. Namely if $\widetilde{\alpha}_G$ is a sufficiently
$C^1$-small perturbation of $\alpha_{0,G}$ then for all elements
$a\in A$ which are regular for $\alpha_{0,G}$ and sufficiently away
from non-regular ones (denote this set by $\overline{A}$) are also
regular for $\widetilde{\alpha}_G$. The central distribution is the
same for any $a\in \overline{A}$ and is uniquely integrable to an
$\widetilde{\alpha}_G(a,\cdot)$-invariant foliation which we denote
by $\mathcal{N}$. Moreover, there is a H\"older homeomorphism
$\widetilde{h}$ of $X$, $C^0$ close to the $id_X$, which maps leaves
of $\mathcal{N}_0$ to leaves of $\mathcal{N}$:
$\widetilde{h}\mathcal{N}_0 = \mathcal{N}$. This homeomorphism is
uniquely defined in the transverse direction, i.e. up to a
homeomorphism preserving $\mathcal{N}$. Furthermore, $\widetilde{h}$
can be chosen smooth and $C^1$ close to the identity along the
leaves of $\mathcal{N}_0$ although we will not use the latter fact.
Clearly the leaves of the foliation $\mathcal{N}_0$ are preserved by
every $a\in \overline{A}$. The action $\alpha_G$ is H\"older
but it is smooth and $C^1$-close to $\alpha_{0,G}$ along the leaves
of the neutral foliation $\mathcal{N}_0$.

 Let us
define an action $\alpha_G$ of $G$ on $X$ as the conjugate of
$\widetilde{\alpha}_G$ by the map $\widetilde{h}$ obtained from the
Hirsch-Pugh- Shub stability theorem: $$\alpha_G :=
\widetilde{h}^{-1}\circ \widetilde{\alpha}_G\circ \widetilde{h}$$
Since the action  $\alpha_G$ is a $C^0$ small perturbation of
$\alpha_{0,G}$ along the leaves of the neutral foliation of
$\alpha_{0,G}$ whose leaves are $\{Y_X\cdot x : x\in X\}$, we have
that $\alpha_G$ is given by a map $\beta: (\ZZ^k\times
\RR^\ell)\times X\rightarrow Y $ by
\begin{align}\label{for:12}
\alpha_G(a, x) = \beta(a, x) \cdot \alpha_{0,G}(a, x)
\end{align}
 for $a\in \ZZ^k\times \RR^\ell$ and $x\in X$.
Notice that since $\alpha_G$ is a small perturbation of the action
by left translations $\alpha_{0,G}$, it can be lifted to a
$G$-action $\overline{\alpha}_G$ on
$\widetilde{X}=\widetilde{SO}^+(m,n)$(corr. $\widetilde{SU}(m,n)$)
commuting with the right $p^{-1}(\Gamma)$ action on
$\widetilde{SO}^+(m,n)$ (corr. $\widetilde{SU}(m,n)$), and $\beta$
is lifted to a cocycle $\overline{\beta}$ over $\overline{\alpha}_G$
(for more details see [\cite{ Margulis2}, example 2.3]). In
particular we have:
$$\overline{\beta}(ab,x)=\overline{\beta}(a,\overline{\alpha}_G(b,x))\overline{\beta}(b,x).$$
Let $U:U_1,\dots,U_r$ denote the invariant unipotent foliations for
the lifted action $\overline{\alpha}_{0,G}$ of $\alpha_{0,G}$ on
$\widetilde{X}$ which projects to invariant Lyapunov foliations
 for $\alpha_{0,G}$;
and let $T:T_1,\dots,T_r$ denote invariant Lyapunov foliations for
lifted  $\overline{\alpha}_G$ which projects to invariant Lyapunov
foliations 
for $\alpha_G$. Notice that the latter foliations  have only H\"older leaves
but we are justified in calling them Lyapunov foliations since they are images of  Lyapunov foliations  for a smooth perturbed action under a H\"older conjugacy.
 Denote  the neutral foliation $\mathcal{N}_0$ on the
covering space by $N_0$. An immediate corollary of the result of
Brin and Pesin \cite{pesin} on persistence of local transitivity of
stable and unstable foliations of a partially hyperbolic
diffeomorphisms and the fact that the collection of homogeneous
Lyapunov foliations $U:U_1,\dots,U_r$ is locally transitive and
$T:T_1,\dots,T_r$ is transitive and they are leafwise $C^0$ close.
Following the proof line closely with only trivial modifications
from those of [Section 6.2, 6.2 and 6.4 \cite{Damjanovic2}], and
[Section 5.3,5.4, \cite{Damjanovic3}], we can show $U$-cycles and
$T$-cycles project to each other along the neutral foliations
(precise definitions are in [Section 6.2,\cite{Damjanovic2}]), which
implies:
\begin{proposition}\label{po:2}
The lifted cocycle for the perturbed action $\overline{\alpha}_G$ is
cohomologous to a constant cocycle.
\end{proposition}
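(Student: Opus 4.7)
The strategy is to verify the hypotheses of Proposition \ref{le:8} for the lifted cocycle $\overline{\beta}$ over the perturbed action $\overline{\alpha}_G$ on the universal cover $\widetilde{X}$, with respect to the collection of Lyapunov foliations $T:T_1,\dots,T_r$. Once local transitivity of $T$ and vanishing of the periodic cycle functional on all $T$-cycles are established, Proposition \ref{le:8} immediately produces a continuous transfer function trivializing $\overline{\beta}$ up to a constant cocycle.

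Local transitivity of $T$ on $\widetilde{X}$ follows from the Brin--Pesin persistence theorem for partially hyperbolic systems, already invoked in the excerpt. The unperturbed foliations $U:U_1,\dots,U_r$ are locally transitive since they come from smooth root-group foliations whose length-one Lie brackets span the tangent space; as each $T_i$ is leafwise $C^0$-close to $U_i$, local transitivity is preserved after perturbation.

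For vanishing of PCF I would follow the scheme of \cite{Damjanovic2, Damjanovic3}. The crucial geometric input, already recorded just before the proposition, is that $U$-cycles and $T$-cycles project to each other along the neutral foliation $N_0$. Combining this with continuity of PCF, its invariance under $\overline{\alpha}_G$, and independence of the auxiliary regular element $a$, the value $\mathrm{PCF}(\mathfrak{c})(\overline{\beta})$ on a $T$-cycle $\mathfrak{c}$ depends only on the element of $p^{-1}(\Gamma)$ it represents. Thus PCF descends to a homomorphism $\phi:p^{-1}(\Gamma)\to Y_X$. Since $p^{-1}(\Gamma)$ is an irreducible lattice in $\widetilde{SO}^+(m,n)$ (resp. $\widetilde{SU}(m,n)$), Lemma \ref{le:38} bounds the order of $\phi(p^{-1}(\Gamma))$ by a constant depending only on $m-n$. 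A $C^2$-small perturbation produces a cocycle that is $C^0$-small, so PCF evaluated on any given compact family of cycles is close to the identity in $Y_X$; combined with the uniform bound on the order of $\phi$, we conclude $\phi\equiv\mathrm{id}$. Hence PCF vanishes on every $T$-cycle and Proposition \ref{le:8} applies.

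The main obstacle is the careful construction and control of the $N_0$-projection between $T$-cycles and $U$-cycles, together with the verification that the corresponding PCF values match. Because the $T$-leaves are only H\"older, one must combine the H\"older regularity of the Hirsch--Pugh--Shub conjugacy $\widetilde{h}$ with the uniform hyperbolicity estimates on the $T$-foliations to guarantee convergence of the non-abelian potentials $P^j_a(y,x)(\overline{\beta})$ and continuity of PCF on $T$-cycles; a subtle point is that the Lyapunov exponents for $\overline{\alpha}_G$ along the $T_i$ and the H\"older exponent of $\overline{\beta}$ must be balanced, exactly as in \cite[Section 6.2]{Damjanovic2} and \cite[Sections 5.3--5.4]{Damjanovic3}. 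Once this matching is secured, the rest of the argument is essentially the homotopy-theoretic scheme imported from those papers.
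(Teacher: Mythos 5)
Your skeleton --- Brin--Pesin persistence for local transitivity of the $T$-foliations, projection of $T$-cycles to $U$-cycles along $N_0$, and then Proposition \ref{le:8} --- is the same as the paper's, but the two ingredients you insert in the middle are respectively misplaced and insufficient, and the second is a genuine gap. On the misplacement: you set the argument on the universal cover $\widetilde{X}$ and then speak of the element of $p^{-1}(\Gamma)$ that a closed $T$-cycle represents; on $\widetilde{X}$ every closed cycle represents the trivial element, so the homomorphism $\phi$, Lemma \ref{le:38}, and the smallness argument carry no content for this proposition. In the paper that machinery appears only afterwards: the statement that PCF depends only on the deck element is deduced \emph{from} Proposition \ref{po:2} and is then used downstairs, together with Lemma \ref{le:38}, to prove Theorem \ref{th:2}. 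Taking it as an input to the proposition, justified only by ``continuity, invariance under the action, and independence of the auxiliary element,'' inverts the paper's logic and is where your sketch begs the question (even if one reads your cycles as living on $X$, the same missing step reappears there).

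The real gap is the vanishing of $\mathrm{PCF}(\overline{\beta})$ on closed $T$-cycles upstairs. This does not follow from the projection statement plus formal properties of PCF: after a $T$-cycle is projected along $N_0$ and closed up by a short $U$-path, one must know that the resulting contractible $U$-cycle is $\alpha$-reducible, i.e.\ lies in $\mathcal{A}\mathcal{S}_{F}(\alpha)$, so that it contributes no drift. For $SO^+(m,n)$ and $SU(m,n)$ this is exactly what the proof of Theorem \ref{th:5} establishes, and it rests on the presentations of Theorems \ref{th:3} and \ref{th:4} --- in particular the $S^1$-symbol relations \ref{for:7}, \ref{for:17}, \ref{for:18} coming from the higher-dimensional root spaces (treated by Milnor's symbol argument), the doubled relation \ref{for:24}, and, for $SU(m,n)$, relation \ref{for:22}, whose powers generate the infinite fundamental group and hence need not be considered on $\widetilde{SU}(m,n)$. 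The scheme of \cite{Damjanovic2,Damjanovic3} that you import covers the split cases and does not supply these relations; without invoking Theorem \ref{th:5} (equivalently Theorems \ref{th:3} and \ref{th:4}) your argument gives no reason why the projected cycles have trivial PCF, which is precisely the content of the proposition. Once that input is cited, the remaining steps you describe (Brin--Pesin transitivity, the cycle projection as in \cite{Damjanovic2} Section 6 and \cite{Damjanovic3} Sections 5.3--5.4, and Proposition \ref{le:8}) do match the paper's proof, and the lattice/Lemma \ref{le:38} argument should be moved to the subsequent deduction of Theorem \ref{th:2}, where it belongs.
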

By Proposition \ref{po:2}, the value of the periodic cycle
functional for $H\tilde{o}lder$ cocycle $\beta$ over
$\widetilde{\alpha}_G$ or its $H\tilde{o}lder$ conjugate $\alpha_G$
 depends only on the element of $p^{-1}(\Gamma)$ this cycle
represents. Using the same trick as in proof of Theorem \ref{th:5},
we can show every homomorphism from $p^{-1}(\Gamma)$ to $Y_X$ is
trivial. Thus we proved Theorem \ref{th:2}.

Thus by Theorem \ref{th:2}, $\beta$ is cohomologous to a small
constant cocycle $s:\ZZ^k\times \RR^\ell\rightarrow Y_X$ via a
continuous transfer map $H:X\rightarrow Y_X$ which can be chosen
close to identity in $C^0$ topology if the perturbation
$\widetilde{\alpha}_G$ small in $C^2$ topology.

Let us consider the map $h'(x):= H^{-1}(x)\cdot x$. We have from the
cocycle equation \ref{for:12} and the cohomology equation
\ref{for:13}
$$h'({\alpha}_G(a, x)) = \alpha_{0,\widetilde{G}}(a, h'(x))$$ where $\alpha_{0,\widetilde{G}}(a, x) := i(a)\cdot x$,
where $i(a) := s(a)i_0(a), a\in A$ and $i_0$ is as in \ref{for:15}.
Since the map $h'$ is $C^0$ close to the identity it is surjective
and thus the action $\alpha_G$ is semi-conjugate to the standard
perturbation $\alpha_{0,\widetilde{G}}$ of $\alpha_{0,G}$, i.e.
$\alpha_{0,\widetilde{G}}$ is a factor of $\alpha_G$. It is enough
to prove that $h'$ is injective. By simple transitivity of
$U$-holonomy group and the fact that there is no non-trivial element
in $Y_X$ such that all its powers are small [Section 7.1
\cite{Damjanovic2}] we have:
\begin{proposition}\emph{(Section 6.1 \cite{Damjanovic2})}
The map $h'$ is a homeomorphism and hence provides a topological
conjugacy between $\alpha_G$ and $\alpha_{0,\widetilde{G}}$.
\end{proposition}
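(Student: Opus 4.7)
The plan is to prove injectivity of $h'$; then the continuous surjection $h':X\rightarrow X$ is a bijection of a compact Hausdorff space, hence automatically a homeomorphism, and the semi-conjugacy relation $h'\circ\alpha_G(a,\cdot)=\alpha_{0,\widetilde{G}}(a,\cdot)\circ h'$ established above upgrades to a topological conjugacy. The argument follows the scheme of \cite{Damjanovic2}, Section 6.1, adapted to the $Y_X$-valued setting that arises here.

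Suppose $h'(x_1)=h'(x_2)$. Unwinding the definition $h'(x)=H^{-1}(x)\cdot x$ gives $x_2=\eta\cdot x_1$ for the element $\eta:=H(x_2)H^{-1}(x_1)\in Y_X$. By Theorem \ref{th:2} the transfer function $H$ is $C^0$-small, so $\eta$ may be assumed to lie in any preassigned small neighborhood $V$ of $\mathrm{id}_{Y_X}$. Feeding both $x_1$ and $x_2$ into the semi-conjugacy relation produces, for every $a\in\ZZ^k\times\RR^{\ell}$, an element $\eta_a\in Y_X$ such that $\alpha_G(a,x_2)=\eta_a\cdot\alpha_G(a,x_1)$, and the same $C^0$-smallness of $H$ forces $\eta_a\in V$ uniformly in $a$.

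The crucial step is to identify $\eta_a$ via $U$-holonomy on the cover $\widetilde{X}$. The unipotent foliations $U_1,\dots,U_r$ for the lifted model action $\overline{\alpha}_{0,G}$ have holonomy valued in the neutral group containing the $Y_X$-factor, and this $Y_X$-component of the holonomy is simply transitive on the fibers of $N_0$. Consequently $\eta$ is realized as the holonomy of a specific closed $U$-path based at a lift of $x_1$. Under the perturbed lifted action $\overline{\alpha}_G$, the combinatorially identical cycle, now traced in the leafwise $C^0$-close foliations $T_1,\dots,T_r$, has a well-defined holonomy which must coincide with $\eta_a$. Iterating a fixed regular element $a_0\in\overline{A}$ then shows that for every $n\in\ZZ$ the element $\eta_{na_0}$ is an $s(na_0)$-iterate of $\eta$ and still lies in $V$. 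Invoking the algebraic fact recalled from \cite{Damjanovic2}, Section 7.1 that $\mathrm{id}_{Y_X}$ is the only element of $Y_X$ whose full $s$-orbit remains in a fixed small neighborhood of the identity, we conclude $\eta=\mathrm{id}_{Y_X}$, hence $x_1=x_2$.

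The main obstacle is precisely this $U$-holonomy identification: one must translate the static $Y_X$-displacement $\eta$ between two fibers of $h'$ into the closure element of a genuine $U$-cycle in $\widetilde{X}$, transport that cycle consistently through the H\"older conjugacy $\widetilde{h}$ and the perturbed dynamics while controlling the loss incurred from the foliations being only H\"older, and verify that the holonomy computed for the perturbed action along the transported cycle is exactly $\eta_a$. Once this bridge between the geometric $U$-holonomy and the dynamically defined displacements $\eta_a$ is in place, the compact-group rigidity of $Y_X$ closes the argument at once, in complete parallel with the treatment of the $SL(n,\RR)$ case in \cite{Damjanovic2}.
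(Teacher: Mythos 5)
Your reduction is fine as far as it goes: surjectivity plus compactness means injectivity is the only issue, and if $h'(x_1)=h'(x_2)$ then indeed $x_2=\eta\, x_1$ with $\eta=H(x_2)H(x_1)^{-1}\in Y_X$ small, and likewise $\alpha_G(a,x_2)=\eta_a\,\alpha_G(a,x_1)$ with $\eta_a=H(\alpha_G(a,x_2))H(\alpha_G(a,x_1))^{-1}$ small for every $a$. But that last observation carries no information: the $\eta_a$ are small for \emph{all} $a$ automatically, by the definition of $h'$ and the $C^0$-smallness of $H$, so no contradiction can ever be extracted from ``$\eta_{na_0}$ still lies in $V$''. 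The argument the paper invokes (Sections 6.1 and 7.1 of \cite{Damjanovic2}) derives the contradiction from the \emph{powers} $\eta^n$ of the single displacement $\eta$: simple transitivity of the $U$-holonomy on the leaves of $\mathcal{N}_0$ (holonomy along the homogeneous Lyapunov foliations commutes with the left $Y_X$-parametrization of a neutral leaf, because $Y_X$ normalizes every root group), combined with the correspondence between $T$-paths and $U$-paths under projection along the neutral foliation, lets one transport the identification $h'(\eta x_1)=h'(x_1)$ to every base point with the \emph{same} $\eta$; the set of elements of $Y_X$ that identify points everywhere is then a semigroup consisting of small elements, so all $\eta^n$ are small, and the fact that no non-trivial element of $Y_X\cong\RR^n\times(\text{compact})$ has all its powers in a fixed small neighborhood of the identity forces $\eta=\mathrm{id}$.

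Your write-up never produces these powers. The step ``the combinatorially identical cycle \dots has a well-defined holonomy which must coincide with $\eta_a$'' is precisely the transport statement above, and you explicitly flag it as ``the main obstacle'' rather than proving it, so the central step is missing. Moreover the endgame is mis-stated: the relevant rigidity fact concerns the powers $\eta^n$, not ``$s(na_0)$-iterates'' of $\eta$ — with $s$ a small (near-trivial) homomorphism the statement ``only $\mathrm{id}$ has its $s$-orbit in a small neighborhood'' is simply false (take $s$ trivial and $\eta\neq \mathrm{id}$ small) — and $Y_X$ has a noncompact $\RR^n$ factor, so ``compact-group rigidity'' is not the right invocation either. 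As written, the proposal reproduces the easy bookkeeping but leaves a genuine gap exactly where the paper's cited ingredients (simple transitivity of the $U$-holonomy group and the no-small-powers property of $Y_X$) have to do the work.
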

Now by letting $h:=h'\widetilde{h}^{-1}$ we have
$$h\circ \widetilde{\alpha}_Gh^{-1}=\alpha_{0,\widetilde{G}}$$
thus there is a topological conjugacy between $\widetilde{\alpha}_G$
and $\alpha_{0,\widetilde{G}}$. The smoothness of this homeomorphism
follows as in \cite{Spatzier}, \cite{Damjanovic2} or
\cite{Margulis2}, by the general Katok-Spatzier theory of
non-stationary normal forms for partially hyperbolic abelian
actions.

\subsection{Proof of Corollary \ref{cor:3}}
Proofs of Theorems \ref{th:1} and \ref{th:2} apply to this case with
minor changes when proving $T$-cycles are projected to $U$-cycles
when $SO^+(m,n)$ is not split. Similar to proof of Lemma
6.5\cite{Damjanovic2}, we need to show that a $T$-cycle at $x$
projected to a $U$-path starting at $x$ gives a $U$-cycle which is
either contractible or its fourth power, after adding a $U$-path of
bounded length which connects the 2 endpoints and closed up the
$U$-path. It is due to Theorem \ref{th:5} and the fact that
$h^j_{L_n}(\sqrt{2}a,\sqrt{2}b)(a,b)\in S^1$, $j\leq m-n-1$ generate
$M$.

\section{Schur multipliers of non-split groups}\label{sec:12}

\subsection{Preliminaries and notations from K-theory}\label{sec:2} In this part, we
follow nations and quote conclusions without proof fairly close to
\cite{Deodhar}. Let $k$ be any arbitrary field. Let $\Omega$ denote
its algebraic closure in a "universal domain." Let $G\hookrightarrow
\GL(n,\Omega)$ be a connected simply connected algebraic group which
is of $k$-rank $\geq 2$. We also assume that $G$ is absolutely
simple over $\Omega$. Let $G_{k}=G\bigcap GL(n,k)$ be the group of
$k$ rational points of $G$. For a subgroup $H$ of $G$, let $H_{k}$
denote set $H\bigcap G_{k}$. Let $\mathfrak{g}$ be the Lie algebra
of $G$, $S\subset G$ be the $k$-split torus. Let $\Phi$ be the
$k$-root system of $G$ with respect to $S$, and
$\mathfrak{g}_{\alpha}$ the corresponding root space. Let $\Phi^{+}$
be the set of positive roots and $\Delta$ the system  of
 simple roots with respect to $\Phi^{+}$. Define
\begin{align*}
\Phi_1=\{\alpha\in\Phi|\alpha/2\notin \Phi\}\qquad\text{ and
 }\qquad\Phi_2=\{\alpha\in\Phi|2\alpha\notin \Phi\}.
\end{align*}
For $\alpha\in \Phi$, let
$u^{\alpha}=\sum_{k>0}\mathfrak{g}_{k\alpha}$, and $U^{\alpha}$ the
corresponding algebraic subgroup of $G$. Let $U^+$  be the algebraic
subgroup of $G$ whose Lie algebra is
$\sum_{\alpha\in\Phi^{+}}\mathfrak{g}_\alpha$. $U^{-}$ is defined as
the subgroup corresponding to
$\sum_{-\alpha\in\Phi^{+}}\mathfrak{g}_\alpha$. For $\alpha\in
\Phi_1$, let $G^{\alpha}$ be the connected algebraic subgroup
generated by $U^{\alpha}$ and $U^{-\alpha}$. Let $Z(S)$ be the
centralizer of $S$ in $G$ and $N(S)$ the normalizer of $S$ in $G$
and $W_0=N(S)/Z(S)$ be the Weyl group. Let $W\subset N(S)_k$ be a
complete representatives. We also assume that $w_{\alpha}$ is so
chosen that for any $\alpha\in \Phi$, $w_{\alpha}\in N(S)\bigcap
G^{\alpha}_{k}$ and has order 2. Next, let $G^+_{k}$ be the group
generated by $k$-rational unipotent elements which belong to the
radical of a parabolic subgroup defined over $k$ in $G$. It is known
that for a wide class of $G$, $G^+_{k}=G_{k}$. Moreover, the only
proper normal subgroups of $G^+_{k}$ are central (and finite). Also,
$G^+_{k}=[G^+_{k},G^+_{k}]$. We start with a technical lemma whose
role will be clear from the subsequent development.
\begin{lemma}(\textbf{\emph{\textrm{The Chain Lemma}}})\label{le:2}
For $\alpha\in \Phi_1$, let $(e\neq) x\in U^\alpha_k$ be any
element. Then there exists elements $x_i\in U^\alpha_k$, $y_i\in
U^{-\alpha}_k(i\in \ZZ)$ such that:
\begin{itemize}
\item[1]. $\qquad x_0=x$.

\item[2]. $x_iy_ix_{i+1}=y_jx_{j+1}y_{j+1}$ $\forall i,j\in\ZZ$; we denote
this element by $w$.

\item[3]. The element $w$ belongs to $N(S)_k$ and ``acts" on  $S$ as the
reflection with respect to $\alpha$, i.e., $ww^{-1}_\alpha\in Z(S)_k
(w_\alpha\in W)$.

\item[4]. Given any $x_i$ or $y_i$ the remaining elements of the chain
$\{x_n,y_m\}$ are uniquely determined.
\end{itemize}
\end{lemma}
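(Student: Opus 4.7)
The argument takes place inside the $k$-rank-one reductive subgroup $G^\alpha \subseteq G$ generated by $U^\alpha$ and $U^{-\alpha}$, and relies on the Bruhat decomposition $G^\alpha = B^\alpha \sqcup B^\alpha w_\alpha B^\alpha$, where $B^\alpha = (Z(S)\cap G^\alpha)\, U^\alpha$, together with its opposite $B^{-,\alpha} = (Z(S)\cap G^\alpha)\, U^{-\alpha}$. The hypothesis $\alpha \in \Phi_1$ (so $\alpha/2 \notin \Phi$) ensures both that $G^\alpha$ is reductive and that $U^\alpha \cap B^{-,\alpha} = \{e\}$, so any nontrivial $x \in U^\alpha_k$ lies in the big cell of the opposite Bruhat decomposition and admits a unique factorization
\[
x \;=\; v_1 \cdot t \cdot w_\alpha \cdot v_2, \qquad v_1, v_2 \in U^{-\alpha}_k,\ \ t \in (Z(S)\cap G^\alpha)_k.
\]
Setting $w := t w_\alpha \in N(S)_k$, $y_0 := v_1^{-1}$, $x_1 := x$, $y_1 := v_2^{-1}$ yields the fundamental identity $y_0 x_1 y_1 = w$; property (3) follows immediately from $w w_\alpha^{-1} = t \in Z(S)_k$.

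The bi-infinite chain is constructed by iterating this rank-one Bruhat flip. Applying the (positive) Bruhat decomposition to $w$ produces the unique expression $w = x_0 \cdot y_0 \cdot x_1$ with $x_0 \in U^\alpha_k$; uniqueness of the $U^{-\alpha}$-factor forces the middle element to agree with $y_0$, so declaring $x_0 := x$ fulfills property (1) and establishes the left-hand side of (2) at the base indices. Running the same procedure on $w$ with the roles of $U^\alpha$ and $U^{-\alpha}$ reversed yields the next pair $(x_2, y_2)$, and iterating in both directions produces $\{x_i\}_{i\in\ZZ} \subset U^\alpha_k$ and $\{y_i\}_{i\in\ZZ} \subset U^{-\alpha}_k$ such that $x_i y_i x_{i+1} = y_j x_{j+1} y_{j+1} = w$ for all $i,j \in \ZZ$. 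Uniqueness of the entire chain given any single entry (property (4)) is inherited from the uniqueness of each rank-one Bruhat factorization used in the iteration.

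The principal technical subtlety concerns the case $2\alpha \in \Phi$ (short roots in $SU(m,n)$), where $U^\alpha$ is a two-step nilpotent Heisenberg-type group with center $U^{2\alpha}$. The Bruhat factorization coefficients $v_1, v_2, t$ are then given by quadratic rather than linear expressions in the coordinates of $x$, and one must verify that they remain $k$-rational whenever $x \in U^\alpha_k$. The required $k$-rationality is guaranteed by the fact that the anisotropic kernel of $G^\alpha$ responsible for non-splitness is contained entirely in $Z(S) \cap G^\alpha$, so the Bruhat coordinates factor cleanly over $k$; the concrete formulas needed for the applications to $SO^+(m,n)$ and $SU(m,n)$ will be worked out in the subsequent sections on Schur multipliers.
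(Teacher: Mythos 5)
First, some context: the paper itself offers no proof of this lemma; Section \ref{sec:2} explicitly quotes it from Deodhar, and Deodhar's argument is precisely the rank-one Bruhat argument you set up, so your choice of route is the appropriate one. The first half of your proposal is sound: for $e\neq x\in U^\alpha_k$ with $\alpha\in\Phi_1$, the relative Bruhat decomposition of the rank-one group over $k$ gives a unique $k$-rational factorization $x=v_1\,(tw_\alpha)\,v_2$ with $v_1,v_2\in U^{-\alpha}_k$, $t\in Z(S)_k$, hence unique $y_{-1}:=v_1^{-1}$, $y_0:=v_2^{-1}\in U^{-\alpha}_k$ with $y_{-1}x_0y_0=w:=tw_\alpha\in N(S)_k$ acting as the reflection in $\alpha$ (here $x_0=x$), which is item 3 and one half of item 2.

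The iteration step, however, has a genuine gap. ``Applying the positive Bruhat decomposition to $w$'' cannot produce an expression $w=x_0y_0x_1$: since $w\in N(S)_k$, its factorization in the cell $B^\alpha w_\alpha B^\alpha$ is $w=e\cdot(tw_\alpha)\cdot e$, with trivial unipotent parts. What must actually be done is to factor the new nontrivial element $y_0\in U^{-\alpha}_k$ in the positive big cell $U^\alpha_k\,Z(S)_k w_\alpha\,U^\alpha_k$ and then prove the consistency that is the real content of items 2 and 4: the $N(S)_k$-part of that factorization is the \emph{same} $w$, and the outer $U^\alpha_k$-factor adjacent to the old data is the element $x_0$ already constructed. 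This is true but requires an argument; for instance, set $x_1:=w^{-1}y_{-1}w\in U^\alpha_k$ (conjugation by $w$ interchanges $U^{\pm\alpha}_k$) and check $x_0y_0x_1=y_{-1}^{-1}w\,x_1=w$; uniqueness of the cell factorization of $y_0$ then shows $(x_0,x_1)$ is the only admissible pair, and iterating this conjugation recursion yields the whole chain and item 4. Your write-up instead declares both $x_1:=x$ and $x_0:=x$, which, taken literally and iterated, produces the constant chain $x_i=x$, $y_i=y_0$. That is exactly what fails in the case you yourself flag as the main subtlety, $2\alpha\in\Phi$: by Lemma \ref{le:25}, for $x=x_{L_n}(t,a)$ in $SU(m,n)$ with $t\neq0$ and $a\neq0$ the chain has $x_i=x_{L_n}(t,\overline{a_0^i}a_0^{-i}a)$, so consecutive $x_i$ genuinely differ; the constant chain occurs only for $x=\exp X$ with $X\in(\mathfrak{g}_\alpha)_k$, as in Remark \ref{re:1}. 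So in the non-reduced case the delicate point is not $k$-rationality of the Bruhat coordinates (which is automatic from the $k$-isomorphism of the big cell with $U^{-\alpha}\times Z(S)\times U^{-\alpha}$), but the non-constancy of the chain and the consistency of the recursion, neither of which your proposal establishes.
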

\begin{definition}\label{de:2}
We define $w_\alpha(x)$, for $(e\neq) x\in U^\alpha_k$ to be the
element $w$ in the chain lemma. Thus, $w_\alpha(x)\in N(S)_k$ and
``acts as the reflection with respect to $\alpha$". Further, we have
$w_\alpha(x)=w_\alpha(x_i)=w_{-\alpha}(y_j)$  and
$w_\alpha(x^{-1})=w_\alpha(x)^{-1}$ $\forall i,j\in \ZZ$. For
$\alpha\in \Phi_1$, $x, x_1$(both$\neq e$)$\in U^\alpha_k$, consider
the element $h_\alpha(x,x_1)= w_\alpha(x)w_\alpha(x_1)^{-1}$. It is
clear that $h_\alpha(x,x_1)\in Z(S)_k\bigcap G_k^+$. Let $H_{k}$ be
the subgroup generated by these elements.
\end{definition}

\begin{remark}\label{re:1}
If $\text{char} k =0,$  then we have the exponential map
$\exp:\mathfrak{g}\rightarrow G$. Let $X\in \mathfrak{g}_\alpha$ be
a $k$-rational(nilpotent) element. Then $x=\exp X\in U^\alpha_k$.
The chain associated with $x$ can also be obtained in the following
way: We have the Jacobson-Morosov theorem which asserts the
existence of an element $Y\in (\mathfrak{g}_{-\alpha})_k$ such that
$\{X,Y,[X,Y]\}$ span a three-dimension split Lie algebra over $k$.
It is then easy to prove that $x_i=x=\exp X, \forall i\in\ZZ$ and
$y_j=\exp(-Y), \forall j\in\ZZ$. If we let $y=$exp$(-Y)$, then we
denote $xyx$ by $w_\alpha(x)$.
\end{remark}
\subsection{Construction of universal central extension}\label{sec:1} Let $N$ be an abstract group.
A central extension of $N$ is a pair $(\pi,N')$ where $N'$ is a
group, $\pi$ is a homomorphism of $N'$ onto $N$ and
$\ker\pi\subseteq$(center of $N'$). A central extension  $(\pi,N')$
of $N$ is said to be universal if for any central extension $(\eta,
E')$ of $N$, there exists a unique homomorphism $\phi:N'\rightarrow
E'$ such that $\eta\circ\phi=\pi$. A necessary and sufficient
condition for $N$ to have a universal central extension is that $N=
[N,N]$. (For the proof of this and other elementary properties of a
universal central extension, one may refer to [\cite{Steinberg2},
Section 7].)

We now construct the universal central extension $(\textrm{u.c.e.})$
of the group $G^{+}_{k}$. (Such an extension exists since
$G^{+}_{k}=[G^{+}_{k}, G^{+}_{k}]$.) For $\alpha, \beta\in \Phi_1$
such that $\alpha\neq -\beta$, it is known that
$$[U^\alpha, U^\beta]\subset\prod_{\gamma=i\alpha+j\beta, i,j\geq 1}U^\gamma.$$
This clearly gives rise to relations $R_{\alpha,\beta}$ between
commutators of above form and elements which belongs to
$$\prod_{\gamma=i\alpha+j\beta, i,j\geq 1}U^\gamma.$$

Let $\widetilde{G}'=U^{+}_k*U^{-}_k$, the free product of groups.
Now for $\alpha, \beta\in \Phi, \alpha\neq -\beta$,
$R_{\alpha,\beta}$ has a natural meaning in $\widetilde{G}'$. We now
quotient $\widetilde{G}'$ by the relations $\{R_{\alpha,\beta}\}$ to
get a group we denote by $\widetilde{G}$. It is clear that there
exists a well-defined homomorphism $\pi_1:\widetilde{G}\rightarrow
G^{+}_{k}$. Further, it is clear that $\pi_1$ is surjective. We
write for $u\in U^{\alpha}_{k}$, the corresponding element in
$\widetilde{G}$ by $\tilde{x}_\alpha(u)$. Then
$\tilde{w}_\alpha(u)$, $\tilde{h}_\alpha(u,u_1)$ for $u,u_1\in
U^{\alpha}_{k}$ are obviously defined elements of $\widetilde{G}$.
\begin{lemma}
$(\widetilde{G},\pi_1)$ is a $(\emph{u.c.e.})$ of $G^{+}_{k}$
\end{lemma}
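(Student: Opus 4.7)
The statement asserts that $(\pi_1,\widetilde{G})$ is a universal central extension of $G^{+}_{k}$, so I need to verify two things: that $\pi_1$ is a central extension (surjective with central kernel), and that it satisfies the universal property. Surjectivity is immediate, since $G^{+}_{k}$ is generated by $k$-rational unipotent elements in radicals of $k$-parabolic subgroups, each of which lies in some $U^{\alpha}_{k}$, and the images $\tilde{x}_\alpha(u)$ generate $\widetilde{G}$ by construction.

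The heart of the proof is the centrality of $\ker\pi_1$ in $\widetilde{G}$, and I would follow Steinberg's argument as adapted by Deodhar. Step one: for each $\alpha\in\Phi_1$ and each nontrivial $u\in U^\alpha_k$, invoke Lemma \ref{le:2} to produce the chain $\{u_i,v_i\}$ determined by $u=u_0$, and define the lifted Weyl representative
$$\tilde{w}_\alpha(u):=\tilde{x}_\alpha(u_0)\,\tilde{x}_{-\alpha}(v_0)\,\tilde{x}_\alpha(u_1)\in\widetilde{G}.$$
Step two: show that for $\beta\neq\pm\alpha$, conjugation by $\tilde{w}_\alpha(u)$ carries $\tilde{x}_\beta(v)$ into the copy of $U^{\alpha'}_k$ sitting in $\widetilde{G}$, where $\alpha'$ is the reflection of $\beta$ through $\alpha$, and that this conjugate covers the correct element of $G^{+}_{k}$ under $\pi_1$; this is a calculation in which the relations $R_{\alpha,\beta}$ and $R_{-\alpha,\beta}$ are used to expand each commutator step. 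Step three: with the Weyl conjugation under control, a standard Steinberg argument shows that any $z\in\ker\pi_1$ commutes with every generator: one uses the lifted Weyl elements to bring $[z,\tilde{x}_\gamma(w)]$ into a single root subgroup of $\widetilde{G}$, and the fact that $\pi_1$ is injective on the copy of $U^\gamma_k$ inside $\widetilde{G}$ forces the commutator to be trivial.

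For universality, given a central extension $\eta:E'\twoheadrightarrow G^{+}_{k}$, I would construct $\phi:\widetilde{G}\to E'$ with $\eta\circ\phi=\pi_1$ as follows. For each $\alpha$, lift $U^\alpha_k$ to a homomorphic section $\iota^\alpha:U^\alpha_k\to E'$ by replicating the Chain-Lemma construction inside $E'$: pick any set-theoretic lift of a generating element, apply the chain relations available in $E'$, and use the centrality of $\ker\eta$ to conclude that the resulting section is a well-defined group homomorphism. Combining the $\iota^\alpha$ for positive (respectively negative) roots gives lifts of $U^{+}_{k}$ and $U^{-}_{k}$, hence a homomorphism from the free product $U^{+}_{k}*U^{-}_{k}$ into $E'$. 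It then remains to verify each $R_{\alpha,\beta}$: its image in $E'$ lies in $\ker\eta\subseteq Z(E')$, and a direct comparison inside the subgroup generated by the $\iota^\gamma(U^\gamma_k)$ for $\gamma=i\alpha+j\beta$ forces the central correction to vanish. Uniqueness of $\phi$ follows because $\widetilde{G}$ is generated by the $\tilde{x}_\alpha(u)$, whose images under any lifting are determined up to $Z(E')$ by $\eta\circ\phi=\pi_1$, and the centrality of $\ker\pi_1$ just established rigidifies the choice.

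The main obstacle is step two of the centrality argument---controlling how $\tilde{w}_\alpha(u)$ conjugates the other generators without having direct $U^\alpha$--$U^{-\alpha}$ relations imposed in $\widetilde{G}$---together with the parallel construction of the sections $\iota^\alpha$ used for the universal property. Both depend on the Chain Lemma and on the explicit commutator formulas of $\S\ref{sec:2}$, and both are exactly where the bookkeeping carried out by Steinberg in the split case and by Deodhar in the quasi-split case must be invoked; everything else in the proof is formal manipulation of the free-product presentation.
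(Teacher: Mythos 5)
The paper itself offers no proof of this lemma: it is quoted from Deodhar, and the actual mechanism is encoded in the surrounding statements (Lemma \ref{le:5} and Lemma \ref{le:1}). Measured against that mechanism, your outline is in the right spirit (Steinberg--Deodhar), but two of your steps have genuine gaps. First, your centrality argument (``step three'') is essentially circular: to conclude that $[z,\tilde{x}_\gamma(w)]$ lands in a single root subgroup you would already need to know how $z\in\ker\pi_1$ conjugates the generators, which is the point at issue. The actual route is structural: one first proves a Bruhat-type decomposition for $\widetilde{G}$ giving $\ker(\pi_1)\subset\tilde{H}$ (this is item 4 of Lemma \ref{le:1}), and then uses the conjugation formula of Lemma \ref{le:5}(4), which says that $\tilde{h}$ conjugates $\tilde{x}_\alpha(u)$ exactly through the conjugation by $\pi_1(\tilde{h})$ in $G^+_k$; for $\tilde{h}\in\ker\pi_1$ this action is trivial, whence centrality. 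Without the containment $\ker(\pi_1)\subset\tilde{H}$ your step three does not get off the ground.

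Second, in the universality part the key technical point is misassigned. A homomorphic section of $\eta$ over $U^\alpha_k$ (and over $U^{\pm}_k$) cannot be produced by ``replicating the Chain-Lemma construction inside $E'$'': in $E'$ the chain relations are only known modulo the central kernel, so invoking them begs the question. What actually produces canonical lifts is the commutator trick for central extensions (commutators of lifts are independent of the choice of lifts) combined with the $k$-rank $\ge 2$ hypothesis, which lets one write unipotent elements as controlled commutators and then verify that the lifted elements satisfy the relations $R_{\alpha,\beta}$; this verification is the bulk of Steinberg's and Deodhar's work, not a formality. Relatedly, your uniqueness argument is incorrect as stated: two lifts $\phi_1,\phi_2$ with $\eta\circ\phi_i=\pi_1$ differ by a homomorphism $\widetilde{G}\rightarrow\ker\eta$ into an abelian group, and uniqueness follows because $\widetilde{G}$ is perfect (so the difference kills all of $\widetilde{G}=[\widetilde{G},\widetilde{G}]$), not because $\ker\pi_1$ is central; perfectness of $\widetilde{G}$ (and of $G^+_k$, needed even for existence of a u.c.e.) is nowhere established in your proposal and must be checked from the commutator relations together with rank $\ge 2$.
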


 Now we list the following lemma which is very important for the
sequel.
\begin{lemma}\label{le:5}
If $\alpha, \beta\in \Phi_1, (e\neq)u\in U^{\alpha}_{\RR}, (e\neq)v,
v_1\in U^{\beta}_{\RR}$, then
\begin{itemize}
\item[1]$\tilde{w}_{\alpha}(u)\tilde{x}_{\beta}(v)\tilde{w}_{\alpha}(u)^{-1}=\tilde{x}_{w_{\alpha}(\beta)}\bigl(w_{\alpha}(u)vw_{\alpha}(u)^{-1}\bigl)$
\item[2]$\tilde{w}_{\alpha}(u)\tilde{w}_{\beta}(v)\tilde{w}_{\alpha}(u)^{-1}=\tilde{w}_{w_{\alpha}(\beta)}\bigl(w_{\alpha}(u)x_{\beta}(v)w_{\alpha}(u)^{-1}\bigl)$
\item[3]$\tilde{w}_{\alpha}(u)\tilde{h}_{\beta}(v,v_1)\tilde{w}_{\alpha}(u)^{-1}$\\
$=\tilde{h}_{w_{\alpha}(\beta)}\bigl(w_{\alpha}(u)x_{\beta}(v)w_{\alpha}(u)^{-1},w_{\alpha}(u)x_{\beta}(v_1)w_{\alpha}(u)^{-1}\bigl)$
\item[4]$\tilde{h}_{\beta}(v,v_1)\tilde{x}_{\alpha}(u)\tilde{h}_{\beta}(v,v_1)^{-1}$\\
$=\tilde{x}_{\alpha}\bigl(h_{\beta}(v,v_1)x_{\alpha}(u)h_{\beta}(v,v_1)^{-1}\bigl)$.
\end{itemize}
\end{lemma}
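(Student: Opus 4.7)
The plan is to establish the four identities in order, with (1) the substantive case and (2)--(4) following formally. For $u\in U^{\alpha}_k$ I fix a chain $\{x_i,y_j\}$ from Lemma \ref{le:2} with $x_0=u$, and take as working lift
\[
\tilde{w}_{\alpha}(u)\;=\;\tilde{x}_{\alpha}(x_0)\,\tilde{x}_{-\alpha}(y_0)\,\tilde{x}_{\alpha}(x_1)\in\widetilde{G}.
\]
The relations $\{R_{\alpha,\beta}\}$ built into $\widetilde{G}$, combined with the additive multiplication inherited from $U^{\pm}_k$, force the three-factor expression for $\tilde{w}_{\alpha}(u)$ to be independent of the shift $i\mapsto i+1$ along the chain, so any chain index is available for the computations that follow.

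For identity (1) I conjugate $\tilde{x}_{\beta}(v)$ through the three factors of $\tilde{w}_{\alpha}(u)$ in turn. When $\beta\notin\{\pm\alpha\}$, each intermediate commutator $[\tilde{x}_{\pm\alpha}(\cdot),\tilde{x}_{\gamma}(\cdot)]$ can be expanded using the defining relations $R_{\pm\alpha,\gamma}$ into a word in generators $\tilde{x}_{i\alpha+j\gamma}$ with $i,j\geq 1$; the resulting word is identical, term by term, to the one computing the classical identity $w_{\alpha}(u)\,x_{\beta}(v)\,w_{\alpha}(u)^{-1}=x_{w_{\alpha}(\beta)}(w_{\alpha}(u)\,v\,w_{\alpha}(u)^{-1})$ in $G^{+}_k$, so the two sides of (1) agree in $\widetilde{G}$. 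The exceptional case $\beta\in\{\pm\alpha\}$, where $w_{\alpha}(\beta)=-\beta$ and no relation $R_{\alpha,-\alpha}$ is available, is handled by rearrangement: applying the chain-shift freedom of $\tilde{w}_{\alpha}(u)$ together with the additive identity $\tilde{x}_{\pm\alpha}(a)\tilde{x}_{\pm\alpha}(b)=\tilde{x}_{\pm\alpha}(a+b)$ folds $\tilde{x}_{\beta}(v)$ into the chain itself, so (1) reduces to the chain-shift invariance of $\tilde{w}_{\alpha}(u)$ in $\widetilde{G}$.

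With (1) in hand, (2) follows by fixing a chain $\{x'_i,y'_j\}$ for $v\in U^{\beta}_k$, writing $\tilde{w}_{\beta}(v)=\tilde{x}_{\beta}(x'_0)\tilde{x}_{-\beta}(y'_0)\tilde{x}_{\beta}(x'_1)$, and conjugating each of its three factors by $\tilde{w}_{\alpha}(u)$; since $w_{\alpha}$-conjugation in $G^{+}_k$ sends the chain of $v$ to the chain of $w_{\alpha}(u)\,v\,w_{\alpha}(u)^{-1}\in U^{w_{\alpha}(\beta)}_k$, the resulting three-factor product equals by definition $\tilde{w}_{w_{\alpha}(\beta)}\bigl(w_{\alpha}(u)\,x_{\beta}(v)\,w_{\alpha}(u)^{-1}\bigr)$. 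Identity (3) is immediate from $\tilde{h}_{\beta}(v,v_1)=\tilde{w}_{\beta}(v)\tilde{w}_{\beta}(v_1)^{-1}$ and two applications of (2). For (4), rewrite the left side as $\tilde{w}_{\beta}(v)\bigl[\tilde{w}_{\beta}(v_1)^{-1}\tilde{x}_{\alpha}(u)\tilde{w}_{\beta}(v_1)\bigr]\tilde{w}_{\beta}(v)^{-1}$ and apply (1) twice: the inner conjugation lands in $\tilde{x}_{w_{\beta}(\alpha)}(\,\cdot\,)$, the outer conjugation returns it to $\tilde{x}_{w_{\beta}w_{\beta}(\alpha)}(\,\cdot\,)=\tilde{x}_{\alpha}(\,\cdot\,)$, and the accumulated inner argument is $h_{\beta}(v,v_1)\,x_{\alpha}(u)\,h_{\beta}(v,v_1)^{-1}$, as required.

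The main obstacle is the word-level verification of (1) in the non-quasi-split setting. The root spaces $\mathfrak{g}_{\pm\alpha}$, $\mathfrak{g}_{\beta}$ can have dimension greater than one, and the relations $R_{\alpha,\beta}$ then encode genuinely bilinear or quadratic maps between them, so the two sides of (1) in $\widetilde{G}$ must be matched coordinate by coordinate through the defining relations rather than merely modulo $\ker\pi_1$ (a centrality/projection argument is not enough). The exceptional case $\beta\in\{\pm\alpha\}$ carries the further subtlety that the chain-shift freedom of Lemma \ref{le:2} must be deployed in exactly the right way; for the concrete groups $SO^+(m,n)$ and $SU(m,n)$ this is precisely where the explicit analysis of Sections \ref{sec:16}--\ref{sec:15} will be needed.
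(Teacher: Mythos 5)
Before anything else, note that the paper itself does not prove this lemma: Sections \ref{sec:2}--\ref{sec:1} state explicitly that the conclusions of this part are quoted without proof from Deodhar \cite{Deodhar} (the lemma is the non-split analogue of Lemma 37 in Steinberg's notes \cite{Steinberg2}), so your proposal can only be compared with that standard argument. Your formal derivations of (2)--(4) from (1) are fine, and your outline of (1) for $\beta\notin\{\pm\alpha\}$ is essentially the standard computation; one remark, though: there the usual (and sufficient) justification is precisely the ``centrality/projection'' argument you dismiss, since all intermediate terms lie in the subgroups $\tilde{U}^{i\alpha+j\beta}$, $j\geq 1$, whose roots lie in an open half-space and on which $\pi_1$ is injective, so equality of the $\pi_1$-images does imply equality in $\widetilde{G}$.

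The genuine gap is the case $\beta\in\{\pm\alpha\}$, together with your opening claim that the three-factor expression $\tilde{x}_{\alpha}(x_0)\tilde{x}_{-\alpha}(y_0)\tilde{x}_{\alpha}(x_1)$ is chain-shift invariant in $\widetilde{G}$. The defining relations $R_{\gamma,\delta}$ of $\widetilde{G}$ are imposed only for $\gamma\neq-\delta$, whereas the chain identity $x_iy_ix_{i+1}=y_jx_{j+1}y_{j+1}$ of Lemma \ref{le:2} is a relation between the \emph{opposite} subgroups $U^{\alpha}_k$ and $U^{-\alpha}_k$; it holds in $G^{+}_k$, but its two lifts to $\widetilde{G}$ agree only up to an element of $\ker\pi_1$, and showing that this central element is trivial is exactly as hard as the $\beta=\pm\alpha$ case of (1) itself. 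So ``folding $\tilde{x}_{\beta}(v)$ into the chain'' is circular. A telling symptom is that your argument never uses the hypothesis $k$-rank $\geq 2$, yet the statement fails in rank one: there $\widetilde{G}$ is just the free product $U^{+}_k * U^{-}_k$ and the word $\tilde{w}_{\alpha}(u)\tilde{x}_{\alpha}(v)\tilde{w}_{\alpha}(u)^{-1}$ is not a single generator $\tilde{x}_{-\alpha}(\cdot)$. In the Steinberg--Deodhar proof the rank hypothesis enters exactly at this point: one chooses a root $\gamma$ neither proportional nor orthogonal to $\alpha$, uses the already-established generic case to write $\tilde{x}_{\pm\alpha}(v)$ as a $\tilde{w}_{\gamma}(z)$-conjugate of an element of some $\tilde{U}^{\delta}$ with $\delta\notin\{\pm\alpha\}$, and then transports the $\tilde{w}_{\alpha}(u)$-conjugation through that expression. (A smaller slip: when $2\alpha\in\Phi$ the group $U^{\alpha}_k$ is not abelian, so the ``additive identity'' you invoke for $\tilde{x}_{\pm\alpha}$ is not available; only the group law inherited from $U^{\pm}_k$ is.)
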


\begin{lemma}\label{le:1} Let $\widetilde{N}$ be the subgroup of $G$ generated by $\{\tilde{w}_\alpha(u),
\alpha\in\Phi_1, (e\neq)u\in U_k^\alpha\}$. For $\alpha\in\Phi^+$,
denote by $\tilde{H}_{\alpha}$ the subgroup generated by
$\tilde{h}_{\alpha}(v,v_1)$, $(e\neq)v, v_1\in U^{\alpha}_k$. let
$\tilde{H}$ be he subgroup generated by $\{\tilde{H}_{\alpha},
\alpha\in\Phi_1\}$. Then
\begin{itemize}
\item[1]$\tilde{H}_{\alpha},\alpha\in\Phi_1$, is normal in
$\widetilde{H}$, and $\widetilde{H}$ is normal in $\widetilde{N}$.

\item[2]$\widetilde{H}$ normalizes each $\tilde{U}_k^\alpha$, and
hence $\tilde{U}_k^+$.

\item[3]$\tilde{H}=\prod_{\alpha\in \Delta}\tilde{H}_{\alpha}$

\item[4]$\ker(\pi_1)\subset \tilde{H}$.
\end{itemize}
\end{lemma}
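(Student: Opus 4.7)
My plan is to prove the four assertions in order, using Lemma \ref{le:5} as the main computational tool for parts (1)--(3), and establishing a Bruhat-type decomposition for part (4), following the template of Steinberg \cite{Steinberg} and Deodhar \cite{Deodhar} adapted to the non-split setting.

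I begin with (2), which is the most direct: the generators of $\tilde{U}^\alpha_k$ are the $\tilde{x}_\alpha(u)$, and Lemma \ref{le:5}(4) gives $\tilde{h}_\beta(v,v_1)\tilde{x}_\alpha(u)\tilde{h}_\beta(v,v_1)^{-1}=\tilde{x}_\alpha\bigl(h_\beta(v,v_1)x_\alpha(u)h_\beta(v,v_1)^{-1}\bigr)$, so the generators of $\widetilde{H}$ normalize each $\tilde{U}^\alpha_k$, whence $\widetilde{H}$ normalizes $\tilde{U}^+_k$. For (1), normality of $\widetilde{H}$ in $\widetilde{N}$ follows from Lemma \ref{le:5}(3): conjugating a generator $\tilde{h}_\beta(v,v_1)$ of $\widetilde{H}$ by a generator $\tilde{w}_\alpha(u)$ of $\widetilde{N}$ produces an element of $\tilde{H}_{w_\alpha(\beta)}\subset\widetilde{H}$. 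For normality of $\tilde{H}_\alpha$ inside $\widetilde{H}$, I expand $\tilde{w}_\alpha(x)=\tilde{x}_\alpha(x_0)\tilde{x}_{-\alpha}(y_0)\tilde{x}_\alpha(x_1)$ using the chain and apply (2) term by term: the three conjugated parameters form the chain associated with $h_\beta(v,v_1)\,x\,h_\beta(v,v_1)^{-1}$, so the uniqueness clause of Lemma \ref{le:2} forces $\tilde{h}_\beta \tilde{w}_\alpha(x) \tilde{h}_\beta^{-1}=\tilde{w}_\alpha(h_\beta x h_\beta^{-1})$, whence $\tilde{h}_\beta\tilde{h}_\alpha(v,v_1)\tilde{h}_\beta^{-1}\in\tilde{H}_\alpha$.

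For (3), my plan has two steps. First, show that $\tilde{H}_\alpha$ and $\tilde{H}_\beta$ commute in $\widetilde{G}$: their $\pi_1$-images lie in the commutative subgroup $Z(S)_k$, and since $\ker\pi_1$ is central in $\widetilde{G}$, the commutator $[\tilde{H}_\alpha,\tilde{H}_\beta]$ already vanishes. Second, for any non-simple $\alpha\in\Phi_1$, write $\alpha=w(\gamma)$ for some $\gamma\in\Delta$ and $w\in W_0$; iteratively applying Lemma \ref{le:5}(3) together with the Steinberg symbol identities recorded in Theorems \ref{th:3}--\ref{th:4} (in particular the multiplicativity relations \eqref{for:6} and \eqref{for:10}), one rewrites each generator $\tilde{h}_\alpha(v,v_1)$ as a product of generators lying in $\tilde{H}_\gamma$ for $\gamma\in\Delta$. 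Combined with the first step, this yields the desired factorization $\widetilde{H}=\prod_{\gamma\in\Delta}\tilde{H}_\gamma$.

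The hard part will be (4). The plan is to first establish, by induction on Weyl-length and using only the commutation relations $R_{\alpha,\beta}$ defining $\widetilde{G}$, a Bruhat-type decomposition $\widetilde{G}=\tilde{U}^+_k\,\widetilde{H}\,\widetilde{W}\,\tilde{U}^+_k$, where $\widetilde{W}$ is a fixed set-theoretic lift of $W_0$ to $\widetilde{N}$. Then, given $g\in\ker\pi_1$, write $g=\tilde{u}_1\tilde{h}\tilde{w}\tilde{u}_2$ and project by $\pi_1$: uniqueness of the Bruhat decomposition of the identity in $G^+_k$ forces $\pi_1(\tilde{w})=e$ and $\pi_1(\tilde{u}_1)=\pi_1(\tilde{u}_2)=e$. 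The delicate point, which will be the main obstacle, is verifying that $\pi_1$ is injective on each $\tilde{U}^\alpha_k$ (and hence on $\tilde{U}^+_k$) so that $\tilde{u}_1=\tilde{u}_2=e$; this should follow from the observation that the relations $R_{\alpha,\beta}$ used in constructing $\widetilde{G}$ are all cross-relations and impose no new identifications inside any single $\tilde{U}^\alpha_k$. With that injectivity in hand, one concludes $g=\tilde{h}\in\widetilde{H}$. The argument runs parallel to Deodhar's \cite{Deodhar}, with the non-split features absorbed into the relations $R_{\alpha,\beta}$ themselves.
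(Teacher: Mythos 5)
The paper does not actually prove this lemma -- it is quoted from Deodhar (the paper states at the start of Section \ref{sec:12} that it quotes such conclusions without proof), so the comparison is really with Deodhar's argument. Your parts (1) and (2) are fine: Lemma \ref{le:5}(3)--(4) plus the conjugated-chain/uniqueness argument via Lemma \ref{le:2} is exactly the right mechanism. Your outline of (4) is also the standard Steinberg--Deodhar route; note only that the ``delicate point'' you flag is not delicate at all in this construction: since $\widetilde{G}'=U^+_k * U^-_k$ is a free product of the groups $U^\pm_k$ and the composite $U^+_k\to\widetilde{G}\xrightarrow{\pi_1}G^+_k$ is the identity, $\pi_1$ is automatically injective on $\tilde{U}^+_k$; the real labor in (4) is the induction establishing the decomposition and its partial uniqueness, which you only gesture at.

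The genuine gap is in (3). Your first step -- that $\tilde{H}_\alpha$ and $\tilde{H}_\beta$ commute because their $\pi_1$-images lie in the ``commutative'' group $Z(S)_k$ and $\ker\pi_1$ is central -- fails twice over. First, centrality of the kernel only tells you $[\tilde{H}_\alpha,\tilde{H}_\beta]\subseteq\ker\pi_1$; it does not make the commutator trivial, and in universal central extensions such commutators are typically exactly the nontrivial symbols (e.g.\ $\{-1,-1\}\neq 1$ over $\RR$). Second, $Z(S)_k$ is not even abelian for the groups of this paper: it contains $SO(m-n)$ resp.\ $U(m-n)$, and the images $\pi_1(\tilde{h}^j_{L_n}(\cdot,\cdot))$ are rotation blocks that visibly do not commute for different $j$. (Commutativity is also unnecessary: once each $\tilde{H}_\gamma$ is normal in $\tilde{H}$ by part (1), the set product over $\Delta$ is a subgroup; the content of (3) is that it contains $\tilde{H}_\alpha$ for every $\alpha\in\Phi_1$.) Your second step is then both circular and invalid: the relations \eqref{for:6} and \eqref{for:10} are statements about $G_\RR$, proved in Theorems \ref{th:3}--\ref{th:4} downstream of this very lemma, and in $\widetilde{G}$ they fail precisely by central symbol elements -- that failure is the Schur multiplier being computed. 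The correct argument for (3) uses only the conjugation identities of Lemma \ref{le:5} (no multiplicativity of $\tilde{h}$), reducing a general $\tilde{h}_\alpha(v,v_1)$ to the simple-root subgroups by induction along Weyl reflections, as in Deodhar; the paper carries out the concrete version of this bookkeeping for its groups in Lemma \ref{le:6} and Corollary \ref{le:9}.
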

\begin{remark}\label{re:4}
We now consider the condition under which $\tilde{h} =
\prod_{\alpha\in \Delta}\tilde{h}_{\alpha}$ is in the kernel of
$\pi_1$. Using the simple connectedness of $G$ over $\Omega^{-}$, it
is easy to see that $\tilde{h}\in\ker\pi_1$ iff $\tilde{h}_\alpha\in
\ker \pi_1 \forall \alpha\in \Delta$. In other words, the Schur
multiplier $\pi_1$ of $G_k$ is generated by $\{\pi_1\bigcap
\tilde{H}_{\alpha},\alpha\text{ simple}\}$. If $G$ is not simply
connected, we only have $\ker(\pi_1)\subset \tilde{H}$.
\end{remark}

\section{Generating relations of $SO^+(m,n)$}\label{sec:16}
\subsection{Basic settings for $SO^+(m,n)$}
In this part we study the generators of $SO^+(m,n)(m\geq n\geq 3)$.
We use notations as in Section \ref{sec:7} and Section \ref{sec:12}.
Explicitly, this is the case where $G=SO(m+n,\CC)$ defined by a
non-degenerate standard bilinear form of signature $(m,n)$.

We denote by $S$ the set of $(m+n)\times (m+n)$ diagonal matrices in
$G_\RR$ with lower-right $(m-n)\times(m-n)$ block identity. Let
$\Phi$ be the root system with respect to $S$. The roots are $\pm
L_i \pm L_j(i<j\leq n) $, whose dimensions are one and $\pm
L_i(1\leq i\leq n)$ are also roots if $m\geq n+1$ with dimensions
 $m-n$. We easily see $\Phi=\Phi_1$. If $m\geq n+1$, the set of positive roots $\Phi^{+}$ and the corresponding set of
 simple roots $\Delta$ are
 \begin{align*}
&\Phi^{+}=\{L_i-L_j\}_{i<j}\cup\{L_i+L_j\}_{i<j}\cup\{L_i\}_{i},\\
&\Delta=\{L_i-L_{i+1}\}_i\cup\{L_n\};
 \end{align*}
 if $m=n$, the set of positive roots $\Phi^{+}$ and the corresponding set of
 simple roots $\Delta$ are
 \begin{align*}
&\Phi^{+}=\{L_i-L_j\}_{i<j}\cup\{L_i+L_j\}_{i<j},\\
&\Delta=\{L_i-L_{i+1}\}_i\cup\{L_{n-1}+L_{n}\}.
 \end{align*}

With notations in Section \ref{sec:7} we have
\begin{align*}
&U^{r}_\RR=\{\exp (tf_{r})\mid t\in \RR\} \text{ for }r=\pm L_i\pm
L_j(i<j),\\
&U^{\alpha}_\RR=\{\prod_j\exp \left(a_jf^{j}_{\alpha}\right)\mid
a=(a_1,\dots,a_{m-n})\in \RR^{m-n}\}\text{ for }\alpha=\pm L_i.
\end{align*}
Correspondingly, for $t\in \RR$ and $a=(a_1,\dots,a_{m-n})\in
\RR^{m-n}$ we write
\begin{align*}
&x_r(t)=\exp (tf_{r})\in U^{r}_\RR\qquad\text{ for }r=\pm L_i\pm
L_j(i<j),\\
&x_\alpha(a)=\prod_j\exp (a_jf^{j}_{\alpha})\in
U^{\alpha}_\RR\qquad\text{ for }\alpha=\pm L_i.
\end{align*}
\subsection{``Chains'' in $SO^+(m,n)$} Our next step is to determine explicitly the ``chain" (cf. Lemma
\ref{le:2}) corresponding to the element $x_\alpha(a)(\neq e)\in
U^{\alpha}_\RR(\alpha=\pm L_i)$. For this, define
$f:\RR^{m-n}\backslash 0\rightarrow \RR^{m-n}\backslash 0$ by
$f(a)=\bigl(\frac{2a_1}{\sum a_i^{2}},\dots,\frac{2a_{m-n}}{\sum
a_i^{2}}\bigl)$ for $a=\left(a_1,\dots,a_{m-n}\right)\in
\RR^{m-n}\backslash 0$. With this notation, we have:
\begin{lemma}\label{le:11}
For $x_\alpha(a)(\neq e)\in U^{\alpha}_\RR(\alpha=\pm L_i)$, the
``chain" corresponding to it is given by
\begin{align*}
x_i=x_\alpha(a), i\in \ZZ;\qquad y_i=x_{-\alpha}(f(a)), i\in \ZZ.
\end{align*}
Denoting the element $w_\alpha(x_\alpha(a))$ by $w_\alpha(a)$, we
have
\begin{align*}
w_\alpha(a)=x_\alpha(a)x_{-\alpha}\left(f(a)\right)x_\alpha(a).
\end{align*}
\end{lemma}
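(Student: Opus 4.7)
Plan: The strategy is to invoke the uniqueness assertion (item 4) of Lemma \ref{le:2}: once one exhibits a single chain starting at $x_0 = x_\alpha(a)$ that satisfies conditions (1)--(3), it must be \emph{the} chain. I would try the \emph{constant} ansatz $x_i = x_\alpha(a)$ and $y_i = x_{-\alpha}(b)$ for all $i\in\ZZ$, with $b\in\RR^{m-n}\setminus\{0\}$ to be determined. Under this ansatz (1) is automatic, condition (2) collapses to the single braid-type identity
\[
x_\alpha(a)\,x_{-\alpha}(b)\,x_\alpha(a) \;=\; x_{-\alpha}(b)\,x_\alpha(a)\,x_{-\alpha}(b),
\]
and (3) requires the common value to lie in $N(S)_\RR$ and represent the reflection $w_\alpha$. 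The task is then to pin down $b$ and verify it equals $f(a)$.

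Everything localizes to the rank-one subgroup $G^\alpha \cong SO(m-n+1,1)$, which acts trivially on the complement of $V := \mathrm{span}\{e_i, e_{i+n}, e_{2n+1}, \dots, e_{m+n}\}$. In the block decomposition of $V$ with block sizes $1,1,m-n$, a direct check shows $\sum_j a_j f^j_\alpha$ is nilpotent of order three --- its square is $-s\cdot e_{i,i+n}$ where $s = \sum_j a_j^2$ --- and expanding the exponential gives
\[
x_\alpha(a) = \begin{pmatrix} 1 & -s/2 & a^\tau \\ 0 & 1 & 0 \\ 0 & -a & I \end{pmatrix}, \qquad x_{-\alpha}(b) = \begin{pmatrix} 1 & 0 & 0 \\ -t/2 & 1 & b^\tau \\ -b & 0 & I \end{pmatrix},
\]
with $t = \sum_j b_j^2$. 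Requiring the $(2,2)$- and $(2,3)$-blocks of $x_\alpha(a)\,x_{-\alpha}(b)\,x_\alpha(a)$ to vanish --- a necessary condition if the product is to be a monomial representative of $w_\alpha$ --- forces $b = (2/s)\,a$, from which $t = 4/s$ follows automatically. This is precisely $b = f(a)$.

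With $b = f(a)$ the block multiplication yields
\[
x_\alpha(a)\,x_{-\alpha}(f(a))\,x_\alpha(a) \;=\; \begin{pmatrix} 0 & -s/2 & 0 \\ -2/s & 0 & 0 \\ 0 & 0 & I - (2/s)\,a a^\tau \end{pmatrix},
\]
whose bottom-right block is the Householder reflection across the hyperplane orthogonal to $a$ (in $O(m-n)$, determinant $-1$); the full matrix therefore has determinant $+1$, lies in $N(S)_\RR$, and acts on $S$ by flipping $t_i$, confirming (3). Because $f \circ f = \mathrm{id}$, the symmetric computation with $a$ and $f(a)$ interchanged gives the identical matrix for $x_{-\alpha}(f(a))\,x_\alpha(a)\,x_{-\alpha}(f(a))$, verifying the braid identity and hence (2). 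The main technical obstacle is simply organizing the $3\times 3$ block multiplication cleanly; the only substantive input is the correct normalization in $f$, whose factor of $2$ --- absent in the $SL_2$ analogue --- reflects the pairing of $f^j_\alpha$ with $f^j_{-\alpha}$ inside the higher-dimensional orthogonal root space.
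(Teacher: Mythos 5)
Your proof is correct, but it takes a different route from the paper's. The paper argues at the Lie algebra level: it checks that $X=\sum_j a_jf^j_\alpha$, $Y=-\sum_j\frac{2a_j}{\sum a_i^2}f^j_{-\alpha}$ and $[X,Y]$ span a copy of $\mathfrak{sl}_2(\RR)$, and then simply invokes Remark \ref{re:1} (the Jacobson--Morosov description of the chain in characteristic zero), which immediately yields the constant chain $x_i=\exp X$, $y_i=\exp(-Y)=x_{-\alpha}(f(a))$ and $w_\alpha(a)=x_\alpha(a)x_{-\alpha}(f(a))x_\alpha(a)$. You instead work entirely at the group level inside the rank-one subgroup $G^\alpha$: you exhibit the constant ansatz, pin down $b=f(a)$ from the necessary vanishing conditions, and verify conditions (2) and (3) of the Chain Lemma by explicit $3\times 3$ block multiplication, concluding via the uniqueness clause (item 4) of Lemma \ref{le:2}. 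Both arguments are sound; I checked your block matrices and the product $x_\alpha(a)x_{-\alpha}(f(a))x_\alpha(a)$, and the braid identity you assert ``by symmetry'' does hold (it is worth one more line: the symmetric computation produces the reflection matrix attached to $c=f(a)$, and since $c$ is parallel to $a$ with $\abs{c}^2=4/\abs{a}^2$ its entries, including the Householder block, coincide with those for $a$). The paper's route is shorter because it delegates the verification of the chain axioms to the general machinery of Remark \ref{re:1}; your route is more computational but self-contained at the group level, and as a by-product it produces the explicit matrix of $w_{L_i}(a)$ that the paper only records afterwards in Remark \ref{re:2}.
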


\begin{proof} It is easy to check
$$\{\sum_j a_jf^{j}_\alpha, \sum_j
\big(-\frac{2a_j}{\sum a_j^{2}}f^{j}_{-\alpha}\big), [\sum_j
a_jf^{j}_\alpha, \sum_j \big(-\frac{2a_j}{\sum
a_j^{2}}f^{j}_{-\alpha}\big)]\}$$ span a three-dimensional Lie
algebra isomorphic to $SL_2(\RR)$.  By Remark \ref{re:1} we get the
conclusion.
\end{proof}

\begin{remark}\label{re:2}
Similar computations can be made for the other roots, such as $\pm
L_i\pm L_j(i<j)$. We record the results here:
\begin{align*}
w_{r}(t)=x_{r}(t)x_{-r}(-t^{-1})x_{r}(t),\qquad t\in \RR^*
\end{align*}
where
\begin{align*}
x_i=x_{r}(t)\forall i,\qquad y_i=x_{-r}(-t^{-1})\forall i.
\end{align*}
Correspondingly, we define
\begin{align*}
&h_r(t)=w_{r}(t)w_{r}(1)^{-1},\qquad t\in\RR^*,r=\pm L_i\pm L_j(i<j),\\
&h_\alpha(a,b)=w_{\alpha}(a)w_{\alpha}(b)^{-1},\qquad a,b\in
\RR^{m-n}\backslash 0, \alpha=\pm L_i.
\end{align*}
Let us write $p(\pi)$ the permutation matrix corresponding to the
permutation $\pi$, that is, the $i,j$ entry of $p(\pi)$ is $1$ if
$i=\pi(j)$ and zeros otherwise. Let $A,B,C,\cdots$ be square
matrices(not necessary the same size), we use
diag$\left(A_{j_1},B_{j_2},C_{j_3},\cdots\right)$ to denote the
$(m+n)\times(m+n)$ matrix that's constructed in the following way.
First, the matrix $A$ is placed as a block in
diag$\left(A_{j_1},B_{j_2},C_{j_3},\cdots\right)$ with its upper
left corner positioned at the $(j_1,j_1)$ entry. Matrices
$B,C,\cdots$ are placed similarly. Then we fill the rest of diagonal
blocks of diag$\left(A_{j_1},B_{j_2},C_{j_3},\cdots\right)$ with
identity matrices of suitable sizes and the off-diagonal blocks with
zero matrices. For example, let $A=\begin{pmatrix} 2 & 3 \\
-1 & 4 \\
  \end{pmatrix}$ and $B=(3)$. Then $\diag\{A_2, B_5\}$ is the
  following matrix
  \[
\begin{pmatrix}1 & 0 & 0 & 0 & 0 \\
0 & 2 & 3 & 0 & 0 \\
0  & -1 & 4 & 0& 0\\
0 & 0 & 0 & 1 & 0\\
0 & 0 & 0 & 0 & 3
\end{pmatrix}.
\]

With these notations we have:
\begin{align*}
w_{L_i-L_j}(t)=p(\pi)\text{diag}\left((-t^{-1})_i,t_j,(-t)_{i+n},(t^{-1})_{j+n}\right),
\qquad \text{ for }t\in \RR^*
\end{align*}
where $\pi$ only permutes $(i,j)$ and $(i+n,j+n)$ while fixes other
numbers.
\begin{align*}
w_{L_i+L_j}(t)=p(\pi)\text{diag}\left((-t^{-1})_i,(t^{-1})_j,(-t)_{i+n},t_{j+n}\right),\qquad
\text{ for }t\in \RR^*
\end{align*}
where $\pi$ only permutes $(i,j+n)$ and $(j,i+n)$ while fixes other
numbers.
\begin{align*}
w_{L_i}(a)=p(\pi)\text{diag}\bigl((-2\abs{a}^{-2})_i,(-\frac{1}{2}\abs{a}^2)_{i+n},B_{2n+1}\bigl),\qquad
\text{ for }a\in \RR^{m-n}\backslash 0,
\end{align*}
where $B\in O(m-n)$ and $\pi$ only permutes $(i,i+n)$ while fixes
other numbers.
\end{remark}
\subsection{Basic relations}\label{sec:4}
We can now define elements $\tilde{x}_{r}(t)$,
$\tilde{x}_{\alpha}(a)$, $\tilde{w}_{r}(t)$,
$\tilde{w}_{\alpha}(a)$, $\tilde{h}_r(t)$, $\tilde{h}_\alpha(a,b)$
etc. as was done in Section \ref{sec:1}. We denote by $\tilde{W}_r$
the subgroup of $\widetilde{G}$ generated by $\tilde{w}_{r}(t)$,
$\tilde{W}_\alpha$ the subgroup generated by
$\tilde{w}_{\alpha}(a)$, $\tilde{H}_r$ the subgroup generated by
$\tilde{h}_{r}(t)$, $\tilde{H}_\alpha$ the subgroup generated by
$\tilde{h}_\alpha(a,b)$. Also, from Lemma \ref{le:5}, it is clear
that certain relations hold both in $\widetilde{G}$ and $G_\RR$. We
record these results in two separate lemmas(Lemma \ref{le:22} and
Lemma \ref{le:14}), since they will serve as ready references later.
\begin{lemma}\label{le:22}
If $a\in\RR^{m-n}$, $t\in\RR\backslash 0$,  the following hold
$\widetilde{G}$(and hence in $G_\RR$ too).

\begin{itemize}
\item[1]$\tilde{w}_{L_n}(a)\tilde{w}_{L_{n-1}-L_n}(t)\tilde{w}_{L_n}(a)^{-1}=\tilde{w}_{L_{n-1}+L_n}\left(-\frac{1}{2}\abs{a}^{2}t\right)$,\\

\item[2]$\tilde{w}_{L_n}(a)\tilde{w}_{L_{n-1}+L_n}(t)\tilde{w}_{L_n}(a)^{-1}=\tilde{w}_{L_{n-1}-L_n}\left(-2\abs{a}^{-2}t\right)$,\\

\item[3]$\tilde{w}_{L_{n-1}-L_n}(t)\tilde{w}_{L_n}(a)\tilde{w}_{L_{n-1}-L_n}(t)^{-1}=\tilde{w}_{L_{n-1}}(at)$,\\

\item[4]$\tilde{w}_{L_{n-1}-L_n}(t)\tilde{w}_{L_{n-1}}(a)\tilde{w}_{L_{n-1}-L_n}(t)^{-1}=\tilde{w}_{L_{n}}\left(-at^{-1}\right)$.
\end{itemize}
Hence,
\begin{itemize}
\item[5]$\tilde{h}_{L_{n-1}-L_n}(t)\tilde{w}_{L_n}(a)\tilde{h}_{L_{n-1}-L_n}(t)^{-1}=\tilde{w}_{L_{n}}(at^{-1})$,\\

\item[6]$\tilde{w}_{L_n}(a)\tilde{h}_{L_{n-1}-L_n}(t)\tilde{w}_{L_n}(a)^{-1}\\
=\tilde{h}_{L_{n-1}+L_n}\left(-\frac{1}{2}\abs{a}^{2}t\right)\tilde{h}_{L_{n-1}+L_n}\left(-\frac{1}{2}\abs{a}^{2}\right)^{-1}$.\\
\end{itemize}
\end{lemma}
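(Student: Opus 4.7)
The plan is to reduce every conjugation in $\widetilde{G}$ to the corresponding conjugation in $G_\RR$ via Lemma~\ref{le:5}, and then to verify the resulting matrix identities using the explicit chain and Weyl-element formulas of Lemma~\ref{le:11} and Remark~\ref{re:2}. Concretely, part~2 of Lemma~\ref{le:5} gives
\[
\tilde{w}_{\alpha}(u)\,\tilde{w}_{\beta}(v)\,\tilde{w}_{\alpha}(u)^{-1}
=\tilde{w}_{w_{\alpha}(\beta)}\!\bigl(w_{\alpha}(u)\,x_{\beta}(v)\,w_{\alpha}(u)^{-1}\bigr),
\]
so the task for each of (1)--(4) is just to (i) record which root $w_{\alpha}(\beta)$ is and (ii) to compute the $G_\RR$-conjugate on the right-hand side.

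For (1) and (2), the reflection $w_{L_n}$ fixes $L_{n-1}$ and sends $L_n\mapsto -L_n$, hence interchanges $L_{n-1}-L_n$ and $L_{n-1}+L_n$; plugging the matrix form of $w_{L_n}(a)$ from Remark~\ref{re:2} into $w_{L_n}(a)\,x_{L_{n-1}\mp L_n}(t)\,w_{L_n}(a)^{-1}$ and using $x_{L_{n-1}\mp L_n}(t)=I+t(e_{n-1,n\pm *}-e_{*,2n-1})$ produces, after tracking the two nontrivial diagonal entries $-2|a|^{-2}$ and $-\tfrac12|a|^2$, exactly the unipotent elements $x_{L_{n-1}+L_n}(-\tfrac12|a|^2 t)$ and $x_{L_{n-1}-L_n}(-2|a|^{-2}t)$ respectively. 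For (3) and (4), one similarly computes with $w_{L_{n-1}-L_n}(t)$ (a signed permutation of the indices $(n-1,n,2n-1,2n)$ with scalars $\pm t^{\pm 1}$): it sends $L_n\mapsto L_{n-1}$ and $L_{n-1}\mapsto L_n$, and the explicit action on the root vectors $f_{L_n}^\ell$ and $f_{L_{n-1}}^\ell$ yields the scaling factors $t$ and $-t^{-1}$ appearing in the statement. In each case, once the $G_\RR$-identity is checked, Lemma~\ref{le:5}(2) lifts it verbatim to $\widetilde{G}$.

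Relations (5) and (6) are purely formal consequences of (1)--(4) and the definition $\tilde{h}_r(t)=\tilde{w}_r(t)\tilde{w}_r(1)^{-1}$. For (5), write $\tilde{h}_{L_{n-1}-L_n}(t)=\tilde{w}_{L_{n-1}-L_n}(t)\tilde{w}_{L_{n-1}-L_n}(1)^{-1}$, apply (4) in the form $\tilde{w}_{L_{n-1}-L_n}(1)^{-1}\tilde{w}_{L_n}(a)\tilde{w}_{L_{n-1}-L_n}(1)=\tilde{w}_{L_{n-1}}(-a)$, and then apply (4) again with parameter $t$ to conclude. For (6), insert $\tilde{w}_{L_{n-1}-L_n}(1)^{-1}\tilde{w}_{L_{n-1}-L_n}(1)$ between the two $\tilde{w}_{L_{n-1}+L_n}$ factors produced by applying (1) twice to $\tilde{w}_{L_n}(a)\tilde{w}_{L_{n-1}-L_n}(t)\tilde{w}_{L_n}(a)^{-1}$ and to $\tilde{w}_{L_n}(a)\tilde{w}_{L_{n-1}-L_n}(1)^{-1}\tilde{w}_{L_n}(a)^{-1}$; regrouping splits the expression as a product of two $\tilde{h}_{L_{n-1}+L_n}$-terms, one with argument $-\tfrac12|a|^2 t$ and the other with argument $-\tfrac12|a|^2$ inverted, which is exactly the claim.

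The main obstacle I expect is bookkeeping: the element $w_{L_n}(a)$ has both a permutation part $p(\pi)$ swapping $(n,2n)$ and a compact ``rotation'' block $B\in O(m-n)$ coming from the higher-dimensional root space, so when one conjugates an $x_{L_{n-1}\pm L_n}(t)$ by it, one must check that the $B$-block genuinely acts trivially on the relevant root vectors (those involve only indices $\le 2n$) so that no unwanted rotation factor appears in the answer. Once this is confirmed the scalar computations are elementary, and the lift from $G_\RR$ to $\widetilde{G}$ is automatic via Lemma~\ref{le:5}.
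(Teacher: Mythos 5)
Your proposal is correct and takes essentially the same route as the paper, which proves Lemma \ref{le:22} exactly this way: it invokes Lemma \ref{le:5}(2) and the explicit matrix forms of the chain elements from Lemma \ref{le:11} and Remark \ref{re:2}, with the scalar computations (and the harmlessness of the $B$-block, which only touches indices $>2n$) left implicit, and obtains (5)--(6) formally from (1)--(4) and $\tilde{h}_r(t)=\tilde{w}_r(t)\tilde{w}_r(1)^{-1}$. Two cosmetic slips only: the inner conjugation you use in (5) is indeed the rearranged form of (4) (equivalently (3) with $t=-1$), and in (6) the inserted identity factor should be $\tilde{w}_{L_{n-1}+L_n}(1)^{-1}\tilde{w}_{L_{n-1}+L_n}(1)$ rather than the $L_{n-1}-L_n$ one.
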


We denote by $S^{i}$ the sphere in $\RR^{i+1}$, denote by
$\tilde{W}_s$ the subgroup generated by
$\tilde{w}_{L_n}\left(\sqrt{2}a\right)$, $a\in S^{m-n-1}$. If
$a=\left(a_1,...,a_n\right)$, then
$$\pi_1(\tilde{w}_{L_n}(\sqrt{2}a))=p(\pi)\text{diag}\bigl((-1)_n,(-1)_{2n},B_{2n+1}\bigl),$$
where $\pi$ permutes n and 2n while fixes other numbers and $B\in
O(m-n)$ with entries $B_{i,j}=-2a_ia_j, i\neq j$ and
$B_{i,i}=1-2a_i^2$. Then $B$ is a reflection in the hyperplane
orthogonal to $a$. Thus for any $w\in \tilde{W}_s$,
$$\pi_1(w)=p(\pi)\text{diag}\bigl((-1)^\delta_n,(-1)^\delta_{2n},B_{2n+1}\bigl),$$
where $\delta=2$ if $p(\pi)=I_{m+n}$ and $B\in SO(m-n)$; $\delta=1$
if $p(\pi)$ permutes $n$ and $2n$ and and $B\in O(m-n)$. Without
confusion, we identify $\pi_1(w_{L_n}(\sqrt{2}a))$ with $B$. The
following holds:
\begin{lemma}\label{le:14}
If $w\in \tilde{W}_s$,
$\pi_1(w)=p(\pi)\diag\bigl((-1)^\delta_n,(-1)^\delta_{2n},B_{2n+1}\bigl)$,
$\delta=1$ or $2$, $B\in O(m-n)$, $a\in S^{m-n-1}$, we have
\begin{align*}
&w\tilde{w}_{L_n}\bigl(\sqrt{2}a\bigl)w^{-1}=\left\{\begin{aligned}
 &\tilde{w}_{L_n}\bigl(\sqrt{2}B\cdot a\bigl), & \qquad &\text{ if } p(\pi)=I_{m+n},\notag\\
 &\tilde{w}_{L_{-n}}\bigl(-\sqrt{2}B\cdot a\bigl), & \qquad &\text{ if } p(\pi)\neq I_{m+n},\\
 \end{aligned}
 \right.\\
\end{align*}
where $\cdot$ means linear operation on vectors. Using above formula
and Definition \ref{de:2} one further has
\begin{align*}
&w\tilde{w}_{L_n}\bigl(\sqrt{2}a\bigl)w^{-1}=\left\{\begin{aligned}
 &\tilde{w}_{L_n}\bigl(\sqrt{2}B\cdot a\bigl), & \qquad &\text{ if } p(\pi)=I_{m+n},\\
 &\tilde{w}_{L_{n}}\bigl(-\sqrt{2}B\cdot a\bigl). & \qquad &\text{ if } p(\pi)\neq I_{m+n}\\
 \end{aligned}
 \right.
\end{align*}
\end{lemma}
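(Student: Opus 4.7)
The plan is first to prove the top formula (with $\tilde{w}_{L_{-n}}$ on the right) by iteration of Lemma~\ref{le:5}(2), then to derive the bottom formula from Definition~\ref{de:2}. Applied to a single generator, Lemma~\ref{le:5}(2) gives
$$\tilde{w}_{L_n}(\sqrt{2}b)\,\tilde{w}_{L_n}(\sqrt{2}a)\,\tilde{w}_{L_n}(\sqrt{2}b)^{-1}=\tilde{w}_{w_{L_n}(L_n)}\bigl(w_{L_n}(\sqrt{2}b)x_{L_n}(\sqrt{2}a)w_{L_n}(\sqrt{2}b)^{-1}\bigr).$$
By induction on the length of $w\in\tilde{W}_s$ as a word in the generators $\tilde{w}_{L_n}(\sqrt{2}b)$, $b\in S^{m-n-1}$, this extends to
$$w\,\tilde{w}_{L_n}(\sqrt{2}a)\,w^{-1}=\tilde{w}_{\pi_1(w)(L_n)}\bigl(\pi_1(w)\,x_{L_n}(\sqrt{2}a)\,\pi_1(w)^{-1}\bigr),$$
the induction being legitimate since at each step the intermediate root is $\pm L_n\in\Phi_1$, so Lemma~\ref{le:5}(2) continues to apply.

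After this reduction the lemma becomes a block-matrix computation in $G_\RR$. From Remark~\ref{re:2}, each generator projects to $p(\pi_0)\diag((-1)_n,(-1)_{2n},B_{b,2n+1})$, where $p(\pi_0)$ is the transposition of positions $n$ and $2n$ and $B_b=I_{m-n}-2bb^{\tau}$ is the reflection in the hyperplane orthogonal to $b$. Since $p(\pi_0)$ commutes with the diagonal factor (both $-1$'s lie at the swapped positions and $B_b$ acts only on indices $>2n$), a product of $k$ generators projects to $p(\pi_0)^k\diag((-1)^k_n,(-1)^k_{2n},B_{b_1}\cdots B_{b_k})$. Thus $k$ even corresponds to $p(\pi)=I_{m+n}$, $\delta=2$, $B\in SO(m-n)$, while $k$ odd corresponds to $p(\pi)\neq I_{m+n}$, $\delta=1$, $B\in O(m-n)\setminus SO(m-n)$; correspondingly, the action of $\pi_1(w)$ on the root $L_n$ is trivial in the first case and is the negation in the second.

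For the adjoint action on root vectors, the crucial observation is that $2L_n\notin\Phi$, so $\mathfrak{g}_{L_n}$ is abelian: the $f_{L_n}^\ell=e_{n,2n+\ell}-e_{2n+\ell,2n}$ pairwise commute and $x_{L_n}(\sqrt{2}a)=\exp\bigl(\sum_\ell\sqrt{2}a_\ell f_{L_n}^\ell\bigr)$. A direct block computation then yields
$$\pi_1(w)\,f_{L_n}^\ell\,\pi_1(w)^{-1}=\begin{cases}\sum_p B_{p\ell}\,f_{L_n}^p, & p(\pi)=I_{m+n},\\ -\sum_p B_{p\ell}\,f_{-L_n}^p, & p(\pi)\neq I_{m+n},\end{cases}$$
the minus sign in the second case being contributed by the two $-1$'s at positions $n$ and $2n$. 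Exponentiating (justified by commutativity) and using the orthogonality of $B$, the right-hand side of the iterated formula becomes $\tilde{w}_{L_n}(\sqrt{2}B\cdot a)$ in the first case and $\tilde{w}_{-L_n}(-\sqrt{2}B\cdot a)$ in the second, which is precisely the first assertion.

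The second assertion is then immediate from Definition~\ref{de:2}. With $f(v)=2v/\abs{v}^2$ as in Lemma~\ref{le:11}, one has $\tilde{w}_{-L_n}(v)=\tilde{w}_{L_n}(f(v))$, and since $\abs{-\sqrt{2}B\cdot a}^2=2\abs{a}^2=2$ (because $B$ is orthogonal and $a\in S^{m-n-1}$), the involution $f$ fixes $-\sqrt{2}B\cdot a$, so $\tilde{w}_{L_{-n}}(-\sqrt{2}B\cdot a)=\tilde{w}_{L_n}(-\sqrt{2}B\cdot a)$. The main obstacle I expect is the careful sign bookkeeping in the block computation above, in particular extracting the minus sign that turns $+B\cdot a$ into $-B\cdot a$ in the swap case, together with verifying that the chain identity $\tilde{w}_{-\alpha}(f(u))=\tilde{w}_\alpha(u)$ used in the second assertion, immediate in $G_\RR$ via Lemma~\ref{le:2}, is genuinely available in the universal central extension $\widetilde{G}$ rather than only in $G_\RR$.
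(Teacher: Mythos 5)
Your proof is correct and takes essentially the same route as the paper, which records Lemma \ref{le:14} as an immediate consequence of Lemma \ref{le:5} together with the explicit matrix form of $\pi_1(w)$ and Definition \ref{de:2}; your induction over the word in the generators and the block computation (including the sign from the two $-1$'s at positions $n$ and $2n$, and the norm argument $\abs{-\sqrt{2}B\cdot a}=\sqrt{2}$ so that $f$ fixes the argument) just spell that out. The worry you flag at the end is not a real gap: the identity $\tilde{w}_{-\alpha}(f(u))=\tilde{w}_{\alpha}(u)$ does hold in $\widetilde{G}$, for instance by applying Lemma \ref{le:5}(2) with $\beta=\alpha$ and $v=u$ (the conjugate $w_\alpha(u)x_\alpha(u)w_\alpha(u)^{-1}$ is exactly the chain partner), which is precisely what the paper's appeal to Definition \ref{de:2} encodes.
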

\subsection{Structure of $\tilde{H}$} We recall the notation set in Section \ref{sec:4}. We
prove the following:
\begin{lemma}\label{le:6}
If $m=n$, $\tilde{H}$ is generated by $\prod_{r\in
\Delta}\tilde{H}_{r}$, where $\Delta=\{L_i-L_{i+1},L_{n-1}+L_n\}$;
if $m\geq n+1$, $\tilde{H}$ is generated by $\bigl(\prod_{r\in
\Delta}\tilde{H}_{r}\bigl)\cdot\tilde{H}_{s}$, where
$\Delta=\{L_i-L_{i+1},L_{n-1}+L_n\}$ and $\tilde{H}_{s}$ is
generated by $\tilde{h}_{L_n}(\sqrt{2}a, \sqrt{2}b), a,b\in
S^{m-n-1}$.
\end{lemma}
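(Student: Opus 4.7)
The case $m=n$ is immediate: the simple root system of $SO^+(n,n)$ is exactly $\Delta=\{L_i-L_{i+1}\}_i\cup\{L_{n-1}+L_n\}$, so Lemma~\ref{le:1}(3) gives $\tilde{H}=\prod_{r\in\Delta}\tilde{H}_r$ on the nose. For $m\ge n+1$ the simple roots are instead $\Delta'=\{L_i-L_{i+1}\}_i\cup\{L_n\}$, and Lemma~\ref{le:1}(3) yields $\tilde{H}=\bigl(\prod_i\tilde{H}_{L_i-L_{i+1}}\bigr)\tilde{H}_{L_n}$. Since $L_{n-1}+L_n\in\Phi_1$ we have $\tilde{H}_{L_{n-1}+L_n}\subset\tilde{H}$, and $\tilde{H}_s\subset\tilde{H}_{L_n}$ by construction, so the remaining content of the lemma is to show that every generator $\tilde{h}_{L_n}(a,b)$ of $\tilde{H}_{L_n}$ lies in the subgroup generated by $\{\tilde{H}_{L_i-L_{i+1}}\}_i$, $\tilde{H}_{L_{n-1}+L_n}$ and $\tilde{H}_s$.

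Writing $a=\sqrt{2}r_a\hat{a}$ and $b=\sqrt{2}r_b\hat{b}$ with $r_a,r_b>0$ and $\hat{a},\hat{b}\in S^{m-n-1}$, I would factor
$$\tilde{h}_{L_n}(a,b)=\tilde{h}_{L_n}\bigl(\sqrt{2}r_a\hat{a},\sqrt{2}r_a\hat{b}\bigr)\cdot\tilde{h}_{L_n}\bigl(\sqrt{2}r_a\hat{b},\sqrt{2}r_b\hat{b}\bigr),$$
splitting off a ``rotation'' of the unit sphere from a ``radial'' rescaling. Applying Lemma~\ref{le:22}(5) with $t=r_a^{-1}$ to each of $\hat{a}$ and $\hat{b}$ rewrites the first factor as
$$\tilde{h}_{L_{n-1}-L_n}(r_a^{-1})\,\tilde{h}_{L_n}\bigl(\sqrt{2}\hat{a},\sqrt{2}\hat{b}\bigr)\,\tilde{h}_{L_{n-1}-L_n}(r_a^{-1})^{-1},$$
a conjugate of an element of $\tilde{H}_s$ by an element of $\tilde{H}_{L_{n-1}-L_n}$. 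A second application of Lemma~\ref{le:22}(5), this time with $t=r_a/r_b$ and base vector $\sqrt{2}r_a\hat{b}$, rewrites the radial factor as
$$\tilde{w}_{L_n}(\sqrt{2}r_a\hat{b})\,\tilde{h}_{L_{n-1}-L_n}(r_a/r_b)\,\tilde{w}_{L_n}(\sqrt{2}r_a\hat{b})^{-1}\,\tilde{h}_{L_{n-1}-L_n}(r_a/r_b)^{-1},$$
and then Lemma~\ref{le:22}(6) (with $\abs{a}^2=2r_a^2$) identifies the middle triple product with $\tilde{h}_{L_{n-1}+L_n}(-r_a^3/r_b)\,\tilde{h}_{L_{n-1}+L_n}(-r_a^2)^{-1}$, placing the radial factor in $\tilde{H}_{L_{n-1}+L_n}\cdot\tilde{H}_{L_{n-1}-L_n}$.

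Combining the two computations places $\tilde{h}_{L_n}(a,b)$ inside the subgroup generated by $\{\tilde{H}_{L_i-L_{i+1}}\}_i$, $\tilde{H}_{L_{n-1}+L_n}$ and $\tilde{H}_s$, which finishes the $m\ge n+1$ case. I expect the main obstacle to be bookkeeping rather than any deep idea: one has to pick a decomposition of $\tilde{h}_{L_n}(a,b)$ that cleanly separates the ``spherical'' content (which must be routed through $\tilde{H}_s$, a collection fixed at magnitude $\sqrt{2}$) from the ``magnitude'' content, and then tune the conjugating parameters in Lemma~\ref{le:22}(5)--(6) so that both residues land in the already-available $\tilde{H}_{L_{n-1}\pm L_n}$ subgroups. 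Access to $\tilde{H}_{L_{n-1}-L_n}$ is indispensable here, both for stretching elements of $\tilde{H}_s$ off the unit sphere in the rotational factor and for absorbing the extra piece produced by the commutator identity in the radial factor.
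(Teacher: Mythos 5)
Your proposal is correct and follows essentially the same route as the paper: after invoking Lemma~\ref{le:1}(3) to reduce to the simple roots, both arguments show $\tilde{H}_{L_n}\subseteq\langle\tilde{H}_s,\tilde{H}_{L_{n-1}-L_n},\tilde{H}_{L_{n-1}+L_n}\rangle$ by conjugating with $\tilde{h}_{L_{n-1}-L_n}$ to normalize vectors to length $\sqrt{2}$ (Lemma~\ref{le:22}(5)) and absorbing the leftover factors into $\tilde{H}_{L_{n-1}\pm L_n}$ via the commutation relation of Lemma~\ref{le:22}(6). The only difference is cosmetic bookkeeping: you first telescope $\tilde{h}_{L_n}(a,b)$ into a spherical and a radial factor, whereas the paper rescales both $\tilde{w}_{L_n}$ factors at once and then pushes the inner $\tilde{h}_{L_{n-1}-L_n}$ terms across; the identities used are the same.
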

\begin{proof}
By Lemma \ref{le:1}, the case for $m=n$ is obvious. If $m\geq n+1$,
$\tilde{H}$ is generated by $\prod_{r\in \Delta}\tilde{H}_{r}$,
where $\Delta=\{L_i-L_{i+1},L_n\}$.

For any $a,b\in \RR^{m-n}\backslash 0$, denote $\sum a_i^2=a_0$,
$\sum b_i^2=b_0$. Using Lemma \ref{le:22}, we have:
\begin{align*}
\tilde{h}_{L_n}(a,b)&=\tilde{w}_{L_n}(a)\tilde{w}_{L_n}(-b)\\
&=\tilde{h}_{L_{n-1}-L_n}\big(\frac{\sqrt{2}}{\sqrt{a_0}}\big)\tilde{w}_{L_n}\big(\frac{\sqrt{2}a}{\sqrt{a_0}}\big)\tilde{h}_{L_{n-1}-L_n}
\big(\frac{\sqrt{2}}{\sqrt{a_0}}\big)^{-1}\\
&\cdot
\tilde{h}_{L_{n-1}-L_n}\big(\frac{\sqrt{2}}{\sqrt{b_0}}\big)\tilde{w}_{L_n}\big(-\frac{\sqrt{2}b}{\sqrt{b_0}}\big)\tilde{h}_{L_{n-1}-L_n}\big(\frac{\sqrt{2}}{\sqrt{b_0}}
\big)^{-1}\\
&=\tilde{h}_{L_{n-1}-L_n}\big(\frac{\sqrt{2}}{\sqrt{a_0}}\big)\tilde{h}_{L_{n-1}+L_n}(-1)\tilde{h}_{L_{n-1}+L_n}\big(-\frac{\sqrt{2}}{\sqrt{a_0}}\big)^{-1}\\
&\cdot
\tilde{h}_{L_{n-1}+L_n}\big(-\frac{\sqrt{2}}{\sqrt{b_0}}\big)\tilde{h}_{L_{n-1}+L_n}(-1)^{-1}\tilde{h}_{L_n}\big(\frac{\sqrt{2}a}{\sqrt{a_0}},\frac{\sqrt{2}b}{\sqrt{b_0}}
\big)\\
&\cdot
\tilde{h}_{L_{n-1}-L_n}\big(\frac{\sqrt{2}}{\sqrt{b_0}}\big)^{-1}
\end{align*}
Thus $\tilde{H}_{L_n}$ is generated by $\tilde{H}_{s}$,
$\tilde{H}_{L_{n-1}-L_n}$ and $\tilde{H}_{L_{n-1}+L_n}$. Hence we
proved the lemma.
\end{proof}

If $m\geq n+2$, for any $(a,b)\in S^1,$ $1\leq j\leq m-n-1$, let
\begin{align*}
\tilde{h}^{j}_{L_n}\bigl(\sqrt{2}a,\sqrt{2}b\bigl)&=\tilde{w}_{L_n}\bigl(0,\dots,0,\underset{j}{\sqrt{2}a},\underset{j+1}{\sqrt{2}b},0,\dots,0\bigl)\\
&\cdot
\tilde{w}_{L_n}\bigl(0,\dots,0,\underset{j}{-\sqrt{2}},0,\dots,0\bigl).
\end{align*}

\begin{corollary}\label{le:9} If $m\geq n+2$, $\tilde{H}$
is generated by $\bigl(\prod_{r\in
\Delta}\tilde{H}_{r}\bigl)\tilde{H}_{s_0}$, where
$\Delta=\{L_i-L_{i+1},L_{n-1}+L_n\}$ and $\tilde{H}_{s_0}$ is
generated by $\tilde{h}^j_{L_n}(\sqrt{2}a, \sqrt{2}b), (a,b\in
S^{1})$.
\end{corollary}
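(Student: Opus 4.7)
The plan is to use Lemma~\ref{le:6} to reduce the problem to showing that $\tilde H_s$ is contained in the subgroup $\tilde K$ generated by $\prod_{r\in\Delta}\tilde H_r$ and $\tilde H_{s_0}$. Every generator of $\tilde H_s$ has the form $\tilde h_{L_n}(\sqrt 2 a,\sqrt 2 b)$ with $a,b\in S^{m-n-1}$, and the definition $\tilde h_{L_n}(x,y)=\tilde w_{L_n}(x)\tilde w_{L_n}(y)^{-1}$ yields the telescoping identity
\[
\tilde h_{L_n}(\sqrt 2 a,\sqrt 2 b)\;=\;\tilde h_{L_n}(\sqrt 2 a,\sqrt 2 e_1)\cdot\tilde h_{L_n}(\sqrt 2 b,\sqrt 2 e_1)^{-1},
\]
so it suffices to show $\tilde h_{L_n}(\sqrt 2 a,\sqrt 2 e_1)\in\tilde K$ for every $a\in S^{m-n-1}$.

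I would prove this by induction on the size $k$ of the support $\{j:a_j\neq 0\}$ of $a$. For the base case $k\le 2$, writing $a=a_1e_1+a_2e_2$ with $(a_1,a_2)\in S^1$, the identity $\tilde w_{L_n}(-\sqrt 2 e_1)=\tilde w_{L_n}(\sqrt 2 e_1)^{-1}$ and the definition of $\tilde h^1_{L_n}$ give directly
\[
\tilde h_{L_n}(\sqrt 2 a,\sqrt 2 e_1)\;=\;\tilde w_{L_n}(\sqrt 2 a)\,\tilde w_{L_n}(-\sqrt 2 e_1)\;=\;\tilde h^1_{L_n}(\sqrt 2 a_1,\sqrt 2 a_2)\;\in\;\tilde H_{s_0}.
\]

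For the inductive step with $k\ge 3$, I would choose an angle $\phi$ so that the plane rotation $R_\phi$ in the $(e_{k-1},e_k)$-coordinate plane sends $(a_{k-1},a_k)$ to $(\sqrt{a_{k-1}^2+a_k^2},0)$; then $a':=R_\phi a$ is a unit vector with support in $\{1,\ldots,k-1\}$. Setting $\tilde g:=\tilde h^{k-1}_{L_n}(\sqrt 2\cos(\phi/2),\sqrt 2\sin(\phi/2))\in\tilde H_{s_0}$, the reflection formula in Remark~\ref{re:2} identifies $\pi_1(\tilde g)$ with the composition of reflections through the hyperplanes orthogonal to $\cos(\phi/2)e_{k-1}+\sin(\phi/2)e_k$ and to $e_{k-1}$, which is precisely $R_\phi$; the two permutation factors from the constituent $\tilde w_{L_n}$'s cancel, so $p(\pi)=I_{m+n}$. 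Because $k\ge 3$, the rotation $R_\phi$ fixes $e_1$, and Lemma~\ref{le:14} applied in the $p(\pi)=I_{m+n}$ case yields
\[
\tilde g\,\tilde h_{L_n}(\sqrt 2 a,\sqrt 2 e_1)\,\tilde g^{-1}\;=\;\tilde h_{L_n}(\sqrt 2 a',\sqrt 2 e_1).
\]
Since $\tilde g\in\tilde H_{s_0}\subset\tilde K$ and, by the inductive hypothesis, the right-hand side lies in $\tilde K$, we conclude $\tilde h_{L_n}(\sqrt 2 a,\sqrt 2 e_1)\in\tilde K$, completing the induction.

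The main point requiring care is the identification $\pi_1(\tilde g)=R_\phi$, which reduces to the standard two-dimensional computation that the composition of two reflections whose axes differ by angle $\phi/2$ equals the rotation by $\phi$; with that in hand, the induction strips off one non-zero coordinate of $a$ at a time via a Givens rotation in $\tilde H_{s_0}$ that fixes $e_1$, and the recursion terminates at the two-coordinate case which is built into the definition of $\tilde h^1_{L_n}$.
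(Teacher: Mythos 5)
Your argument is correct in substance and rests on the same two pillars as the paper's proof: Lemma \ref{le:6} to reduce everything to $\tilde H_{s}$, and Lemma \ref{le:14} together with the observation that $\pi_1$ of the generators $\tilde h^{j}_{L_n}$ are plane rotations, so conjugation by elements of $\tilde H_{s_0}$ moves the vectors labelling $\tilde w_{L_n}$ around the sphere. The organization differs: the paper chooses $A,B\in\tilde H_{s_0}$ with $A\tilde w_{L_n}(\sqrt2,0,\dots,0)A^{-1}=\tilde w_{L_n}(\sqrt2 a)$ and $B\tilde w_{L_n}(-\sqrt2,0,\dots,0)B^{-1}=\tilde w_{L_n}(\sqrt2 b)$ (using that $\pi_1(\tilde H_{s_0})$ contains a copy of $SO(m-n)$, hence acts transitively on the sphere), rewrites $\tilde w_{L_n}(\sqrt2 a)\tilde w_{L_n}(\sqrt2 b)=A\bigl(\tilde w_{L_n}(\sqrt2,0,\dots,0)A^{-1}B\,\tilde w_{L_n}(-\sqrt2,0,\dots,0)\bigr)B^{-1}$, and finishes with the invariance $\tilde w_{L_n}(\sqrt2,0,\dots,0)\tilde H_{s_0}\tilde w_{L_n}(\sqrt2,0,\dots,0)^{-1}\subseteq\tilde H_{s_0}$, thereby proving the sharper statement $\tilde H_{s}=\tilde H_{s_0}$. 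You instead normalize the second argument to $e_1$ by telescoping (legitimate, since $\tilde w_{L_n}(\sqrt2 e_1)^{-1}=\tilde w_{L_n}(-\sqrt2 e_1)$ by Definition \ref{de:2}) and then realize the needed rotation explicitly as a product of consecutive Givens rotations fixing $e_1$, landing directly on a generator $\tilde h^{1}_{L_n}$ in the base case; this avoids the final invariance step and is a bit more hands-on, at the cost of only giving the containment $\tilde H_{s}\subseteq\langle\prod_{r\in\Delta}\tilde H_r,\tilde H_{s_0}\rangle$ rather than the equality $\tilde H_s=\tilde H_{s_0}$ (which is all the corollary as stated requires). Your identification of $\pi_1(\tilde g)$ as the rotation $R_\phi$ with trivial permutation part is consistent with the formulas following Lemma \ref{le:22} and with Remark \ref{re:2}, so the application of Lemma \ref{le:14} in the $p(\pi)=I_{m+n}$ case is justified.

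One bookkeeping point needs repair. You induct on the cardinality $k$ of the support of $a$, but both the inductive step (a rotation in the $(e_{k-1},e_k)$-plane acting on $(a_{k-1},a_k)$) and the base case (writing $a=a_1e_1+a_2e_2$) tacitly assume the support is the initial segment $\{1,\dots,k\}$; for a vector such as $a=e_3$, or one supported on $\{1,5,9\}$, the step as written does not decrease your induction parameter and the base case does not apply. The fix is immediate with the tools you already use: induct on the largest index $N$ with $a_N\neq0$. If $N\ge3$, the Givens rotation in the $(e_{N-1},e_N)$-plane lies in $\tilde H_{s_0}$, fixes $e_1$, and strictly lowers $N$; if $N\le2$ you are in your base case. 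With that change the argument is complete.
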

\begin{proof}
By Lemma \ref{le:6}, we just need to show
$\tilde{H}_{s}=\tilde{H}_{s_0}$. Observe that
$\pi_1\bigl(\tilde{h}^{j}_{L_n}(\sqrt{2}a,\sqrt{2}b)\bigl)$=diag$(R_{2n+j})$,
where
$$R=\left(
\begin{array}{ccc}
   a^2-b^2& -2ab  \\
   2ab & a^2-b^2\\
\end{array}
\right).$$ Then it is clear that
$\pi_1(\tilde{h}^{j}_{L_n}(a,b))(j\leq m-n-1)$ generate a subgroup
isomorphic to $SO(m-n)$.

Hence for any $a\in S^{m-n-1}$ we can find $(e_i,f_i)\in S^1$ such
that
$$\bigl(\prod_i\pi_1(\tilde{h}^{i}_{L_n}(\sqrt{2}e_i,\sqrt{2}f_i))\bigl)\cdot (\sqrt{2},0,...,0)=\sqrt{2}a.$$
Denote $\prod_i\tilde{h}^{i}_{L_n}(\sqrt{2}e_i,\sqrt{2}f_i)$ by $A$.
By Lemma \ref{le:14} we have
\begin{align*}
A\tilde{w}_{L_n}(\sqrt{2},0,...,0)A^{-1}=\tilde{w}_{L_n}(\sqrt{2}a).
\end{align*}
Similarly, for any $b\in S^{m-n-1}$, we can find $B\in
\tilde{H}_{s_0}$ such that
\begin{align*}
B\tilde{w}_{L_n}(-\sqrt{2},0,...,0)B^{-1}=\tilde{w}_{L_n}(\sqrt{2}b).
\end{align*}
It follows
\begin{align*}
&\tilde{w}_{L_n}(\sqrt{2}a)\tilde{w}_{L_n}(\sqrt{2}b)\\
&=A\bigl(\tilde{w}_{L_n}(\sqrt{2},0,...,0)A^{-1}B\tilde{w}_{L_n}(-\sqrt{2},0,...,0)\bigl)B^{-1}.
\end{align*}
By Lemma \ref{le:14}, it is easy to check
$$\tilde{w}_{L_n}(\sqrt{2},0,...,0)\tilde{H}_{s_0}\tilde{w}_{L_n}(-\sqrt{2},0,...,0)^{-1}\subseteq\tilde{H}_{s_0}.$$
Thus we get the conclusion.
\end{proof}
\begin{lemma}\label{le:23}
\begin{align*}
&(1) \ker(\pi_1)\cap \tilde{H}_{L_1-L_2}=\{\prod_i
\tilde{h}_{L_1-L_2}(t_i)\mid \text{ \emph{with} }\prod_i t_i=1\}.\\
&(2)  \ker(\pi_1)\cap \tilde{H}_{r}=\ker(\pi_1)\cap
\tilde{H}_{L_1-L_2}, \qquad \text{ \emph{for} }r=\pm L_i\pm
L_j(i\neq j).
\end{align*}
\end{lemma}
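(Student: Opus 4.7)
The plan is to prove (1) first as the substantive step, and then derive (2) by a Weyl-group conjugation argument that exploits centrality of $\ker\pi_1$ in $\widetilde{G}$.

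For (1), the inclusion $\supseteq$ is immediate: by Remark \ref{re:2}, $h_{L_1-L_2}:\RR^*\to H_{L_1-L_2}$ is a homomorphism, so
$\pi_1\bigl(\prod_i\tilde{h}_{L_1-L_2}(t_i)\bigr)=h_{L_1-L_2}(\prod_i t_i)=e$ whenever $\prod_i t_i=1$. For the reverse inclusion, introduce the Steinberg symbol $c(s,t):=\tilde{h}_{L_1-L_2}(s)\tilde{h}_{L_1-L_2}(t)\tilde{h}_{L_1-L_2}(st)^{-1}$. A standard computation in $\widetilde{G}$, using Lemma \ref{le:5} as in Steinberg's and Deodhar's framework, shows that $c(\cdot,\cdot)$ lies in $\ker\pi_1$, is central in $\widetilde{G}$, and defines a bimultiplicative symbol on $\RR^*$. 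By Theorem \ref{th:6} every such continuous symbol satisfies $c(s,t)^2=e$. Combined with $\tilde{h}_{L_1-L_2}(1)=e$ and the tautology $\tilde{h}_{L_1-L_2}(s)^{-1}=c(s^{-1},s)^{-1}\tilde{h}_{L_1-L_2}(s^{-1})$, this yields the key reduction formula
$$\tilde{h}_{L_1-L_2}(s)^{-1}=\tilde{h}_{L_1-L_2}(s^{-1})\tilde{h}_{L_1-L_2}(s)\tilde{h}_{L_1-L_2}(s^{-1}),$$
showing that every element of $\tilde{H}_{L_1-L_2}$ can be rewritten as a \emph{positive} product $\eta=\prod_i\tilde{h}_{L_1-L_2}(t_i)$. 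Membership of such an $\eta$ in $\ker\pi_1$ forces $h_{L_1-L_2}(\prod_i t_i)=e$, and since $h_{L_1-L_2}$ is injective on $\RR^*$ this gives $\prod_i t_i=1$.

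For (2), fix a root $r=\pm L_i\pm L_j$ with $i\neq j$. Since the Weyl group of $SO^+(m,n)$ acts transitively on the set of such long roots, repeated application of Lemma \ref{le:5}(3) — together with the explicit identities in Lemma \ref{le:22} when a sign-flip is required — yields an element $w\in\widetilde{N}$ (a product of $\tilde{w}_{L_p-L_{p+1}}$'s, and when needed $\tilde{w}_{L_{n-1}+L_n}$) such that $w\tilde{h}_{L_1-L_2}(t)w^{-1}=\tilde{h}_r(\varphi(t))$ for an explicit bijection $\varphi:\RR^*\to\RR^*$. Applying the same construction to $w^{-1}$ gives both inclusions, hence $w\tilde{H}_{L_1-L_2}w^{-1}=\tilde{H}_r$. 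Because $\ker\pi_1\subseteq Z(\widetilde{G})$, conjugation by $w$ fixes every element of $\ker\pi_1$ pointwise, so
$$\ker\pi_1\cap\tilde{H}_r=w\bigl(\ker\pi_1\cap\tilde{H}_{L_1-L_2}\bigr)w^{-1}=\ker\pi_1\cap\tilde{H}_{L_1-L_2},$$
which is the claim.

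The main obstacle I anticipate is the positive-product normalization step in (1): converting an arbitrary word in the $\tilde{h}_{L_1-L_2}(t)$'s into one with no inverses genuinely requires the order-two bound on continuous Steinberg symbols over $\RR$ from Theorem \ref{th:6}, and this is the only point where the choice of base field enters in an essential way. Everything else in both (1) and (2) is formal manipulation inside the universal central extension using the identities already set up in Section \ref{sec:12}.
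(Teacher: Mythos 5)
Your part (2) is sound and, in substance, is the same mechanism the paper uses: the paper observes that $\ker(\pi_1)\cap\tilde{H}_r$ is generated by the elements $\tilde{h}_r(t_1)\tilde{h}_r(t_2)\tilde{h}_r(t_1t_2)^{-1}$ and then identifies these for Weyl-conjugate roots by an argument as in Moore's Lemma 8.2, which is exactly your ``conjugate the subgroups by representatives in $\widetilde{N}$ and use that $\ker(\pi_1)$ is central, hence fixed pointwise'' argument, stated directly. One small imprecision: by Lemma \ref{le:5}(3) the conjugate of $\tilde{h}_{L_1-L_2}(t)$ is in general of the form $\tilde{h}_r(ct)\tilde{h}_r(c)^{-1}$ rather than a single $\tilde{h}_r(\varphi(t))$, but this does not affect the subgroup equality $w\tilde{H}_{L_1-L_2}w^{-1}=\tilde{H}_r$ that your argument actually needs. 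For (1), the paper itself is terser than you are: it just computes $\pi_1\bigl(\tilde{h}_{L_1-L_2}(t)\bigr)=\diag\bigl(t_1,(t^{-1})_2,(t^{-1})_{1+n},t_{2+n}\bigr)$ and declares (1) clear; your positive-product normalization addresses precisely the point this glosses over.

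The genuine flaw is the step in (1) where you invoke Theorem \ref{th:6} to conclude $c(s,t)^2=e$. Theorem \ref{th:6} is about \emph{continuous} Steinberg symbols, whereas your $c$ takes values in $\ker(\pi_1)\subseteq Z(\widetilde{G})$, the Schur multiplier of the abstract group $G^+_\RR$: no topology is present and no continuity has been established, so Milnor's theorem does not apply; moreover the asserted conclusion is false in $\widetilde{G}$ in general (universal symbols with positive arguments need not have finite order -- for split groups they generate $K_2(\RR)$, which contains a divisible part; if all symbols had order two, the Schur-multiplier analysis of Sections 5--7 would be superfluous, and continuity only becomes available much later, when symbols are evaluated by periodic cycle functionals). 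Fortunately your argument needs far less: the reduction formula $\tilde{h}_{L_1-L_2}(s)^{-1}=\tilde{h}_{L_1-L_2}(s^{-1})\tilde{h}_{L_1-L_2}(s)\tilde{h}_{L_1-L_2}(s^{-1})$ is equivalent to $c(s^{-1},s)^2=e$, and this is a purely formal consequence of bimultiplicativity together with $c(t,-t)=1$, since then $c(t,t^{-1})=c(t,t)^{-1}=c(t,-1)$ and $c(t,-1)^2=c(t,1)=e$. These identities are exactly the ones recorded in Lemma \ref{le:24} (proved as in the appendix of \cite{Moore}, independently of Lemma \ref{le:23}, so there is no circularity), and they hold over any field; so, contrary to your closing remark, neither Milnor's theorem nor any property specific to $\RR$ enters at this point. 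With that replacement your proof of (1) is correct, and the final step (injectivity and multiplicativity of $t\mapsto\pi_1(\tilde{h}_{L_1-L_2}(t))$ forcing $\prod_i t_i=1$) matches the paper's observation.
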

\begin{proof} (1). Notice
$\pi_1(\tilde{h}_{L_1-L_2}(t))=$diag$\left(t_1,(t^{-1})_2,(t^{-1})_{1+n},t_{2+n}\right)$.
Thus (1) is clear.

It follows from (1) that $\ker(\pi_1)\cap \tilde{H}_{L_1-L_2}$ is
generated by elements
$$\tilde{h}_{L_1-L_2}(t_1)\tilde{h}_{L_1-L_2}(t_2)\tilde{h}_{L_1-L_2}(t_1t_2)^{-1},\text{ where }t_1,t_2\in\RR^*.$$

(2) We can prove similarly that $\ker(\pi_1)\cap
\tilde{H}_{r}$($r=\pm L_i\pm L_j$) is generated by elements
$\tilde{h}_{r}(t_1)\tilde{h}_{r}(t_2)\tilde{h}_{r}(t_1t_2)^{-1}$.
Since these simple roots belong to the same orbit under the Weyl
group, an argument similar to one in [\cite{Moore}, Lemma 8.2] shows
that $\ker(\pi_1)\cap \tilde{H}_{r}\subseteq \ker(\pi_1)\cap
\tilde{H}_{L_1-L_2}$ for all roots $r=\pm L_i\pm L_j$. This proves
(2).
\end{proof}

For $t_1,t_2\in\RR^{*}$, we define:
\begin{align*}
&\{t_1,t_2\}=\tilde{h}_{L_1-L_2}(t_1)\tilde{h}_{L_1-L_2}(t_2)\tilde{h}_{L_1-L_2}(t_1t_2)^{-1}.
\end{align*}
Now in exactly the same manner as the proof in the appendix of
\cite{Moore}, we prove that these $\{t_1,t_2\}$'s satisfy the
conditions
\begin{lemma}\label{le:24}
\begin{align*}
\{t_1, t_2\}&=\{t_2,t_1\}^{-1}\qquad \forall t_1,t_2\in \RR^*,\\
\{t_1, t_2\cdot t_3\}&=\{t_1,t_2\}\cdot\{t_1,t_3\}\qquad \forall t_1, t_2,t_3\in \RR^*,\\
\{t_1\cdot t_2,
t_3\}&=\{t_1,t_3\}\cdot\{t_2,t_3\}\qquad \forall t_1, t_2,t_3\in \RR^*,\\
\{t,1-t\}&=1\qquad \forall t\in \RR^*, t\neq 1,\\
\{t,-t\}&=1\qquad \forall t\in\RR^*.
\end{align*}
\end{lemma}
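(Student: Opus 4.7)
The plan is to follow Moore's argument from his appendix, which proves these symbol relations by reducing them to identities in the universal central extension of a rank-one $SL_2$-subgroup and then propagating them using the fact that $\{t_1, t_2\}$ is central in $\widetilde{G}$. First I would establish centrality. Lemma \ref{le:23} places $\{t_1, t_2\}$ in $\ker(\pi_1) \cap \tilde{H}_{L_1-L_2}$, and Lemma \ref{le:5}.4 tells us that conjugation of any generator $\tilde{x}_\alpha(u)$ by $\tilde{h}_\beta(v, v_1)$ is determined entirely by the image $h_\beta(v, v_1) = \pi_1(\tilde{h}_\beta(v, v_1))$ acting by conjugation in $G_\RR$. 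Consequently, any element of $\tilde{H}$ with trivial image commutes with every $\tilde{x}_\alpha$, and hence lies in $Z(\widetilde{G})$ because the $\tilde{x}_\alpha$ generate $\widetilde{G}$.

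Next, with centrality in hand, the defining relation $\tilde{h}(s)\tilde{h}(t) = \{s,t\}\tilde{h}(st)$ combined with associativity in $\widetilde{G}$ yields the cocycle identity $\{t_1, t_2\}\{t_1 t_2, t_3\} = \{t_2, t_3\}\{t_1, t_2 t_3\}$. Together with $\{1, t\} = \{t, 1\} = 1$, which is immediate from $\tilde{h}_{L_1-L_2}(1) = e$, this already forces the two bilinearity relations once skew-symmetry is established. For the remaining identities $\{t, 1-t\} = 1$ and $\{t, -t\} = 1$, I would restrict attention to the subgroup $\widetilde{G}_0 \subseteq \widetilde{G}$ generated by $\tilde{x}_{L_1-L_2}(\RR)$ and $\tilde{x}_{L_2-L_1}(\RR)$. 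The Jacobson--Morozov construction of Remark \ref{re:1} together with the explicit form of $w_{L_1-L_2}(t)$ from Remark \ref{re:2} shows that $\pi_1(\widetilde{G}_0)$ is an $SL_2(\RR)$-subgroup of $SO^+(m,n)$, so $\widetilde{G}_0$ is itself a central extension of $SL_2(\RR)$. Inside this central extension the two identities are exactly what Moore verifies, using only the chain lemma and the standard Steinberg-type manipulation of $\tilde{w}_{L_1-L_2}(t) = \tilde{x}_{L_1-L_2}(t)\tilde{x}_{L_2-L_1}(-t^{-1})\tilde{x}_{L_1-L_2}(t)$.

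Finally, skew-symmetry $\{t_1, t_2\}\{t_2, t_1\} = 1$ will follow from $\{t, -t\} = 1$ and the cocycle identity by the standard computation: expand $\{t_1 t_2, -t_1 t_2\}$ via the cocycle identity and cancel using $\{t_1, -t_1\} = \{t_2, -t_2\} = 1$. Full bilinearity in each argument then drops out of the cocycle identity and skew-symmetry. The main technical obstacle is confirming that our non-split setting does not disturb the $SL_2$-reduction in the middle step; but since $L_1 - L_2$ is a one-dimensional root of $SO^+(m,n)$ and the root vectors $f_{\pm(L_1 - L_2)}$ span a standard split $\mathfrak{sl}_2$-subalgebra inside $\mathfrak{so}(m,n)$, all the delicate computations take place inside a genuine split $SL_2$, and Moore's arguments transfer verbatim.
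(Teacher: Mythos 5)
Your first two steps are sound: centrality of $\{t_1,t_2\}$ is immediate (indeed $(\widetilde{G},\pi_1)$ is a central extension, so $\ker\pi_1\subseteq Z(\widetilde{G})$ and the appeal to Lemma \ref{le:5}.4 is not even needed), and the identities $\{t,-t\}=1$ and $\{t,1-t\}=1$ can indeed be obtained by Steinberg's rank-one computation inside the subgroup generated by $\tilde{x}_{\pm(L_1-L_2)}(\RR)$. The gap is in the final step. Your derivation of skew-symmetry expands $\{t_1t_2,-t_1t_2\}$ into $\{t_1,-t_1\}\{t_1,t_2\}\{t_2,t_1\}\{t_2,-t_2\}$, but that expansion \emph{is} bimultiplicativity, which at that point you have not proved; and you then claim bimultiplicativity follows from skew-symmetry plus the cocycle identity, so the argument is circular. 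More seriously, bimultiplicativity is not a formal consequence of the normalized $2$-cocycle identity together with $\{t,-t\}=1$ and $\{t,1-t\}=1$: a central extension of $\RR^*$ with a section satisfying exactly these relations is what Moore calls a Steinberg cocycle, and Steinberg cocycles need not be bimultiplicative. This is precisely the distinction between the rank-one situation (and the long symplectic root in Matsumoto's theory) and genuine symbols, so no amount of formal manipulation inside the $SL_2$-subgroup over $L_1-L_2$ can yield the second and third relations of the lemma.

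The missing ingredient -- and the actual content of the appendix of \cite{Moore} that the paper invokes, cf.\ also Milnor's computation for $St(n,k)$, $n\geq 3$ -- is the use of a second root. Since $n\geq 3$, the roots $L_1-L_2$, $L_2-L_3$, $L_1-L_3$ form an $A_2$-subsystem, and the conjugation formulas of Lemma \ref{le:5} (together with Lemma \ref{le:23}(2), which identifies the symbols attached to the different roots $\pm L_i\pm L_j$) show that $\tilde{h}_{L_1-L_2}(s)$ conjugates $\tilde{h}_{L_1-L_3}(t)$ to $\{s,t\}\,\tilde{h}_{L_1-L_3}(t)$, i.e.\ $\{s,t\}$ is a commutator of two $\tilde{h}$-elements whose images in $G_\RR$ commute. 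Because this commutator is central, the standard commutator identities give multiplicativity in each variable separately, and only then does $1=\{st,-st\}$ yield skew-symmetry. As written, your outline establishes only the Steinberg-cocycle relations, not the full list of symbol relations asserted in the lemma; inserting the $A_2$ commutator argument is what closes the gap and is where the hypothesis $n\geq 3$ is genuinely used.
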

Thus we define a symbol on $\RR$.
\subsection{Construction of a $S^1$-symbol} We construct a new symbol on
 $S^1$ to get prepared for further study of $\ker{\pi_1}\cap
 \tilde{H}_{s_0}$. Up to Section \ref{sec:3}, we will doing
 calculation inside $\tilde{W}_s$(defined after Lemma \ref{le:22}). For the sake of
 simplicity, henceforth we denote $\tilde{w}_{L_n}(\sqrt{2}a)(a\in S^{m-n-1})$ by
$\tilde{w}_{L_n}(a)$, $\tilde{h}_{L_n}(\sqrt{2}a,\sqrt{2}b)(a,b\in
S^{m-n-1})$ by $\tilde{h}_{L_n}(a,b)$
 and
$\tilde{h}^i_{L_n}(\sqrt{2}b,\sqrt{2}c)((b,c)\in S^1)$ by
$\tilde{h}^i_{L_n}(b,c)$ until the end of Section \ref{sec:3}.

\begin{lemma} \label{le:3}For $\forall(a,b), (c,d)\in S^1$ we have:
\begin{align*}
&[\tilde{h}^i_{L_n}(a,b),\tilde{h}^i_{L_n}(c,d)]\notag \\
&=\tilde{h}^i_{L_n}\bigl((a^2-b^2,2ab)\cdot(c,d)\bigl)\tilde{h}^i_{L_n}(a^2-b^2,2ab)^{-1}\tilde{h}^i_{L_n}(c,d)^{-1}\notag \\
&=\tilde{h}^i_{L_n}(a,b)\tilde{h}^i_{L_n}(c^2-d^2,2cd)\tilde{h}^i_{L_n}\bigl((a,b)\cdot(c^2-d^2,2cd)\bigl)^{-1}
\end{align*}
where the $\cdot$ is multiplication among complex numbers.
\end{lemma}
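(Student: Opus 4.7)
The plan is to reduce the lemma to a single conjugation identity inside $\tilde{W}_s$. First I would invoke the computation in the proof of Corollary \ref{le:9}, which gives $\pi_1(\tilde{h}^i_{L_n}(a,b))=\diag(R_{2n+i})$ with
\begin{align*}
R=R_{(a,b)}=\begin{pmatrix} a^2-b^2 & -2ab \\ 2ab & a^2-b^2 \end{pmatrix}.
\end{align*}
Identifying $\RR^2\cong\CC$, this $R$ acts by multiplication by the complex number $(a+b\textrm{i})^2=(a^2-b^2)+2ab\,\textrm{i}$; in particular $R_{(a,b)}\cdot(c,d)=(a^2-b^2,2ab)\cdot(c,d)$ and $R_{(a,b)}\cdot(1,0)=(a^2-b^2,2ab)$. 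Since the permutation part of $\pi_1(\tilde{h}^i_{L_n}(a,b))$ is trivial, only the first case of Lemma \ref{le:14} applies and yields
\begin{align*}
\tilde{h}^i_{L_n}(a,b)\,\tilde{w}_{L_n}(\sqrt{2}v)\,\tilde{h}^i_{L_n}(a,b)^{-1}=\tilde{w}_{L_n}\bigl(\sqrt{2}\,R_{(a,b)}\cdot v\bigr)
\end{align*}
for every $v\in S^{m-n-1}$.

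Next I would apply this formula to each of the two factors in the expansion
\begin{align*}
\tilde{h}^i_{L_n}(c,d)=\tilde{w}_{L_n}\bigl(\sqrt{2}(0,\dots,\underset{i}{c},\underset{i+1}{d},\dots,0)\bigr)\cdot\tilde{w}_{L_n}\bigl(-\sqrt{2}(0,\dots,\underset{i}{1},0,\dots,0)\bigr).
\end{align*}
Setting $(e,f)=(a^2-b^2,2ab)\cdot(c,d)$ and $(g,h)=(a^2-b^2,2ab)$, this produces
\begin{align*}
\tilde{h}^i_{L_n}(a,b)\tilde{h}^i_{L_n}(c,d)\tilde{h}^i_{L_n}(a,b)^{-1}=\tilde{w}_{L_n}\bigl(\sqrt{2}(\dots,\underset{i}{e},f,\dots)\bigr)\tilde{w}_{L_n}\bigl(-\sqrt{2}(\dots,\underset{i}{g},h,\dots)\bigr).
\end{align*}
The general identity $\tilde{w}_{L_n}(v)^{-1}=\tilde{w}_{L_n}(-v)$ (immediate from Lemma \ref{le:11} since $f(-v)=-f(v)$) lets me insert the cancelling pair $\tilde{w}_{L_n}(-\sqrt{2}(\dots,1,0,\dots))\tilde{w}_{L_n}(\sqrt{2}(\dots,1,0,\dots))$ in the middle; the right-hand side is then recognized as $\tilde{h}^i_{L_n}(e,f)\,\tilde{h}^i_{L_n}(g,h)^{-1}$. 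Right-multiplying by $\tilde{h}^i_{L_n}(c,d)^{-1}$ yields the first displayed form of the commutator.

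For the second displayed form, I would apply the first form to the swapped commutator $[\tilde{h}^i_{L_n}(c,d),\tilde{h}^i_{L_n}(a,b)]$, invert via $[x,y]^{-1}=[y,x]$, and use commutativity of complex multiplication, $(c^2-d^2,2cd)\cdot(a,b)=(a,b)\cdot(c^2-d^2,2cd)$, to match the stated expression. The only delicate point in the whole argument is ensuring that the conjugation acts via the genuine rotation $R_{(a,b)}$ rather than a rotation composed with a reflection — this is the potential obstacle, but it is resolved at the outset by the observation that $\pi_1(\tilde{h}^i_{L_n}(a,b))$ has trivial permutation part, so the second case of Lemma \ref{le:14} never enters.
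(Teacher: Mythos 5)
Your proposal is correct and follows essentially the same route as the paper's proof: conjugate the two $\tilde{w}_{L_n}$-factors of $\tilde{h}^i_{L_n}(c,d)$ by $\tilde{h}^i_{L_n}(a,b)$ via the rotation case of Lemma \ref{le:14}, reassemble using $\tilde{w}_{L_n}(v)^{-1}=\tilde{w}_{L_n}(-v)$ to recognize $\tilde{h}^i_{L_n}\bigl((a^2-b^2,2ab)\cdot(c,d)\bigr)\tilde{h}^i_{L_n}(a^2-b^2,2ab)^{-1}$, and read off the commutator. The only cosmetic difference is in the second displayed form: the paper records a second conjugation identity directly, whereas you deduce it from the first via $[x,y]^{-1}=[y,x]$ together with commutativity of complex multiplication, which is an equivalent (and clean) piece of bookkeeping.
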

\begin{proof}
Using Lemma \ref{le:14}, it follows
\begin{align*}
&\tilde{h}^i_{L_n}(a,b)\tilde{w}_{L_n}(0,\dots,c_i,d_{i+1},0\dots,0)\tilde{h}^i_{L_n}(a,b)^{-1}\\
&=\tilde{w}_{L_n}\bigl(0,\dots,(ca^2-cb^2-2dba)_i,(da^2-db^2+2cba)_{i+1},\dots,0\bigl).
\end{align*}
Thus we have:
\begin{align*}
&\tilde{h}^i_{L_n}(a,b)\tilde{h}^i_{L_n}(c,d)\tilde{h}^i_{L_n}(a,b)^{-1}\\
&=\tilde{h}^i_{L_n}\bigl((a^2-b^2,2ab)\cdot(c,d)\bigl)\tilde{h}^i_{L_n}(a^2-b^2,2ab)^{-1},
\end{align*}
and
\begin{align*}
&\tilde{h}^i_{L_n}(a,b)^{-1}\tilde{h}^i_{L_n}(c,d)\tilde{h}^i_{L_n}(a,b)\\
&=\tilde{h}^i_{L_n}(c^2-d^2,2cd)\tilde{h}^i_{L_n}\bigl((a,b)\cdot(c^2-d^2,2cd)\bigl)^{-1}.
\end{align*}
We have thus proved the lemma.
\end{proof}
\begin{lemma}\label{le:35}
For $\forall a\in S^{m-n-1}$,
$$\tilde{w}_{L_n}(a)=\tilde{w}_{L_n}(-a)\tilde{h}^1_{L_n}(-1,0)$$ and
$$\tilde{h}^1_{L_n}(-1,0)\tilde{h}^1_{L_n}(-1,0)=e.$$
\end{lemma}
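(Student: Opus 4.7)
The plan is to show that $\tilde{w}_{L_n}(a)^2$ is a central element of $\widetilde{G}$ that is independent of $a\in S^{m-n-1}$; specialization to $a=(-1,0,\dots,0)$ then yields both claims simultaneously. A preliminary observation, from the Chain Lemma formula $\tilde{w}_{L_n}(\sqrt{2}a)=\tilde{x}_{L_n}(\sqrt{2}a)\tilde{x}_{-L_n}(\sqrt{2}a)\tilde{x}_{L_n}(\sqrt{2}a)$ (valid on $S^{m-n-1}$ since $f(\sqrt{2}a)=\sqrt{2}a$ there), is that in the current shorthand $\tilde{w}_{L_n}(-a)=\tilde{w}_{L_n}(a)^{-1}$, so in particular $\tilde{h}^1_{L_n}(-1,0)$ equals $\tilde{w}_{L_n}(-1,0,\dots,0)^2$ directly from its definition.

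To see that $\tilde{w}_{L_n}(a)^2$ lies in the center, I would invoke Remark \ref{re:2}: the matrix $\pi_1(\tilde{w}_{L_n}(a))$ interchanges the coordinate positions $n$ and $2n$ with sign $-1$ and acts on the last $m-n$ coordinates as the reflection orthogonal to $a$; each piece is an involution in $G_\RR$, so $\tilde{w}_{L_n}(a)^2\in\ker(\pi_1)\subseteq Z(\widetilde{G})$ because $(\widetilde{G},\pi_1)$ is a central extension. Given any $a,a'\in S^{m-n-1}$, pick a rotation $B\in SO(m-n)$ with $Ba=a'$ (possible since $m-n\geq 2$). By the proof of Corollary \ref{le:9}, the individual rotations $\pi_1(\tilde{h}^j_{L_n}(\sqrt{2}b,\sqrt{2}c))$, $(b,c)\in S^1$, $1\leq j\leq m-n-1$, generate $SO(m-n)$, so one can choose $w_0\in\tilde{H}_{s_0}$ with $\pi_1(w_0)=\diag(I_{2n},B)$. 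Applying Lemma \ref{le:14} in the case $p(\pi)=I_{m+n}$ gives $w_0\tilde{w}_{L_n}(a)w_0^{-1}=\tilde{w}_{L_n}(a')$, and therefore $\tilde{w}_{L_n}(a')^2=w_0\tilde{w}_{L_n}(a)^2w_0^{-1}=\tilde{w}_{L_n}(a)^2$ by centrality.

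With this independence in hand, setting $a'=(-1,0,\dots,0)$ identifies the common value: $\tilde{w}_{L_n}(a)^2=\tilde{w}_{L_n}(-1,0,\dots,0)^2=\tilde{h}^1_{L_n}(-1,0)$. Left-multiplying by $\tilde{w}_{L_n}(a)^{-1}=\tilde{w}_{L_n}(-a)$ yields the first equality of the lemma. Applying it to both $a=(1,0,\dots,0)$ and $a=(-1,0,\dots,0)$ gives $\tilde{w}_{L_n}(1,0,\dots,0)^2=\tilde{w}_{L_n}(-1,0,\dots,0)^2=\tilde{w}_{L_n}(1,0,\dots,0)^{-2}$, so $\tilde{w}_{L_n}(1,0,\dots,0)^4=e$, which is exactly $\tilde{h}^1_{L_n}(-1,0)^2=e$. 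The one non-formal ingredient is the existence of $w_0$ realizing an arbitrary $B\in SO(m-n)$ in its $(m-n)\times(m-n)$ block; this is the main obstacle and is supplied exactly by the content, internal to Corollary \ref{le:9}, that $SO(m-n)$ is generated by the $2\times 2$ rotation blocks $\pi_1(\tilde{h}^j_{L_n}(\sqrt{2}b,\sqrt{2}c))$.
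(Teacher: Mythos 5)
Your proof is correct in the regime where $\tilde{h}^1_{L_n}(-1,0)$ is actually defined (i.e.\ $m-n\geq 2$), but it takes a genuinely different route from the paper's. The paper first notes $\pi_1\bigl(\tilde{h}^1_{L_n}(-1,0)\bigr)=I_{m+n}$, then conjugates this central element by $\tilde{w}_{L_n}(a)$ itself, i.e.\ uses the $p(\pi)\neq I_{m+n}$ (reflection) case of Lemma \ref{le:14}, obtaining $\tilde{h}^1_{L_n}(-1,0)=\tilde{w}_{L_n}\bigl(1-2a_1^2,-2a_1a_2,\dots,-2a_1a_{m-n}\bigr)^2$ and finishing the first identity by checking that this map of $S^{m-n-1}$ to itself is surjective; the second identity is proved separately by conjugating by $\tilde{h}_{L_{n-1}-L_n}(-1)$ via Lemma \ref{le:22}, which turns $\tilde{w}_{L_n}(-1,0,\dots,0)$ into $\tilde{w}_{L_n}(1,0,\dots,0)$ and exhibits $\tilde{h}^1_{L_n}(-1,0)$ as its own inverse. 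You instead observe directly that $\pi_1(\tilde{w}_{L_n}(a))$ is an involution, so $\tilde{w}_{L_n}(a)^2\in\ker(\pi_1)\subseteq Z(\widetilde{G})$, and you get independence of $a$ by conjugating with a lift $w_0\in\tilde{H}_{s_0}$ of an arbitrary rotation $B\in SO(m-n)$ (the $p(\pi)=I_{m+n}$ case of Lemma \ref{le:14}, legitimized by the generation statement inside the proof of Corollary \ref{le:9}); both identities then fall out formally, with no appeal to Lemma \ref{le:22} and no surjectivity computation. The one caveat is coverage at the boundary: your transitivity step needs $SO(m-n)$ to carry $(1,0,\dots,0)$ to $(-1,0,\dots,0)$, hence $m-n\geq 2$. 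The paper's surjectivity argument for the first identity has the same limitation, but its Lemma \ref{le:22} derivation of $\tilde{h}^1_{L_n}(-1,0)^2=e$ does not, and the paper later quotes precisely that order-two fact in the case $m=n+1$ (proof of Theorem \ref{th:3}); so if your argument is to support all later uses, you should retain the Lemma \ref{le:22} conjugation as a supplement for $m-n=1$.
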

\begin{proof}
Notice $\pi_1(\tilde{h}^1_{L_n}(-1,0))=I_{m+n}$. By Lemma
\ref{le:14}, for any $a=(a_1,a_2,\dots,a_{m-n})\in S^{m-n-1}$ we
have
\begin{align*}
&\tilde{h}^1_{L_n}(-1,0)=\tilde{w}_{L_n}(a)\tilde{h}^1_{L_n}(-1,0)\tilde{w}_{L_n}(a)^{-1}\\
&=\tilde{w}_{L_n}\left(1-2a_1^2,-2a_1a_2,-2a_1a_3,\dots,-2a_1a_{m-n}\right)\\
&\cdot
\tilde{w}_{L_n}\left(1-2a_1^2,-2a_1a_2,-2a_1a_3,\dots,-2a_1a_{m-n}\right).
\end{align*}
Let $f:S^{m-n-1}\rightarrow S^{m-n-1}$ be
$$f(a)=\left(1-2a_1^2,-2a_1a_2,-2a_1a_3,\dots,-2a_1a_{m-n}\right)$$
for $\forall a=\left(a_1,a_2,\dots,a_{m-n}\right)\in S^{m-n-1}$.

It is easy to check $f$ is surjective. Hence we proved the first par
of the lemma. For the second part, notice
$\pi_1(\tilde{h}^1_{L_n}(-1,0))=I_{m+n}$ and use Lemma \ref{le:22}
we have
\begin{align*}
\tilde{h}^1_{L_n}(-1,0)=&\tilde{h}_{L_{n-1}-L_n}(-1)\tilde{h}^1_{L_n}(-1,0)\tilde{h}_{L_{n-1}-L_n}(-1)^{-1}\\
=&\tilde{w}_{L_n}(1,\dots,0)\tilde{w}_{L_n}(1,\dots,0)\\
=&\tilde{h}^1_{L_n}(-1,0)^{-1}.
\end{align*}
Hence we proved the lemma.
\end{proof}
Let $H^i$ be the subgroup generated by
$\tilde{h}^i_{L_n}(a,b)((a,b)\in S^1)$. We prove the following:
\begin{lemma}\label{le:12}
\begin{align*}
&(1) \ker(\pi_1)\cap H^1=\{\prod_j
\tilde{h}^1_{L_n}(a_j,b_j)\mid \text{ \emph{with} }\prod_j (a_j^2-b_j^2+2a_jb_j \emph{i})=1\},\\
&(2)  \ker(\pi_1)\cap H^j=\ker(\pi_1)\cap H^1, \qquad \text{
\emph{for} }j\leq m-n-1.
\end{align*}
\end{lemma}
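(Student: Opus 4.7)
My plan is to prove (2) by conjugation and (1) by a direct $\pi_1$-calculation combined with an inverse-absorption step. For (2): by Corollary~\ref{le:9}, $\tilde{H}_s = \tilde{H}_{s_0}$ is generated by the $\tilde{h}^i_{L_n}(a,b)$ (in the shorthand notation in force), and $\pi_1$ maps these onto a generating set of $SO(m-n)$. For each $j$ with $1 \leq j \leq m-n-1$, I would choose $R_j \in \tilde{H}_s$ whose $\pi_1$-image is a rotation in $SO(m-n)$ sending $e_j \mapsto e_1$ and $e_{j+1} \mapsto e_2$. Applying Lemma~\ref{le:14} to each of the two $\tilde{w}_{L_n}$ factors entering the definition of $\tilde{h}^j_{L_n}$ gives $R_j \tilde{h}^j_{L_n}(a,b) R_j^{-1} = \tilde{h}^1_{L_n}(a,b)$, hence $R_j H^j R_j^{-1} = H^1$. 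Since $\ker(\pi_1)$ lies in the center of $\widetilde{G}$ (Remark~\ref{re:4}), $R_j$ fixes every kernel element, and (2) follows from
\[
\ker(\pi_1)\cap H^j = R_j\bigl(\ker(\pi_1)\cap H^j\bigr)R_j^{-1} = \ker(\pi_1)\cap R_jH^jR_j^{-1} = \ker(\pi_1)\cap H^1.
\]

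For the $\supseteq$ inclusion of (1): by direct computation using Remark~\ref{re:2} (multiplying the two factors $\pi_1(\tilde{w}_{L_n}(\sqrt{2}a,\sqrt{2}b,0,\dots))$ and $\pi_1(\tilde{w}_{L_n}(-\sqrt{2},0,\dots))$ in the definition of $\tilde{h}^1_{L_n}$),
\[
\pi_1\bigl(\tilde{h}^1_{L_n}(a,b)\bigr) = \diag\bigl(I_{2n},\, R((a+b\textrm{i})^2),\, I_{m-n-2}\bigr),
\]
where $R(z)$ is the $2\times2$ rotation (in coordinates $2n+1,2n+2$) associated to the unit complex number $z$, and where the two Householder reflections occurring in Remark~\ref{re:2} compose to the asserted rotation because their axes differ by angle $\arg(a+b\textrm{i})$. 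These rotation blocks commute and multiply as complex numbers, so $\pi_1$ of any positive product $\prod_j \tilde{h}^1_{L_n}(a_j,b_j)$ is the identity precisely when $\prod_j (a_j^2 - b_j^2 + 2a_jb_j\textrm{i}) = 1$, giving $\supseteq$.

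The $\subseteq$ inclusion of (1) is the substantive step. Given $g\in\ker(\pi_1)\cap H^1$, a priori written as $\prod_k \tilde{h}^1_{L_n}(a_k,b_k)^{\epsilon_k}$ with $\epsilon_k\in\{\pm 1\}$, I must rewrite $g$ using only positive powers; the symbol condition then follows from $\pi_1(g)=\mathrm{id}$ by the preceding paragraph. The tools are Lemma~\ref{le:35} (the involution $\tilde{h}^1_{L_n}(-1,0)^2=e$, so this particular generator is its own inverse) and Lemma~\ref{le:3} (the explicit commutator formula, which shows $[H^1,H^1]\subseteq\ker(\pi_1)\cap H^1$ since $\pi_1(H^1)\subseteq SO(2)$ is abelian, and which writes each commutator as a product of $\tilde{h}^1_{L_n}$'s). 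Through suitable specializations of Lemma~\ref{le:3}---for instance, pairing $(a,b)$ with $(a,-b)$ so that $\prod(a_j+b_j\textrm{i})^2=1$ and the product lies in the kernel---and iterated use of the involution, each inverse $\tilde{h}^1_{L_n}(a,b)^{-1}$ can be absorbed into a positive product. The main obstacle is precisely this absorption: it is the $S^1$-analogue of Milnor's analysis of continuous Steinberg symbols on $\RR^*$ (cf.\ Theorem~\ref{th:6} and Lemma~\ref{le:23}) and requires choosing specializations of Lemma~\ref{le:3} systematically enough to cover every $(a,b)\in S^1$. Part (2), by contrast, is purely formal once $R_j$ is produced, being a direct consequence of the centrality of $\ker(\pi_1)$.
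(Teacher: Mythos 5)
Your part (2) and the containment $\supseteq$ in (1) are correct and essentially reproduce the paper's own argument: the paper conjugates by an explicit product of two $\tilde{w}_{L_n}$'s (so that the permutation part of its $\pi_1$-image is trivial, which is also why your $R_j$ should be an even word, e.g.\ taken in $\tilde{H}_s$ as you do), applies Lemma~\ref{le:14} to the two factors defining $\tilde{h}^j_{L_n}$, and invokes centrality of the kernel; your packaging $\ker(\pi_1)\cap H^j=R_j\bigl(\ker(\pi_1)\cap H^j\bigr)R_j^{-1}=\ker(\pi_1)\cap R_jH^jR_j^{-1}=\ker(\pi_1)\cap H^1$ is the same computation, only applied to the whole subgroup rather than to the kernel generators.

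The gap is in the containment $\subseteq$ of (1), which you isolate correctly but leave open, and the repair you sketch does not close it: pairing $(a,b)$ with $(a,-b)$ via Lemma~\ref{le:3} only shows that $\tilde{h}^1_{L_n}(a,b)\tilde{h}^1_{L_n}(a,-b)$ lies in $\ker(\pi_1)$, so the rewriting $\tilde{h}^1_{L_n}(a,b)^{-1}=\bigl(\tilde{h}^1_{L_n}(a,b)\tilde{h}^1_{L_n}(a,-b)\bigr)^{-1}\tilde{h}^1_{L_n}(a,-b)$ still contains the inverse of a kernel word, and the symbol identities of Lemma~\ref{le:17} are not available to you since they are derived after, and from, Lemma~\ref{le:12}. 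What actually removes the inverses, and what makes the paper's terse ``(1) is clear'' legitimate, is the exact identity $\tilde{h}^1_{L_n}(a,b)^{-1}=\tilde{h}^1_{L_n}(a,-b)$ in $\widetilde{G}$ (shorthand as in the paper, $\tilde{w}_{L_n}(v)$ standing for $\tilde{w}_{L_n}(\sqrt{2}v)$, $v\in S^{m-n-1}$). Indeed, $\tilde{w}_{L_n}(v)^{-1}=\tilde{w}_{L_n}(-v)$ by Definition~\ref{de:2}, so Lemma~\ref{le:35} gives $\tilde{w}_{L_n}(v)^2=\tilde{h}^1_{L_n}(-1,0)$, a central involution, for every $v$; and Lemma~\ref{le:14}, applied to conjugation by $\tilde{w}_{L_n}(-1,0,\dots,0)$ (whose $B$ is $\diag(-1,1,\dots,1)$ with nontrivial $p(\pi)$), gives $\tilde{w}_{L_n}(-1,0,\dots,0)\,\tilde{w}_{L_n}(a,b,0,\dots,0)=\tilde{w}_{L_n}(a,-b,0,\dots,0)\,\tilde{w}_{L_n}(-1,0,\dots,0)$. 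Hence
\begin{align*}
\tilde{h}^1_{L_n}(a,-b)\,\tilde{h}^1_{L_n}(a,b)
&=\tilde{w}_{L_n}(a,-b,0,\dots,0)^2\,\tilde{w}_{L_n}(-1,0,\dots,0)^2
=\tilde{h}^1_{L_n}(-1,0)^2=e.
\end{align*}
With this, every element of $H^1$ is a positive product of the generators $\tilde{h}^1_{L_n}(a,b)$, and (1) follows at once from the $\pi_1$-formula you already verified; this is exactly how the paper proceeds (it records the $\pi_1$-computation, takes (1) as clear, and then reads off that $\ker(\pi_1)\cap H^1$ is generated by the symbol elements together with $\tilde{h}^1_{L_n}(-1,0)$), so your plan is missing this one short but essential step and otherwise follows the paper's route.
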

\begin{proof}
(1) Notice $\pi_1(\tilde{h}^{1}_{L_n}(a,b))$=diag$(R_{2n+1})$, where
$$R=\left(
\begin{array}{ccc}
   a^2-b^2& -2ab  \\
   2ab & a^2-b^2\\
\end{array}
\right).$$ We identify the above matrix with a complex number
$a_j^2-b_j^2+2a_jb_j \textrm{i}$, then (1) is clear.

It follows from (1) that $\ker(\pi_1)\cap H^1$ is generated by
elements
$$\tilde{h}^1_{L_n}(a,b)\tilde{h}^1_{L_n}(c,d)\tilde{h}^1_{L_n}\bigl((a,b)\cdot (c,d)\bigl)^{-1}\text{ and }\tilde{h}^1_{L_n}(-1,0).$$
where $(a,b),(c,d)\in S^1$ and $\cdot$ means multiplication among
complex numbers.

(2) We can prove similarly that $\ker(\pi_1)\cap H^i$ is generated
by elements
$$\tilde{h}^i_{L_n}(a,b)\tilde{h}^i_{L_n}(c,d)\tilde{h}^i_{L_n}\bigl((a,b)\cdot
(c,d)\bigl)^{-1}\text{ and }\tilde{h}^i_{L_n}(-1,0).$$ Let $i, j$ be
distinct, let
$$w=\tilde{w}_{L_n}\bigl(0,-\frac{\sqrt{2}}{2},\dots,(\frac{\sqrt{2}}{2})_{i+1},\dots,0\bigl)\cdot
\tilde{w}_{L_n}\bigl(-\frac{\sqrt{2}}{2},\dots,(\frac{\sqrt{2}}{2})_i,\dots,0\bigl).$$
By Lemma \ref{le:14} we have
\begin{align*}
&w\tilde{h}^i_{L_n}(a,b)w^{-1}=\tilde{h}^1_{L_n}(a,b).
\end{align*}
Since $\tilde{h}^i_{L_n}(-1,0)\in Z(\widetilde{G}),$ we have
\begin{align*}
&\tilde{h}^i_{L_n}(a,b)\tilde{h}^i_{L_n}(c,d)\tilde{h}^i_{L_n}\bigl((a,b)\cdot
(c,d)\bigl)^{-1}\\
&=w\tilde{h}^i_{L_n}(a,b)\tilde{h}^i_{L_n}(c,d)\tilde{h}^i_{L_n}\bigl((a,b)\cdot
(c,d)\bigl)^{-1}w^{-1}\\
&=\tilde{h}^1_{L_n}(a,b)\tilde{h}^1_{L_n}(c,d)\tilde{h}^1_{L_n}\bigl((a,b)\cdot
(c,d)\bigl)^{-1},
\end{align*}
and
\begin{align*}
\tilde{h}^i_{L_n}(-1,0)=w\tilde{h}^i_{L_n}(-1,0)w^{-1}=\tilde{h}^1_{L_n}(-1,0).
\end{align*}

 Thus we have proved (2).
\end{proof}
For $(a,b),(c,d)\in S^1$, we define:
\begin{align*}
&\{(a,b),(c,d)\}=\tilde{h}^1_{L_n}\bigl((a,b)\cdot(c,d)\bigl)\tilde{h}^1_{L_n}(a,b)^{-1}\tilde{h}^1_{L_n}(c,d)^{-1},
\end{align*}
By Lemma \ref{le:3}, in exactly the same manner as the proof in the
appendix of \cite{Moore}, these $\{(a,b),(c,d)\}$'s satisfy the
conditions
\begin{lemma}\label{le:17}
For all $(a,b),(c,d),(a_1,b_1),(c_1,d_1)\in S^1$, we have:
\begin{align*}
\{(a,b), (c,d)\}&=\{(c,d), (a,b)\}^{-1} ,\\
\{(a,b), (c,d)\cdot(c_1,d_1)\}&=\{(a,b),(c,d)\}\cdot\{(a,b),(c_1,d_1)\},\\
\{(a,b)\cdot(a_1,b_1),
(c,d)\}&=\{(a,b),(c,d)\}\cdot\{(a_1,b_1),(c,d)\},\\
\{(c,d),(-c,-d)\}&=1.
\end{align*}
\end{lemma}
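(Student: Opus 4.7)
The plan is to follow the template of Moore's appendix \cite{Moore}, combining Lemma \ref{le:3} (the commutator formula) with Lemma \ref{le:35} (the order-two identity for $\tilde{h}^1_{L_n}(-1,0)$) and the fact that $(\widetilde{G},\pi_1)$ is a universal central extension. Write $h(u):=\tilde{h}^1_{L_n}(u)$ for $u\in S^1$, so that $\{u,v\}=h(uv)\,h(u)^{-1}h(v)^{-1}$. The proof of Lemma \ref{le:12} shows that $\pi_1\circ h$ is the squaring homomorphism $u\mapsto u^2$ from $S^1$ to $SO(2)$, so every symbol $\{u,v\}$ lies in $\ker(\pi_1)\cap H^1$. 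Since the kernel of a universal central extension is central, every symbol is central in $\widetilde{G}$, and the associativity of multiplication in $S^1$ immediately produces the $2$-cocycle relation $\{v,w\}\{u,vw\}=\{u,v\}\{uv,w\}$.

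Bi-multiplicativity is then extracted from Lemma \ref{le:3}. Rewriting the first of the two identities in Lemma \ref{le:3} yields $[h(u),h(v)]=\{u^2,v\}$. Expanding $[h(u_1)h(u_2),h(v)]$ via the centrality of commutators gives $\{(u_1u_2)^2,v\}=\{u_1^2,v\}\{u_2^2,v\}$. Because squaring is a surjective endomorphism of the abelian group $S^1$, this upgrades to $\{ab,v\}=\{a,v\}\{b,v\}$ for all $a,b,v\in S^1$. Substituting back into the $2$-cocycle relation yields bi-multiplicativity in the second argument.

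The crucial step is the Steinberg-type identity $\{(c,d),(-c,-d)\}=1$. Lemma \ref{le:35} provides $\tilde{w}_{L_n}(-a)=\tilde{w}_{L_n}(a)\tilde{h}^1_{L_n}(-1,0)$ (so $h(-u)=h(-1)\,h(u)$ by centrality of $h(-1)$) and $h(-1)^2=e$, from which the definition of $h$ gives $h(1)=h(-1)^2=e$ and hence $\{u,-1\}=h(-u)h(u)^{-1}h(-1)^{-1}=e$. The harder sub-step is $\{u,u\}=e$: the bi-multiplicativity relation $\{u\cdot u^{-1},u\}=e$ shows $\{u,u\}=h(u)h(u^{-1})$, and one then computes $h(u)h(u^{-1})$ by commuting $\tilde{w}_{L_n}(-\sqrt{2},0,\ldots,0)$ past $\tilde{w}_{L_n}(\sqrt{2}c,-\sqrt{2}d,0,\ldots,0)$ using Lemma \ref{le:14}; conjugating $\tilde{w}_{L_n}(\sqrt{2},0,\ldots,0)$ by a suitable element of $\tilde{H}_{s_0}$ whose $\pi_1$-image is the rotation by $2\theta$ with $(\cos 2\theta,\sin 2\theta)=(c,d)$ shows $\tilde{w}_{L_n}(\sqrt{2}c,\sqrt{2}d,0,\ldots,0)^2=h(-1)$ uniformly in $(c,d)\in S^1$, and the computation collapses to $h(u)h(u^{-1})=h(-1)^2=e$. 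Bi-multiplicativity then gives $\{u,-u\}=\{u,-1\}\{u,u\}=1$.

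Finally, skew-symmetry is a purely formal consequence: expanding $\{uv,-(uv)\}=1$ via bi-multiplicativity in both arguments and using the already established $\{u,-1\}=\{v,-1\}=\{u,u\}=\{v,v\}=e$ to cancel the diagonal terms yields $\{u,v\}\{v,u\}=1$. The main obstacle is the identity $\tilde{w}_{L_n}(\sqrt{2}c,\sqrt{2}d,0,\ldots,0)^2=\tilde{h}^1_{L_n}(-1,0)$ for every $(c,d)\in S^1$, which underlies $\{u,u\}=e$; this is not formal and reflects the specific geometry of the double cover of $SO(m-n)$ inside $\widetilde{SO}^+(m,n)$, and it is precisely where Lemmas \ref{le:35} and \ref{le:14} are used in an essential way.
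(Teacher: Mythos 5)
Your proof is correct and takes essentially the route the paper intends: the paper's proof of Lemma \ref{le:17} is just the citation of Lemma \ref{le:3} together with the symbol manipulations of Moore's appendix, and your argument faithfully fleshes that template out (centrality of the symbols in the central extension, the commutator identity $[h(u),h(v)]=\{u^2,v\}$ combined with surjectivity of squaring on $S^1$ for bi-multiplicativity, the $2$-cocycle identity for the second variable, and the $\tilde{w}_{L_n}$-computations via Lemmas \ref{le:35} and \ref{le:14} giving $\{u,u\}=\{u,-1\}=1$, which is exactly the analogue of $w(t)^2=h(-1)$ in the $SL_2$ case). I see no gaps.
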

Thus we define a symbol on $S^1$.

\subsection{Structure of $\ker(\pi_1)\cap \tilde{H}_{s_0}$}
 We recall the notation set in
Corollary \ref{le:9}. $\tilde{H}_{s_0}$ is the subgroup generated by
all $\tilde{h}^j_{L_n}(a,b)$($(a,b)\in S^1$). We focus on studying
$\ker(\pi_1)\cap \tilde{H}_{s_0}$. The crucial step in proving the
main Theorem \ref{th:3} is:
\begin{theorem}\label{th:8}
$\ker(\pi_1)\cap \tilde{H}_{s_0}=\ker(\pi_1)\cap H^1$.
\end{theorem}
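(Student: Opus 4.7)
The containment $\ker(\pi_1)\cap H^1\subseteq \ker(\pi_1)\cap\tilde H_{s_0}$ is immediate from $H^1\subseteq\tilde H_{s_0}$. The content of the theorem is the reverse inclusion: every $y\in \ker(\pi_1)\cap\tilde H_{s_0}$ actually lies in $H^1$. Write $y=\prod_{k=1}^N \tilde h^{j_k}_{L_n}(a_k,b_k)$. The plan is to rewrite this word, using only identities in $\widetilde G$, so that every factor has index $1$; since each rewrite is an equality in $\widetilde G$, the end product equals $y$ and places $y\in H^1$.

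Two ingredients drive the rewriting. First, Lemma~\ref{le:12}(2) gives $\ker(\pi_1)\cap H^j=\ker(\pi_1)\cap H^1$ as subgroups of $\widetilde G$, so any correction produced when combining two same-index factors already lies in $H^1$. Second, Lemma~\ref{le:14} gives the conjugation rule $\tilde h^j_{L_n}(a,b)\,\tilde w_{L_n}(\sqrt 2\, c)\,\tilde h^j_{L_n}(a,b)^{-1}=\tilde w_{L_n}(\sqrt 2\, R_j\cdot c)$, where $R_j$ is the planar rotation in the $(j,j{+}1)$-coordinates; unfolding each $\tilde h^i$ into two $\tilde w_{L_n}$-factors and applying this formula produces explicit commutation/braid identities in $\widetilde G$.

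Two regimes then appear: when $|i-j|\ge 2$, the rotation $R_j$ fixes the coordinates appearing in $\tilde h^i$, so $\tilde h^j_{L_n}(a,b)$ and $\tilde h^i_{L_n}(c,d)$ commute exactly in $\widetilde G$; when $|i-j|=1$, one obtains a braid-type identity expressing the commutator as a product of $\tilde h^{j}$- and $\tilde h^{j+1}$-factors on rotated arguments, modulo an $H^1$-correction absorbed via the first ingredient. Induction on the largest index $J$ appearing in the word: the commutation regime collects all occurrences of the maximum index $J$ into a single block; the braid regime then rewrites such a block together with the adjacent $\tilde h^{J-1}$-factors, reducing the count of index-$J$ factors while generating only lower-index factors and $H^1$-corrections (using Lemma~\ref{le:12}(2) repeatedly). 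After finitely many reductions the word lies entirely in $H^1$; since $\pi_1(y)=\mathrm{id}$, we conclude $y\in\ker(\pi_1)\cap H^1$.

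The main obstacle is making the braid step at $|i-j|=1$ explicit. It requires expanding two $\tilde h$-factors into four $\tilde w_{L_n}$-factors, applying Lemma~\ref{le:14} (which, when the permutation parity is nontrivial as for an isolated $\tilde w_{L_n}$, introduces the sign flips that must be tracked), and then repackaging the result back into $\tilde h^{?}$-factors via the chain-lemma identities of Lemma~\ref{le:11} and Remark~\ref{re:2}. Careful bookkeeping is required to ensure that at each step the residual correction lands in $H^1$ rather than in some $H^j$ with $j\ge 2$, which is precisely the role of Lemma~\ref{le:12}(2) and the centrality of $\tilde h^1_{L_n}(-1,0)$ established in Lemma~\ref{le:35}.
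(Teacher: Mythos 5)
Your outline reproduces the paper's first half (the braid relation $H^{i}H^{i+1}H^{i}=H^{i+1}H^{i}H^{i+1}$, proved by unfolding the $\tilde h$'s into $\tilde w_{L_n}$-factors and conjugating via Lemma~\ref{le:14}, with corrections absorbed into $\ker(\pi_1)\cap H^1$ through Lemma~\ref{le:12}(2)), but the endgame has a genuine gap. Commutation and braid moves preserve the image under $\pi_1$, so no sequence of such rewrites can turn a word with factors of index $J\ge 2$ into a word ``lying entirely in $H^1$'': a generic element of $\tilde H_{s_0}$ has $\pi_1$-image outside $\pi_1(H^1)$, and your reduction never invokes the hypothesis $\pi_1(y)=\mathrm{id}$ until the final sentence, where it is used only to restate membership in the kernel. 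What the moves actually yield is a normal form in which the maximal index occurs once — this is the paper's Proposition~\ref{le:15}, $\tilde H_{s_0}=\bigl(\prod_{i=1}^{m-n-1}H^i\bigr)\bigl(\prod_{i=1}^{m-n-2}H^i\bigr)\cdots H^1$ — not a word in $H^1$ alone. The step you are missing is precisely how the kernel condition kills the top-index factor: in the normal form the lower-right entry of $\pi_1(h)$ is $\bigl(a^{m-n-1}_{m-n-1}\bigr)^2-\bigl(b^{m-n-1}_{m-n-1}\bigr)^2$, determined solely by the unique factor of maximal index, so $\pi_1(h)=I_{m+n}$ forces that factor to be $\tilde h^{m-n-1}_{L_n}(\pm 1,0)$, which lies in $\ker(\pi_1)\cap H^1$ (Lemmas~\ref{le:35} and~\ref{le:12}); one then strips it off and descends by induction on the index. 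Without this coordinate argument the claim ``after finitely many reductions the word lies entirely in $H^1$'' does not follow from the moves you describe.

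A secondary inaccuracy: the $|i-j|\ge 2$ commutations alone cannot ``collect all occurrences of the maximum index $J$ into a single block,'' since index-$J$ factors do not commute with index-$(J\pm 1)$ factors; collapsing the $J$-occurrences requires interleaving braid moves with commutations in the careful inductive order the paper carries out in the proof of Proposition~\ref{le:15} (left multiplication, interior insertion, and right multiplication treated separately). That part is repairable bookkeeping, but the missing use of $\pi_1(y)=I$ via the matrix-entry argument is the essential omission.
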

The ensuing discussion (up to Lemma \ref{le:15}) proves this
theorem. Henceforth we consider the quotient group
$\tilde{W}_{s}/(\ker(\pi_1)\cap H^1)$ until the end of Section
\ref{sec:3}.(Note that $\ker(\pi_1)\cap H^1$ being central, this is
well defined.) We continue to write $\tilde{h}^j_{L_n}(a,b)$ for
$(a,b)\in S^{1}$ and $\tilde{w}_{L_n}(a)$ for $a\in S^{m-n-1}$ for
their images in $\tilde{W}_{s}/(\ker(\pi_1)\cap H^1)$. However,
there is no confusion in doing so.

As a first step towards the proof of Theorem \ref{th:8}, we prove:
\begin{proposition}\label{po:1}
\begin{align*}
H^{i}\cdot H^{i+1}\cdot H^{i}=H^{i+1}\cdot H^{i}\cdot H^{i+1}.
\end{align*}
\end{proposition}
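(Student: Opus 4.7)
My plan is to work entirely inside the quotient group $\tilde{W}_s/(\ker(\pi_1)\cap H^1)$, where by Lemma \ref{le:35} the element $\tilde{h}^1_{L_n}(-1,0)$ becomes trivial, and by Lemma \ref{le:12}(2) so does every $\tilde{h}^j_{L_n}(-1,0)$. Using Lemma \ref{le:35} I can then rewrite each generator of $H^j$ as a product of two reflection-type factors,
\begin{align*}
\tilde{h}^j_{L_n}(a,b)=\tilde{w}_{L_n}\bigl(\sqrt{2}(ae_j+be_{j+1})\bigr)\,\tilde{w}_{L_n}(\sqrt{2}e_j),
\end{align*}
so that elements of $H^iH^{i+1}H^i$ and of $H^{i+1}H^iH^{i+1}$ become explicit ordered products of six such factors.

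Next I project via $\pi_1$ to $SO(m-n)$. By Lemma \ref{le:14} and the computation in Remark \ref{re:2}, $\pi_1(H^j)$ is exactly the circle $SO(2)$ of rotations in the $(2n+j,2n+j+1)$-plane. Consequently $\pi_1(H^i)$ and $\pi_1(H^{i+1})$ together generate the copy of $SO(3)$ acting on $\mathrm{span}(e_{2n+i},e_{2n+i+1},e_{2n+i+2})$ and fixing all other basis vectors. The classical Euler-angle theorem (for the two perpendicular axes perpendicular to these two planes inside the 3-space) gives every element of this $SO(3)$ both as $R^i(\alpha)R^{i+1}(\beta)R^i(\gamma)$ and as $R^{i+1}(\alpha')R^i(\beta')R^{i+1}(\gamma')$, so at the projection level one has $\pi_1(H^iH^{i+1}H^i)=\pi_1(H^{i+1}H^iH^{i+1})$.

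To lift this equality I take an arbitrary $\alpha=h_1h_2h_3\in H^iH^{i+1}H^i$, use the Euler-angle decomposition to produce $\beta=g_1g_2g_3\in H^{i+1}H^iH^{i+1}$ with $\pi_1(\alpha)=\pi_1(\beta)$, and seek to show $\alpha=\beta$ in the quotient. After the rewriting above, both $\alpha$ and $\beta$ are ordered products of six reflections $\tilde{w}_{L_n}(\sqrt{2}v_k)$ whose argument vectors live in $\mathrm{span}(e_i,e_{i+1},e_{i+2})$. Lemma \ref{le:14} shows that conjugating a reflection past any element of $\tilde{W}_s$ merely rotates its argument vector by the corresponding orthogonal transformation. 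By iteratively applying this conjugation rule, I would slide the six reflections of $\alpha$ into the order pattern defining $\beta$, collecting the residual discrepancy $\alpha\beta^{-1}\in\ker\pi_1$ so that at each stage the correction sits inside a single $H^j$; by Lemma \ref{le:12}(2) such an element coincides with one of $\ker(\pi_1)\cap H^1$, which is trivial in the quotient.

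The main obstacle is precisely controlling this residual: since we cannot yet invoke Theorem \ref{th:8} (for which Proposition \ref{po:1} is a stepping stone), I must organize the lift so that the accumulated difference $\alpha\beta^{-1}$ is never an \emph{a priori} element of $\ker(\pi_1)\cap\tilde H_{s_0}$ but always visibly an element of some single $\ker(\pi_1)\cap H^j$. The delicate combinatorial step is therefore to carry out the Euler-angle rewriting at the level of $\tilde{W}_s$ via the explicit conjugation formulas of Lemma \ref{le:14}, tracking each of the six reflection factors so that the collected correction is always attributable to one $\tilde{h}^j_{L_n}$-family at a time; once this is arranged, the proposition follows from the trivialization of that single-family kernel in our quotient.
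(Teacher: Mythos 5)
Your outline follows the same strategy as the paper: pass to the quotient $\tilde{W}_s/(\ker(\pi_1)\cap H^1)$, write the generators $\tilde{h}^j_{L_n}$ as products of two $\tilde{w}_{L_n}$-factors, project by $\pi_1$ onto the relevant copy of $SO(3)$, use an Euler-angle decomposition there, and lift by arranging that every correction term is visibly an element of a single $\ker(\pi_1)\cap H^j$, which Lemma \ref{le:12} kills in the quotient. (The sign slip in your factorization of $\tilde{h}^j_{L_n}(a,b)$ — the second factor should carry $-\sqrt{2}$ — is harmless in the quotient, since $\tilde{h}^1_{L_n}(-1,0)\in\ker(\pi_1)\cap H^1$.)

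However, the proof as written has a genuine gap exactly at the step you yourself flag as ``the main obstacle.'' Saying that you ``would slide the six reflections'' so that ``the collected correction is always attributable to one $\tilde{h}^j_{L_n}$-family at a time'' is the entire content of the proposition: a priori the discrepancy $\alpha\beta^{-1}$ only lies in $\ker(\pi_1)$ intersected with the subgroup generated by $H^i$ and $H^{i+1}$, and identifying that with $\ker(\pi_1)\cap H^1$ is essentially the two-index case of Theorem \ref{th:8}, which you correctly note cannot be invoked. The paper closes this gap with two concrete devices that your proposal does not supply: Lemma \ref{le:4}, which exchanges a product of two reflections $\tilde{w}^j_{L_n}(a,b,c)\tilde{w}^j_{L_n}(\cos x,\sin x,0)$ for one of the form $\tilde{w}^j_{L_n}(d,g,f)\tilde{w}^j_{L_n}(0,\cos y,\sin y)$ — proved by conjugating both sides (via Lemma \ref{le:14}) so that their argument vectors lie in a common coordinate plane, whence both are elements of one $H^j$ and are determined by their $\pi_1$-images thanks to Lemma \ref{le:12}; and Lemma \ref{fact:1}, which uses the conjugation relations of Lemma \ref{le:14} together with Lemma \ref{le:4} to carry out the explicit three-factor rewriting $\tilde{h}^j\tilde{h}^{j+1}\tilde{h}^j=\tilde{h}^{j+1}\tilde{h}^j\tilde{h}^{j+1}$ (and its reverse), after which Proposition \ref{po:1} is immediate because, by Lemma \ref{le:12}, every element of $H^i$ is a single $\tilde{h}^i_{L_n}(a,b)$ in the quotient. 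Until you produce an analogue of these two lemmas — i.e., an actual bookkeeping scheme in which every intermediate comparison takes place inside one $H^j$ — your argument is a plan rather than a proof.
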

Let $(a,b,c)\in S^2$, define
$$\tilde{w}_{L_n}^{i}(a,b,c)=\tilde{w}_{L_n}(0,\ldots,a_{i},b_{i+1},c_{i+2},0,\ldots,0).$$
Lemma \ref{le:4}--\ref{fact:1} are preparations for proof of
Proposition \ref{po:1}.
\begin{lemma}\label{le:4}
For $\forall$ $(a,b,c)\in S^2$, $\forall x\in\RR$, $\forall j$,
there exist $(d,g,f)\in S^2$,  $(d_1,g_1,f_1)\in S^2$, $y\in\RR$ and
$y_1\in\RR$ such that
\begin{align*}
&(1) \tilde{w}^{j}_{L_n}(a,b,c)\tilde{w}^j_{L_n}(\cos x,\sin
x,0)=\tilde{w}^{j}_{L_n}(d,g,f)\tilde{w}^{j}_{L_n}(0,\cos y,\sin y),\\
&(2) \tilde{w}^{j}_{L_n}(a,b,c)\tilde{w}^j_{L_n}(0,\cos x,\sin
x)=\tilde{w}^{j}_{L_n}(d_1,g_1,f_1)\tilde{w}^{j}_{L_n}(\cos y_1,\sin
y_1,0).
\end{align*}
\end{lemma}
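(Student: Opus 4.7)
The two sides of each equation in Lemma \ref{le:4} represent lifts (modulo the central subgroup $\ker\pi_1 \cap H^1$) of the same element of $O(m-n)$, namely a product of two reflections whose axes lie in the 3-dimensional coordinate slot $\mathrm{span}(e_j,e_{j+1},e_{j+2}) \subset \RR^{m-n}$. The plan is first to choose the parameters $y$ and $(d,g,f)$ so that the $\pi_1$-images of both sides agree, and then to show that the resulting ``defect'' is captured by $\ker\pi_1 \cap H^1$ and hence vanishes in the quotient.

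For the parameter selection, apply $\pi_1$. By Lemma \ref{le:14}, $\pi_1(\tilde{w}^j_{L_n}(u_1,u_2,u_3))$ is (up to the coordinate swap on the $n, 2n$ block) the reflection $B_u$ of $\RR^{m-n}$ through the hyperplane orthogonal to the unit vector $u$ supported in positions $j, j{+}1, j{+}2$. The product of two such reflections is a rotation of the 2-plane $P = \mathrm{span}(v_1, v_2)$, where $v_1 = (a,b,c)$ and $v_2 = (\cos x, \sin x, 0)$, fixing the orthogonal complement inside the slot. Matching the $\pi_1$-image on the right forces $v_2' = (0, \cos y, \sin y) \in P$. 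Such a $v_2'$ exists because $P$ and the coordinate 2-plane $\{x_j = 0\} \cap \mathrm{span}(e_j,e_{j+1},e_{j+2})$ are both 2-dimensional inside a 3-dimensional ambient and so meet in at least a line. Once $v_2'$ is chosen, the requirement $B_{v_1'}B_{v_2'}=B_{v_1}B_{v_2}$ determines $v_1' = (d,g,f) \in P$ up to sign, and the sign is absorbed in the quotient by Lemma \ref{le:35}.

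For the lift, set $Z := \tilde{w}^j_{L_n}(v_1)\tilde{w}^j_{L_n}(v_2)\tilde{w}^j_{L_n}(v_2')^{-1}\tilde{w}^j_{L_n}(v_1')^{-1}$. By construction $Z \in \ker\pi_1$, hence $Z$ is central. Using Definition \ref{de:2} in the form $\tilde{w}^j_{L_n}(u)\tilde{w}^j_{L_n}(u')^{-1} = \tilde{h}_{L_n}(u,u')$, combined with Lemma \ref{le:14} to move these $\tilde{h}$-factors through the remaining $\tilde{w}$'s, one rewrites $Z$ as a product of $\tilde{h}^i_{L_n}$-elements with $i \in \{j, j{+}1\}$. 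Thus $Z$ lies in $H^j \cdot H^{j+1}$. Applying the conjugation trick from the proof of Lemma \ref{le:12}(2) --- which realizes an inner automorphism of $\tilde{W}_s$ carrying $H^{j+1}$ into $H^j$ and $H^j$ into $H^1$ --- reduces $Z$ modulo $\ker\pi_1 \cap H^1$ to an element of $\ker\pi_1 \cap H^1$, as required. Part (2) follows by the identical argument after interchanging the roles of the first and third coordinates of the slot.

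The main obstacle is this last step --- certifying that the defect $Z$ lies not merely in $\ker\pi_1 \cap \tilde{H}_{s_0}$ but specifically in $\ker\pi_1 \cap H^1$. Achieving this requires explicit tracking of the $\tilde{h}^i_{L_n}$-symbols produced by the commutations in Lemma \ref{le:14}, together with the multiplicative structure of the $S^1$-symbol recorded in Lemmas \ref{le:3} and \ref{le:17}, to ensure that no stray Steinberg-type symbol of Lemma \ref{le:24} contributes outside $H^1$.
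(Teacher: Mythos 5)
Your choice of the new vectors is essentially the paper's: both $(d,g,f)$ and $(0,\cos y,\sin y)$ must lie in the plane $P$ spanned by $(a,b,c)$ and $(\cos x,\sin x,0)$, with the same angle between them (equivalently $B_{v_1'}B_{v_2'}=B_{v_1}B_{v_2}$), the collinear case being handled by Lemma \ref{le:35}. The genuine gap is in your lifting step. You set $Z$ equal to the quotient of the two sides, note $Z\in\ker\pi_1$, and then claim (i) that $Z$ can be rewritten as a product of $\tilde h^i_{L_n}$-elements with $i\in\{j,j+1\}$, and (ii) that a ``conjugation trick from Lemma \ref{le:12}(2)'' then pushes $Z$ into $\ker\pi_1\cap H^1$. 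Claim (i) is not justified by Lemma \ref{le:14} or Definition \ref{de:2}: the factors $\tilde w^j_{L_n}(a,b,c)$ and $\tilde w^j_{L_n}(d,g,f)$ generically have all three slot coordinates nonzero, and expressing such elements inside the subgroup generated by $H^j$ and $H^{j+1}$ is precisely the kind of structural statement (Proposition \ref{po:1}, Proposition \ref{le:15}) that the paper derives \emph{after}, and by means of, Lemma \ref{le:4}. Claim (ii) is worse: Lemma \ref{le:12} only identifies $\ker\pi_1\cap H^i$ with $\ker\pi_1\cap H^1$ for a \emph{single} index $i$; it does not say that $\ker\pi_1$ intersected with the subgroup generated by $H^j$ and $H^{j+1}$ lies in $H^1$. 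That stronger statement is essentially Theorem \ref{th:8} (restricted to two indices), whose proof in the paper rests on Proposition \ref{po:1} and hence on Lemma \ref{le:4} itself, so your route is circular. You in fact flag this step yourself as the ``main obstacle'' and leave it to unspecified ``explicit tracking'' of symbols, so the proof is not complete as written.

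The paper closes exactly this gap by a different lifting argument that avoids mixed products altogether: since all four vectors lie in the single $2$-plane $P$, one conjugates both sides by an element $h$ of the subgroup generated by $H^j$ and $H^{j+1}$ whose image $\pi_1(h)\in SO(3)$ carries $P$ into the coordinate plane spanned by the $j$-th and $(j{+}1)$-st slots. After conjugation every $\tilde w^j_{L_n}$-factor has last slot coordinate zero, so both products are (up to the central elements already killed in the quotient) elements of the \emph{single} group $H^j$ with equal $\pi_1$-image; equality in $\tilde W_s/(\ker\pi_1\cap H^1)$ then follows directly from Lemma \ref{le:12}, and conjugating back by $h^{-1}$ gives the statement. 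If you replace your step with this conjugation-into-one-coordinate-plane argument, the proof goes through; as proposed, the decisive step is missing.
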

\begin{proof} (1) If $(a,b,c)$ and $(\cos x,\sin
x,0)$ are collinear, then $(a,b,c)=\pm (\cos x,\sin x,0)$. We just
let $-(d,g,f)=(0,\cos y,\sin y)=(0,-1,0)$. By Lemma \ref{le:35}, we
get the conclusion.

Suppose $(a,b,c)$ and $(\cos x,\sin x,0)$ are not collinear. Then we
choose $y\in\RR$ and $(d,g,f)\in S^2$ such that $(0,\cos y,\sin y)$
and $(d,g,f)$ are both on the plane generated by $(a,b,c)$ and
$(\cos x,\sin x,0)$ and satisfying
$$\angle\bigl((a,b,c),(\cos x,\sin
x,0)\bigl)=\angle\bigl((d,g,f),(0,\cos y,\sin y)\bigl),$$ where
$\angle$ means the angle between 2 vectors in $\RR^3$.  Choose an
$h$ in the subgroup generated by $H^j$ and $H^{j+1}$ such that
$\pi_1(h)$ maps the 4 vectors to $xy$-plane(with last coordinate 0
in $\RR^3$). Denote
\begin{alignat*}{3}
\pi_1(h)\cdot(a,b,c)&=(a_1,b_1,0), & \qquad\pi_1(h)\cdot(\cos x,\sin
x,0)&=(\cos x_1,\sin x_1,0),\\
\pi_1(h)\cdot(d,g,f)&=(d_1,g_1,0), & \qquad\pi_1(h)\cdot(0,\cos
y,\sin y)&=(\cos y_1,\sin y_1,0).
\end{alignat*}
We have
$$\angle\left((a_1,b_1,0),(\cos x_1,\sin x_1,0)\right)=\angle\left((d_1,g_1,0),(\cos y_1,\sin y_1,0)\right),$$
since $\pi_1(h)\in SO(3)$.

From the above equation and the fact that
$\pi_1\bigl(\tilde{w}^{j}_{L_n}(a_1,b_1,0)\tilde{w}^j_{L_n}(\cos
x_1,\sin x_1,0)\bigl)$ is a rotation in $xy$-plane with angle 2
times the one between $(\cos x_1,\sin x_1,0)$ and $(a_1,b_1,0)$, it
follows
\begin{align*}
&\pi_1\left(\tilde{w}^{j}_{L_n}(a_1,b_1,0)\tilde{w}^j_{L_n}(\cos x_1,\sin x_1,0)\right)\\
&=\pi_1\left(\tilde{w}^{j}_{L_n}(d_1,g_1,0)\tilde{w}^j_{L_n}(\cos
y_1,\sin y_1,0)\right).
\end{align*}

Hence we get
\begin{align*}
&h\tilde{w}^{j}_{L_n}(a,b,c)\tilde{w}^j_{L_n}(\cos x,\sin
x,0)h^{-1}\\
&=\tilde{w}^{j}_{L_n}(a_1,b_1,0)\tilde{w}^j_{L_n}(\cos x_1,\sin
x_1,0)\\
&=\tilde{w}^{j}_{L_n}(d_1,g_1,0)\tilde{w}^j_{L_n}(\cos y_1,\sin y_1,0)\qquad(\text{ by Lemma }\ref{le:12})\\
&=h\tilde{w}^{j}_{L_n}(d,g,f)\tilde{w}^{j}_{L_n}(0,\cos y,\sin
y)h^{-1}.
\end{align*}
Thus we proved $$\tilde{w}^{j}_{L_n}(a,b,c)\tilde{w}^j_{L_n}(\cos
x,\sin x,0)=\tilde{w}^{j}_{L_n}(d,g,f)\tilde{w}^{j}_{L_n}(0,\cos
y,\sin y).$$

(2) A similar argument holds for (2).

We have thus proved the lemma completely.
\end{proof}
\begin{lemma}\label{fact:1}
For $\forall \theta_1,\theta_2,\theta_3\in\RR$, there exist
$\beta_1, \beta_2, \beta_3,$ $\beta_4, \beta_5, \beta_6\in\RR$, such
that
\begin{align*}
(1) &\tilde{h}^j_{L_n}(\cos\theta_1,\sin\theta_1)\tilde{h}^{j+1}_{L_n}(\cos\theta_2,\sin\theta_2)\tilde{h}^j_{L_n}(\cos\theta_3,\sin\theta_3)\\
&=
\tilde{h}^{j+1}_{L_n}(\cos\beta_1,\sin\beta_1)\tilde{h}^{j}_{L_n}(\cos\beta_2,\sin\beta_2)\tilde{h}^{j+1}_{L_n}(\cos\beta_3,\sin\beta_3);\\
(2) &\tilde{h}^{j+1}_{L_n}(\cos\theta_1,\sin\theta_1)\tilde{h}^{j}_{L_n}(\cos\theta_2,\sin\theta_2)\tilde{h}^{j+1}_{L_n}(\cos\theta_3,\sin\theta_3)\\
&=
\tilde{h}^{j}_{L_n}(\cos\beta_4,\sin\beta_4)\tilde{h}^{j+1}_{L_n}(\cos\beta_5,\sin\beta_5)\tilde{h}^{j}_{L_n}(\cos\beta_6,\sin\beta_6).
\end{align*}
\end{lemma}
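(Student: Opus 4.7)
The plan is to prove (1); identity (2) then follows by the symmetric argument exchanging the roles of $\tilde{h}^j$ and $\tilde{h}^{j+1}$. First I would project both sides via $\pi_1$ to $SO(3)$ acting on the 3-dimensional subspace $\mathrm{span}(e_{2n+j}, e_{2n+j+1}, e_{2n+j+2})$. By Lemma \ref{le:12} each $\tilde{h}^j_{L_n}(\cos\theta, \sin\theta)$ projects to a rotation by $2\theta$ in the $(2n+j, 2n+j+1)$-plane (and similarly for $\tilde{h}^{j+1}_{L_n}$), so the LHS projects to $R_{12}(2\theta_1)R_{23}(2\theta_2)R_{12}(2\theta_3)$ and the RHS to $R_{23}(2\beta_1)R_{12}(2\beta_2)R_{23}(2\beta_3)$. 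The classical Euler-angle decomposition in $SO(3)$ supplies $\beta_1, \beta_2, \beta_3 \in \RR$ making the two projections coincide.

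Next I would lift this $SO(3)$-equality to the quotient $\tilde{W}_s/(\ker(\pi_1) \cap H^1)$. Expanding $\tilde{h}^j_{L_n}(\cos\theta,\sin\theta) = \tilde{w}^j_{L_n}(\cos\theta, \sin\theta, 0)\,\tilde{w}^j_{L_n}(-1, 0, 0)$ and the analogous expression for $\tilde{h}^{j+1}_{L_n}$, both LHS and RHS become products of six $\tilde{w}^j_{L_n}$ factors: the LHS has the \emph{type-pattern} AABBAA and the RHS has BBAABB, where A and B denote factors whose third (resp.\ first) coordinate vanishes. Lemma \ref{le:4} now provides the key moves: parts (1) and (2) allow one to swap an adjacent factor's type from A to B or vice versa, at the cost of turning the preceding factor into a general $\tilde{w}^j_{L_n}(d, g, f)$. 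After each swap, adjacent same-type pairs can be collected back into $\tilde{h}^j$ or $\tilde{h}^{j+1}$ elements, i.e.\ into a single $H^i$; and there Lemma \ref{le:12}(2) guarantees that two elements of $H^i$ with the same $\pi_1$-projection already agree modulo $\ker(\pi_1) \cap H^1$.

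The main obstacle will be bookkeeping of the intermediate \emph{general} factors $\tilde{w}^j_{L_n}(d, g, f)$ produced by Lemma \ref{le:4}, which lie in $S^2$ but need not be of type A or B. Two observations make this manageable. The first is the identity $\tilde{w}_{L_n}(a) = \tilde{w}_{L_n}(-a)$ valid in the quotient (immediate from Lemma \ref{le:35} together with $\tilde{h}^1_{L_n}(-1, 0) \in \ker(\pi_1) \cap H^1$), which shows that $\tilde{h}^j_{L_n}(\cos\theta, \sin\theta)$ depends only on $\theta \bmod \pi$ in the quotient and thereby absorbs the sign ambiguity of the Euler lift (matching the fact that $\pi_1(\tilde{h}^j_{L_n}(\cos\theta,\sin\theta))$ has period $\pi$ in $\theta$). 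The second is that the parameters $(d, g, f)$ and $y$ produced by each application of Lemma \ref{le:4} are pinned down by the $\pi_1$-projection constraint, so one can verify step by step that the swaps organize themselves exactly into the Euler decomposition identified above. Combining these, the Euler-angle identity in $SO(3)$ lifts faithfully to the required identity in the quotient with $\beta_1,\beta_2,\beta_3$ chosen as the Euler angles.
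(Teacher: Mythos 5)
Your overall toolkit is the right one (work in the quotient by $\ker(\pi_1)\cap H^1$, expand each $\tilde{h}^i_{L_n}$ into two $\tilde{w}^j_{L_n}$-factors, use Lemma \ref{le:4} and Lemma \ref{le:12}, and the sign identity from Lemma \ref{le:35}), but the decisive step of your plan is not justified and is where the actual difficulty sits. You fix $\beta_1,\beta_2,\beta_3$ in advance as Euler half-angles of the $SO(3)$-projection and then assert that "the swaps organize themselves exactly into the Euler decomposition" so that "the Euler-angle identity in $SO(3)$ lifts faithfully to the required identity in the quotient." Equality of $\pi_1$-images does not give equality in $\tilde{W}_s/(\ker(\pi_1)\cap H^1)$: whether $\pi_1$ separates points of $\tilde{H}_{s_0}$ modulo $\ker(\pi_1)\cap H^1$ is essentially Theorem \ref{th:8}, which is proved \emph{downstream} of this lemma (via Proposition \ref{po:1} and Proposition \ref{le:15}), so any appeal to faithfulness of the lift is circular. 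The only injectivity actually available is inside a single $H^i$ (your correct reading of Lemma \ref{le:12}), but your reduction never brings the full comparison of the two six-letter words inside one $H^i$; and the claim that the intermediate data $(d,g,f),y$ of Lemma \ref{le:4} are "pinned down by the $\pi_1$-projection constraint" in a way that reproduces the pre-chosen Euler angles is precisely the computation that is missing. Moreover, Lemma \ref{le:4} alone cannot carry the AABBAA $\to$ BBAABB restructuring: you also need the conjugation formulas of Lemma \ref{le:14} to push $\tilde{h}^j$-factors past $\tilde{w}^j$-factors, which your sketch never invokes.

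The paper's proof avoids this by not choosing the $\beta$'s in advance. It first collapses the left-hand word using Lemma \ref{le:14}: the middle block $\tilde{h}^j_{L_n}(\cos\theta_1,\sin\theta_1)\tilde{w}^{j}_{L_n}(0,-1,0)\tilde{h}^j_{L_n}(\cos\theta_3,\sin\theta_3)\tilde{w}^{j}_{L_n}(-1,0,0)$ lies in $H^j$ and is identified, by Lemma \ref{le:12}, with $\tilde{h}^j_{L_n}(-\sin(\theta_1-\theta_3),\cos(\theta_1-\theta_3))$; this leaves a product of just two $\tilde{w}^j_{L_n}$-factors, to which Lemma \ref{le:4} is applied once, giving $\tilde{w}^{j}_{L_n}(a,b,c)\tilde{w}^j_{L_n}(0,\cos\alpha,\sin\alpha)$. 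The same computation with the roles of $j$ and $j+1$ exchanged shows that any candidate right-hand side equals $\tilde{w}^{j}_{L_n}(-\sin\beta_2,\cos2\beta_1\cos\beta_2,\sin2\beta_1\cos\beta_2)\tilde{w}^j_{L_n}(0,\cos(\beta_1-\beta_3),\sin(\beta_1-\beta_3))$, and the $\beta$'s are then obtained by solving $(-\sin\beta_2,\cos2\beta_1\cos\beta_2,\sin2\beta_1\cos\beta_2)=(a,b,c)$ and $\beta_1-\beta_3=\alpha$, which is an elementary surjectivity statement. To repair your argument, derive the $\beta$'s from this reduction (literal equality of two-factor words) rather than prescribing Euler angles and trying to verify the lift; as written, the verification step is a genuine gap.
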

\begin{proof}
 (1) Using Lemma \ref{le:14}, it follows
\begin{align*}
&\tilde{h}^j_{L_n}(\cos\theta_1,\sin\theta_1)\tilde{h}^{j+1}_{L_n}(\cos\theta_2,\sin\theta_2)\tilde{h}^j_{L_n}(\cos\theta_3,\sin\theta_3)\\
&=\tilde{h}^j_{L_n}(\cos\theta_1,\sin\theta_1)\tilde{w}^{j}_{L_n}(0,\cos\theta_2,\sin\theta_2)\tilde{w}^{j}_{L_n}(0,-1,0)\tilde{h}^j_{L_n}(\cos\theta_3,\sin\theta_3)\\
&=\tilde{h}^j_{L_n}(\cos\theta_1,\sin\theta_1)\tilde{w}^{j}_{L_n}(0,\cos\theta_2,\sin\theta_2)\tilde{h}^j_{L_n}(\cos\theta_1,\sin\theta_1)^{-1}\\
&\cdot
\tilde{h}^j_{L_n}(\cos\theta_1,\sin\theta_1)\tilde{w}^{j}_{L_n}(0,-1,0)\tilde{h}^j_{L_n}(\cos\theta_3,\sin\theta_3)\\
&=\tilde{w}^{j}_{L_n}\left(-\sin2\theta_1\cos\theta_2,\cos2\theta_1\cos\theta_2,\sin\theta_2\right)\tilde{h}^j_{L_n}(\cos\theta_1,\sin\theta_1)\\
&\cdot
\tilde{w}^{j}_{L_n}(0,-1,0)\tilde{h}^j_{L_n}\left(\cos\theta_3,\sin\theta_3\right)\tilde{w}^{j}_{L_n}(-1,0,0)\tilde{w}^{j}_{L_n}(1,0,0).
\end{align*}
Since
$$\tilde{h}^j_{L_n}(\cos\theta_1,\sin\theta_1)\tilde{w}^{j}_{L_n}(0,-1,0)\tilde{h}^j_{L_n}(\cos\theta_3,\sin\theta_3)\tilde{w}^{j}_{L_n}(-1,0,0)\in
H^j,$$ and
\begin{align*}
&\pi_1\left(\tilde{h}^j_{L_n}(\cos\theta_1,\sin\theta_1)\tilde{w}^{j}_{L_n}(0,-1,0)\tilde{h}^j_{L_n}(\cos\theta_3,\sin\theta_3)\tilde{w}^{j}_{L_n}(-1,0,0)\right)\\
&=\pi_1\left(\tilde{h}^j_{L_n}\left(-\sin(\theta_1-\theta_3),\cos(\theta_1-\theta_3)\right)\right),
\end{align*}
by Lemma \ref{le:12}, we have
\begin{align*}
&\tilde{h}^j_{L_n}(\cos\theta_1,\sin\theta_1)\tilde{w}^{j}_{L_n}(0,-1,0)\tilde{h}^j_{L_n}(\cos\theta_3,\sin\theta_3)\tilde{w}^{j}_{L_n}(-1,0,0)\\
&=\tilde{h}^j_{L_n}(-\sin(\theta_1-\theta_3),\cos(\theta_1-\theta_3)).
\end{align*}
It follows
\begin{align*}
&\tilde{h}^j_{L_n}(\cos\theta_1,\sin\theta_1)\tilde{h}^{j+1}_{L_n}(\cos\theta_2,\sin\theta_2)\tilde{h}^j_{L_n}(\cos\theta_3,\sin\theta_3)\\
&=\tilde{w}^{j}_{L_n}\left(-\sin2\theta_1\cos\theta_2,\cos2\theta_1\cos\theta_2,\sin\theta_2\right)\\
&\cdot
\tilde{h}^j_{L_n}(-\sin(\theta_1-\theta_3),\cos(\theta_1-\theta_3))\tilde{w}^{j}_{L_n}(1,0,0)\\
&=\tilde{w}^{j}_{L_n}(-\sin2\theta_1\cos\theta_2,\cos2\theta_1\cos\theta_2,\sin\theta_2)\\
&\cdot
\tilde{w}^j_{L_n}(-\sin(\theta_1-\theta_3),\cos(\theta_1-\theta_3),0).
\end{align*}
By Lemma \ref{le:4}, there exist $(a,b,c)\in S^2$ and $\alpha\in\RR$
satisfying
\begin{align}\label{for:20}
&\tilde{w}^{j}_{L_n}(-\sin2\theta_1\cos\theta_2,\cos2\theta_1\cos\theta_2,\sin\theta_2)\notag\\
&\cdot
\tilde{w}^j_{L_n}(-\sin(\theta_1-\theta_3),\cos(\theta_1-\theta_3),0)\notag\\
&=\tilde{w}^{j}_{L_n}(a,b,c)\tilde{w}^j_{L_n}(0,\cos\alpha,\sin\alpha).
\end{align}

Using a similar argument , for $\forall\gamma_1, \gamma_2,
\gamma_3\in\RR$ we have
\begin{align}\label{for:11}
&\tilde{h}^{j+1}_{L_n}(\cos\gamma_1,\sin\gamma_1)\tilde{h}^{j}_{L_n}(\cos\gamma_2,\sin\gamma_2)\tilde{h}^{j+1}_{L_n}(\cos\gamma_3,\sin\gamma_3)\notag\\
&=\tilde{w}^{j}_{L_n}\left(-\sin\gamma_2,\cos2\gamma_1\cos\gamma_2,\sin2\gamma_1\cos\gamma_2\right)\notag\\
&\cdot
\tilde{w}^j_{L_n}\left(0,\cos(\gamma_1-\gamma_3),\sin(\gamma_1-\gamma_3)\right).
\end{align}

It is easy to see that there exist $\beta_1,\beta_2,\beta_3\in\RR$
satisfying
\begin{align*}
(-\sin\beta_2,\cos2\beta_1\cos\beta_2,\sin2\beta_1\cos\beta_2)&=(a,b,c)
\end{align*}
and $$\beta_1-\beta_3=\alpha.$$ Let $\gamma_1=\beta_1,$
$\gamma_2=\beta_2$ and $\gamma_3=\beta_3,$ combine (\ref{for:20})
and (\ref{for:11}),
 we thus proved (1).

 A similar argument holds for
(2). We have thus proved the lemma completely.
\end{proof}

\subsection{Proof of Proposition \ref{po:1}}\label{sec:5}

By Lemma \ref{le:12},  every element of $H^i$ can be expressed as
$\tilde{h}^i_{L_n}(a,b)$ where $(a,b)\in S^1$. Thus by Lemma
\ref{fact:1}, it follows
\begin{align*}
H^{i}\cdot H^{i+1}\cdot H^{i}\subseteq H^{i+1}\cdot H^{i}\cdot
H^{i+1}
\end{align*}
and
\begin{align*}
H^{i+1}\cdot H^{i}\cdot H^{i+1}\subseteq H^{i}\cdot H^{i+1}\cdot
H^{i},
\end{align*}
which completes the proof.

We denote $H^1\cdot H^2,\dotsc,\cdot H^j$ by $\prod_{j=1}^i H^j$.
With this notation we have:
\begin{proposition}\label{le:15}
\begin{align*}
\tilde{H}_{s_0}=\bigl(\prod_{i=1}^{m-n-1}H^i\bigl)\cdot\bigl(\prod_{i=1}^{m-n-2}H^i\bigl),\dotsc,\cdot\bigl(H^1\bigl).
\end{align*}
\end{proposition}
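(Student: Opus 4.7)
\noindent
The plan is to prove by induction on $k$ the strengthened statement that $K_k := \langle H^1, \ldots, H^k \rangle$ equals $B_k \cdot B_{k-1} \cdots B_1$ for every $1 \leq k \leq m-n-1$, where $B_j := H^1 \cdot H^2 \cdots H^j$. The proposition is the case $k = m-n-1$; the base case $k=1$ is immediate since $K_1 = H^1 = B_1$. For the inductive step ($k \geq 2$) it suffices to establish $K_k = B_k \cdot K_{k-1}$ and then invoke the inductive hypothesis.

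\noindent
Under $\pi_1$, the subgroup $K_k$ surjects onto the copy of $SO(k+1)$ acting on the basis vectors $e_{2n+1}, \ldots, e_{2n+k+1}$, with $\pi_1(H^j)$ being the one-parameter group of rotations in the $(2n+j, 2n+j+1)$-plane. Because $H^j$ for $j \leq k-1$ fixes $e_{2n+k+1}$, the image $\pi_1(K_{k-1})$ is exactly the stabilizer $SO(k)$ of $e_{2n+k+1}$, so the coset space is $\pi_1(K_k)/\pi_1(K_{k-1}) \cong S^k$. The key geometric computation is that $B_k$ surjects onto $S^k$ via the orbit map. Choosing $h_j \in H^j$ with rotation angle $\theta_j$ and applying iteratively from right to left, one obtains
\begin{align*}
\pi_1(h_1 h_2 \cdots h_k) \cdot e_{2n+k+1} &= \cos\theta_k \cdot e_{2n+k+1} + \sin\theta_k \cos\theta_{k-1} \cdot e_{2n+k} + \cdots \\
&\quad + \sin\theta_k \cdots \sin\theta_2 \cos\theta_1 \cdot e_{2n+2} + \sin\theta_k \cdots \sin\theta_1 \cdot e_{2n+1},
\end{align*}
which is the standard spherical parametrization of $S^k$.

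\noindent
Given this surjectivity, for any $g \in K_k$ I pick $b \in B_k$ with $\pi_1(b) \cdot e_{2n+k+1} = \pi_1(g) \cdot e_{2n+k+1}$. Then $\pi_1(b^{-1} g)$ fixes $e_{2n+k+1}$, so there exists $y \in K_{k-1}$ with $\pi_1(y) = \pi_1(b^{-1} g)$. The residue $y^{-1} b^{-1} g$ lies in $\ker(\pi_1) \cap K_k \subseteq \ker(\pi_1) \cap \tilde{H}_{s_0}$, which by Theorem \ref{th:8} equals $\ker(\pi_1) \cap H^1 \subseteq H^1 \subseteq K_{k-1}$ (using $k \geq 2$). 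Thus $b^{-1}g \in K_{k-1}$, and $g = b \cdot (b^{-1} g) \in B_k \cdot K_{k-1}$, closing the induction.

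\noindent
The main obstacle is twofold: first, verifying the spherical-coordinate surjectivity at the level of the full sphere $S^k$ rather than merely a dense open subset, which requires careful handling at the poles where the parametrization degenerates (invoking Lemma \ref{le:35} to absorb sign/orientation issues); and second, making essential use of Theorem \ref{th:8} to confine the entire kernel of $\pi_1|_{\tilde{H}_{s_0}}$ inside the last factor $B_1 = H^1$ of the triangular product. Without this concentration of the kernel one could not expect the specific decomposition asserted by the proposition, since residues coming from $\ker(\pi_1)$ need to be placed in the innermost factor of the product.
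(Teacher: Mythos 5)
There is a genuine logical gap: your argument is circular within the paper's development. The residue step is the crux of your proof — you place $y^{-1}b^{-1}g \in \ker(\pi_1)\cap\tilde{H}_{s_0}$ and then invoke Theorem \ref{th:8} to conclude it lies in $\ker(\pi_1)\cap H^1\subseteq K_{k-1}$. But Proposition \ref{le:15} is itself one of the two ingredients used to \emph{prove} Theorem \ref{th:8}: the paper states that the discussion up to Proposition \ref{le:15} proves that theorem, and the actual proof in Section \ref{sec:3} begins ``By Proposition \ref{le:15}, $\forall h\in\tilde{H}_{s_0}$ can be written as \dots'' and only then deduces $\ker(\pi_1)\cap\tilde{H}_{s_0}=\ker(\pi_1)\cap H^1$ by peeling off the triangular factors. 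At the point where Proposition \ref{le:15} must be established, the only available information about the kernel is Lemma \ref{le:12}, which controls $\ker(\pi_1)\cap H^j$ for a \emph{single} $j$, not the kernel intersected with the subgroup generated by all the $H^j$; your residue lives in the latter, so you cannot confine it to $H^1$ (or to $K_{k-1}$) without assuming exactly what Theorem \ref{th:8} asserts. The coset/orbit part of your argument (that $\pi_1(K_{k-1})$ is the stabilizer $SO(k)$ of $e_{2n+k+1}$ and that $\pi_1(B_k)$ sweeps out the sphere $S^k$ via spherical coordinates) is correct at the level of $\pi_1$-images, but the whole difficulty of the proposition is lifting such a decomposition to the central extension $\widetilde{G}$, where kernel elements obstruct the naive transitivity argument.

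The paper circumvents this by working in the quotient $\tilde{W}_s/(\ker(\pi_1)\cap H^1)$ and running a purely combinatorial word-rewriting induction: every element of $H^i$ is a single $\tilde{h}^i_{L_n}(a,b)$ (Lemma \ref{le:12}), the braid-type relation $H^iH^{i+1}H^i=H^{i+1}H^iH^{i+1}$ (Proposition \ref{po:1}) together with the commutation \eqref{for:9} for $\abs{i-j}\ge 2$ lets one absorb a generator multiplied on either side of a triangular word back into triangular form. If you want to keep your coset-space strategy, you would need to replace the appeal to Theorem \ref{th:8} by an independent control of $\ker(\pi_1)\cap K_k$ — which in effect forces you back to an argument of the paper's type — or restructure the induction so that the residue is produced inside a single $H^j$, where Lemma \ref{le:12} applies.
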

\begin{proof} We write
$\tilde{h}^i_{L_n}$ for the set of all $\tilde{h}^i_{L_n}(a,b)$,
$(a,b)\in S^1$. It is sufficient to prove:
\begin{align*}
\tilde{H}_{s_0}=\bigl(\prod_{i=1}^{m-n-1}\tilde{h}^i_{L_n}\bigl)\cdot\bigl(\prod_{i=1}^{m-n-2}\tilde{h}^i_{L_n}\bigl),\dotsc,\cdot\bigl(\tilde{h}^1_{L_n}\bigl).
\end{align*}
We use induction on $m-n=2+k$. For $k=0$ we are though by Lemma
\ref{le:12}.

If $k=1$, any element in $\tilde{H}_{s_0}$ can be written as
$\tilde{h}^1_{L_n}\tilde{h}^2_{L_n},\dotsc,\tilde{h}^1_{L_n}\tilde{h}^2_{L_n}$.
Keep using Proposition \ref{po:1} and Lemma \ref{le:12}, then we are
through.

Now assume the lemma is correct for $m-n=2+k$. We will show it is
correct for $m-n=2+k+1$. Denote $Y_k$ the subgroup generated by
$H^j$ where $j\leq k+1$. Then $\tilde{H}_{s_0}$ is generated by
$\{Y_k,\tilde{h}^{k+2}_{L_n}\}$.  By induction, any $y_k\in Y_k$ can
be written as
$$y_k=\bigl(\prod_{i=1}^{k+1}\tilde{h}^i_{L_n}\bigl)\cdot\bigl(\prod_{i=1}^{k}\tilde{h}^i_{L_n}\bigl)\cdot,\dotsc,\cdot\bigl(\prod_{i=1}\tilde{h}^i_{L_n}\bigl).$$

Next we will see what changes are brought to this express after
adding $\tilde{h}^{i}_{L_n}(i\leq 2+k)$ from both sides of $y_k$.
Obviously, adding $\tilde{h}^{i}_{L_n}(i\leq 1+k)$ from either side
makes no changes. One considers adding $\tilde{h}^{k+2}_{L_n}$ from
both sides. Notice if $2\leq |i-j|$,
\begin{align}\label{for:9}
\tilde{h}^{i}_{L_n}(a,b)\tilde{h}^{j}_{L_n}(c,d)\tilde{h}^{i}_{L_n}(a,b)^{-1}=\tilde{h}^{j}_{L_n}(c,d),
\end{align}
thus for any $y_k$, we have
\begin{align*}
&\tilde{h}^{k+2}_{L_n}y_k\tilde{h}^{k+2}_{L_n}\\
&=\tilde{h}^{k+2}_{L_n}\bigl(\prod_{i=1}^{k+1}\tilde{h}^i_{L_n}\bigl)\bigl(\prod_{i=1}^{k}\tilde{h}^i_{L_n}\bigl),\dotsc,\bigl(\prod_{i=1}\tilde{h}^i_{L_n}\bigl)\tilde{h}^{k+2}_{L_n}\\
&=\bigl(\prod_{i=1}^{k}\tilde{h}^i_{L_n}\bigl)\tilde{h}^{k+2}_{L_n}\tilde{h}^{k+1}_{L_n}\tilde{h}^{k+2}_{L_n}\bigl(\prod_{i=1}^{k}\tilde{h}^i_{L_n}\bigl),\dotsc,\bigl(\prod_{i=1}\tilde{h}^i_{L_n}\bigl)\qquad
(\text{ by }(\ref{for:9}))\\
&=\bigl(\prod_{i=1}^{k}\tilde{h}^i_{L_n}\bigl)\tilde{h}^{k+1}_{L_n}\tilde{h}^{k+2}_{L_n}\tilde{h}^{k+1}_{L_n}\bigl(\prod_{i=1}^{k}\tilde{h}^i_{L_n}\bigl),\dotsc,\bigl(\prod_{i=1}\tilde{h}^i_{L_n}\bigl)\qquad(\text{ by Proposition }\ref{po:1})\\
&=\bigl(\prod_{i=1}^{k+2}\tilde{h}^i_{L_n}\bigl)\tilde{h}^{k+1}_{L_n}\bigl(\prod_{i=1}^{k}\tilde{h}^i_{L_n}\bigl),\dotsc,\bigl(\prod_{i=1}\tilde{h}^i_{L_n}\bigl).
\end{align*}
Since
$\tilde{h}^{k+1}_{L_n}\bigl(\prod_{i=1}^{k}\tilde{h}^i_{L_n}\bigl),\dotsc,\bigl(\prod_{i=1}\tilde{h}^i_{L_n}\bigl)\in
Y_k$, by induction, we have
\begin{align*}
&\tilde{h}^{k+2}_{L_n}y_k\tilde{h}^{k+2}_{L_n}\\
&=\bigl(\prod_{i=1}^{k+2}\tilde{h}^i_{L_n}\bigl)\bigl(\prod_{i=1}^{k+1}\tilde{h}^i_{L_n}\bigl),\dotsc,\bigl(\prod_{i=1}\tilde{h}^i_{L_n}\bigl).
\end{align*}
Next we consider the changes after adding $\tilde{h}^{i}_{L_n}(i\leq
2+k)$ from left side of the above expression. It is clear adding
$\tilde{h}^{1}_{L_n}$ makes no changes. Neither for
$\tilde{h}^{k+2}_{L_n}$, since a same argument holds as in previous
proof. For $\tilde{h}^{j}_{L_n}(1<j<k+2)$ we have:
\begin{align*}
&\tilde{h}^{j}_{L_n}\bigl(\prod_{i=1}^{k+2}\tilde{h}^i_{L_n}\bigl)\bigl(\prod_{i=1}^{k+1}\tilde{h}^i_{L_n}\bigl),\dotsc,\bigl(\prod_{i=1}\tilde{h}^i_{L_n}\bigl)\\
&=\bigl(\prod_{i=1}^{j-2}\tilde{h}^{i}_{L_n}\bigl)\tilde{h}^j_{L_n}\tilde{h}^{j-1}_{L_n}\tilde{h}^j_{L_n}\bigl(\prod_{i=j+1}^{k+2}\tilde{h}^j_{L_n}\bigl)\bigl(\prod_{i=1}^{k+1}\tilde{h}^i_{L_n}\bigl),\dotsc,\bigl(\prod_{i=1}\tilde{h}^i_{L_n}\bigl)(\text{ by }(\ref{for:9}))\\
&=\bigl(\prod_{i=1}^{j-2}\tilde{h}^{i}_{L_n}\bigl)\tilde{h}^{j-1}_{L_n}\tilde{h}^{j}_{L_n}\tilde{h}^{j-1}_{L_n}\bigl(\prod_{i=j+1}^{k+2}\tilde{h}^j_{L_n}\bigl)\bigl(\prod_{i=1}^{k+1}\tilde{h}^i_{L_n}\bigl),\dots,\bigl(\prod_{i=1}\tilde{h}^i_{L_n}\bigl)(\text{ by Proposition }\ref{po:1})\\
&=\bigl(\prod_{i=1}^{k+2}\tilde{h}^{i}_{L_n}\bigl)\tilde{h}^{j-1}_{L_n}\bigl(\prod_{i=1}^{k+1}\tilde{h}^i_{L_n}\bigl)\bigl(\prod_{i=1}^{k}\tilde{h}^i_{L_n}\bigl),\dots,\bigl(\prod_{i=1}\tilde{h}^i_{L_n}\bigl).\qquad
(\text{ by }(\ref{for:9}))
\end{align*}
Since
$\tilde{h}^{j-1}_{L_n}\bigl(\prod_{i=1}^{k+1}\tilde{h}^i_{L_n}\bigl)\bigl(\prod_{i=1}^{k}\tilde{h}^i_{L_n}\bigl),\dotsc,\bigl(\prod_{i=1}\tilde{h}^i_{L_n}\bigl)\in
Y_k$, by induction we have
\begin{align*}
&\tilde{h}^{j}_{L_n}\bigl(\prod_{i=1}^{k+2}\tilde{h}^i_{L_n}\bigl)\bigl(\prod_{i=1}^{k+1}\tilde{h}^i_{L_n}\bigl),\dotsc,\bigl(\prod_{i=1}\tilde{h}^i_{L_n}\bigl)\\
&=\bigl(\prod_{i=1}^{k+2}\tilde{h}^{i}_{L_n}\bigl)\bigl(\prod_{i=1}^{k+1}\tilde{h}^i_{L_n}\bigl)\bigl(\prod_{i=1}^{k}\tilde{h}^i_{L_n}\bigl),\dotsc,\bigl(\prod_{i=1}\tilde{h}^i_{L_n}\bigl).
\end{align*}
Hence we are through left side case. Now consider adding elements
from right side. In this case, we are left with adding
$\tilde{h}^{k+2}_{L_n}$. Then
\begin{align*}
&\bigl(\prod_{i=1}^{k+2}\tilde{h}^i_{L_n}\bigl)\bigl(\prod_{i=1}^{k+1}\tilde{h}^i_{L_n}\bigl),\dotsc,\bigl(\prod_{i=1}\tilde{h}^i_{L_n}\bigl)\tilde{h}^{k+2}_{L_n}\\
&=\bigl(\prod_{i=1}^{k+2}\tilde{h}^i_{L_n}\bigl)\bigl(\prod_{i=1}^{k+1}\tilde{h}^i_{L_n}\bigl)\tilde{h}^{k+2}_{L_n}\bigl(\prod_{i=1}^{k}\tilde{h}^i_{L_n}\bigl),\dotsc,\bigl(\prod_{i=1}\tilde{h}^i_{L_n}\bigl)\qquad(\text{ by }(\ref{for:9}))\\
&=\bigl(\prod_{i=1}^{k+1}\tilde{h}^i_{L_n}\bigl)\bigl(\prod_{i=1}^{k}\tilde{h}^i_{L_n}\bigl)\tilde{h}^{k+2}_{L_n}\tilde{h}^{k+1}_{L_n}\tilde{h}^{k+2}_{L_n}\bigl(\prod_{i=1}^{k}\tilde{h}^i_{L_n}\bigl),\dotsc,\bigl(\prod_{i=1}\tilde{h}^i_{L_n}\bigl)\\
&=\bigl(\prod_{i=1}^{k+1}\tilde{h}^i_{L_n}\bigl)\bigl(\prod_{i=1}^{k}\tilde{h}^i_{L_n}\bigl)\tilde{h}^{k+1}_{L_n}\tilde{h}^{k+2}_{L_n}\tilde{h}^{k+1}_{L_n}\bigl(\prod_{i=1}^{k}\tilde{h}^i_{L_n}\bigl),\dotsc,\bigl(\prod_{i=1}\tilde{h}^i_{L_n}\bigl).\qquad(\text{
by Proposition }\ref{po:1}).
\end{align*}
Since
$\bigl(\prod_{i=1}^{k+1}\tilde{h}^i_{L_n}\bigl)\bigl(\prod_{i=1}^{k}\tilde{h}^i_{L_n}\bigl)\tilde{h}^{k+1}_{L_n}\in
Y_k$, by induction we have
\begin{align*}
&\bigl(\prod_{i=1}^{k+1}\tilde{h}^i_{L_n}\bigl)\bigl(\prod_{i=1}^{k}\tilde{h}^i_{L_n}\bigl)\tilde{h}^{k+1}_{L_n}\tilde{h}^{k+2}_{L_n}\tilde{h}^{k+1}_{L_n}\bigl(\prod_{i=1}^{k}\tilde{h}^i_{L_n}\bigl),\dotsc,\bigl(\prod_{i=1}\tilde{h}^i_{L_n}\bigl)\\
&=\bigl(\prod_{i=1}^{k+1}\tilde{h}^i_{L_n}\bigl)\bigl(\prod_{i=1}^{k}\tilde{h}^i_{L_n}\bigl),\dotsc,\bigl(\prod_{i=1}\tilde{h}^i_{L_n}\bigl)\tilde{h}^{k+2}_{L_n}\tilde{h}^{k+1}_{L_n}\bigl(\prod_{i=1}^{k}\tilde{h}^i_{L_n}\bigl),\dotsc,\bigl(\prod_{i=1}\tilde{h}^i_{L_n}\bigl)\\
&=\bigl(\prod_{i=1}^{k+2}\tilde{h}^i_{L_n}\bigl)\bigl(\prod_{i=1}^{k}\tilde{h}^i_{L_n}\bigl),\dotsc,\bigl(\prod_{i=1}\tilde{h}^i_{L_n}\bigl)\tilde{h}^{k+1}_{L_n}\bigl(\prod_{i=1}^{k}\tilde{h}^i_{L_n}\bigl),\dotsc,\bigl(\prod_{i=1}\tilde{h}^i_{L_n}\bigl).\qquad(\text{
by }(\ref{for:9}))
\end{align*}
Notice
\begin{align*}
\bigl(\prod_{i=1}^{k}\tilde{h}^i_{L_n}\bigl),\dotsc,\bigl(\prod_{i=1}\tilde{h}^i_{L_n}\bigl)\tilde{h}^{k+1}_{L_n}\bigl(\prod_{i=1}^{k}\tilde{h}^i_{L_n}\bigl),\dotsc,\bigl(\prod_{i=1}\tilde{h}^i_{L_n}\bigl)\in
Y_k,
\end{align*}
by induction we have
\begin{align*}
&\bigl(\prod_{i=1}^{k+2}\tilde{h}^i_{L_n}\bigl)\bigl(\prod_{i=1}^{k}\tilde{h}^i_{L_n}\bigl),\dotsc,\bigl(\prod_{i=1}\tilde{h}^i_{L_n}\bigl)\tilde{h}^{k+1}_{L_n}\bigl(\prod_{i=1}^{k}\tilde{h}^i_{L_n}\bigl),\dotsc,\bigl(\prod_{i=1}\tilde{h}^i_{L_n}\bigl),\\
=&\bigl(\prod_{i=1}^{k+2}\tilde{h}^i_{L_n}\bigl)\bigl(\prod_{i=1}^{k+1}\tilde{h}^i_{L_n}\bigl),\dotsc,\bigl(\prod_{i=1}\tilde{h}^i_{L_n}\bigl).
\end{align*}
Thus we have proved adding elements from right leaves the expression
unchanged. Then we finally proved this proposition.
\end{proof}
\subsection{Proof of Theorem \ref{th:8}}\label{sec:3} By Proposition
\ref{le:15}, $\forall h\in \tilde{H}_{s_0}$ can be written as
\begin{align*}
&h=\bigl(\prod_{i=1}^{m-n-1}\tilde{h}^i_{L_n}(a^{m-n-1}_i,
b^{m-n-1}_i)\bigl)\bigl(\prod_{i=1}^{m-n-2}\tilde{h}^i_{L_n}(a^{m-n-2}_i,
b^{m-n-2}_i)\bigl)\\
&,\dots,\tilde{h}^1_{L_n}\left(a^{1}_1, b^{1}_1\right)h_0
\end{align*}
where $(a^{j}_i, b^{j}_i)\in S^1$ for $i,j\leq m-n-1$ and
$h_0\in\ker(\pi_1)\cap H^1$. Notice the element in the lower right
corner of matrix $\pi_1(h)$ is
$\left(a^{m-n-1}_{m-n-1}\right)^2-\left(b^{m-n-1}_{m-n-1}\right)^2$.

If $\pi_1(h)=I_{m+n}$, we have $a^{m-n-1}_{m-n-1}=\pm 1$,
$b^{m-n-1}_{m-n-1}=0$. Thus it follows
$\tilde{h}^{m-n-1}_{L_n}(a^{m-n-1}_{m-n-1},b^{m-n-1}_{m-n-1})\in
\ker(\pi_1)\cap H^1$. By induction, we have $h\in\ker(\pi_1)\cap
H^1$. We thus proved $\ker(\pi_1)\cap
\tilde{H}_{s_0}\subseteq\ker(\pi_1)\cap H^1$. The other side
inclusion is obvious. Hence we finished the proof completely.

\subsection{Proof of Theorem \ref{th:3}}\label{sec:13} We first consider $m=n$. By the fact coming from Lemma \ref{le:6}
that $\ker(\pi_1)\subseteq \prod_{r\in \Delta}\tilde{H}_{r}$, where
$\Delta=\{L_i-L_{i+1},L_{n-1}+L_n\}$, we only need to consider the
elements in $\prod_{r\in \Delta}\tilde{H}_{r}$. By Lemma
\ref{le:23}, $\forall h\in\prod_{r\in \Delta}\tilde{H}_{r}$ can be
written as
$$h=\tilde{h}_{L_1-L_2}(a_1)\tilde{h}_{L_2-L_3}(a_2),\dots,\tilde{h}_{L_{n-1}-L_n}(a_{n-1})\tilde{h}_{L_{n-1}+L_n}(a_{n})h_0$$
where $h_0\in \ker(\pi_1)\cap \tilde{H}_{L_1-L_2}$.

If $\pi_1(h)=I_{m+n}$, we have $a_1=a_2=\dots=a_{n-2}=1$ and
$a_{n-1}=a_n=\pm 1$. Thus we have
\begin{align*}
h=h_0\qquad\text{ or
 }\qquad h=\tilde{h}_{L_{n-1}-L_n}(-1)\tilde{h}_{L_{n-1}+L_n}(-1).
\end{align*}
Notice
\begin{align*}
&\tilde{h}_{L_{n-1}-L_n}(-1)\tilde{h}_{L_{n-1}+L_n}(-1)\\
&=h_{L_1-L_n}^\sim(-1)\bigl(\tilde{h}_{L_{n-1}-L_n}(-1)\tilde{h}_{L_{n-1}+L_n}(-1)\bigl)h_{L_1-L_n}^\sim(-1)^{-1}\\
&=\tilde{w}_{L_{n-1}-L_n}(1)\tilde{w}_{L_{n-1}-L_n}(1)\tilde{w}_{L_{n-1}+L_n}(1)\tilde{w}_{L_{n-1}-L_n}(1)\tilde{w}_{L_{n-1}+L_n}(1)\\
&=\bigl(\tilde{h}_{L_{n-1}-L_n}(-1)\tilde{h}_{L_{n-1}+L_n}(-1)\bigl)^{-1}
\end{align*}
Thus we have
$\bigl(\tilde{h}_{L_{n-1}-L_n}(-1)\tilde{h}_{L_{n-1}+L_n}(-1)\bigl)^2=e$.
 Thus we proved the case for $m=n$.

If $m=n+1$ by Lemma \ref{le:6}, $\ker(\pi_1)\subseteq
\bigl(\prod_{r\in \Delta}\tilde{H}_{r}\bigl)\cdot\tilde{H}_{s}$,
where $\Delta=\{L_i-L_{i+1},L_{n-1}+L_n\}$. Further, by Lemma
\ref{le:23}, $\forall h\in\bigl(\prod_{r\in
\Delta}\tilde{H}_{r}\bigl)\cdot\tilde{H}_{s}$ can be written as
$$h=\tilde{h}_{L_1-L_2}(a_1)\tilde{h}_{L_2-L_3}(a_2),\dots,\tilde{h}_{L_{n-1}-L_n}(a_{n-1})\tilde{h}_{L_{n-1}+L_n}(a_n)h_1h_0$$
where $h_0\in \ker(\pi_1)\cap \tilde{H}_{L_1-L_2}$ and $h_1\in
\tilde{H}_{s}$.

If $\pi_1(h)=I_{m+n}$, we have $a_1=a_2=\dots=a_{n-2}=1$,
$a_{n-1}=a_n=\pm 1$ and $\pi_1(h_1)=I_{m+n}$.

If $a_{n-1}=a_{n}=1$, we get $h\in \left(\ker(\pi_1)\cap
\tilde{H}_{s}\right)\cdot\left(\ker(\pi_1)\cap
\tilde{H}_{L_1-L_2}\right)$.

If $a_{n-1}=a_{n}=-1$, we have
\begin{align}\label{for:27}
&\tilde{h}^1_{L_n}(-1,0)\notag\\
&=\tilde{h}_{L_{n-1}-L_{n}}(-1)\tilde{w}_{L_n}(\sqrt{2},0,\ldots,0)\notag\\
&\cdot\tilde{h}_{L_{n-1}-L_{n}}(-1)^{-1}\tilde{w}_{L_n}(-\sqrt{2},0,\ldots,0)\notag\\
&=\tilde{h}_{L_{n-1}-L_{n}}(-1)\tilde{h}_{L_{n-1}+L_{n}}(-1).
\end{align}
Thus we still get  $h\in \left(\ker(\pi_1)\cap
\tilde{H}_{s}\right)\cdot\left(\ker(\pi_1)\cap
\tilde{H}_{L_1-L_2}\right)$.

Notice $\ker(\pi_1)\cap \tilde{H}_{s}$ is the 2-cyclic group
generated by $h^1_{L_n}(-1)$ by the fact
$\bigl(h^1_{L_n}(-1)\bigl)^2=e$ from lemma \ref{le:35}. Thus we
proved the case for $m=n+1$.

If $m\geq n+2$, by Corollary \ref{le:9}, $\ker(\pi_1)\subseteq
\left(\prod_{r\in \Delta}\tilde{H}_{r}\right)\tilde{H}_{s_0}$, where
$\Delta=\{L_i-L_{i+1},L_{n-1}+L_n\}$. Further, by Lemma \ref{le:23},
$\forall h\in\left(\prod_{r\in
\Delta}\tilde{H}_{r}\right)\tilde{H}_{s_0}$ can be written as
$$h=\tilde{h}_{L_1-L_2}(a_1)\tilde{h}_{L_2-L_3}(a_2),\dots,\tilde{h}_{L_{n-1}-L_n}(a_{n-1})\tilde{h}_{L_{n-1}+L_n}(a_n)h_1h_0$$
where $h_0\in \ker(\pi_1)\cap \tilde{H}_{L_1-L_2}$ and $h_1\in
\tilde{H}_{s_0}$. If $\pi_1(h)=I_{m+n}$, we have
$a_1=a_2=\dots=a_{n-2}=1$, $a_{n-1}=a_n=\pm 1$ and
$\pi_1(h_1)=I_{m+n}$.

If $a_{n-1}=a_{n}=1$, we get $h\in \left(\ker(\pi_1)\cap
\tilde{H}_{s_0}\right)\cdot\left(\ker(\pi_1)\cap
\tilde{H}_{L_1-L_2}\right)$.

If $a_{n-1}=a_{n}=-1$, by (\ref{for:27}) we still get  $$h\in
\left(\ker(\pi_1)\cap
\tilde{H}_{s_0}\right)\cdot\left(\ker(\pi_1)\cap
\tilde{H}_{L_1-L_2}\right).$$

By Theorem \ref{th:8}, $\ker(\pi_1)\cap
\tilde{H}_{s_0}=\ker(\pi_1)\cap H^1$. Thus we have proved
$$\ker(\pi_1)=\left(\ker(\pi_1)\cap H^1\right)\cdot\left(\ker(\pi_1)\cap
\tilde{H}_{L_1-L_2}\right).$$ Hence we proved Theorem \ref{th:3}
completely.

\section{Generating relations of $SU(m,n)$}\label{sec:15}
\subsection{Basic settings for $SU(m,n)$}
In this part, we study the generators of $SU(m,n)$ where $m\geq
n\geq 3$. we use $\overline{a}$ to denote complex conjugate of
complex numbers, vectors or matrices. We use notations as in Section
\ref{sec:7} and Section \ref{sec:12}. Explicitly, this is the case
where $G=SU(m,n)(\CC,H)$ with $H$ a non-degenerate standard
hermitian form of signature $(m,n)$.

  In the
sequel we freely use the notation of previous part without
confusion. We denote by set $S$ the the $(m+n)\times (m+n)$ real
diagonal matrices in $G_\RR$ with lower-right $(m-n)\times(m-n)$
block identity. Let $\Phi$ be the root system of $G$ with respect to
$S$. The roots are $\pm L_i \pm L_j(i<j\leq n)$, whose dimensions
are 2 and $\pm 2 L_i(i\leq n)$ whose dimension is 1. Also the $\pm
L_i(i\leq n)$ are roots if $m\neq n$ with dimensions
 $2(m-n)$. If $m-n\geq 1$, the set of positive roots $\Phi^{+}$ and the corresponding set of
 simple roots $\Delta$ are
\begin{align*}
&\Phi^{+}=\{L_i-L_j\}_{i<j}\cup\{L_i+L_j\}_{i<j}\cup\{L_i\}_i\cup\{2L_i\}_i,\\
&\Delta=\{L_i-L_{i+1}\}_{i<j}\cup\{L_n\};
\end{align*}
if $m=n$, the set of positive roots $\Phi^{+}$ and the corresponding
set of simple roots $\Delta$ are
\begin{align*}
&\Phi^{+}=\{L_i-L_j\}_{i<j}\cup\{L_i+L_j\}_{i<j}\cup\{2L_i\}_i,\\
&\Delta=\{L_i-L_{i+1}\}_i\cup\{L_{n-1}-L_{n}\}_i.
\end{align*}
Correspondingly, if $m-n\geq 1$, the set of $\Phi_1$ are $\pm L_i\pm
L_j(i\neq j), \pm
 L_i$;  if
 $m=n$, the set of $\Phi_1$ are $\pm L_i\pm L_j(i\neq j)$.

 We use $e_{k,\ell}$ to denote matrix with
 $(k,\ell)$ element 1, otherwise 0. We denote
\begin{align*}
f^1_{L_i+L_j} &=(e_{i,j+n}-e_{j,i+n})_{i<j},  &\quad f^2_{L_i+L_j}
&=\textrm{i}(
e_{i,j+n}+e_{j,i+n})_{i<j},\\
f^1_{L_i-L_j} &=(e_{i,j}-e_{j+n,i+n})_{i\neq j}, &\quad
f^2_{L_i-L_j}& =\textrm{i}( e_{i,j}+e_{j+n,i+n})_{i\neq j},\\
f^1_{-L_i-L_j} &=(e_{j+n,i}-e_{i+n,j})_{i<j}, &\quad
f^2_{-L_i-L_j} &=\textrm{i}( e_{j+n,i}+e_{i+n,j})_{i<j},\\
(f_{L_i}^\ell)_1& =(e_{i,2n+\ell}-e_{2n+\ell,i+n})_{\ell\leq m-n},
&\quad (f_{L_i}^\ell)_2 &=\textrm{i}( e_{i,2n+\ell}+
e_{2n+\ell,i+n})_{i\leq m-n},\\
(f_{-L_i}^\ell)_1 &=(e_{i+n,2n+\ell}-e_{2n+\ell,i})_{\ell\leq m-n},
&\quad (f_{-L_i}^\ell)_2 &=\textrm{i}( e_{i+n,2n+\ell}+
e_{2n+\ell,i})_{i\leq m-n},\\
f_{2L_i}&=\textrm{i} e_{i,i+n}, &\quad f_{-2L_i}&=\textrm{i}
e_{i+n,i}.
\end{align*}

For any complex number $z$, we use $r(z)$ to denote the real part
and $i(z)$ to denote the imaginary part. Thus we have
\begin{align*}
U^{r}_\RR=&\{\exp
\left(r(z)f^1_{r}\right)\exp\left(i(z)f^2_{r}\right)\mid z\in \CC\}
\text{ for }r=\pm L_i\pm L_j(i<j), \\
U^{\alpha}_\RR=&\{\exp \bigl(\bigl(t-\sum_j
r(a_j)i(a_j)\bigl)f_{2\alpha}\bigr)\exp
\bigl(r(a_1)(f^{1}_{\alpha})_1\bigl)\exp\bigl(i(a_1)(f^{1}_{\alpha})_2\bigl)\\
&,\dots,\exp
\bigl(r(a_{m-n})(f^{m-n}_{\alpha})_1\bigl)\exp\bigl(i(a_{m-n})(f^{m-n}_{\alpha})_2\bigl)\\
&\mid a=(a_1,\dots,a_{m-n})\in \CC^{m-n},t\in\RR\}\qquad\text{ for
}\alpha=\pm L_i.
\end{align*} Correspondingly, for $t\in \RR$, $z\in\CC$ and
$a=(a_1,\dots,a_{m-n})\in \CC^{m-n}$ we write
\begin{align*}
x_r(z)=&\exp \left(r(z)f^1_{r}\right)\exp\left(i(z)f^2_{r}\right)\in
U^{r}_\RR\qquad\text{ for }r=\pm L_i\pm
L_j(i<j),\\
x_\alpha(t,a)=&\exp \bigl(\bigl(t-\sum_j
r(a_j)i(a_j)\bigr)f_{2\alpha}\bigr)\exp
\bigl(r(a_1)(f^{1}_{\alpha})_1\bigl)\exp\left(i(a_1)(f^{1}_{\alpha})_2\right)\\
&,\dots,\exp
\bigl(r(a_{m-n})(f^{m-n}_{\alpha})_1\bigl)\exp\left(i(a_{m-n})(f^{m-n}_{\alpha})_2\right)\in
U^{\alpha}_\RR\\
&\qquad\text{ for }\alpha=\pm L_i.
\end{align*}
Notice if $a=0$, $x_\alpha(t,0)=\exp(tf_{2\alpha})$.
\subsection{``Chain'' in $SU(m,n)$ and basic relations}
Our next step is to determine explicitly the ``chain" (cf. Lemma
\ref{le:2}) corresponding to the element $x_\alpha(t,a)(\neq e)\in
U^{\alpha}_\RR(\alpha=\pm L_i)$  where $a=(a_1,\dots,a_{m-n})\in
\CC^{m-n}$.

We determine the ``chain" for $x_\alpha(0,a)$ at first. For this,
define $f:\CC^{m-n}\backslash 0\rightarrow \CC^{m-n}\backslash 0$ by
$f(a)=\bigl(\frac{2a_1}{\sum
\abs{a_i}^{2}},\dots,\frac{2a_{m-n}}{\sum \abs{a_i}^{2}}\bigr)$ for
$a=(a_1,\dots,a_{m-n})\in \CC^{m-n}\backslash 0$. With this
notation, we have:
\begin{lemma}
For $x_\alpha(0,a)(\neq e)\in U^{\alpha}_\RR(\alpha=\pm L_i)$, the
``chain" corresponding to it is given by
\begin{align*}
&x_i=x_\alpha(0,a), i\in \ZZ;\qquad y_i=x_{-\alpha}(0,f(a)), i\in
\ZZ.
\end{align*}
Denoting the element $w_\alpha(x_\alpha(0,a))$ by $w_\alpha(0,a)$,
we have
\begin{align*}
w_\alpha(0,a)&=x_\alpha(0,a)x_{-\alpha}(0,f(a))x_\alpha(0,a).
\end{align*}
\end{lemma}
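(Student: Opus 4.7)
The plan is to mimic the proof of Lemma~\ref{le:11}: exhibit a three-dimensional split Lie subalgebra over $\RR$ spanned by an $\mathfrak{sl}_2$-triple $(X, Y, [X,Y])$ with $X \in \mathfrak{g}_\alpha$ and $Y \in \mathfrak{g}_{-\alpha}$, and then invoke Remark~\ref{re:1} to read off the chain.

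First I would rewrite $x_\alpha(0,a)$ in pure exponential form. Set
\begin{align*}
X = \sum_{j=1}^{m-n}\bigl(r(a_j)(f_\alpha^j)_1 + i(a_j)(f_\alpha^j)_2\bigr) \in \mathfrak{g}_\alpha .
\end{align*}
A direct check in the defining representation yields $[f_{2\alpha}, (f_\alpha^j)_1] = [f_{2\alpha}, (f_\alpha^j)_2] = 0$, that slots with distinct indices commute, and that $[(f_\alpha^j)_1, (f_\alpha^j)_2] = 2 f_{2\alpha}$. Hence each single-slot factor $\exp\bigl(r(a_j)(f_\alpha^j)_1\bigr)\exp\bigl(i(a_j)(f_\alpha^j)_2\bigr)$ collapses via Baker--Campbell--Hausdorff to $\exp\bigl(r(a_j)(f_\alpha^j)_1 + i(a_j)(f_\alpha^j)_2 + r(a_j)i(a_j) f_{2\alpha}\bigr)$, all higher commutators vanishing. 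Since the slot factors commute, multiplying them and pre-multiplying by $\exp\bigl(-\sum_j r(a_j)i(a_j) f_{2\alpha}\bigr)$ (as prescribed in the definition of $x_\alpha(0,a)$) exactly cancels the $f_{2\alpha}$ correction, so $x_\alpha(0,a) = \exp X$. The same computation applied to $-\alpha$ gives $x_{-\alpha}(0, b) = \exp\bigl(\sum_j r(b_j)(f_{-\alpha}^j)_1 + i(b_j)(f_{-\alpha}^j)_2\bigr)$ for every $b \in \CC^{m-n}$.

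Next I would set
\begin{align*}
Y = -\sum_{j=1}^{m-n}\Bigl(r\bigl(f(a)_j\bigr)(f_{-\alpha}^j)_1 + i\bigl(f(a)_j\bigr)(f_{-\alpha}^j)_2\Bigr) \in \mathfrak{g}_{-\alpha} ,
\end{align*}
so that by the previous paragraph $x_{-\alpha}\bigl(0, f(a)\bigr) = \exp(-Y)$, and verify directly that $\{X, Y, H := [X,Y]\}$ spans a real three-dimensional split Lie subalgebra isomorphic to $\mathfrak{sl}_2(\RR)$. Concretely, $H$ turns out to be a real linear combination of the diagonal elements $e_{i,i}-e_{i+n,i+n}$ and therefore $\RR$-split, and the normalization $f(a) = 2a/\sum_k \abs{a_k}^2$ is calibrated so that $[H, X] = 2X$ and $[H, Y] = -2Y$. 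With this $\mathfrak{sl}_2$-triple at hand, Remark~\ref{re:1} applies verbatim and yields $x_i = \exp X = x_\alpha(0,a)$, $y_i = \exp(-Y) = x_{-\alpha}\bigl(0, f(a)\bigr)$ for every $i \in \ZZ$, together with $w_\alpha(0,a) = \exp(X)\exp(-Y)\exp(X)$, which is the claimed formula.

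The main obstacle is the bookkeeping in verifying the $\mathfrak{sl}_2$-relations: unlike the $SO^+(m,n)$ case, the $\alpha$-root space here is $2(m-n)$-dimensional and the ambient unipotent $U^\alpha$ also contains $\mathfrak{g}_{2\alpha}$, so one must simultaneously track cross-slot commutators and the $f_{2\alpha}$-contributions arising from $[(f_\alpha^j)_1,(f_\alpha^j)_2]$. The key insight is that the subtraction $-\sum_j r(a_j)i(a_j) f_{2\alpha}$ hard-wired into the parametrization of $x_\alpha(0,a)$ is precisely what forces the exponent to be a pure $\mathfrak{g}_\alpha$-element; once this cancellation is in place, the Jacobson--Morosov mechanism of Remark~\ref{re:1} proceeds exactly as in the split case.
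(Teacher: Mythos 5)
Your proposal is correct and follows essentially the same route as the paper: the paper's proof exhibits exactly the same $\mathfrak{sl}_2$-triple, with $X=\sum_j\bigl(r(a_j)(f^{j}_{\alpha})_1+i(a_j)(f^{j}_{\alpha})_2\bigr)$ and $Y=-\sum_j\bigl(r(f(a)_j)(f^{j}_{-\alpha})_1+i(f(a)_j)(f^{j}_{-\alpha})_2\bigr)$, asserts $\exp X=x_\alpha(0,a)$, $\exp(-Y)=x_{-\alpha}(0,f(a))$, and invokes Remark \ref{re:1}. The only difference is that you spell out the verifications (the Baker--Campbell--Hausdorff cancellation of the $f_{2\alpha}$-term against the $-\sum_j r(a_j)i(a_j)$ shift in the parametrization, and the bracket relations $[H,X]=2X$, $[H,Y]=-2Y$) which the paper leaves as ``easy to check.''
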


\begin{proof} It is easy to check $\{X,Y,[X,Y]\}$ span a
three-dimensional Lie algebra isomorphic to $SL_2(\RR)$, where
\begin{align*}
&X=\sum_j (r(a_j)(f^{j}_{\alpha})_1+i(a_j)(f^{j}_{\alpha})_2),\\
&Y=\sum_j
-\frac{2r(a_j)(f^{j}_{-\alpha})_1+2i(a_j)(f^{j}_{-\alpha})_2}{\sum
\abs{a_i}^{2}}.
\end{align*}
And we have
\begin{align*}
\exp(X)=x_\alpha(0,a),\qquad\exp(-Y)=x_\alpha(0,f(a)).
\end{align*}
 Thus by Remark
\ref{re:1} we get the conclusion.
\end{proof}
\begin{remark}\label{re:3}
Similar computations can be made for the other roots, such as $\pm
L_i\pm L_j(i<j)$, $\pm 2L_i$. We record the results here:
\begin{align*}
w_{r}(z)=x_{r}(z)x_{-r}(-z^{-1})x_{r}(z),\qquad z\in \CC^*, r=\pm
L_i\pm L_j(i<j),
\end{align*}
where
\begin{align*}
x_i=x_{r}(z)\forall i,\qquad y_i=x_{-r}(-z^{-1})\forall i.
\end{align*}
\begin{align*}
w_{\beta}(t)=w_{\alpha}(t,0)=x_{\alpha}(t,0)x_{-\alpha}(t^{-1},0)x_{\alpha}(t,0),\qquad
t\in \RR^*, \beta=2\alpha, \alpha=\pm L_i,
\end{align*}
where
\begin{align*}
x_i=x_{\alpha}(t,0)\forall i,\qquad y_i=x_{-\alpha}(t^{-1},0)\forall
i.
\end{align*}
Correspondingly, we define
\begin{align*}
&h_r(z)=w_{r}(z)w_{r}(1)^{-1},\qquad z\in\CC^*,r=\pm L_i\pm L_j(i<j),\\
&h_\beta(t)=w_{\beta}(t)w_{\beta}(1)^{-1},\qquad t\in\RR^*,\beta=\pm 2L_i,\\
&h_\alpha((0,a),(0,b))=w_{\alpha}(0,a)w_{\alpha}(0,b)^{-1},\qquad
a,b\in \CC^{m-n}\backslash 0, \alpha=\pm L_i.
\end{align*}
Using the same notations as in Remark \ref{re:2} we have:
\begin{align*}
w_{L_i-L_j}(z)=p(\pi)\text{diag}\bigl((-z^{-1})_i,z_j,(-\overline{z})_{i+n},(\overline{z}^{-1})_{j+n}\bigr),
\qquad \text{ for }z\in \CC^*
\end{align*}
where $\pi$ only permutes $(i,j)$ and $(i+n,j+n)$ while fixes other
numbers.
\begin{align*}
w_{L_i+L_j}(z)=p(\pi)\text{diag}\bigl((-z^{-1})_i,(\overline{z}^{-1})_j,(-\overline{z})_{i+n},z_{j+n}\bigr),\qquad
\text{ for }z\in \CC^*
\end{align*}
where $\pi$ only permutes $(i,j+n)$ and $(j,i+n)$ while fixes other
numbers.
\begin{align*}
w_{L_i}(0,a)=p(\pi)\text{diag}\bigl((-2\abs{a}^{-2})_i,(-\frac{1}{2}\abs{a}^2)_{i+n},B_{2n+1}\bigr),\qquad
\text{ for }a\in \CC^{m-n}\backslash 0,
\end{align*}
where $B\in U(m-n)$ and $\pi$ only permutes $(i,i+n)$ while fixes
other numbers.
\begin{align*}
w_{2L_i}(t)=p(\pi)\text{diag}\bigl((t^{-1}\textrm{i})_i,(t\textrm{i})_{i+n}\bigl),\qquad
\text{ for }t\in \CC^*,
\end{align*}
where $\pi$ only permutes $(i,i+n)$ while fixes other numbers.
\end{remark}

\begin{definition}We can now define elements $\tilde{x}_{r}(z)$,
$\tilde{x}_{\beta}(t)$, $\tilde{x}_{\alpha}(t,a)$,
$\tilde{w}_{r}(z)$, $\tilde{w}_{\beta}(t)$,
$\tilde{w}_{\alpha}(0,a)$, $\tilde{h}_r(z)$, $\tilde{h}_\beta(t)$,
$\tilde{h}_\alpha\bigl((0,a),(0,b)\bigr)$ etc. as was done in
Section \ref{sec:1}. We denote by $\tilde{W}_r(r=\pm L_i\pm
L_j,i<j)$ the subgroup of $\widetilde{G}$ generated by
$\tilde{w}_{r}(z)$, $\tilde{H}_r(r=\pm L_i\pm L_j,i<j)$ the subgroup
generated by $\tilde{h}_{r}(z)$.
\end{definition}

 Also, by Lemma \ref{le:5}, it is clear that certain relations
hold both in $\widetilde{G}$ and $G_\RR$. We record these results in
2 separate lemmas(Lemma \ref{le:21} and Lemma \ref{le:16}), since
they will serve as ready references later.
\begin{lemma}\label{le:21}
If $a\in\CC^{m-n}\backslash 0$, $z\in\CC^*$, $t\in\RR^*$ the
following hold in $\widetilde{G}$:
\begin{itemize}
\item[1] $\tilde{w}_{L_n}(0,a)\tilde{w}_{L_{n-1}-L_n}(z)\tilde{w}_{L_n}(0,a)^{-1}=\tilde{w}_{L_{n-1}+L_n}(-\frac{1}{2}\abs{a}^{2}z)$,

\item[2] $\tilde{w}_{L_n}(0,a)\tilde{w}_{L_{n-1}+L_n}(z)\tilde{w}_{L_n}(0,a)^{-1}=\tilde{w}_{L_{n-1}-L_n}(-2\abs{a}^{-2}z)$,

\item[3] $\tilde{w}_{L_{n-1}-L_n}(z)\tilde{w}_{L_n}(0,a)\tilde{w}_{L_{n-1}-L_n}(z)^{-1}=\tilde{w}_{L_{n-1}}(0,az)$,

\item[4] $\tilde{w}_{L_{n-1}-L_n}(z)\tilde{w}_{L_{n-1}}(0,a)\tilde{w}_{L_{n-1}-L_n}(z)^{-1}=\tilde{w}_{L_{n}}(0,-az^{-1})$,

\item[5]
$\tilde{w}_{L_{n-1}-L_n}(z)\tilde{w}_{2L_{n}}(t)\tilde{w}_{L_{n-1}-L_n}(z)^{-1}=\tilde{w}_{2L_{n-1}}(t\abs{z}^2)$,

\item[6]$\tilde{w}_{2L_{n}}(t)\tilde{w}_{L_{n-1}-L_n}(z)\tilde{w}_{2L_{n}}(t)^{-1}=\tilde{w}_{L_{n-1}+L_n}(-tz
\emph{i})$.
\end{itemize}
Hence,
\begin{itemize}
\item[5]$\tilde{h}_{L_{n-1}-L_n}(z)\tilde{w}_{L_n}(0,a)\tilde{h}_{L_{n-1}-L_n}(z)^{-1}=\tilde{w}_{L_{n}}(0,az^{-1})$,

\item[6]$\tilde{h}_{L_{n-1}-L_n}(z)\tilde{w}_{2L_n}(t)\tilde{h}_{L_{n-1}-L_n}(z)^{-1}=\tilde{w}_{2L_{n}}(t\abs{z}^{-2})$,

\item[7]$\tilde{w}_{L_n}(0,a)\tilde{h}_{L_{n-1}-L_n}(z)\tilde{w}_{L_n}(0,a)^{-1}\\
=\tilde{h}_{L_{n-1}+L_n}\left(-\frac{1}{2}\abs{a}^{2}z\right)\tilde{h}_{L_{n-1}+L_n}\left(-\frac{1}{2}\abs{a}^{2}\right)^{-1}$,

\item[8]$\tilde{w}_{2L_{n}}(t)\tilde{h}_{L_{n-1}-L_n}(z)\tilde{w}_{2L_{n}}(t)^{-1}=\tilde{h}_{L_{n-1}+L_n}(-tz
\emph{i})\tilde{h}_{L_{n-1}+L_n}(-t\emph{i})^{-1}$.
\end{itemize}
\end{lemma}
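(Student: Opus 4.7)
The strategy is to deduce all identities in Lemma~\ref{le:21} from Lemma~\ref{le:5} together with the explicit matrix descriptions of the Weyl representatives $w_r(z)$, $w_\beta(t)$, $w_\alpha(0,a)$ recorded in Remark~\ref{re:3}. Throughout, I treat $\widetilde{G}$-identities as formal consequences of the Steinberg-type relations that built $\widetilde{G}$, so every step is in fact a consequence of Lemma~\ref{le:5}.

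\textbf{First block (items 1--6).} Each identity has the form $\tilde{w}_\alpha(u)\tilde{w}_\beta(v)\tilde{w}_\alpha(u)^{-1}$, so I would apply part~(2) of Lemma~\ref{le:5}:
\[
\tilde{w}_\alpha(u)\tilde{w}_\beta(v)\tilde{w}_\alpha(u)^{-1}=\tilde{w}_{w_\alpha(\beta)}\bigl(w_\alpha(u)x_\beta(v)w_\alpha(u)^{-1}\bigr).
\]
Two verifications are required for each line: (i) compute the Weyl reflection $w_\alpha$ on the root $\beta$ (e.g.\ reflection through $L_n$ sends $L_{n-1}-L_n\mapsto L_{n-1}+L_n$, reflection through $L_{n-1}-L_n$ sends $L_n\mapsto L_{n-1}$ and $2L_n\mapsto 2L_{n-1}$, etc.), which dictates the root on the right-hand side; (ii) carry out the $3$-factor matrix conjugation $w_\alpha(u)x_\beta(v)w_\alpha(u)^{-1}$ inside $G_\RR$ using the explicit diagonal-plus-permutation forms from Remark~\ref{re:3}, and read off the new parameter. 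For item~1 the matrix $w_{L_n}(0,a)$ scales the parameter of $x_{L_{n-1}-L_n}(z)$ by $-\tfrac12|a|^2$; for item~6 the factor of $\mathrm{i}$ in $w_{2L_n}(t)=\diag((t^{-1}\mathrm{i})_n,(t\mathrm{i})_{2n})$ combines with the complex parameter $z$ to produce $-tz\mathrm{i}$; the remaining items are analogous.

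\textbf{Second block (the four ``Hence'' items).} These I would reduce to the first block through the defining relation $\tilde{h}_\beta(v,v_1)=\tilde{w}_\beta(v)\tilde{w}_\beta(v_1)^{-1}$. Writing $\tilde{h}_{L_{n-1}-L_n}(z)=\tilde{w}_{L_{n-1}-L_n}(z)\tilde{w}_{L_{n-1}-L_n}(1)^{-1}$ and applying items~3 and~5 of the first block twice in succession gives items~5 and~6 of the ``Hence'' block: the root change is undone because the two Weyl reflections cancel, leaving only the scalar effect on the parameter. For items~7 and~8 I would use part~(3) of Lemma~\ref{le:5} instead, which states how conjugation of $\tilde{h}_\beta(v,v_1)$ by $\tilde{w}_\alpha(u)$ produces $\tilde{h}_{w_\alpha(\beta)}(\cdots,\cdots)$; the correction factor $\tilde{h}_{L_{n-1}+L_n}(-\tfrac12|a|^2)^{-1}$ (respectively $\tilde{h}_{L_{n-1}+L_n}(-t\mathrm{i})^{-1}$) appears because the pair of $\tilde{w}_{L_{n-1}+L_n}(\cdot)$ factors produced by the conjugation must be renormalized into a single $\tilde{h}_{L_{n-1}+L_n}(\cdot)$, which by definition involves dividing out a base $\tilde{w}_{L_{n-1}+L_n}(1)$.

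\textbf{Main obstacle.} The delicate bookkeeping lies in the complex parameters and the factor of $\mathrm{i}$ entering through $w_{2L_i}(t)$. Unlike the split $SO^+(m,n)$ situation of Lemma~\ref{le:22}, one has to track how complex conjugation interacts with the Weyl reflections, since $w_{L_i-L_j}(z)$ has entries involving $\overline{z}$; this is why the $|z|^2$ appears in item~5 of the first block and the $-tz\mathrm{i}$ in item~6. Ensuring that the sign and the $\mathrm{i}$-factor land exactly as stated in items~6 and~8 is the part of the argument requiring the most care, but it reduces to a direct matrix computation once the explicit forms of $w_{L_{n-1}-L_n}(z)$ and $w_{2L_n}(t)$ from Remark~\ref{re:3} are substituted and the cancellation of the Weyl representative at parameter $1$ is correctly accounted for in the $\tilde{h}$-normalization.
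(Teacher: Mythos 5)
Your proposal is correct and matches the paper's own route: the paper states Lemma \ref{le:21} without a detailed proof, asserting exactly that the relations follow from Lemma \ref{le:5} together with the explicit matrix forms of the Weyl representatives in Remark \ref{re:3}, which is what you carry out, including the reduction of the ``Hence'' items via $\tilde{h}_r(z)=\tilde{w}_r(z)\tilde{w}_r(1)^{-1}$ and the renormalization producing the correction factors $\tilde{h}_{L_{n-1}+L_n}(-\tfrac12\abs{a}^2)^{-1}$ and $\tilde{h}_{L_{n-1}+L_n}(-t\mathrm{i})^{-1}$.
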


We denote by $S_{\RR}^{i}$ the sphere in $\RR^{i+1}$ and by
$S_{\CC}^{i}$ the sphere in $\CC^{i+1}$. Let $\tilde{W}_{s}$ be the
subgroup generated by $\tilde{w}_{L_n}(0,\sqrt{2}a)$, $a\in
S_{\CC}^{m-n-1}$. If $a=(a_1,\dots,a_n)$, then
$$\pi_1\bigl(\tilde{w}_{L_n}(0,\sqrt{2}a)\bigr)=p(\pi)\text{diag}\bigl((-1)_n,(-1)_{2n},B_{2n+1}\bigr),$$
where $\pi$ permutes n and 2n while fixes other numbers and $B\in
U(m-n)$ with entries $B_{i,j}=-2\overline{a_i}a_j$, for $i\neq j$
and $B_{i,i}=1-2\abs{a_i}^2$. Then $B$ is a reflection in the
hyperplane orthogonal to $\overline{a}$. Thus for any $w\in
\tilde{W}_{s}$,
$$\pi_1(w)=p(\pi)\text{diag}\bigl((-1)^\delta_n,(-1)^\delta_{2n},B_{2n+1}\bigr),$$
where $\delta=2$ if $p(\pi)=I_{m+n}$ and $B\in SU(m-n)$; $\delta=1$
if $p(\pi)$ permutes $n$ and $2n$ and and $B\in U(m-n)$ with
determinant $-1$. Without confusion, we identify
$\pi_1(\tilde{w}_{L_n}(0,\sqrt{2}a))$ and $B$.

Arguments similar to those in Lemma \ref{le:14} show the following
lemma:
\begin{lemma}\label{le:16}
If $w\in \tilde{W}_s$,
$\pi_1(w)=p(\pi)\diag((-1)^\delta_n,1,(-1)^\delta_{2n},B_{2n+1})$,
$\delta=1$ or $2$, $B\in U(m-n)$, $a\in S_{\CC}^{m-n-1}$,
$b\in\CC^{m-n}$, $t\in\RR^*$ then
\begin{align}
w\tilde{w}_{L_n}(0,\sqrt{2}a)w^{-1}&=\left\{\begin{aligned}
 \tilde{w}_{L_n}(0,\sqrt{2}\overline{B}\cdot a), & \quad &\text{ if } p(\pi)=I_{m+n},\\
 \tilde{w}_{L_{n}}(0,-\sqrt{2}\overline{B}\cdot a), & \quad &\text{ if } p(\pi)\neq I_{m+n}\label{for:14},\\
 \end{aligned}
 \right.\\
 w\tilde{x}_{L_n}(t,b)w^{-1}&=\left\{\begin{aligned}
 \tilde{x}_{L_n}(t,\overline{B}\cdot b), & \quad &\text{ if } p(\pi)=I_{m+n},\\
 \tilde{x}_{-L_n}(-t,-\overline{B}\cdot b), & \quad &\text{ if } p(\pi)\neq I_{m+n}\label{for:14}.\\
 \end{aligned}
 \right.
\end{align}
where $\cdot$ means linear operation on vectors.
\end{lemma}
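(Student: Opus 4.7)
My plan is to mirror the proof of Lemma \ref{le:14}, pulling the conjugation identity in $\widetilde{G}$ down to a matrix calculation in $G_\RR$ via Lemma \ref{le:5} and then reassembling the result using Definition \ref{de:2} to read the output as an element indexed by $L_n$ (or $-L_n$) as appropriate.

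First I would argue by induction on the word length of $w$ in the generators $\tilde{w}_{L_n}(0,\sqrt{2}c)$, $c \in S^{m-n-1}_\CC$, of $\tilde{W}_s$. For the base case $w = \tilde{w}_{L_n}(0,\sqrt{2}c)$, Lemma \ref{le:5}(2) yields
$$w\,\tilde{w}_{L_n}(0,\sqrt{2}a)\,w^{-1} = \tilde{w}_{w_{L_n}(L_n)}\bigl(w_{L_n}(0,\sqrt{2}c)\,x_{L_n}(0,\sqrt{2}a)\,w_{L_n}(0,\sqrt{2}c)^{-1}\bigr),$$
and Lemma \ref{le:5}(1) gives the analogous identity with $\tilde{x}_{L_n}(t,b)$ in place of $\tilde{w}_{L_n}(0,\sqrt{2}a)$. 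Since $w_{L_n}(L_n)=-L_n$, the inner matrix product can be computed directly in $G_\RR$ using the explicit form $\pi_1(w_{L_n}(0,\sqrt{2}c))=p(\pi)\diag\bigl((-1)_n,(-1)_{2n},B_{2n+1}\bigr)$ recorded in Remark \ref{re:3}, with $B\in U(m-n)$ the complex Householder reflection in $\overline{c}^{\perp}$.

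The decisive technical point, and where the $SU$ case diverges from the $SO$ case treated in Lemma \ref{le:14}, is the appearance of $\overline{B}$ rather than $B$. This comes from the matrix structure of $su(m,n)$: the elements $x_{L_n}(t,b)$ carry $b_\ell$ in the $(n,\,2n+\ell)$-positions but $-\overline{b_\ell}$ in the $(2n+\ell,\,2n)$-positions, so conjugation by the unitary block acting on coordinates $2n+1,\ldots,m+n$ induces $\overline{B}\cdot b$ at the parameter level. A direct matrix computation then yields $x_{-L_n}(-t,-\overline{B}\cdot b)$ and $x_{-L_n}(0,-\sqrt{2}\,\overline{B}\cdot a)$ for the inner conjugations, the sign flip and root flip coming from $p(\pi)$ swapping $n$ with $2n$. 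Definition \ref{de:2} then identifies the resulting $\tilde{w}_{-L_n}$ with a $\tilde{w}_{L_n}$ via the corresponding element $y_j$ in the chain, converting the output into the form displayed in the statement.

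For the inductive step, the key algebraic identities are $\overline{B_1 B_2}=\overline{B_1}\cdot\overline{B_2}$ and the fact that the parity of the word length in the generators governs whether $p(\pi)$ is trivial; together these reproduce exactly the case split in the lemma. The principal obstacle is the careful bookkeeping of complex conjugation across the matrix entries of $x_{L_n}(t,b)$: any slip concerning which entries involve $b$ versus $\overline{b}$ would either lose the $\overline{B}$ (replacing it with $B$) or botch a sign, and such an error would propagate through the induction. Once the base case is handled correctly the rest is routine and parallels Lemma \ref{le:14} almost verbatim.
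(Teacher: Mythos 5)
Your outline is the paper's own argument: the paper proves this lemma only by the remark that ``arguments similar to those in Lemma \ref{le:14}'' apply, i.e.\ one conjugates generator by generator via Lemma \ref{le:5}, evaluates the resulting honest matrix conjugation in $G_\RR$ from the explicit form of $\pi_1(w)$ recorded in Remark \ref{re:3} and in the discussion preceding the lemma, and then uses Definition \ref{de:2} (the chain identification $w_{-L_n}(0,\sqrt{2}d)=w_{L_n}(0,\sqrt{2}d)$) to rewrite the $-L_n$ output as an $L_n$ element. Your induction on word length, with parity governing $p(\pi)$ and $\overline{B_1B_2}=\overline{B_1}\,\overline{B_2}$, is just the formal bookkeeping of that iteration, and your explanation of why $\overline{B}$ (rather than $B$) appears is correct.

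The gap is in the one computation you do not actually perform. With the paper's parametrization one has $x_\alpha(t,a)=\exp\bigl(\sum_j f^j_\alpha(a_j)+t\,f_{2\alpha}\bigr)$ (the correction $t-\sum_j r(a_j)i(a_j)$ is exactly the Baker--Campbell--Hausdorff term), so conjugation acts through $\mathrm{Ad}_{\pi_1(w)}$ on the exponent. When $p(\pi)\neq I_{m+n}$ the $(n,2n)$-block of $\pi_1(w)=p(\pi)\diag\bigl((-1)_n,(-1)_{2n},B_{2n+1}\bigr)$ is $\left(\begin{smallmatrix}0&-1\\-1&0\end{smallmatrix}\right)$, and conjugation by it sends $\mathrm{i}e_{n,2n}$ to $+\mathrm{i}e_{2n,n}$: the two $-1$'s cancel, so the central coordinate does \emph{not} change sign. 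Already for $b=0$ one gets $\pi_1(w)\bigl(I+t\mathrm{i}e_{n,2n}\bigr)\pi_1(w)^{-1}=I+t\mathrm{i}e_{2n,n}=x_{-L_n}(t,0)$, and in general the matrix conjugation produces $x_{-L_n}(t,-\overline{B}\cdot b)$, consistent with Lemma \ref{le:27}(1) where the first coordinate transforms by the positive factor $\abs{a_0}^{-2}$. So your justification of the $-t$ (``the sign flip \ldots coming from $p(\pi)$ swapping $n$ with $2n$'') does not hold, and carrying out the bookkeeping you yourself identify as the principal obstacle leads to a discrepancy with the printed $-t$ that must be resolved (most plausibly a sign slip in the displayed formula, since Lemma \ref{le:5} forces the $\widetilde{G}$-identity to reproduce whatever the matrix computation gives). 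As written, the proposal asserts the stated sign at precisely the step where the argument has to earn it, so that step needs to be done explicitly and reconciled with the statement.
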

We now determine the ``chain" for $x_\alpha(t,a)$.
\begin{lemma}\label{le:25}
For $x_\alpha(t,a)(\neq e)\in U^{\alpha}_\RR(\alpha=\pm L_n)$, the
``chain" corresponding to it is given by
\begin{align*}
&x_i=x_\alpha\bigl(t,\overline{a_0^i}a_0^{-i}a\bigr), i\in \ZZ;\\
&
y_i=x_{-\alpha}\bigl(\abs{a_0}^{-2}t,-\overline{a_0^i}a_0^{-i-1}a\bigr),
i\in \ZZ,
\end{align*}
where $a_0=-\frac{1}{2}\abs{a}^2+t\emph{i}$. Denoting the element
$w_\alpha(x_\alpha(t,a))$ by $w_\alpha(t,a)$, we have
\begin{align*}
w_\alpha(t,a)&=x_\alpha(t,a)x_{-\alpha}\bigl(\abs{a_0}^{-2}t,-a_0^{-1}a\bigr)x_\alpha\bigl(t,\overline{a_0}a_0^{-1}a\bigr).
\end{align*}
\end{lemma}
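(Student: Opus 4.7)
My plan is to imitate the proof strategy of Lemma \ref{le:11}, but since $\alpha=\pm L_n$ has $2\alpha$ also a root, the rank-one subgroup $G^\alpha$ generated by $U^{\pm\alpha}$ is isomorphic to $SU(m-n+1,1)$ rather than $SL_2(\RR)$, and $u^\alpha = \mathfrak{g}_\alpha \oplus \mathfrak{g}_{2\alpha}$ is a two-step nilpotent Heisenberg-type algebra with $\mathfrak{g}_{2\alpha}$ central. So Remark \ref{re:1} does not directly apply, and the chain must be verified inside this larger rank-one subgroup. The appearance of $\omega := \overline{a_0}a_0^{-1}$ (a complex number of modulus $1$, since $|a_0|=|\overline{a_0}|$) in the statement is the manifestation of this extra non-commutativity.

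First I would compute the closed matrix form
\begin{align*}
x_{L_n}(t,a) \;=\; I + a_0\, e_{n,2n} + \sum_{j=1}^{m-n}\bigl(a_j e_{n,2n+j} - \overline{a_j}\,e_{2n+j,n}\bigr),
\end{align*}
with $a_0=-\tfrac12|a|^2+t\textrm{i}$. This follows by collapsing the defining product into a single exponential $\exp(X)$ with $X = t f_{2\alpha}+\sum_j\bigl(r(a_j)(f^j_\alpha)_1+i(a_j)(f^j_\alpha)_2\bigr)$, using that the only nonzero brackets in $u^\alpha$ land in the central subspace $\mathfrak{g}_{2\alpha}$. One then checks that $X^2 = -|a|^2 e_{n,2n}$ and $X^3=0$, so $\exp(X)=I+X+\tfrac12 X^2$. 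An analogous formula holds for $x_{-L_n}(s,b)$, and for $y_0=x_{-\alpha}\bigl(|a_0|^{-2}t,-a_0^{-1}a\bigr)$ one computes the central parameter to be $b_0 = |a_0|^{-2}a_0 = \overline{a_0}^{-1}$, which is precisely what is needed so that the principal terms of $x_0 y_0$ collapse by the identity $a_0+\overline{a_0}=-|a|^2$.

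Next I would multiply out $x_0 y_0 x_1$, where $x_1=x_\alpha(t,\omega a)$. Since $|\omega a|=|a|$, the central parameter $a_0$ of $x_1$ coincides with that of $x_0$; only the $\mathfrak{g}_\alpha$-part rotates by $\omega$. Using the cancellations forced by $|\omega|=1$, $a_0+\overline{a_0}=-|a|^2$ and $|a_0|^2=\tfrac14|a|^4+t^2$, the product reduces to a matrix of the form $p(\pi)\diag\bigl(\lambda_n,\overline\lambda_{2n},B_{2n+1}\bigr)$ with $\pi$ swapping $n$ and $2n$ and $B\in U(m-n)$; this exhibits $w_\alpha(t,a)$ as an element of $N(S)_k$ acting on $S$ as the reflection with respect to $\alpha$. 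The self-similar nature of the chain then propagates: replacing $a$ by $\omega^i a$ in $x_0$ (resp.\ $y_0$) only rotates the $\mathfrak{g}_\alpha$-entries by a power of $\omega$ while preserving $a_0$, so the same computation yields $x_i y_i x_{i+1} = w_\alpha(t,a) = y_j x_{j+1} y_{j+1}$ for all $i,j\in\ZZ$. Uniqueness of the chain given $x_0$ (part 4 of Lemma \ref{le:2}) identifies the entire chain.

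The main obstacle is the bookkeeping of the twist $\omega$: unlike the constant chain of Lemma \ref{le:11}, each step of the chain here rotates the $\mathfrak{g}_\alpha$-component of the parameter by $\omega$, as a direct consequence of the non-commutativity contributed by the central subspace $\mathfrak{g}_{2\alpha}$. Showing that the many phase factors $\omega^{\pm i}$ introduced by these successive rotations really do cancel in the matrix product, leaving an $i$-independent element of $N(S)_k$ of the expected reflection form, is where the computational work concentrates; the essential algebraic inputs are the identities $a_0+\overline{a_0}=-|a|^2$ and $\omega\overline{\omega}=1$.
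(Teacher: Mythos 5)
Your proof is essentially correct, but it takes a genuinely different route from the paper. You verify the chain directly for arbitrary $a\in\CC^{m-n}$: collapse the defining product into a single exponential, use the two-step nilpotency ($X^2=-\abs{a}^2e_{n,2n}$, $X^3=0$) to get the closed matrix form with central parameter $a_0$, and then multiply out $x_0y_0x_1$, where the cancellations come from $a_0+\overline{a_0}=-\abs{a}^2$, $\abs{a_0}^2=\tfrac14\abs{a}^4+t^2$ and $\abs{\omega}=1$; the $i$-independence follows since $\overline{\omega^i a}\,(\omega^i a)^\tau=\overline{a}\,a^\tau$. The paper instead reduces to the single-nonzero-coordinate case $a'=(\abs{a},0,\dots,0)$: it picks $B\in SU(m-n)$ with $B\cdot a=a'$, realizes $\overline{B}$ as $\pi_1$ of a product $w=\prod_i w_\alpha(0,\sqrt2 b_i)\in Z(S)_\RR$, conjugates via Lemma \ref{le:16}, quotes Deodhar (p.~30) for the explicit chain of $x_\alpha(t,a')$, and conjugates back, the reflection property being preserved because $w\in Z(S)_\RR$. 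Your approach is self-contained (no citation of Deodhar's computation and no rotation machinery), at the cost of carrying out the general matrix multiplication; the paper's approach avoids that computation and matches the reduction-by-compact-rotation technique it uses elsewhere (e.g.\ for $\tilde H_{s_0}$). Two small points to fix in your write-up: the closed form should read $e_{2n+j,2n}$, not $e_{2n+j,n}$ (your stated consequences $X^2=-\abs{a}^2e_{n,2n}$, $X^3=0$ are the ones for the correct index, so this is only a typo); and the second chain identity $y_jx_{j+1}y_{j+1}=w$ is not literally ``the same computation'' as $x_jy_jx_{j+1}=w$ — it needs either its own analogous multiplication or the routine conjugation check $wy_{j+1}w^{-1}=x_j$, exactly as the paper also treats the two forms separately (``Repeating the argument\dots'').
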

\begin{proof} For $a$, we can find $B\in SU(m-n)$ such that $B\cdot a=(\abs{a},\dots,0)$. Denote
$(\abs{a},\dots,0)$ by $a'$. By remarks after Lemma \ref{le:21}, we
can find $b_i\in S_{\CC}^{m-n-1}$ such that $$\pi_1\bigl(\prod_i
w_{\alpha}(\sqrt{2}b_i)\bigr)=\overline{B}.$$ Let $w=\prod_i
w_{\alpha}(\sqrt{2}b_i)$. Using Lemma \ref{le:16} we have
\begin{align*}
&wx_\alpha\bigl(t,\overline{a_0^i}a_0^{-i}a\bigr)x_{-\alpha}\bigl(\abs{a_0}^{-2}t,-\overline{a_0^i}a_0^{-i-1}a\bigr)x_\alpha\bigl(t,\overline{a_0^{i+1}}a_0^{-i-1}a\bigr)w^{-1}\\
&=x_\alpha\bigl(t,\overline{a_0^i}a_0^{-i}a'\bigr)x_{-\alpha}\bigl(\abs{a_0}^{-2}t,-\overline{a_0^i}a_0^{-i-1}a'\bigr)x_\alpha\bigl(t,\overline{a_0^{i+1}}a_0^{-i-1}a'\bigr).
\end{align*}
In [\cite{Deodhar}, p. 30], it was proved
$$w_\alpha(t,a')=x_\alpha\bigl(t,\overline{a_0^i}a_0^{-i}a'\bigr)x_{-\alpha}\bigl(\abs{a_0}^{-2}t,-\overline{a_0^i}a_0^{-i-1}a'\bigr)x_\alpha\bigl(t,\overline{a_0^{i+1}}a_0^{-i-1}a'\bigr)\in
N(S)_\RR$$ and in fact acts as the reflection with respect to
$\alpha$. Since $w\in Z(S)_{\RR}$, it  follows that
$x_{-\alpha}\bigl(\abs{a_0}^{-2}t,-\overline{a_0^i}a_0^{-i-1}a\bigr)=y_i$
and $x_\alpha\bigl(t,\overline{a_0^{i+1}}a_0^{-i-1}a\bigr)=x_i$ are
the ``right" elements in the chain of $x_\alpha(t,a)$ and
$$w_\alpha(t,a)=x_\alpha\bigl(t,\overline{a_0^i}a_0^{-i}a\bigr)x_{-\alpha}\bigl(\abs{a_0}^{-2}t,-\overline{a_0^i}a_0^{-i-1}a\bigr)x_\alpha\bigl(t,\overline{a_0^i}a_0^{-i}a\bigr).$$
And we have
\begin{align*}
w_\alpha(t,a)=w^{-1}w_\alpha(t,a')w.
\end{align*}
Repeating the argument, we find that
\begin{align*}
w_\alpha(t,a)=x_{-\alpha}\bigl(\abs{a_0}^{-2}t,-\overline{a_0^i}a_0^{-i-1}a\bigr)x_\alpha\bigl(t,\overline{a_0^{i+1}}a_0^{-i-1}a\bigr)
x_{-\alpha}\bigl(\abs{a_0}^{-2}t,-\overline{a_0^{i+1}}a_0^{-i-2}a\bigr).
\end{align*}
This proves the lemma completely.
\end{proof}

\begin{definition}\label{def:3}
Denote $w_{L_n}(t,z)=w_{L_n}(t,z,0,\dots,0)$ for
$(t,z)\in(\RR\times\CC)\backslash 0$. Let $W_{L_n}$ be the subgroup
generated by all $w_{L_n}(t,a)$ where
$(t,a)\in(\RR\times\CC^{m-n})\backslash 0$ and $H_{L_n}$ the
subgroup generated by all $w_{L_n}(t_1,a_1)w_{L_n}(t_2,a_2)^{-1}$
where $(t_1,a_1)$, $(t_2,a_2)\in(\RR\times\CC^{m-n})\backslash 0$.
Denote by $W_u$ the subgroup generated by $w_{L_n}(t,z)$, where
$(t,z)\in(\RR\times\CC)\backslash 0$ and denote by $H_u$ the
subgroup generated by all $w_{L_n}(t_1,z_1)w_{L_n}(t_2,z_2)^{-1}$
where $(t_1,z_1),(t_2,z_2)\in(\RR\times\CC)\backslash 0$. Denote by
$W_v$ the subgroup generated by $w_{L_n}(0,a)$ where
$a\in\CC^{m-n}\backslash 0$ and denote by $H_v$ the subgroup
generated by $w_{L_n}(0,a)w_{L_n}(0,b)^{-1}$ where
$a,b\in\CC^{m-n}\backslash 0$. Let $\tilde{W}_{L_n}$,
$\tilde{H}_{L_n}$, $\tilde{W}_u$, $\tilde{W}_v$, $\tilde{H}_v$ and
$\tilde{H}_u$ be the corresponding subgroups in $\widetilde{G}$.
\end{definition}

For $(t,a)\in(\RR\times\CC^{m-n})\backslash 0$, where
$a=(a_1,...,a_n)$. $a_0=-\frac{1}{2}\abs{a}^2+t \textrm{i}$ then
$$\pi_1\bigl(\tilde{w}_{L_n}(t,a)\bigr)=p(\pi)\text{diag}\bigl((\overline{a_0^{-1}})_n,(a_0)_{2n},B_{2n+1}\bigr),$$
where $\pi$ permutes n and 2n while fixes other numbers and $B\in
U(m-n)$(determinant of $B$ is $-\overline{a_0}a_0^{-1}$) with
entries $B_{i,j}=\overline{a_i}a_ja_0^{-1}$, for $i\neq j$ and
$B_{i,i}=1+\abs{a_i}^2a_0^{-1}$. Without confusion, we identify
$\pi_1(w_{L_n}(t,a))$ and $B$. The following lemma are proved easily
by computations using the Steinberg's relations [\cite{Steinberg2},
p. 40] or by using Lemma \ref{le:5}:
\begin{lemma}\label{le:27}
For $(t,a),(t_1,b)\in(\RR\times\CC^{m-n})\backslash 0$, $z\in\CC^*$,
$\pi_1\bigl(\tilde{w}_{L_n}(t,a)\bigr)=p(\pi)\diag\bigl((\overline{a_0^{-1}})_n,(a_0)_{2n},B_{2n+1}\bigr)$
where $a_0=(-\frac{1}{2}\abs{a}^2+t\emph{i})$. we have
\begin{itemize}
\item[1]$\tilde{w}_{L_n}(t,a)\tilde{w}_{L_n}(t_1,b)\tilde{w}_{L_n}(t,a)^{-1}=\tilde{w}_{-L_n}(t_1\abs{a_0}^{-2},\overline{a_0^{-1}B}\cdot b)$,

\item[2]$\tilde{w}_{L_n}(t,a)\tilde{w}_{L_{n-1}-L_n}(z)\tilde{w}_{L_n}(t,a)^{-1}=\tilde{w}_{L_{n-1}+L_n}(z\overline{a_0})$,

\item[3]$\tilde{w}_{L_n}(t,a)\tilde{w}_{L_{n-1}+L_n}(z)\tilde{w}_{L_n}(t,a)^{-1}=\tilde{w}_{L_{n-1}-L_n}(z_1a_0^{-1})$,

\item[4]$\tilde{w}_{L_{n-1}+L_n}(z)\tilde{w}_{L_n}(t,a)\tilde{w}_{L_{n-1}+L_n}(z)^{-1}=\tilde{w}_{-L_{n-1}}(t\abs{z}^{-2},\overline{z_1^{-1}}a)$,

\item[5]$\tilde{w}_{L_{n-1}-L_n}(z)\tilde{w}_{L_n}(t,a)\tilde{w}_{L_{n-1}-L_n}(z)^{-1}=\tilde{w}_{L_{n-1}}(t\abs{z}^2,za)$.
\end{itemize}

\end{lemma}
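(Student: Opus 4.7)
The strategy is to apply Lemma \ref{le:5}(1)--(2) to each of the five identities, which reduces the claim inside $\widetilde{G}$ to the analogous conjugation identity inside $G_\RR$. Once reduced to $G_\RR$, one reads off the result by a direct $(m+n)\times(m+n)$ block-matrix calculation using the explicit form
\[\pi_1(\tilde{w}_{L_n}(t,a))=p(\pi)\diag\bigl((\overline{a_0^{-1}})_n,(a_0)_{2n},B_{2n+1}\bigr)\]
given just before the statement, where $a_0=-\tfrac12\abs{a}^2+t\mathrm{i}$, $\pi$ swaps $n$ and $2n$, and $B\in U(m-n)$. In every case, after conjugation the target root is obtained from the Weyl reflection $w_{L_n}$ (for (1)--(3)) or $w_{L_{n-1}\mp L_n}$ (for (4)--(5)), and the scalar/vector parameters of the conjugated $\tilde{w}$ are read off from matrix multiplication.

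For (1), $w_{L_n}$ sends $L_n$ to $-L_n$, so Lemma \ref{le:5}(2) gives
\[\tilde{w}_{L_n}(t,a)\tilde{w}_{L_n}(t_1,b)\tilde{w}_{L_n}(t,a)^{-1}=\tilde{w}_{-L_n}\bigl(w_{L_n}(t,a)x_{L_n}(t_1,b)w_{L_n}(t,a)^{-1}\bigr).\]
Conjugating the unipotent matrix $x_{L_n}(t_1,b)$ by the diagonal--permutation matrix $\pi_1(\tilde{w}_{L_n}(t,a))$, the $f_{2L_n}$-coordinate $t_1$ gets multiplied by $\overline{a_0^{-1}}a_0^{-1}=\abs{a_0}^{-2}$, while the vector $b$ in the $f^\ell_{L_n}$-coordinates is acted on by $\overline{a_0^{-1}}$ (from the $n$-entry) composed with $B$ (from the lower block), giving $\overline{a_0^{-1}B}\cdot b$ after the swap by $p(\pi)$. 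For (2)--(3), the Weyl reflection $w_{L_n}$ sends $L_{n-1}\mp L_n$ to $L_{n-1}\pm L_n$, and the analogous block-matrix computation shows that the parameter $z$ gets scaled by $\overline{a_0}$ in (2) and by $a_0^{-1}$ in (3) (only the $n$-diagonal entry matters, since $L_{n-1}$ lies in the identity part of $\pi_1(\tilde{w}_{L_n}(t,a))$). For (4)--(5), one uses Lemma \ref{le:5}(2) with $\alpha = L_{n-1}\pm L_n$, applies the explicit form of $w_{L_{n-1}\pm L_n}(z)$ recorded in Remark \ref{re:3}, and computes $w_{L_{n-1}\pm L_n}(z)x_{L_n}(t,a)w_{L_{n-1}\pm L_n}(z)^{-1}$ directly; the scaling factors $\abs{z}^{\pm 2}$ come from conjugating the $f_{2L_n}$-entry by a diagonal of weight $\pm 2$, and the vector parameter transforms by $z$ or $\overline{z^{-1}}$ according to which entry of the diagonal matrix $w_{L_{n-1}\pm L_n}(z)$ gets picked up.

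The main obstacle is purely bookkeeping: keeping track of the complex conjugations (the signature $(m,n)$ forces $\overline{a_0^{-1}}$ in position $n$ versus $a_0$ in position $2n$), of the swap $p(\pi)$ (which is what turns conjugation by $\tilde{w}_{L_n}(t,a)$ of an $L_n$-element into an $-L_n$-element in (1)), and of the normalisation convention for $x_\alpha(t,a)$ (where the $f_{2\alpha}$-coefficient is $t-\sum r(a_j)i(a_j)$, not $t$, so one must check that the transformation $b\mapsto \overline{a_0^{-1}B}\cdot b$ is compatible with the choice of central term). None of these steps is deep; they are the same kind of computation already used to extract the matrix form of $\pi_1(\tilde{w}_{L_n}(t,a))$ in the paragraph preceding the statement, and the resulting formulas can also be checked by specialising $a=0$ (recovering Lemma \ref{le:21}, items 5--8) and comparing with Steinberg's commutator identities in \cite{Steinberg2}, as the lemma's hint suggests.
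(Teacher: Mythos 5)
Your proposal is correct and follows essentially the same route as the paper, which gives no detailed argument but states that the identities "are proved easily by computations using the Steinberg's relations [Steinberg, Lecture Notes, p.~40] or by using Lemma \ref{le:5}"; your reduction via Lemma \ref{le:5} followed by the block-matrix computation with $\pi_1(\tilde{w}_{L_n}(t,a))$ and the explicit Weyl elements of Remark \ref{re:3} is exactly that computation, carried out in detail (and your scalings $\abs{a_0}^{-2}$, $\overline{a_0}$, $a_0^{-1}$, $\abs{z}^{\pm2}$ agree with the stated formulas, where the paper's ``$z_1$'' is evidently a typo for $z$).
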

Notice when $m\geq n+1$, $\tilde{w}_{L_n}(t,0)=\tilde{w}_{2L_n}(t)$
where $t\in\RR^*$.

 For $z_1,z_2\in\CC^{*}$, we define:
\begin{align*}
&\{z_1,z_2\}=\tilde{h}_{L_1-L_2}(z_1)\tilde{h}_{L_1-L_2}(z_2)\tilde{h}_{L_1-L_2}(z_1z_2)^{-1}.
\end{align*}
In exactly the same manner as proofs of Lemma \ref{le:23} and
\ref{le:24}, we have the followings:
\begin{lemma}\label{le:36}
\begin{align*}
&(1) \ker(\pi_1)\cap \tilde{H}_{L_1-L_2}=\{\prod_i
\tilde{h}_{L_1-L_2}(z_i)\mid \text{ \emph{with} }\prod_i z_i=1\}.\\
&(2)  \ker(\pi_1)\cap \tilde{H}_{r}= \ker(\pi_1)\cap
\tilde{H}_{L_1-L_2}, \qquad \text{ \emph{for} }r=\pm L_i\pm
L_j(i\neq j).
\end{align*}
\end{lemma}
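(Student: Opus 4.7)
The plan is to mirror the proofs of Lemma \ref{le:23} step by step, substituting $\CC^*$ for $\RR^*$ wherever the parameter $t$ appeared. Both parts are essentially structural, so the arguments transfer; only a couple of places need a sentence of justification for the complex field.

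For (1), I would begin with the explicit formula in Remark \ref{re:3}, namely
\[
w_{L_1-L_2}(z) = p(\pi)\,\diag\bigl((-z^{-1})_1,z_2,(-\overline z)_{1+n},(\overline z^{-1})_{2+n}\bigr),
\]
and use it to compute $\pi_1(\tilde h_{L_1-L_2}(z)) = \pi_1\bigl(w_{L_1-L_2}(z)w_{L_1-L_2}(1)^{-1}\bigr)$ as the diagonal matrix with entries $z,\, z^{-1},\, \overline z^{-1},\, \overline z$ in positions $1,\,2,\,1{+}n,\,2{+}n$ (and $1$ elsewhere); the permutation $p(\pi)$ cancels between the two factors. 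Multiplicativity of this diagonal projection then gives
\[
\pi_1\bigl(\textstyle\prod_i \tilde h_{L_1-L_2}(z_i)\bigr)=\diag\bigl((\textstyle\prod_i z_i)_1,(\prod_i z_i^{-1})_2,(\prod_i \overline{z_i}^{-1})_{1+n},(\prod_i \overline{z_i})_{2+n}\bigr),
\]
which equals the identity exactly when $\prod_i z_i = 1$ (the conjugate condition being automatic). Since $\tilde H_{L_1-L_2}$ is by definition generated by the $\tilde h_{L_1-L_2}(z)^{\pm 1}$, every element can be written as such a product, and the claimed description of the kernel follows.

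For (2), I would use the Weyl-group conjugation argument from the proof of Lemma \ref{le:23}(2), which itself was modelled on Moore's Lemma 8.2. The key point is that all roots $r = \pm L_i \pm L_j$ with $i\neq j$ lie in a single orbit of the Weyl group $W_0$ of $SU(m,n)$ (which acts as signed permutations of the $L_i$). For each such $r$, I pick $w \in W_0$ sending $r$ to $L_1-L_2$ and a lift $\tilde w \in \widetilde{N}$ via Definition \ref{de:2}. By Lemma \ref{le:5}(3), conjugation by $\tilde w$ sends $\tilde h_r(z_1)\tilde h_r(z_2)^{-1}$ to $\tilde h_{L_1-L_2}(z_1')\tilde h_{L_1-L_2}(z_2')^{-1}$ for an element $z' \in \CC^*$ of the form $z$ or $\overline z$, depending on signs; in either case the kernel condition $\prod z_i = 1$ from part (1) is preserved. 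Since $\pi_1$ is equivariant under conjugation by elements projecting into $G_k$, this exhibits $\ker(\pi_1)\cap \tilde H_r$ and $\ker(\pi_1)\cap \tilde H_{L_1-L_2}$ as images of one another under mutually inverse automorphisms, and equality follows.

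The only point where one must be slightly careful, and where I would supply a line of detail, is the explicit form of the transformation $z \mapsto z'$ in the conjugation formula of Lemma \ref{le:5}(3): for the roots in question the worst that can happen is either a swap $r \leftrightarrow -r$ (inverting $z$ to $-z^{-1}$) or an outer reflection that hits the complex conjugate, and neither operation affects the condition $\prod z_i = 1$. I do not anticipate any genuine obstacle: the lemma is a direct analogue of Lemma \ref{le:23} with the same proof pattern, the replacement $\RR^* \rightsquigarrow \CC^*$ being harmless because the kernel computation in (1) already accommodates the conjugate entries automatically.
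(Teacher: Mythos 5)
Your proposal is correct and is essentially the paper's own argument: the paper proves this lemma by declaring it "in exactly the same manner" as Lemma \ref{le:23}, i.e.\ computing $\pi_1(\tilde h_{L_1-L_2}(z))=\diag\bigl(z_1,(z^{-1})_2,(\overline z^{-1})_{1+n},(\overline z)_{2+n}\bigr)$ for part (1) and invoking the Weyl-orbit conjugation argument modelled on Moore's Lemma 8.2 for part (2), exactly as you do (with centrality of the kernel elements making the conjugation argument give literal equality of subgroups).
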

\begin{lemma}\label{le:28}
\begin{align*}
\{z_1, z_2\}&=\{z_2,z_1\}^{-1}\qquad \forall z_1,z_2\in \CC^*,\\
\{z_1, z_2\cdot z_3\}&=\{z_1,z_2\}\cdot\{z_1,z_3\}\qquad \forall z_1, z_2,z_3\in \CC^*,\\
\{z_1\cdot z_2,
z_3\}&=\{z_1,z_3\}\cdot\{z_2,z_3\}\qquad \forall z_1, z_2,z_3\in \CC^*,\\
\{z,1-z\}&=1\qquad \forall z\in \CC^*, z\neq 1,\\
\{z,-z\}&=1\qquad \forall z\in\CC^*.
\end{align*}
\end{lemma}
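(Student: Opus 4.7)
The plan is to mirror, line for line, the appendix of Moore's paper \cite{Moore} (which treats the identical statement over $\RR$ via Lemma \ref{le:24}), with $\CC$ replacing $\RR$ throughout. The five displayed identities are precisely the defining axioms of a Steinberg symbol on $\CC^*$, so it suffices to verify each one inside $\widetilde{G}$.

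First I would establish centrality: every element of $\ker(\pi_1)\cap \tilde{H}_{L_1-L_2}$, and in particular each $\{z_1,z_2\}$, lies in the center of $\widetilde{G}$. This follows from Lemma \ref{le:5}(4), which says $\tilde{h}_\beta(u,v)$ acts on each generator $\tilde{x}_\alpha(a)$ by the scalar that $h_\beta(u,v)$ induces in $G_\RR$; a kernel element acts by the identity, hence commutes with every generator. Centrality lets me freely reorder symbol factors in the calculations below. Next I would reduce the entire problem to the rank-two subgroup of $\widetilde{G}$ generated by the root subgroups for $\pm(L_1-L_2)$ and $\pm(L_2-L_3)$; this is the universal central extension of an embedded $\SL_3$-copy in $SU(m,n)$, and the hypothesis $n\geq 3$ is used exactly to supply the second simple root. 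Within this copy the commutator formulas \eqref{for:4} specialize to the standard Steinberg relations for type $A_2$, which is all the machinery required.

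The crux is the relation $\{z,1-z\}=1$. I would run Steinberg's computation from \cite{Steinberg2}, Chapter 7 (also reproduced in Milnor \cite{Milnor}, Appendix to \S 9) inside the $A_2$ subgroup from the previous paragraph. The key move rewrites
$$\tilde{x}_{L_1-L_2}(z)\,\tilde{x}_{L_2-L_3}(1-z)\,\tilde{x}_{L_1-L_2}(z)^{-1}$$
in two ways using the commutator identities, then applies the chain-lemma definition of $\tilde{w}_{L_1-L_2}$ to collapse the result to $\tilde{h}_{L_1-L_2}(z)\tilde{h}_{L_1-L_2}(1-z)\tilde{h}_{L_1-L_2}(z(1-z))^{-1}=e$. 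Because the calculation only uses the ring operations on the coefficient field, it transfers verbatim from $\RR$ to $\CC$. Given $\{z,1-z\}=1$, the remaining axioms follow by standard manipulations: skew-symmetry $\{z_1,z_2\}=\{z_2,z_1\}^{-1}$ and $\{z,-z\}=1$ are deduced from $\{z,1-z\}=1$ together with the factorization $-z=(1-z)(1-z^{-1})^{-1}$, and Matsumoto's argument (cf. \cite{Moore}, Lemma A.2) converts the $2$-cocycle identity $c(z_1,z_2z_3)=c(z_1,z_2)\,c(z_1z_2,z_3)\,c(z_2,z_3)^{-1}$ (a direct consequence of the definition of $\{z_1,z_2\}$) into genuine bimultiplicativity in each slot once $\{z,1-z\}=1$ is in hand.

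The main obstacle is not logical but organizational: the chain lemma for $SU(m,n)$ in the long-root direction $\pm L_n$ (Lemma \ref{le:25}) is entangled with the compact $U(m-n)$ factor and with the extra generator $\tilde{w}_{2L_n}$, so one must carefully confine the entire argument to the $A_2$ subgroup generated by $\pm(L_1-L_2)$ and $\pm(L_2-L_3)$ and resist the temptation to conjugate by elements involving $L_n$. Once that discipline is maintained, every step of Moore's real-case proof transfers without modification, yielding the desired Steinberg symbol on $\CC^*$.
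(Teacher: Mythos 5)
Your proposal is correct and follows essentially the same route as the paper: the paper's proof of this lemma is simply to repeat, with $\CC$ in place of $\RR$, the argument of Lemmas \ref{le:23} and \ref{le:24}, i.e.\ the computation in the appendix of \cite{Moore} (Steinberg's Chapter 7 argument), which is exactly what you do, including the use of a second simple root $L_2-L_3$ (hence $n\ge 3$) and the centrality of $\ker(\pi_1)$. Your extra organizational remarks (confining the computation to the $A_2$ subsystem spanned by $\pm(L_1-L_2),\pm(L_2-L_3)$) are consistent with the paper; the only slight overstatement is calling that subgroup of $\widetilde{G}$ a universal central extension of the embedded $\SL_3(\CC)$, which is not needed since the verification only uses relations that hold in $\widetilde{G}$.
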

Thus we define a symbol on $\CC$.

\subsection{Structure of $\ker(\pi_1)$}
If $m\geq n+2$, for any $(a,b)\in S^1_\CC$, $j\leq m-n-1$, we define
\begin{align*}
\tilde{h}^{j}_{L_n}(0,\sqrt{2}a,\sqrt{2}b)&=\tilde{w}_{L_n}(0,\dots,0,\underset{j+1}{\sqrt{2}a},\underset{j+2}{\sqrt{2}b},0,\dots,0)\\
&\cdot
\tilde{w}_{L_n}(0,\dots,0,\underset{j+1}{-\sqrt{2}},0,\dots,0).
\end{align*}

Let $\tilde{H}_{s_0}$ denote the subgroup generated by
$\tilde{h}^j_{L_n}(0,\sqrt{2}a, \sqrt{2}b), ((a,b)\in S_{\CC}^{1})$
and $\tilde{H}_{0}$ denote the cyclic group generated by
$\tilde{h}_{2n}(-1)\tilde{h}_{2n}(-1)$ and $\tilde{H}_{c}$ denote
the cyclic group generated by $\tilde{h}_{2n}(-1)$.

An important step in proving Theorem \ref{th:4} is:
\begin{theorem}\label{th:9}
If $n\leq m\leq n+1$, $\ker(\pi_1)=\bigl(\ker(\pi_1)\cap
\tilde{H}_{L_1-L_2}\bigl)\cdot \tilde{H}_0$; if $m\geq n+2$,
$\ker(\pi_1)=\bigl(\ker(\pi_1)\cap \tilde{H}_{L_1-L_2}\bigr)\cdot
\tilde{H}_0\cdot\bigl(\ker(\pi_1)\cap \tilde{H}_{s_0}\bigr)$.
\end{theorem}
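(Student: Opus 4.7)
The plan is to adapt the strategy of the proof of Theorem \ref{th:3} to the unitary setting, noting two differences that must be tracked throughout: (i) the underlying field is $\CC$ rather than $\RR$, so that Lemmas \ref{le:36}--\ref{le:28} play the role previously played by Lemmas \ref{le:23}--\ref{le:24}; (ii) $\pi_1(SU(m,n))\cong \ZZ$ is infinite cyclic, which forces the extra factor $\tilde{H}_0$ generated by $\tilde{h}_{2L_n}(-1)\tilde{h}_{2L_n}(-1)$ to appear in the final decomposition.

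First I would establish the structural analogue of Lemma \ref{le:6} and Corollary \ref{le:9}. By Lemma \ref{le:1}(3)--(4), $\ker(\pi_1)\subseteq\tilde{H}=\prod_{r\in\Delta}\tilde{H}_r$. The non-trivial task is to rewrite the factor coming from the terminal simple root. Using Lemma \ref{le:21}(1)--(8) and the chain formula in Lemma \ref{le:25}, any $\tilde{h}_{L_n}\bigl((t,a),(t',b)\bigr)$ can be conjugated (via $\tilde{H}_{L_{n-1}-L_n}$ and elements of $\tilde{W}_s$) into a product of factors in $\tilde{H}_{L_{n-1}\pm L_n}$, an $\tilde{H}_c$-factor carrying the one-dimensional $2L_n$-data, and (when $m\ge n+2$) factors from $\tilde{H}_{s_0}$ carrying the vector-part data. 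Thus every $h\in\tilde{H}$ admits the normal form
\begin{align*}
h=\tilde{h}_{L_1-L_2}(z_1)\tilde{h}_{L_2-L_3}(z_2)\cdots\tilde{h}_{L_{n-1}-L_n}(z_{n-1})\cdot\tilde{h}_{L_{n-1}+L_n}(\zeta)\cdot h_c\cdot h_s\cdot h_0,
\end{align*}
where $z_i\in\CC^*$, $\zeta\in\CC^*$, $h_c\in\tilde{H}_c$, $h_s\in\tilde{H}_{s_0}$ (absent when $m\le n+1$), and $h_0\in\ker(\pi_1)\cap\tilde{H}_{L_1-L_2}$ by Lemma \ref{le:36}(2).

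Second, I would impose the kernel condition $\pi_1(h)=I_{m+n}$ and read off the diagonal constraints. Using the explicit formulas from Remark \ref{re:3} and the description of $\pi_1(\tilde{w}_{L_n}(0,\sqrt{2}a))$ preceding Lemma \ref{le:16}, the diagonal entries force $z_1=\cdots=z_{n-2}=1$ and $z_{n-1}=\zeta=\pm 1$, while $\pi_1(h_s)=I_{m+n}$ puts $h_s\in\ker(\pi_1)\cap\tilde{H}_{s_0}$. If all signs are $+1$ we are done. In the remaining case, the residual factor is $\tilde{h}_{L_{n-1}-L_n}(-1)\tilde{h}_{L_{n-1}+L_n}(-1)\cdot h_c$. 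Following exactly the computation in \eqref{for:27}, but now using Lemma \ref{le:21}(7)--(8) (the identity $\tilde{w}_{2L_n}(t)\tilde{h}_{L_{n-1}-L_n}(z)\tilde{w}_{2L_n}(t)^{-1}$ formula), one rewrites
\begin{align*}
\tilde{h}_{L_{n-1}-L_n}(-1)\tilde{h}_{L_{n-1}+L_n}(-1)\equiv \tilde{h}_{2L_n}(-1)^{\pm 1}\pmod{\ker(\pi_1)\cap\tilde{H}_{L_1-L_2}},
\end{align*}
which is absorbed into $h_c$. What remains is a power of $\tilde{h}_{2L_n}(-1)$ in the kernel; since $\pi_1(\tilde{h}_{2L_n}(-1))=\diag((-1)_n,\ldots,(-1)_{2n},\ldots)$ has order two in $SU(m,n)$ (and generates a representative of the non-trivial class in $\pi_1(SU(m,n))$ lifted one level), the kernel intersected with $\tilde{H}_c$ is generated precisely by $\tilde{h}_{2L_n}(-1)^2$, i.e.\ is $\tilde{H}_0$.

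The main obstacle will be Step Two above, specifically disentangling the $2L_n$-contribution (infinite cyclic) from the $L_n$-contribution (rotations in $U(m-n)$) inside the normal form, because Lemma \ref{le:25} intertwines the scalar parameter $t$ with the vector parameter $a$ through the constant $a_0=-\tfrac12|a|^2+t\,\textrm{i}$. The plan is to conjugate $\tilde{w}_{L_n}(t,a)$ by a carefully chosen element of $\tilde{W}_s$ using Lemma \ref{le:16} so as to move $a$ onto the first axis, at which point the chain collapses into the one-dimensional case computed in Remark \ref{re:3}, and the decomposition of the Steinberg-type $\tilde{h}_{L_n}((t,a),(t',b))$ into an $\tilde{H}_c$-piece times an $\tilde{H}_{s_0}$-piece becomes transparent; the cost of the conjugation is absorbed by $\tilde{H}_{s_0}$ and changes nothing modulo $\ker(\pi_1)\cap\tilde{H}_{s_0}$. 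After this canonicalization, the identification of the cyclic contribution with $\tilde{H}_0$ is a direct computation from the explicit matrix formulas.
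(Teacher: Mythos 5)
Your overall architecture (normal form over the simple-root subgroups, impose $\pi_1(h)=I_{m+n}$, absorb the residual torus factor, identify the cyclic part with $\tilde{H}_0$) parallels the paper's treatment of $SO^+(m,n)$, and the paper's actual proof of Theorem \ref{th:9} is organized differently only in that it first invokes Remark \ref{re:4} to reduce to $\ker(\pi_1)\cap\tilde{H}_\alpha$ for each simple $\alpha$ and then quotes Lemma \ref{le:26}. The genuine gap in your proposal is the step you flag as ``the main obstacle'': the chains $\tilde{w}_{L_n}(t,a)$ with \emph{both} $t\neq 0$ and $a\neq 0$. Conjugating by an element of $\tilde{W}_s$ via Lemma \ref{le:16} only rotates $a$ onto the first axis; it does not make $a$ vanish, so the resulting element $\tilde{w}_{L_n}(t,|a|e_1)$ is still a genuinely two-parameter, quasi-split rank-one chain and does \emph{not} collapse to the one-dimensional $2L_n$-chain of Remark \ref{re:3} (which requires $a=0$). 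Deciding which central elements these mixed chains produce is exactly the quasi-split $SU(2,1)$-type computation; the paper does not reprove it but imports Deodhar's Theorem 2.13 (cited inside Lemma \ref{le:26}(v) to get $\ker(\pi_1)\cap\tilde{H}_u\subseteq\ker(\pi_1)\cap\tilde{H}_{2L_n}$, and again verbatim for the whole case $m=n+1$). Your proposal has no substitute for this input, and it is most visible precisely when $m=n+1$: there $\tilde{H}_{s_0}$ does not exist, so there is nothing to ``absorb the cost of the conjugation into,'' yet the intertwining of $t$ with $a$ through $a_0=-\tfrac12|a|^2+t\,\textrm{i}$ is still present.

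Two smaller inaccuracies in Step Two should also be repaired. First, the claimed congruence $\tilde{h}_{L_{n-1}-L_n}(-1)\tilde{h}_{L_{n-1}+L_n}(-1)\equiv\tilde{h}_{2L_n}(-1)^{\pm1}$ modulo $\ker(\pi_1)\cap\tilde{H}_{L_1-L_2}$ cannot hold: the left side has trivial image under $\pi_1$ while $\pi_1(\tilde{h}_{2L_n}(-1))=\diag\bigl((-1)_n,(-1)_{2n}\bigr)\neq I_{m+n}$. The correct statement, which the paper derives in the proof of Lemma \ref{le:26}(i) (formula \eqref{for:23}), is that $\tilde{h}_{L_{n-1}-L_n}(z)\tilde{h}_{L_{n-1}+L_n}(z^{-1})$ already lies in $\ker(\pi_1)\cap\tilde{H}_{L_1-L_2}$ for every $z$ on the unit circle. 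Second, and relatedly, your diagonal bookkeeping misses a case: once the $\tilde{H}_c$-factor contributes the sign $\diag\bigl((-1)_n,(-1)_{2n}\bigr)$, the constraints force $z_{n-1}^2=-1$, i.e.\ $z_{n-1}=\pm\textrm{i}$ and $\zeta=z_{n-1}^{-1}$, not $z_{n-1}=\zeta=\pm1$; handling this residue is exactly why the unit-circle version of \eqref{for:23} (and not just its value at $z=-1$) is needed before one can conclude that what survives in $\tilde{H}_c$ is the even powers of $\tilde{h}_{2L_n}(-1)$, i.e.\ $\tilde{H}_0$.
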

The proof of this theorem relies on the following result. Recall the
notation set in Definition \ref{def:3}. We have
\begin{lemma}\label{le:26}
\begin{itemize}
\item[(i)]$\tilde{H}_{2L_n}\subseteq \tilde{H}_{L_{n-1}-L_n}\cdot
\tilde{H}_{L_{n-1}+L_n}\cdot \tilde{H}_c$.

\item[(ii)]$\ker(\pi_1)\cap\tilde{H}_{2L_n}\subseteq(\ker(\pi_1)\cap\tilde{H}_{L_{1}-L_2})\cdot\tilde{H}_0$.

\item[(iii)]$\tilde{H}_v\subseteq \tilde{H}_{L_{n-1}-L_n}\cdot
\tilde{H}_{L_{n-1}+L_n}\cdot\tilde{H}_{s_0}$.

\item[(iv)]$\ker(\pi_1)\cap\tilde{H}_v\subseteq (\ker(\pi_1)\cap\tilde{H}_{L_{n-1}-L_n})\cdot
(\ker(\pi_1)\cap\tilde{H}_{s_0})$.

\item[(v)]$\ker(\pi_1)\cap\tilde{H}_{L_n}\subseteq \bigl(\ker(\pi_1)\cap \tilde{H}_{L_1-L_2}\bigr)\cdot
\tilde{H}_0\cdot\bigl(\ker(\pi_1)\cap \tilde{H}_{s_0}\bigr)$
\end{itemize}
\end{lemma}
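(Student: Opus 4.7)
The five parts of Lemma \ref{le:26} are proved in the order (i)$\to$(ii)$\to$(iii)$\to$(iv)$\to$(v), each leveraging the decomposition obtained in the preceding part and the fact that $\ker(\pi_1)$ lies in the center of $\widetilde{G}$. For (i), I combine items 6 and 8 (after ``Hence'') of Lemma \ref{le:21}: item 6 gives $\tilde{h}_{L_{n-1}-L_n}(z)\tilde{w}_{2L_n}(1)\tilde{h}_{L_{n-1}-L_n}(z)^{-1}=\tilde{w}_{2L_n}(|z|^{-2})$, while item 8 rewrites $\tilde{w}_{2L_n}(1)\tilde{h}_{L_{n-1}-L_n}(z)^{-1}\tilde{w}_{2L_n}(1)^{-1}$ as a two-term product in $\tilde{H}_{L_{n-1}+L_n}$; multiplying yields the explicit identity
\begin{equation*}
\tilde{h}_{2L_n}(|z|^{-2})=\tilde{h}_{L_{n-1}-L_n}(z)\,\tilde{h}_{L_{n-1}+L_n}(-\textrm{i})\,\tilde{h}_{L_{n-1}+L_n}(-z\textrm{i})^{-1},
\end{equation*}
which covers all $t>0$. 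For $t<0$ I repeat the manipulation with $\tilde{w}_{2L_n}(-1)$ in place of $\tilde{w}_{2L_n}(1)$; the extra factor $\tilde{h}_{2L_n}(-1)\in \tilde{H}_c$ can be pushed past $\tilde{H}_{L_{n-1}+L_n}$ using normality of the latter in $\tilde{H}$ (Lemma \ref{le:1}). Part (ii) then takes $h\in\ker(\pi_1)\cap \tilde{H}_{2L_n}$, decomposes it via (i) as $h_{-}h_{+}h_{c}$, pulls central symbols in $\ker(\pi_1)\cap \tilde{H}_{L_1-L_2}$ to the end through Lemma \ref{le:36}, and solves $\pi_1(h)=I_{m+n}$ entry by entry using the explicit diagonal forms of $\pi_1(\tilde{h}_{L_{n-1}\pm L_n}(Z))$ and $\pi_1(\tilde{h}_{2L_n}(-1))$. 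The constraints force $|Z_{-}|=1$, $Z_{+}=Z_{-}^{-1}$, and the exponent of $\tilde{h}_{2L_n}(-1)$ to be even, so $h_{c}\in \tilde{H}_0$; specializing the identity of (i) to $|z|=1$ collapses $\tilde{h}_{L_{n-1}-L_n}(Z)\tilde{h}_{L_{n-1}+L_n}(Z^{-1})$ into an element of $\tilde{H}_{L_{n-1}+L_n}$ which, being in $\ker(\pi_1)$, lies in $\ker(\pi_1)\cap \tilde{H}_{L_1-L_2}$ by Lemma \ref{le:36}.

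For (iii) I mirror the proof of Lemma \ref{le:6}. Given $\tilde{w}_{L_n}(0,a)\tilde{w}_{L_n}(0,b)^{-1}$, I normalize both factors using item 5 after ``Hence'' of Lemma \ref{le:21}: conjugation by $\tilde{h}_{L_{n-1}-L_n}(|a|/\sqrt{2})$ sends $\tilde{w}_{L_n}(0,a)$ to $\tilde{w}_{L_n}(0,\sqrt{2}\hat{a})$ for $\hat{a}=a/|a|$, introducing $\tilde{H}_{L_{n-1}-L_n}$-factors. Item 7 then moves those factors past the normalized $\tilde{w}_{L_n}$'s at the cost of $\tilde{H}_{L_{n-1}+L_n}$-factors, leaving a core product $\tilde{w}_{L_n}(0,\sqrt{2}\hat{a})\tilde{w}_{L_n}(0,\sqrt{2}\hat{b})^{-1}$ on the unit sphere. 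A unitary analog of Corollary \ref{le:9} shows this core lies in $\tilde{H}_{s_0}$: by Lemma \ref{le:16} together with the fact that the reflections $\pi_1(\tilde{h}^j_{L_n}(0,\sqrt{2}a,\sqrt{2}b))$ generate $U(m-n)$, the group $\tilde{H}_{s_0}$ acts transitively by conjugation on vectors of length $\sqrt{2}$, so each $\tilde{w}_{L_n}(0,\sqrt{2}\hat{a})$ is an $\tilde{H}_{s_0}$-conjugate of $\tilde{w}_{L_n}(0,\sqrt{2},0,\ldots,0)$, and the product of two such stays in $\tilde{H}_{s_0}$. Part (iv) follows from (iii) by exactly the template of (ii): decompose $h\in\ker(\pi_1)\cap \tilde{H}_v$, push centrals to the end, impose $\pi_1(h)=I$, and use the $|z|=1$ identity from (i) to collapse the $\tilde{H}_{L_{n-1}-L_n}$-factor into $\tilde{H}_{L_{n-1}+L_n}$, whose kernel intersection equals $\ker(\pi_1)\cap \tilde{H}_{L_1-L_2}$.

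For (v) I use Lemma \ref{le:25} together with the Steinberg commutation relation $R_{L_n,L_n}$ (which factors $\tilde{x}_{L_n}(t,a)$ as $\tilde{x}_{2L_n}(t')\cdot \tilde{x}_{L_n}(0,a)$ up to central corrections) to decompose every $\tilde{w}_{L_n}(t,a)$ as a $\tilde{w}_{2L_n}$-part times a $\tilde{w}_{L_n}(0,a')$-part, conjugated by elements of $\tilde{H}_{L_{n-1}\pm L_n}$. This gives the inclusion $\tilde{H}_{L_n}\subseteq \tilde{H}_{2L_n}\cdot \tilde{H}_v\cdot \tilde{H}_{L_{n-1}-L_n}\cdot \tilde{H}_{L_{n-1}+L_n}$; applying (ii) to the $\tilde{H}_{2L_n}$-component, (iv) to the $\tilde{H}_v$-component, and the kernel analysis of (ii)/(iv) to the residual $\tilde{H}_{L_{n-1}\pm L_n}$-part then yields the claim. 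I expect the main obstacle to lie in (iii) and (v): in (iii), the unitary analog of Corollary \ref{le:9} is more delicate than the orthogonal one because ``rotations'' in $U(m-n)$ carry phase ambiguities that must be tracked separately from reflection content in order to keep the residual elements inside $\tilde{H}_{s_0}$; in (v), the chain for $\tilde{w}_{L_n}(t,a)$ genuinely mixes the central ($2L_n$) contribution with the vector part, so the Steinberg symbols produced during the decomposition need to be identified with elements of $\ker(\pi_1)\cap \tilde{H}_{L_1-L_2}$ or $\tilde{H}_0$ before (ii) and (iv) can be invoked cleanly.
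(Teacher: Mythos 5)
Your treatment of parts (i)--(iv) is essentially the paper's own argument: you derive the same identity from items 6 and 8 of Lemma \ref{le:21} (the paper works with $\tilde{h}_{L_{n-1}-L_n}(z^{-1/2})$ and the split $t>0$, $t<0$ with the extra $\tilde{h}_{2L_n}(-1)\in\tilde{H}_c$), you extract the same $\abs{z}=1$ relation $\tilde{h}_{L_{n-1}-L_n}(z)\tilde{h}_{L_{n-1}+L_n}(z^{-1})\in\ker(\pi_1)\cap\tilde{H}_{L_1-L_2}$, and you prove (ii) and (iv) by the same decompose-and-impose-$\pi_1(h)=I$ template using Lemma \ref{le:36}. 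Part (iii) is also the paper's route (mirror Lemma \ref{le:6} and Corollary \ref{le:9} via Lemmas \ref{le:21}, \ref{le:27}, \ref{le:16}); note only that the elements $\pi_1\bigl(\tilde{h}^j_{L_n}(0,\sqrt{2}a,\sqrt{2}b)\bigr)$ generate $SU(m-n)$, not $U(m-n)$ (each is a product of two reflections, hence of determinant $1$); since $SU(m-n)$ is still transitive on the unit sphere for $m-n\geq 2$, the conjugation argument survives, but the phase bookkeeping you flag is handled in the paper by Lemma \ref{pop:2}-type statements.

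The genuine gap is in (v). You propose to factor $\tilde{w}_{L_n}(t,a)$ into a $\tilde{w}_{2L_n}$-part times a $\tilde{w}_{L_n}(0,a')$-part ``up to central corrections'' via a Steinberg relation $R_{L_n,L_n}$, and thence $\tilde{H}_{L_n}\subseteq\tilde{H}_{2L_n}\cdot\tilde{H}_v\cdot\tilde{H}_{L_{n-1}-L_n}\cdot\tilde{H}_{L_{n-1}+L_n}$. No such relation is available: the defining relations $R_{\alpha,\beta}$ of $\widetilde{G}$ are imposed only for $\alpha\neq-\beta$, and while the unipotent $\tilde{x}_{L_n}(t,a)$ does factor inside $\tilde{U}^{L_n}$ as a $\tilde{x}_{2L_n}$-part times $\tilde{x}_{L_n}(0,a)$, this does not transfer to the Weyl elements $\tilde{w}_{L_n}(t,a)=\tilde{x}_{L_n}\tilde{x}_{-L_n}\tilde{x}_{L_n}$, which mix positive and negative unipotents; indeed $\pi_1\bigl(w_{L_n}(t,a)\bigr)$ acts on $S$ as the reflection in $L_n$, whereas any product of a $w_{2L_n}$-element and a $w_{L_n}(0,\cdot)$-element acts trivially, so the proposed factorization cannot hold as stated, and fixing it by ``central corrections'' is exactly the point at issue, since those central discrepancies are the unknown Schur-multiplier contributions. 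The heart of (v) is the rank-one subgroup generated by $U^{\pm L_n}$ restricted to one complex coordinate together with the $\pm 2L_n$ directions (a quasi-split group of type $SU(2,1)$), encoded in $\tilde{H}_u$, and its kernel cannot be reached from (ii) and (iv) alone. The paper's proof first uses Lemma \ref{le:25} and Lemma \ref{le:27} to show $\tilde{H}_{L_n}\subseteq\tilde{H}_v\cdot\tilde{H}_u$ (conjugating $\tilde{w}_{L_n}(t,a)$ into $\tilde{w}_{L_n}(t,\abs{a})$ by an element of $\tilde{H}_v$), and then imports Deodhar's Theorem 2.13 for the quasi-split case $m=n+1$, which gives $\ker(\pi_1)\cap\tilde{H}_u\subseteq\ker(\pi_1)\cap\tilde{H}_{2L_n}$; only then do (ii) and (iv) finish the argument. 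Your proposal omits this external rank-one input, and the step meant to replace it does not go through, so (v) is not established.
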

\begin{proof}
(i) Using Lemma \ref{le:21}, for $\forall t\in\RR^*$, let
$z\in\CC^*$ such that $\abs{z}=\abs{t}$. We have
\begin{align*}
\tilde{h}_{2L_n}(t)&=\tilde{w}_{2L_n}(t)\tilde{w}_{2L_n}(-1)\\
&=\tilde{h}_{L_{n-1}-L_n}(z^{-\frac{1}{2}})\tilde{w}_{2L_n}(t\abs{z}^{-1})\tilde{h}_{L_{n-1}-L_n}(z^{-\frac{1}{2}})^{-1}\tilde{w}_{2L_n}(-1)\\
&=\tilde{h}_{L_{n-1}-L_n}(z^{-\frac{1}{2}})\tilde{w}_{2L_n}(t\abs{t}^{-1})\tilde{h}_{L_{n-1}-L_n}(z^{-\frac{1}{2}})^{-1}\tilde{w}_{2L_n}(-1).
\end{align*}
If $t>0$ we have
\begin{align*}
&\tilde{h}_{L_{n-1}-L_n}(z^{-\frac{1}{2}})\tilde{w}_{2L_n}(t\abs{t}^{-1})\tilde{h}_{L_{n-1}-L_n}(z^{-\frac{1}{2}})^{-1}\tilde{w}_{2L_n}(-1)\\
=&\tilde{h}_{L_{n-1}-L_n}(z^{-\frac{1}{2}})(\tilde{w}_{2L_n}(1)\tilde{h}_{L_{n-1}-L_n}(z^{-\frac{1}{2}})^{-1}\tilde{w}_{2L_n}(-1))\\
=&\tilde{h}_{L_{n-1}-L_n}(z^{-\frac{1}{2}})\tilde{h}_{L_{n-1}+L_n}(-
\textrm{i})\tilde{h}_{L_{n-1}+L_n}(-z^{-\frac{1}{2}}
\textrm{i})^{-1}.
\end{align*}
If $t<0$ we have
\begin{align*}
&\tilde{h}_{L_{n-1}-L_n}(z^{-\frac{1}{2}})\tilde{w}_{2L_n}(t\abs{t}^{-1})\tilde{h}_{L_{n-1}-L_n}(z^{-\frac{1}{2}})^{-1}\tilde{w}_{2L_n}(-1)\notag\\
=&\tilde{h}_{L_{n-1}-L_n}(z^{-\frac{1}{2}})\tilde{w}_{2L_n}(-1)\tilde{h}_{L_{n-1}-L_n}(z^{-\frac{1}{2}})^{-1}\tilde{w}_{2L_n}(-1)\notag\\
=&\tilde{h}_{L_{n-1}-L_n}(z^{-\frac{1}{2}})\tilde{h}_{L_{n-1}+L_n}(\textrm{i})\tilde{h}_{L_{n-1}+L_n}(z^{-\frac{1}{2}}
\textrm{i})^{-1}\tilde{h}_{2L_n}(-1).
\end{align*}
Especially, if $t=-1$, we have
$$e=\tilde{h}_{L_{n-1}-L_n}(z)\tilde{h}_{L_{n-1}+L_n}(\textrm{i})\tilde{h}_{L_{n-1}+L_n}(z \textrm{i})^{-1},$$
for $\forall z\in S^0_\CC$. By Lemma \ref{le:36}, we have
$$\tilde{h}_{L_{n-1}+L_n}(\textrm{i})\tilde{h}_{L_{n-1}+L_n}(z \textrm{i})^{-1}\in \tilde{h}_{L_{n-1}+L_n}(z^{-1})\cdot\bigl(\ker(\pi_1)
\cap \tilde{H}_{L_{1}-L_2}\bigr),$$ then it follows
\begin{align}\label{for:23}
\tilde{h}_{L_{n-1}-L_n}(z)\tilde{h}_{L_{n-1}+L_n}(z^{-1})\in\ker(\pi_1)\cap
\tilde{H}_{L_{1}-L_2}
\end{align}
for $\forall z\in S^0_\CC$.
 Hence we proved (i).

 (ii) By Lemma \ref{le:36} and (i), any $h\in \tilde{H}_{2L_n}$ can be written as
$$h=\tilde{h}_{L_{n-1}-L_n}(z_1)\tilde{h}_{L_{n-1}+L_n}(z_2)h_1h_2$$
where $z_1,z_2\in\CC^*$, $h_1\in\ker(\pi_1)\cap
\tilde{H}_{L_{1}-L_2}$ and $h_2\in \tilde{H}_c$.

If $\pi_1(h)=I_{m+n}$, we have
$z_1=\overline{z_2}=\overline{z_1^{-1}}$, and $\pi_1(h_2)=I_{m+n}$.
By (\ref{for:23}) we have
$$\tilde{h}_{L_{n-1}-L_n}(z_1)\tilde{h}_{L_{n-1}+L_n}(z_2)\in \ker(\pi_1)\cap \tilde{H}_{L_{1}-L_2}.$$
Notice $\pi_1(\tilde{h}_{2n}(-1))=$diag$((-1)_n,(-1)_{2n})$, it
follows $h_2=\bigl(\tilde{h}_{2n}(-1)\bigl)^{2k}$, $k\in\ZZ$. Hence
we proved (ii).

(iii)  Notice for $(a,b)\in S_{\CC}^1$, we have
$$\pi_1\bigl(\tilde{h}^{j}_{L_n}(0,\sqrt{2}a,\sqrt{2}b)\bigr)=\text{diag}(R_{2n+j}),$$
where $$R_j=\left(
\begin{array}{ccc}
   \abs{a}^2-\abs{b}^2& -2\overline{a}b  \\
   2\overline{b}a& \abs{a}^2-\abs{b}^2\\
\end{array}
\right).$$  Then
$\pi_1\bigl(\tilde{h}^{j}_{L_n}(0,\sqrt{2}a,\sqrt{2}b)\bigl)$,
$j\leq m-n-1$ generate a subgroup isomorphic to $SU(m-n)$. Using
Lemma \ref{le:27}, similar to proofs in Lemma \ref{le:6} and
Corollary \ref{le:9}, it follows
\begin{align*}
\tilde{H}_v\subseteq \tilde{H}_{L_{n-1}-L_n}\cdot
\tilde{H}_{L_{n-1}+L_n}\tilde{H}_{s_0}.
\end{align*}

(iv) By Lemma \ref{le:36} and (iii), any $h\in \tilde{H}_v$ can be
written as
$$h=\tilde{h}_{L_{n-1}-L_n}(z_1)\tilde{h}_{L_{n-1}+L_n}(z_2)h_1h_2$$
where $z_1,z_2\in\CC^*$, $h_1\in\ker(\pi_1)\cap
\tilde{H}_{L_{1}-L_2}$ and $h_2\in \tilde{H}_{s_0}$.

If $\pi_1(h)=I_{m+n}$, we have
$z_1=\overline{z_2}=\overline{z_1^{-1}}$, and $\pi_1(h_2)=I_{m+n}$.
Along the line in proof of (ii), we get (iv).

(v)  By Lemma \ref{le:25}, for $\forall
(t,a)\in(\RR\times\CC^{m-n})\backslash 0$ we can find $w\in
\tilde{H}_{v}$ such that
\begin{align*}
\tilde{w}_{L_n}(t,a)=w^{-1}\tilde{w}_{L_n}(t,\abs{a})w.
\end{align*}
Using Lemma \ref{le:27}, for any $w_1\in \tilde{W}_u$ we have
\begin{align*}
w_1\tilde{W}_v w_1^{-1}\subseteq \tilde{W}_v \qquad\text{ and
}\qquad w_1\tilde{H}_v w_1^{-1}\subseteq \tilde{H}_v.
\end{align*}
Thus it follows
\begin{align*}
\tilde{H}_{L_n}\subseteq \tilde{H}_{v}\cdot \tilde{H}_{u}.
\end{align*}
Then any $h\in\tilde{H}_{L_n}$ can be written as $h=h_vh_u$ where
$h_v\in \tilde{H}_{v}$ and $h_u\in \tilde{H}_{u}$.

If $\pi_1(h)=I_{m+n}$ we have $\pi_1(h_u)=\pi_1(h_v)=I_{m+n}$. Thus
it follows that $\ker(\pi_1)\cap\tilde{H}_{L_n}\subseteq
(\ker(\pi_1)\cap\tilde{H}_{v})\cdot (\ker(\pi_1)\cap\tilde{H}_{u})$.

In [\cite{Deodhar}, p37-p59] Theorem 2.13 asserts that if $m=n+1$,
$\ker(\pi_1)=\ker\pi_1)\cap\tilde{H}_{2L_n}$. It follows
$\ker(\pi_1)\cap \tilde{H}_{u}\subseteq \ker(\pi_1)\cap
\tilde{H}_{2L_n}$. By (ii) and (iv) we get (v).
\end{proof}
\textbf{Proof of Theorem \ref{th:9}}\label{for:19}

If $m=n$, by Remark \ref{re:4}, $\ker(\pi_1)\subseteq (\prod_{r\in
\Delta}\ker(\pi_1)\cap\tilde{H}_{r})$ where
$\Delta=\{L_i-L_{i+1},2L_n\}$. By Lemma \ref{le:36},
$\ker(\pi_1)\cap\tilde{H}_{L_i-L_{i+1}}=\ker(\pi_1)\cap\tilde{H}_{L_{1}-L_{2}}$.
By Lemma \ref{le:26} (2), we are thus though this case.

We are though the case $m=n+1$ by referring to
[\cite{Deodhar},Theorem 2.13] which asserts that if $m=n+1$,
$\ker(\pi_1)=\ker(\pi_1)\cap\tilde{H}_{2L_n}$.

When $m\geq n+2$, by Remark \ref{re:4}, $\ker(\pi_1)\subseteq
(\prod_{r\in \Delta}\ker(\pi_1)\cap\tilde{H}_{r})$ where
$\Delta=\{L_i-L_{i+1},L_n\}$. Along the line in proof of the case
$m=n$, and by Lemma \ref{le:26} (5), we are though this case.
\subsection{Construction of $S_{\RR}^1$-symbols}
We now proceed with the study of $\ker(\pi_1)\cap \tilde{H}_{s_0}$
when $m-n\geq 2$. Up to Lemma \ref{pop:2}, we study properties of
$\tilde{w}_{L_n}(0,\sqrt{2}a)(a\in S_{\CC}^{m-n-1})$ and
$\tilde{h}^i_{L_n}(0,\sqrt{2}b,\sqrt{2}c)\bigl((b,c)\in S^1\bigr)$
which build up the whole $\tilde{H}_{s_0}$, and construct new
symbols on $S_{\RR}^1$ to get prepared for further study of
$\ker(\pi_1)\cap \tilde{H}_{s_0}$.

\begin{definition}\label{de:1} Recall that $\tilde{W}_{s}$ the subgroup
generated by $\tilde{w}_{L_n}(0,\sqrt{2}a)$, where $a\in
S_{\CC}^{m-n-1}$.  We will do calculations inside $\tilde{W}_s$
until the end of Section \ref{sec:14}. For the sake of simplicity,
we denote $\tilde{w}_{L_n}(0,\sqrt{2}a)(a\in S^{m-n-1})$ by
$\tilde{w}_{L_n}(a)$,
$\tilde{h}^j_{L_n}(0,\sqrt{2}a,\sqrt{2}b)(a,b\in S^{1}_\CC)$ by
$\tilde{h}^j_{L_n}(a,b)$.
\end{definition}

\begin{lemma}\label{le:31} For $(a,b), (c,d)\in S_{\RR}^1$, $v\in S_{\CC}^0$, we have
\begin{align*}
(1) &\tilde{h}^i_{L_n}(v,0)=e,\\
(2) &[\tilde{h}^i_{L_n}(a,b),\tilde{h}^i_{L_n}(c,d)]\notag \\
&=\tilde{h}^i_{L_n}\bigl((a^2-b^2,2ab)\cdot(c,d)\bigl)\tilde{h}^i_{L_n}(a^2-b^2,2ab)^{-1}\tilde{h}^i_{L_n}(c,d)^{-1}\notag \\
&=\tilde{h}^i_{L_n}(a,b)\tilde{h}^i_{L_n}(c^2-d^2,2cd)\tilde{h}^i_{L_n}\bigl((a,b)\cdot(c^2-d^2,2cd)\bigl)^{-1},
\end{align*}
\begin{align*}
&(3) [\tilde{h}^i_{L_n}(a,b\emph{i}),\tilde{h}^i_{L_n}(c,d\emph{i})]\notag \\
=&\tilde{h}^i_{L_n}\bigl((a^2-b^2,-2ab\emph{i})\cdot(c,d \emph{i})\bigl)\tilde{h}^i_{L_n}(a^2-b^2,-2ab\emph{i})^{-1}\tilde{h}^i_{L_n}(c,d\emph{i})^{-1}\notag \\
=&\tilde{h}^i_{L_n}(a,b\emph{i})\tilde{h}^i_{L_n}(c^2-d^2,-2cd\emph{i})\tilde{h}^i_{L_n}\bigl((a,b\emph{i})\cdot(c^2-d^2,-2cd\emph{i})\bigl)^{-1},
\end{align*}
where the $\cdot$ follows multiplication rule among quarternions if
we identify any quarternion $(x+y\emph{i}+z\emph{j}+w\emph{k})$ with
$(x+y\emph{i},z+w\emph{i})$.
\end{lemma}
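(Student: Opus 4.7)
The plan is to prove all three parts by direct computation inside $\tilde{W}_s$, mirroring the strategy of the $SO^+$-analogue Lemma \ref{le:3}.

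For parts (2) and (3), I would apply the conjugation formula Lemma \ref{le:16} to
\[
\tilde{h}^i_{L_n}(a,b)\,\tilde{w}_{L_n}(0,\ldots,\sqrt{2}c,\sqrt{2}d,0,\ldots,0)\,\tilde{h}^i_{L_n}(a,b)^{-1},
\]
and its $\textrm{i}$-twisted analogue in (3). Under the paper's identification $(x+y\textrm{i},z+w\textrm{i})\leftrightarrow x+y\textrm{i}+z\textrm{j}+w\textrm{k}$, the image $\pi_1(\tilde{h}^i_{L_n}(a,b))$ (resp.\ $\pi_1(\tilde{h}^i_{L_n}(a,b\textrm{i}))$) acts on the $(i+1,i+2)$-block of $U(m-n)$ as multiplication by the quaternion corresponding to the square $(a,b)^2=(a^2-b^2,2ab)$ (resp.\ $(a,b\textrm{i})^2=(a^2-b^2,-2ab\textrm{i})$). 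Conjugation therefore transforms the parameter of $\tilde{w}_{L_n}$ by this product, and refolding the result into the definition of $\tilde{h}^i_{L_n}$ yields the two equivalent forms of each commutator identity displayed in the statement. This is essentially the $SO$-case calculation with the complex-number multiplication rule replaced by the quaternionic one; the bookkeeping is routine.

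For part (1), I would first unfold the definition to
\[
\tilde{h}^i_{L_n}(v,0)=\tilde{w}_{L_n}(\sqrt{2}\,v\vec{e}_{i+1})\cdot\tilde{w}_{L_n}(-\sqrt{2}\,\vec{e}_{i+1}),
\]
observing that the second factor equals $\tilde{w}_{L_n}(\sqrt{2}\vec{e}_{i+1})^{-1}$ because $x_{L_n}(0,\pm\sqrt{2}\vec{e}_{i+1})$ are mutual inverses in $U^{L_n}_{\RR}$ (only a single real $1$-parameter subgroup is involved, so no non-commutativity of $\mathfrak{g}_{L_n}$ intervenes). A direct check shows both factors have the same image under $\pi_1$, so the element lies in $\ker\pi_1$ and is central in $\tilde{G}$. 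Applying the conjugation identity $\tilde{h}_{L_{n-1}-L_n}(z)\tilde{w}_{L_n}(0,\vec{a})\tilde{h}_{L_{n-1}-L_n}(z)^{-1}=\tilde{w}_{L_n}(0,\vec{a}z^{-1})$ from Lemma \ref{le:21} with $z=v^{-1}$ rewrites this element as the commutator $[\tilde{h}_{L_{n-1}-L_n}(v^{-1}),\tilde{w}_{L_n}(\sqrt{2}\vec{e}_{i+1})]$. I would then mimic the cancellation argument in the second half of the proof of Lemma \ref{le:35}: further conjugation by an appropriate element of $\tilde{H}_{2L_n}$ (a subgroup available only in the $SU$-setting, since $2L_n$ is not a root of $SO^+(m,n)$) exhibits $\tilde{h}^i_{L_n}(v,0)$ as equal to its own inverse, which combined with the trivial case $v=1$ forces $\tilde{h}^i_{L_n}(v,0)=e$ for every $v\in S^0_\CC$.

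The main obstacle is part (1). In $SO^+(m,n)$ the corresponding element has genuine order $2$ (Lemma \ref{le:35}), whereas here we need outright triviality, and this depends crucially on the extra root $2L_n$ and its associated Steinberg-type relations. The calculation must track complex phases carefully: since $\pi_1(\tilde{w}_{L_n}(\sqrt{2}v\vec{e}_{i+1}))$ is independent of $v\in S^0_\CC$, the entire $v$-dependence is concealed inside $\ker\pi_1$ and must be shown to collapse by exploiting exactly the $SU$-specific cancellations provided by $\tilde{H}_{2L_n}$.
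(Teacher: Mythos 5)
Your treatment of parts (2) and (3) is essentially the paper's: conjugate $\tilde{w}_{L_n}(0,\dots,c_i,d_{i+1},\dots,0)$ (and its $\textrm{i}$-twisted version) by $\tilde{h}^i_{L_n}(a,b)$ via Lemma \ref{le:16}, read off that the parameter gets multiplied by $(a^2-b^2,\pm 2ab)$ under the quaternionic rule, and refold as in Lemma \ref{le:3}. No issue there.

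Part (1), however, has a genuine gap. Your endgame is: show $\tilde{h}^i_{L_n}(v,0)$ is equal to its own inverse (by a conjugation trick modeled on the second half of Lemma \ref{le:35}, invoking $\tilde{H}_{2L_n}$), and then conclude triviality "combined with the trivial case $v=1$". Self-inverseness only gives $\tilde{h}^i_{L_n}(v,0)^2=e$, and knowing $\tilde{h}^i_{L_n}(1,0)=e$ says nothing about other values of $v$; an order-two central element is exactly what survives in the $SO^+$ situation (there $\tilde{h}^1_{L_n}(-1,0)$ is self-inverse and in general \emph{not} trivial — it generates $\ker(\pi_1)\cap\tilde{H}_s$ when $m=n+1$). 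Since $\tilde{G}$ is an abstract group, there is also no continuity argument available to interpolate from $v=1$. So as written the argument proves order at most two, not the claimed identity $\tilde{h}^i_{L_n}(v,0)=e$. Your diagnosis that the extra root $2L_n$ must supply the cancellation is also not how the triviality actually arises, and you do not exhibit the needed relation.

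The paper's mechanism is different and worth noting: working inside $\tilde{W}_s$ (so using $m-n\geq 2$, not $2L_n$), one conjugates by an element $h\in\tilde{H}_{s_0}$ with $\pi_1(h)=\diag(\overline{u}_{2n+i},u_{2n+i+1})$; since $\tilde{h}^i_{L_n}(v,0)$ is central this yields the multiplicativity $\tilde{h}^i_{L_n}(uv,0)=\tilde{h}^i_{L_n}(u,0)\tilde{h}^i_{L_n}(v,0)$ for all $u,v\in S^0_{\CC}$. Expanding $\tilde{h}^i_{L_n}(v,0)\tilde{h}^i_{L_n}(-v,0)$ into $\tilde{w}_{L_n}$-factors and cancelling gives $\tilde{h}^i_{L_n}(-v^2,0)=\tilde{h}^i_{L_n}(-1,0)$ for every $v$; taking $v=\textrm{i}$ forces $\tilde{h}^i_{L_n}(-1,0)=\tilde{h}^i_{L_n}(1,0)=e$, and since every element of the circle is of the form $-v^2$, triviality follows for all $v$. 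In other words, the $SU$-specific input is the divisibility of the circle realized through the unitary rotations in the compact $(m-n)$-block, not a relation coming from $\tilde{H}_{2L_n}$; your proposal would need to be reworked along these lines (or some equivalent) to close the gap.
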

\begin{proof}
(1) Notice $\pi_1(\tilde{h}^i_{L_n}(v,0))=I_{m+n}$ for any $v\in
S_{\CC}^0$. For $\forall u\in S^0_\CC$, choose $h\in
\tilde{H}_{s_0}$ such that
$\pi_1(h)$=diag$(\overline{u}_{2n+i},u_{2n+i+1})$, we have
\begin{align*}
&\tilde{h}^i_{L_n}(v,0)=\tilde{h}\tilde{h}^i_{L_n}(v,0)\tilde{h}^{-1}\\
&=\tilde{w}_{L_n}(0,\dots,(uv)_{i},\dots,0)\tilde{w}_{L_n}(0,\dots,-u_{i},\dots,0)\\
&=\tilde{h}^i_{L_n}(uv,0)\tilde{h}^i_{L_n}(u,0)^{-1}.
\end{align*}
Thus for any $u,v\in S_{\CC}^0$, we have
$$\tilde{h}^i_{L_n}(uv,0)=\tilde{h}^i_{L_n}(u,0)\tilde{h}^i_{L_n}(v,0).$$
Thus we have
\begin{align*}
&\tilde{h}^i_{L_n}(-v^2,0)=\tilde{h}^i_{L_n}(v,0)\tilde{h}^i_{L_n}(-v,0)\\
&=\tilde{w}_{L_n}(0,\dots,v_i,\dots,0)\tilde{w}_{L_n}(0,\dots,(-1)_i,\dots,0)\\
&\cdot \tilde{w}_{L_n}(0,\dots,(-v)_i,\dots,0)\tilde{w}_{L_n}(0,\dots,(-1)_i,\dots,0)\\
&=\tilde{w}_{L_n}(0,\dots,(-1)_i,\dots,0)\tilde{w}_{L_n}(0,\dots,(-1)_i,\dots,0)\\
&=\tilde{h}^i_{L_n}(-1,0).
\end{align*}
Since $\tilde{h}^i_{L_n}(-\textrm{i}^2,0)=\tilde{h}^i_{L_n}(1,0)=e$,
we thus proved (1).

 (2) and (3) By Lemma
\ref{le:16} we have
\begin{align*}
&\tilde{h}^i_{L_n}(a,b)\tilde{w}_{L_n}(0,\dots,c_i,d_{i+1},0\dots,0)\tilde{h}^i_{L_n}(a,b)^{-1}\\
&=\tilde{w}_{L_n}(ca^2-cb^2-2dba,da^2-db^2+2cba)\\
&\tilde{h}^i_{L_n}(a,b \textrm{i})\tilde{w}_{L_n}(0,\dots,c_i,d_{i+1} \textrm{i},0,\dots,0)\tilde{h}^i_{L_n}(a,b\textrm{i})^{-1}\\
&=\tilde{w}_{L_n}(ca^2-cb^2+2dba,(da^2-db^2-2cba)\textrm{i}).
\end{align*}
Similar to the proof of Lemma \ref{le:3}, we get (2) and (3).
\end{proof}
\begin{lemma}\label{pop:2}
For $\forall v\in S_{\CC}^0$,  $\forall a\in S_{\CC}^{m-n-1}$ we
have $\tilde{w}_{L_n}(a)=\tilde{w}_{L_n}(va)$.
\end{lemma}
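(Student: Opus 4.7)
The statement asserts that $\tilde{w}_{L_n}(a)$, as an element of $\tilde{W}_s$, depends only on the complex line $\CC a$ rather than on the specific unit vector $a \in S^{m-n-1}_\CC$. My plan is to establish this first along a single coordinate axis (using the vanishing identity from Lemma \ref{le:31}(1)) and then transport the result to an arbitrary unit direction by conjugation inside $\tilde{H}_{s_0}$, invoking transitivity of $SU(m-n)$ on the unit sphere of $\CC^{m-n}$. Throughout I assume $m-n\ge 2$, which is the setting in which $\tilde{h}^j_{L_n}$ and $\tilde{H}_{s_0}$ are defined.

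For the base case, I would unpack the definition of $\tilde{h}^j_{L_n}(v,0)$ for $v\in S^0_\CC$ and $1\le j\le m-n-1$. Writing $e_j$ for the $j$-th standard basis vector of $\CC^{m-n}$, the definition together with the abbreviation in Definition \ref{de:1} gives $\tilde{h}^j_{L_n}(v,0)=\tilde{w}_{L_n}(ve_j)\tilde{w}_{L_n}(-e_j)$. Lemma \ref{le:31}(1) says this is the identity, so $\tilde{w}_{L_n}(ve_j)=\tilde{w}_{L_n}(-e_j)^{-1}$ for every $v\in S^0_\CC$. Comparing the cases $v=1$ and general $v$ then yields $\tilde{w}_{L_n}(ve_j)=\tilde{w}_{L_n}(e_j)$. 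Thus the statement of Lemma pop:2 holds whenever $a$ is a unit vector along a coordinate axis.

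The second step is to transport this equality to an arbitrary $a\in S^{m-n-1}_\CC$. From the proof of Lemma \ref{le:26}(iii), the image $\pi_1(\tilde{H}_{s_0})$ coincides with the full group $SU(m-n)\subset U(m-n)$. Because $SU(m-n)$ acts transitively on the unit sphere of $\CC^{m-n}$ for $m-n\ge 2$, I can choose $w\in\tilde{H}_{s_0}$ whose $U(m-n)$-component $B$ satisfies $\overline{B}\cdot e_1=a$. Lemma \ref{le:16} then gives
\begin{align*}
w\tilde{w}_{L_n}(e_1)w^{-1}&=\tilde{w}_{L_n}(\overline{B}\cdot e_1)=\tilde{w}_{L_n}(a),\\
w\tilde{w}_{L_n}(ve_1)w^{-1}&=\tilde{w}_{L_n}(\overline{B}\cdot(ve_1))=\tilde{w}_{L_n}(v\overline{B}\cdot e_1)=\tilde{w}_{L_n}(va),
\end{align*}
where the second line uses the $\CC$-linearity of $\overline{B}$. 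Combining with the base-case identity $\tilde{w}_{L_n}(e_1)=\tilde{w}_{L_n}(ve_1)$ and conjugating by $w$ yields $\tilde{w}_{L_n}(a)=\tilde{w}_{L_n}(va)$, which is the desired conclusion.

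The only substantive point to check — and what I would regard as the main obstacle if one is careless — is that the appearance of $\overline{B}$ rather than $B$ in Lemma \ref{le:16} is harmless: transitivity of $SU(m-n)$ on the unit sphere is equivalent to transitivity of the set $\{\overline{B}:B\in SU(m-n)\}$, since $SU(m-n)$ is stable under complex conjugation. Apart from this bookkeeping, the proof is a direct two-step reduction and involves no further calculation.
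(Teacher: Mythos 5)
Your proof is correct and takes essentially the same route as the paper's: both establish the coordinate-axis identity $\tilde{w}_{L_n}(ve_j)=\tilde{w}_{L_n}(e_j)$ from Lemma \ref{le:31}(1) and then transport it to an arbitrary direction by conjugation through Lemma \ref{le:16}. The only (harmless) difference is in the transport step: the paper conjugates the axis relation by a single element $\tilde{w}_{L_n}(a)$ and verifies surjectivity of the explicit map $f(a,g)$, whereas you conjugate by elements of $\tilde{H}_{s_0}$ realizing rotations and invoke transitivity of $SU(m-n)$ on the unit sphere (legitimately available from the proof of Lemma \ref{le:26}, with the complex conjugation on $B$ handled as you note).
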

\begin{proof}
 For $v\in S_{\CC}^0$ $a=(a_1,\dots,a_n)\in S_{\CC}^{m-n-1}$,
define $f:S_{\CC}^{m-n-1}\times S_{\CC}^0\rightarrow
S_{\CC}^{m-n-1}$ to be
$f(a,g)=((2\abs{a_1}^2-1)g,2ga_2\overline{a_1},\dots,2ga_n\overline{a_1})$.
It is easy to check $f$ is surjective. Using Lemma \ref{le:16} and
Lemma \ref{le:31}, for any $u,v\in S_{\CC}^0$ we have
\begin{align*}
e&=\tilde{h}^1_{L_n}(u,0)(\tilde{h}^1_{L_n}(-v,0))^{-1}\\
&=\tilde{w}_{L_n}(u,\dots,0)\tilde{w}_{L_n}(v,\dots,0)\\
&=\tilde{w}_{L_n}(a)\tilde{w}_{L_n}(u,\dots,0)\tilde{w}_{L_n}(v,\dots,0)\tilde{w}_{L_n}(a)^{-1}\\
&=\tilde{w}_{L_n}\bigl((2\abs{a_1}^2-1)u,2ua_2\overline{a_1},\dots,2ua_n\overline{a_1}\bigr)\\
&\cdot
\tilde{w}_{L_n}\bigl((2\abs{a_1}^2-1)v,2va_2\overline{a_1},\dots,2va_n\overline{a_1}\bigr).
\end{align*}
Thus we proved the lemma.
\end{proof}

For $(a,b),(c,d)\in S_{\RR}^1$ we define:
\begin{align*}
&\{(a,b),(c,d)\}^1_i=\tilde{h}^i_{L_n}\bigl((a,b)\cdot(c,d)\bigr)\tilde{h}^i_{L_n}(a,b)^{-1}\tilde{h}^i_{L_n}(c,d)^{-1},\\
&\{(a,b),(c,d)\}^2_i=\tilde{h}^i_{L_n}\bigl((a,b
\textrm{i})\cdot(c,d
\textrm{i})\bigr)\tilde{h}^i_{L_n}(a,b\textrm{i})^{-1}\tilde{h}^i_{L_n}(c,d\textrm{i})^{-1}.
\end{align*}

Let $i, j$ be distinct, and let
\begin{align*}
w&=\tilde{w}_{L_n}(0,\dots,(-\frac{\sqrt{2}}{2})_{i+1},\dots,(\frac{\sqrt{2}}{2})_{j+1},0\dots,0)\\
&\cdot
\tilde{w}_{L_n}(0,\dots,(-\frac{\sqrt{2}}{2})_i,\dots,(\frac{\sqrt{2}}{2})_{j},0,\dots,0).
\end{align*}

By Lemma \ref{le:16}, for $\forall (u,v)\in S^1_\CC$ we have
$$w\tilde{h}^i_{L_n}(u,v)w^{-1}=\tilde{h}^j_{L_n}(u,v).$$

Since $\{(a,b), (c,d)\}^\delta_i\in Z(\tilde{G})$($\delta=1,2$), it
follows that
\begin{lemma}\label{le:20}
\begin{align*}
\{(a,b), (c,d)\}^\delta=\{(a,b), (c,d)\}^\delta_i\qquad \delta=1,2,
\end{align*}
are well defined.
\end{lemma}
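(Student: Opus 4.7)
The plan is to verify that each $\{(a,b),(c,d)\}^\delta_i$ is a central element of $\widetilde{G}$, after which the conjugation formula preceding the statement forces it to be independent of $i$.

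First I would check the centrality. Recall from Section~\ref{sec:1} that $\ker(\pi_1)$ lies in the center of $\widetilde{G}$ by the very definition of a universal central extension. So it suffices to show $\pi_1\bigl(\{(a,b),(c,d)\}^\delta_i\bigr) = e$. For $\delta=1$ and $(a,b),(c,d)\in S^1_\RR$, the computation in the proof of Lemma \ref{le:12} shows that $\pi_1(\tilde{h}^i_{L_n}(a,b))$ is the diagonal placement of the rotation matrix with entries determined by $(a^2-b^2, 2ab)$; equivalently, if we identify $(a,b)\leftrightarrow a+b\mathrm{i}\in S^1$, the assignment $(a,b)\mapsto\pi_1(\tilde{h}^i_{L_n}(a,b))$ is a homomorphism $S^1\to SU(m-n)$, because complex multiplication on $S^1_\RR$ corresponds exactly to composition of planar rotations. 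Consequently
\[
\pi_1\bigl(\tilde{h}^i_{L_n}((a,b)\cdot(c,d))\bigr) \;=\; \pi_1\bigl(\tilde{h}^i_{L_n}(a,b)\bigr)\cdot\pi_1\bigl(\tilde{h}^i_{L_n}(c,d)\bigr),
\]
so $\{(a,b),(c,d)\}^1_i\in\ker(\pi_1)\subset Z(\widetilde{G})$. For $\delta=2$ the same argument works after identifying $(a,b\mathrm{i})$ with the corresponding element of $S^1$ in the imaginary plane and invoking Lemma \ref{le:31}(3), which exhibits the analogous multiplicativity in that variable.

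Next I would use the element $w$ constructed in the paragraph before the lemma, namely
\[
w=\tilde{w}_{L_n}\bigl(0,\dots,(-\tfrac{\sqrt{2}}{2})_{i+1},\dots,(\tfrac{\sqrt{2}}{2})_{j+1},\dots\bigr)\cdot\tilde{w}_{L_n}\bigl(0,\dots,(-\tfrac{\sqrt{2}}{2})_i,\dots,(\tfrac{\sqrt{2}}{2})_j,\dots\bigr),
\]
for which Lemma \ref{le:16} gives the key transport identity $w\tilde{h}^i_{L_n}(u,v)w^{-1}=\tilde{h}^j_{L_n}(u,v)$ for every $(u,v)\in S^1_\CC$. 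Applying this identity to each of the three factors in the definition of $\{(a,b),(c,d)\}^\delta_i$ and assembling the pieces yields
\[
w\,\{(a,b),(c,d)\}^\delta_i\,w^{-1}=\{(a,b),(c,d)\}^\delta_j.
\]

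Finally, combining the two ingredients: since $\{(a,b),(c,d)\}^\delta_i$ is central, the left-hand side above equals $\{(a,b),(c,d)\}^\delta_i$ itself, giving $\{(a,b),(c,d)\}^\delta_i=\{(a,b),(c,d)\}^\delta_j$ for all pairs of indices $i\ne j$, so the notation $\{(a,b),(c,d)\}^\delta$ is unambiguous. The only step that requires any genuine verification is the centrality, and that reduces to checking multiplicativity of $\pi_1\circ \tilde{h}^i_{L_n}$ on $S^1$ under the two identifications; the rest is a direct three-line conjugation computation.
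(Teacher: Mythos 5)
Your proposal is correct and follows essentially the same route as the paper: the paper conjugates by the same element $w$ using Lemma \ref{le:16} to transport $\tilde{h}^i_{L_n}$ to $\tilde{h}^j_{L_n}$, and then invokes centrality of $\{(a,b),(c,d)\}^\delta_i$ to conclude independence of $i$. Your only addition is to spell out why the symbols are central (they lie in $\ker(\pi_1)$, which is central in the universal central extension, by multiplicativity of $\pi_1\circ\tilde{h}^i_{L_n}$ on the circle), a point the paper simply asserts.
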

Using Lemma \ref{le:31}, in exactly the same manner as the proof in
the appendix of [8], we prove that these $\{(a,b),(c,d)\}$'s satisfy
the conditions
\begin{lemma}\label{le:30}
For all $(a,b),(c,d),(a_1,b_1),(c_1,d_1)\in S_{\RR}^1$ and
$\delta=1,2$ we have:
\begin{align*}
\{(a,b), (c,d)\}^\delta&=(\{(c,d), (a,b)\}^\delta)^{-1},\\
\{(a,b), (c,d)\cdot(c_1,d_1)\}^\delta&=\{(a,b),(c,d)\}^\delta\cdot\{(a,b),(c_1,d_1)\}^\delta,\\
\{(a,b)\cdot(a_1,b_1),
(c,d)\}^\delta&=\{(a,b),(c,d)\}^\delta\cdot\{(a_1,b_1),(c,d)\}^\delta,\\
\{(c,d),(-c,-d)\}^\delta&=1.
\end{align*}
\end{lemma}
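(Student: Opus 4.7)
The plan is to mimic the appendix of \cite{Moore} almost verbatim, with the multiplicative group of a field replaced by the compact circle $S^1_\RR$ (under complex multiplication) and the Steinberg generator $\tilde{h}_{L_1-L_2}(t)$ replaced by $\tilde{h}^i_{L_n}(a,b)$. The argument breaks into four stages, one for each listed identity.

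First I would establish that every symbol $\{(a,b),(c,d)\}^\delta$ lies in the center of $\widetilde{G}$. By construction the symbol is a product in $H^i$ whose $\pi_1$-image is trivial, so it lies in $\ker(\pi_1)\cap H^i$. By Lemma \ref{le:1} this kernel is contained in $\tilde{H}$, which normalizes every unipotent $\tilde{U}^\alpha_\RR$; combining this with the triviality of the symbol under $\pi_1$ and Lemma \ref{le:5}(4) shows it acts trivially on every generator. This is the same centralizing argument already invoked to define the symbol independently of $i$ in Lemma \ref{le:20}.

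Second, I would derive bimultiplicativity and antisymmetry. Lemma \ref{le:31}(2),(3) gives two equivalent expressions for the commutator $[\tilde{h}^i_{L_n}(a,b),\tilde{h}^i_{L_n}(c,d)]$. Substituting $(c,d)\cdot(c_1,d_1)$ in the second slot and sliding the now-central symbols past the remaining factors yields
\begin{align*}
\{(a,b),(c,d)(c_1,d_1)\}^\delta=\{(a,b),(c,d)\}^\delta\cdot\{(a,b),(c_1,d_1)\}^\delta,
\end{align*}
with bimultiplicativity in the first variable obtained symmetrically. Antisymmetry then follows from the commutativity $(a,b)\cdot(c,d)=(c,d)\cdot(a,b)$ in $S^1_\RR$, which makes the defining expressions for $\{(a,b),(c,d)\}^\delta$ and $\{(c,d),(a,b)\}^\delta$ mutually inverse.

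The main obstacle is the last relation $\{(c,d),(-c,-d)\}^\delta=1$, the analogue of the classical $\{t,-t\}=1$. The field-theoretic proof rests on Steinberg's $\{t,1-t\}=1$, which has no counterpart on the compact circle. My approach is to write $(-c,-d)=(-1,0)\cdot(c,d)$ and apply bimultiplicativity:
\begin{align*}
\{(c,d),(-c,-d)\}^\delta=\{(c,d),(-1,0)\}^\delta\cdot\{(c,d),(c,d)\}^\delta.
\end{align*}
Lemma \ref{le:31}(1) gives $\tilde{h}^i_{L_n}(-1,0)=e$, so the first factor vanishes. The second factor has order dividing two by antisymmetry; to show it is in fact trivial I would adapt Moore's direct calculation, exploiting the explicit chain of Lemma \ref{le:25} and conjugation by an element of $\tilde{W}_s$ whose $\pi_1$-image realizes a suitable reflection. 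This direct computation inside $\tilde{W}_s$, which substitutes for the missing Steinberg identity, is the technical heart of the proof.
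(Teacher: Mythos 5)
Your stages one and two are in the spirit of the paper's own (one-line) proof, which simply feeds Lemma \ref{le:31} into the computations of Moore's appendix; but two of your shortcuts do not actually work as stated. First, antisymmetry is \emph{not} a formal consequence of commutativity of $S^1_\RR$: writing $u=\tilde{h}^i_{L_n}(a,b)$, $v=\tilde{h}^i_{L_n}(c,d)$, $P=\tilde{h}^i_{L_n}\bigl((a,b)(c,d)\bigr)$, one has $\{(a,b),(c,d)\}^\delta\cdot\bigl(\{(c,d),(a,b)\}^\delta\bigr)=Pu^{-1}v^{-1}\cdot Pv^{-1}u^{-1}$, and since $u$ and $v$ need not commute (their commutator is exactly the symbol $\{(a,b)^2,(c,d)\}^\delta$ by Lemma \ref{le:31}), this product is not visibly trivial; antisymmetry is one of the identities that genuinely requires the Moore-style manipulations with the conjugation relations (Lemma \ref{le:16} and Lemma \ref{le:31}), not a formal cancellation. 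Likewise your bimultiplicativity step (``sliding the now-central symbols past the remaining factors'') only yields the $2$-cocycle identity $\{z,ww_1\}\{w,w_1\}=\{zw,w_1\}\{z,w\}$, not bimultiplicativity itself.

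The more serious gap is the fourth identity, which you yourself call the technical heart and then do not prove. Your reduction via $(-c,-d)=(-1,0)\cdot(c,d)$ makes no progress: once one knows $\tilde{h}^i_{L_n}(-c,-d)=\tilde{h}^i_{L_n}(c,d)$ (which already requires Lemma \ref{pop:2}, i.e.\ invariance of $\tilde{w}_{L_n}$ under unit scalars, and not merely $\tilde{h}^i_{L_n}(-1,0)=e$ from Lemma \ref{le:31}(1)), the element $\{(c,d),(-c,-d)\}^\delta$ \emph{is} the element $\{(c,d),(c,d)\}^\delta$, so you have only restated the claim, and the promised ``direct computation inside $\tilde{W}_s$'' is left as a gesture, moreover pointed at the wrong tool: Lemma \ref{le:25} concerns the chain of $x_\alpha(t,a)$ with $t\neq 0$ and plays no role in $\tilde{W}_s$, where all elements have $t=0$. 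Your premise that the classical proof of $\{t,-t\}=1$ rests on the Steinberg relation $\{t,1-t\}=1$ is also inaccurate: in the Steinberg-group framework that Moore's appendix uses, $\{t,-t\}=1$ is derived directly from the relations $\tilde{w}(u)^{-1}=\tilde{w}(-u)$ and the formulas for conjugating one $\tilde{w}$ by another, with no appeal to $\{t,1-t\}=1$; and precisely that derivation transfers to the present setting, since by Lemma \ref{le:31}(1) and Lemma \ref{pop:2} each $\tilde{w}^j_{L_n}(a,b)$ is an involution and Lemma \ref{le:16} says conjugation by it acts through the associated reflection, after which a short computation gives $\tilde{h}^j_{L_n}(z^2)=\tilde{h}^j_{L_n}(z)^2$, i.e.\ $\{(c,d),(c,d)\}^\delta=1$. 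This is what the paper's appeal to ``the same manner as Moore's appendix'' amounts to, and it is the piece your proposal leaves missing.
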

Thus we define 2 symbols on $S^1_{\RR}$. Denote by $H_{sym}$ The
subgroup generated by these symbols.

\subsection{Structure of $H^i$}
 Let $H^i_0$ be the subgroup
generated by $\tilde{h}^i_{L_n}(a,b)\bigl((a,b)\in S^1_\RR\bigr)$,
$H^i_1$ be the subgroup generated by
$\tilde{h}^i_{L_n}(a,b\textrm{i})\bigl((a,b )\in S^1_\RR\bigr)$ and
$H^i$ the subgroup generated by $\tilde{h}^i(u,v)\bigl((u,v)\in
S^1_\CC\bigr)$. Using Lemma \ref{le:20}, along the line of  proof of
Lemma \ref{le:12}, We get the following:
\begin{lemma}\label{le:32}
\begin{align*}
&(1) \ker(\pi_1)\cap H^j_\delta=\ker(\pi_1)\cap H^1_\delta\qquad
 j\leq m-n-1, \delta=0,1, \\
&(2) \ker(\pi_1)\cap H^1_\delta=H_{sym}\cap H^1_\delta\qquad
\delta=0,1.
\end{align*}
\end{lemma}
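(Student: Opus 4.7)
The plan is to imitate the proof of Lemma \ref{le:12} closely, using the new symbols from Lemma \ref{le:20} in place of the single symbol used there.

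For part (1), I would argue by conjugation exactly as in Lemma \ref{le:12}(2). Given distinct indices $i,j\leq m-n-1$, the element
\[
w=\tilde{w}_{L_n}\bigl(0,\dots,(-\tfrac{\sqrt{2}}{2})_{i+1},\dots,(\tfrac{\sqrt{2}}{2})_{j+1},\dots,0\bigr)\tilde{w}_{L_n}\bigl(0,\dots,(-\tfrac{\sqrt{2}}{2})_{i},\dots,(\tfrac{\sqrt{2}}{2})_{j},\dots,0\bigr)
\]
(introduced in the discussion preceding Lemma \ref{le:20}) satisfies $w\tilde{h}^{i}_{L_n}(u,v)w^{-1}=\tilde{h}^{j}_{L_n}(u,v)$ for all $(u,v)\in S^{1}_{\CC}$ by Lemma \ref{le:16}. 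In particular conjugation by $w$ maps $H^{j}_{\delta}$ bijectively onto $H^{1}_{\delta}$ for $\delta=0,1$, and since $\ker(\pi_1)$ is central it is invariant under this conjugation. This gives $\ker(\pi_1)\cap H^{j}_{\delta}=\ker(\pi_1)\cap H^{1}_{\delta}$.

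For part (2), the inclusion $H_{sym}\cap H^{1}_{\delta}\subseteq \ker(\pi_1)\cap H^{1}_{\delta}$ is immediate from the definition of the symbols, since $\pi_1\bigl(\{(a,b),(c,d)\}^{\delta}\bigr)$ is a product of a rotation matrix by its inverse. For the reverse inclusion I would proceed as follows. By the computation recorded inside the proof of Lemma \ref{le:12}, the map $\pi_1$ sends $\tilde{h}^{1}_{L_n}(a,b)$ to the rotation diag$(R_{2n+1})$ with
\[
R=\begin{pmatrix} a^{2}-b^{2} & -2ab\\ 2ab & a^{2}-b^{2}\end{pmatrix},
\]
and a similar computation shows that $\pi_1\bigl(\tilde{h}^{1}_{L_n}(a,b\textrm{i})\bigr)$ corresponds to $a^{2}-b^{2}-2ab\textrm{i}$. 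After identifying the rotation matrix with the corresponding unit complex number, any element $h\in \ker(\pi_1)\cap H^{1}_{\delta}$ is a product $h=\prod_{k}\tilde{h}^{1}_{L_n}(p_k)$ (with $p_k\in S^1_\RR$ for $\delta=0$ and $p_k$ of the form $(a_k,b_k\textrm{i})$ for $\delta=1$), subject to the product of the associated unit complex numbers being~$1$.

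The key step is to rewrite such a product as an element of $H_{sym}$. Using the commutator formulas of Lemma \ref{le:31}(2) and (3), I would collapse adjacent factors $\tilde{h}^{1}_{L_n}(p)\tilde{h}^{1}_{L_n}(q)$ into $\tilde{h}^{1}_{L_n}(p\cdot q)\cdot\{p,q\}^{\delta}$, exactly as in the classical manipulation in the appendix of \cite{Moore}. Applying this step inductively collects all factors into a single $\tilde{h}^{1}_{L_n}(P)$ times a product of symbols, where $P$ is the quaternionic product of the original $p_k$. The kernel condition forces $\pi_1\bigl(\tilde{h}^{1}_{L_n}(P)\bigr)=I$, and for both $\delta=0,1$ this forces $P=(\pm 1,0)$, which by Lemma \ref{le:31}(1) gives $\tilde{h}^{1}_{L_n}(P)=e$. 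Hence $h$ lies in $H_{sym}$, as required.

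The routine obstacle will be the bookkeeping for the $\delta=1$ case, because the multiplication rule on $\{(a,b\textrm{i})\}$ follows the quaternionic identification rather than ordinary complex multiplication, and one must check that the symbol relations in Lemma \ref{le:30} are applied with the correct sign conventions. Once this is handled uniformly for $\delta=0,1$, the argument is essentially the same word-for-word reduction used in Lemma \ref{le:12}(1), with Lemma \ref{le:31}(1) absorbing the residual ``trivial'' factor that previously required the element $\tilde{h}^1_{L_n}(-1,0)$ of order~$2$.
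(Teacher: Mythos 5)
Your proposal is correct and follows essentially the same route the paper intends: the paper's own proof is just the remark that the result follows ``using Lemma \ref{le:20}, along the line of proof of Lemma \ref{le:12}'', i.e.\ conjugation by the element $w$ (with centrality of $\ker(\pi_1)$) for part (1) and the Moore-style reduction to symbols for part (2), which is exactly what you carry out, with Lemma \ref{le:31}(1) correctly absorbing the residual factor that in the orthogonal case was the order-two element $\tilde{h}^1_{L_n}(-1,0)$. No gaps beyond the sign bookkeeping you already flag.
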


We now make a slight digression to state a fact and prove a lemma
whose roles will be clear from the subsequent development. However,
the fact and the lemma in themselves seems to be interesting.
\begin{fact}\label{fact:2}
If $\pi_1\bigl(\tilde{w}_{L_n}(a)\tilde{w}_{L_n}(b)\bigl)=C$, where
$a,b\in S_{\CC}^{m-n-1}$. Then we have
$$4\abs{\langle a,b\rangle}^2+m-n-4=\textrm{trace}(C),$$ where $\langle a,b\rangle=a\cdot \overline{b}$ is the inner product of $a$ and $b$.
\end{fact}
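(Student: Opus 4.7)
The plan is to reduce the claimed identity to a direct linear-algebra computation about rank-one Hermitian projections. First I would invoke the explicit formula given right after Definition \ref{de:1}: for $a\in S_\CC^{m-n-1}$ the matrix $\pi_1(\tilde{w}_{L_n}(a))$ has block form $p(\pi)\,\diag((-1)_n,(-1)_{2n},(B_a)_{2n+1})$, where $B_a\in U(m-n)$ is the Householder-type reflection with entries $(B_a)_{ij}=\delta_{ij}-2\overline{a_i}a_j$. Because of the identification fixed in that same paragraph (``we identify $\pi_1(\tilde{w}_{L_n}(0,\sqrt{2}a))$ with $B$''), the element $C$ is identified with the product $B_aB_b\in U(m-n)$, so the task reduces to computing $\mathrm{trace}(B_aB_b)$.

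The main step would be to write $B_a=I-2P_a$, where $P_a$ is the Hermitian matrix $(P_a)_{ij}=\overline{a_i}a_j$. Since $\abs{a}^2=1$, a one-line check gives $P_a^2=P_a$, so $P_a$ is the rank-one orthogonal projection onto the line spanned by $\overline{a}^{\,T}$, and similarly for $P_b$. Expanding
\[
B_aB_b=I-2P_a-2P_b+4P_aP_b
\]
and taking traces reduces everything to three scalar computations: $\mathrm{trace}(P_a)=\sum_i\abs{a_i}^2=1$, the same for $P_b$, and
\[
\mathrm{trace}(P_aP_b)=\sum_{i,k}\overline{a_i}a_k\overline{b_k}b_i=\Big(\sum_k a_k\overline{b_k}\Big)\Big(\sum_i \overline{a_i}b_i\Big)=\langle a,b\rangle\,\overline{\langle a,b\rangle}=\abs{\langle a,b\rangle}^2,
\]
using the convention $\langle a,b\rangle=a\cdot\overline{b}$ fixed in the statement. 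Substituting these values would yield $\mathrm{trace}(C)=(m-n)-4+4\abs{\langle a,b\rangle}^2$, exactly the desired identity.

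I do not anticipate any serious obstacle: the whole fact is a mechanical linear-algebra verification once one has unpacked the paper's convention for identifying $\pi_1(\tilde{w}_{L_n}(a))$ with $B_a$. The only point requiring attention is to keep the two conjugation conventions consistent, so that $\sum_k a_k\overline{b_k}$ and $\sum_i \overline{a_i}b_i$ are correctly recognized as $\langle a,b\rangle$ and $\overline{\langle a,b\rangle}$ respectively. A more invariant way to phrase the same calculation is to observe that $B_a$ is the reflection fixing the hyperplane $a^\perp\subset\CC^{m-n}$, so that $\mathrm{trace}(B_aB_b)$ depends only on the ``angle'' between the lines $\CC\overline{a}$ and $\CC\overline{b}$; but that observation is just a restatement of the projection computation above.
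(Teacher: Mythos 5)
Your verification is correct; the paper states Fact \ref{fact:2} without any proof, and your argument supplies exactly the routine computation it leaves implicit: writing $B_a=I-2P_a$ with $(P_a)_{ij}=\overline{a_i}a_j$ a rank-one projection, so that $\mathrm{trace}(B_aB_b)=(m-n)-2-2+4\,\mathrm{trace}(P_aP_b)=m-n-4+4\abs{\langle a,b\rangle}^2$. You also handled the one genuinely delicate point correctly, namely that $C$ must be read through the paper's identification of $\pi_1\bigl(\tilde{w}_{L_n}(0,\sqrt{2}a)\bigr)$ with its $U(m-n)$-block $B$ (for the full $(m+n)\times(m+n)$ matrix the trace would instead come out as $m+n-4+4\abs{\langle a,b\rangle}^2$).
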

Denote
$\tilde{w}^j_{L_n}(a,b)=\tilde{w}_{L_n}(0,\dots,a_{j},b_{j+1},0,\dots,0)$
where $(a,b)\in S^1_\CC$.
\begin{lemma}\label{le:10}
If $\langle (a_1,a_2),(b_1,b_2)\rangle=\langle
(c_1,c_2),(d_1,d_2)\rangle$, where $(a_1,a_2)$, $(b_1,b_2)$,
$(c_1,c_2)$, $(d_1,d_2)\in S^1_{\CC}$, there exists $g\in S^0_{\CC}$
and $h\in SU(2)$ such that
$$h\cdot(a_1,a_2)=(gc_1,gc_2),\qquad h\cdot(b_1,b_2)=(gd_1,gd_2).$$
\end{lemma}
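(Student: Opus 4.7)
The strategy is to reduce to the classical fact that two ordered pairs of unit vectors in $\CC^2$ lie in the same $U(2)$-orbit iff they share the same Hermitian Gram matrix, and then to absorb the $U(2)/SU(2)$ ambiguity into the scalar $g$. Write $a=(a_1,a_2)$, $b=(b_1,b_2)$, $c=(c_1,c_2)$, $d=(d_1,d_2)$ for the four unit vectors in $\CC^2$. I claim it suffices to produce a unitary matrix $U\in U(2)$ with $Ua=c$ and $Ub=d$: given such a $U$, I would pick $g\in S_\CC^0$ with $g^2=(\det U)^{-1}$, which is possible because squaring is surjective on the circle group $U(1)=S_\CC^0$. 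Then $h:=gU$ satisfies $\det h=g^2\det U=1$, so $h\in SU(2)$, and $h\cdot a=gc$, $h\cdot b=gd$, which is the conclusion of the lemma.

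It remains to construct such a $U$. This is immediate from Witt's extension theorem for the standard Hermitian form on $\CC^2$ once one checks that the two pairs have equal Gram matrices: the diagonal entries all equal $1$ because the vectors are unit, the hypothesis gives $\langle a,b\rangle=\langle c,d\rangle$, and the opposite off-diagonal entries agree by Hermitian symmetry. For a self-contained argument I would split on $\zeta:=\langle a,b\rangle=\langle c,d\rangle$. If $\abs{\zeta}<1$, the pair $\{a,b\}$ is linearly independent over $\CC$; I would apply Gram--Schmidt to produce a unit vector $\tilde b$ orthogonal to $a$ in the complex plane spanned by $a$ and $b$, and analogously form $\tilde d$ from $c,d$. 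Then $\{a,\tilde b\}$ and $\{c,\tilde d\}$ are orthonormal bases of $\CC^2$ in which $b$ and $d$ have identical coordinates, namely $(\overline\zeta,\sqrt{1-\abs{\zeta}^2})$ under the convention $\langle u,v\rangle=u\cdot\overline v$. So the linear map defined by $a\mapsto c$, $\tilde b\mapsto\tilde d$ is the required unitary $U$. If $\abs{\zeta}=1$, then $b=\lambda a$ and $d=\mu c$ for some $\lambda,\mu\in S_\CC^0$, and the identity $\langle a,b\rangle=\overline\lambda=\overline\mu=\langle c,d\rangle$ forces $\lambda=\mu$; extending $\{a\}$ and $\{c\}$ to orthonormal bases and matching them gives a $U\in U(2)$ with $Ua=c$, and then automatically $Ub=\lambda c=d$.

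No serious obstacle appears; this is essentially a textbook application of the action of $U(2)$ on pairs of unit vectors in $\CC^2$. The only point worth highlighting is the role of the scalar $g$: one cannot in general find $h\in SU(2)$ with $ha=c$, $hb=d$ outright, because $SU(2)$-orbits on ordered pairs are strictly finer than $U(2)$-orbits; however, the quotient $U(2)/SU(2)\cong U(1)$ is divisible (every element is a square), which is precisely what makes the determinant obstruction eliminable by a common scalar factor on the right-hand side. This explains why the lemma is stated with a common $g$ rather than as a direct $SU(2)$-equivalence.
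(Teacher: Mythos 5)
Your proof is correct, and it reaches the lemma by a genuinely different factorization of the same underlying mechanism. The paper first moves $(c_1,c_2)$ to $(1,0)$ by an explicit $SU(2)$ conjugation, then writes down a one-parameter family of $SU(2)$ matrices $h=h(g)$ (namely $h=\bigl(\begin{smallmatrix}\overline{a_1}g & \overline{a_2}g\\ -a_2\overline{g} & a_1\overline{g}\end{smallmatrix}\bigr)$), each of which sends $(a_1,a_2)$ to $(g,0)=g(c_1,c_2)$; preservation of the Hermitian product together with the hypothesis forces the first component of $h\cdot(b_1,b_2)$ to be $gd_1$ and forces the second component to have modulus $\abs{d_2}$, and the free phase $g$ is then chosen so that $\overline{g}^{2}(a_1b_2-a_2b_1)=d_2$. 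That last equation is exactly your determinant obstruction in disguise: $a_1b_2-a_2b_1$ and $c_1d_2-c_2d_1=d_2$ have equal modulus because the two Gram matrices agree, and solvability for $g$ is the surjectivity of squaring on $S^0_{\CC}$ that you invoke. You instead split the argument into two conceptual steps: $U(2)$ acts transitively on ordered pairs of unit vectors with a prescribed Gram matrix (via Witt or your Gram--Schmidt computation, with the Cauchy--Schwarz-degenerate case $\abs{\zeta}=1$ handled separately and correctly), and then the $U(2)/SU(2)$ ambiguity is absorbed by choosing $g$ with $g^{2}=(\det U)^{-1}$ and setting $h=gU$. Your route never needs the normalization $c=(1,0)$, stays coordinate-free, and makes transparent why the statement must allow the common scalar $g$ rather than assert a plain $SU(2)$-equivalence; the paper's route stays inside $SU(2)$ throughout and is purely computational, matching the explicit matrix style used in the rest of that section. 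Both arguments are complete.
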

\begin{proof} We first consider the case $(c_1,c_2)=(1,0)$.
Direct calculation shows for any $g\in S^0_{\CC}$, if we let
$$ h=\begin{pmatrix}\overline{a_1}g & \overline{a_2}g\\
-a_2\overline{g} & a_1\overline{g}\\
\end{pmatrix},$$
then
\begin{align*}
h\cdot(a_1,a_2)&=(gc_1,0),\\
h\cdot(b_1,b_2)&=\bigl(g(\overline{a_1}b_1+\overline{a_2}b_2),g(-a_2\overline{g}^2b_1+a_1\overline{g}^2b_2)\bigl).
\end{align*}
Since $h$ preserves inner product, we have
\begin{align*}
\langle h\cdot(a_1,a_2),h\cdot(b_1,b_2)\rangle=\langle
(c_1,c_2),(d_1,d_2)\rangle=\overline{d_1}.
\end{align*}
Thus it follows $\overline{a_1}b_1+\overline{a_2}b_2=d_1$,
$\abs{-a_2b_2+a_1b_2}=\sqrt{1-\abs{d_1}^2}=\abs{d_2}$. Hence we can
choose right $g$ such that
$-a_2\overline{g}^2b_2+a_1\overline{g}^2b_2=d_2$. We thus proved the
case for $(c_1,c_2)=(1,0)$.

If $(c_1,c_2)\neq(1,0)$, there exists $h'\in SU(2)$ such that
$h'\cdot(c_1,c_2)=(1,0)$. Then we reduce it to previous case. We
hence proved the lemma.
\end{proof}

 An important step in proving the main Theorem \ref{th:4} is:
\begin{theorem}\label{th:10}
$\ker(\pi_1)\cap H^j=H_{sym}$ for $\forall j\leq m-n-1$.
\end{theorem}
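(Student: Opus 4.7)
The inclusion $H_{sym}\subseteq\ker(\pi_1)\cap H^j$ is immediate: by Lemma~\ref{le:20} each symbol $\{(a,b),(c,d)\}^\delta$ lies in $Z(\widetilde G)$, and its $\pi_1$-image vanishes because the restriction of $\pi_1$ to each of the one-parameter families parameterizing $H^j_0$ and $H^j_1$ (each by $S^1_\RR$) is a group homomorphism into $SO(2)\subset SU(2)$. For the reverse inclusion my plan is to reduce to Lemma~\ref{le:32} by showing that modulo $H_{sym}$ every element of $H^j$ is of the form $h_0 h_1$ with $h_0\in H^j_0$ and $h_1\in H^j_1$.

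The key step is to show that each generator $\tilde h^j(u,v)$, $(u,v)\in S^1_\CC$, admits such a decomposition modulo $H_{sym}$. By Lemma~\ref{pop:2}, $\tilde h^j(u,v)=\tilde h^j(zu,zv)$ for every $z\in S^0_\CC$, so the generators are parameterized by $\CC P^1\cong S^2_\RR$. Given $(u,v)\in S^1_\CC$, I invoke Lemma~\ref{le:10} to produce an $SU(2)$-element carrying a standard pair $(\cos\phi,\sin\phi\,\textrm{i})$ onto $(u,v)$, and I realize this rotation by an element $g\in\widetilde W_s$ acting as a unitary matrix on the $(j+1,j+2)$-block and as the identity outside; such $g$ exists because products of the reflections $\pi_1(\tilde w_{L_n}(0,\sqrt 2 a))$ generate all of $U(m-n)$, as computed before Lemma~\ref{le:16}. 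Conjugating $\tilde h^j(\cos\phi,\sin\phi\,\textrm{i})\in H^j_1$ by $g$ and applying Lemma~\ref{le:16} produces $\tilde h^j(u,v)$ plus a correction coming from the second factor $\tilde w^j_{L_n}(-1,0)$ in the definition of $\tilde h^j$; that correction is itself a product of $\tilde h^j$-type generators with real parameters, which the within-type commutator relations of Lemma~\ref{le:31}(2),(3) and the Steinberg relations of Lemma~\ref{le:30} allow me to absorb into $H^j_0\cdot H_{sym}$.

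For $h=h_0 h_1 s\in\ker(\pi_1)\cap H^j$ one has $\pi_1(h_0)\pi_1(h_1)=e$ in $SU(2)$. Since $\pi_1(H^j_0)$ and $\pi_1(H^j_1)$ are two transverse one-parameter subgroups whose only common elements are $\pm I$, both $\pi_1(h_0)$ and $\pi_1(h_1)$ equal $+I$, or both equal $-I$. In the first case Lemma~\ref{le:32} puts $h_0,h_1\in H_{sym}$, hence $h\in H_{sym}$. In the second case I note $\tilde h^j(0,1)=\tilde h^j(0,\textrm{i})$ by Lemma~\ref{pop:2} (scaling by $\textrm{i}$ in the $\tilde w_{L_n}$-argument), and the symbol identity $\{(0,1),(0,1)\}^1=\tilde h^j(-1,0)\tilde h^j(0,1)^{-2}$ combined with $\tilde h^j(-1,0)=e$ from Lemma~\ref{le:31}(1) gives $\tilde h^j(0,1)^2\in H_{sym}$; multiplying $h_0$ by $\tilde h^j(0,1)\in H^j_0$ and $h_1$ by $\tilde h^j(0,\textrm{i})\in H^j_1$ reduces the $-I$ case to the $+I$ case.

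The main obstacle is the decomposition step of the second paragraph, because the cross-commutator $[H^j_0,H^j_1]$ is not covered by the within-type commutator relations of Lemma~\ref{le:31}. Circumventing this through conjugation in $\widetilde W_s$ forces one to keep careful track of the correction factor generated by the $\tilde w^j_{L_n}(-1,0)$ term of $\tilde h^j$, and to verify that arbitrary products of generators (not merely single generators) can be collapsed to the normal form $h_0 h_1$ modulo $H_{sym}$; this last collapse, which amounts to saying that the central extension of $SU(2)=\pi_1(H^j)$ by $\ker(\pi_1)\cap H^j$ is already witnessed by the symbols $H_{sym}$, is the technical heart of the argument.
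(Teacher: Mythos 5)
There is a genuine gap, and it is fatal to your plan: the two--factor normal form you aim for is false. Modulo $H_{sym}\subseteq\ker(\pi_1)$ the $\pi_1$-image is unchanged, and $\pi_1(H^j_0)$, $\pi_1(H^j_1)$ are just two circle subgroups of $SU(2)$, namely the real rotations $\left(\begin{smallmatrix}\cos\phi&-\sin\phi\\ \sin\phi&\cos\phi\end{smallmatrix}\right)$ and the matrices $\left(\begin{smallmatrix}\cos\psi&\mathrm{i}\sin\psi\\ \mathrm{i}\sin\psi&\cos\psi\end{smallmatrix}\right)$. Their product set is at most two-dimensional, while $\pi_1(H^j)=SU(2)$ is three-dimensional, so $H^j\neq H^j_0\cdot H^j_1\cdot H_{sym}$. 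The failure already occurs for your ``key step'' on a single generator: $\pi_1\bigl(\tilde h^j_{L_n}(u,v)\bigr)$ has $(1,1)$-entry $\abs{u}^2-\abs{v}^2$ (real) and off-diagonal entry $-2\overline{u}v$, which is generically neither real nor purely imaginary, whereas a product of one element from each circle has $(1,1)$-entry $\cos\phi\cos\psi-\mathrm{i}\sin\phi\sin\psi$, which is real only when $\sin\phi\sin\psi=0$, i.e.\ when one factor is $\pm I$. So most generators $\tilde h^j_{L_n}(u,v)$ with $(u,v)\in S^1_\CC$ cannot be written as $h_0h_1$ modulo $H_{sym}$, no conjugation trick inside $\tilde W_s$ can fix this, and your final kernel analysis rests on a decomposition that does not exist. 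You in fact flag the collapse to normal form as ``the technical heart'' but never supply it; it cannot be supplied in the form stated.

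What the paper does instead is prove a length-three alternating normal form: using the exchange relation of Lemma \ref{le:13} (an element of $H^j_1H^j_0H^j_1$ equals an element of $H^j_0H^j_1H^j_0$ and vice versa, valid in $\tilde W_s/H_{sym}$), whose proof is the real work and relies on the geometric Lemma \ref{le:37} together with Fact \ref{fact:2} and Lemma \ref{le:10}, one gets that every $h\in H^j$ is $\tilde h^j_{L_n}(\cos x_1,\sin x_1)\tilde h^j_{L_n}(\cos x_2,\sin x_2\,\mathrm{i})\tilde h^j_{L_n}(\cos x_3,\sin x_3)h_0$ with $h_0\in H_{sym}$ --- an Euler-angle-type decomposition consistent with $\dim SU(2)=3$. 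Then, if $\pi_1(h)=I_{m+n}$, the middle factor's image lies in the intersection of the two circles, hence is $\pm I$ on the relevant block; the $+I$ case is finished by Lemma \ref{le:32}, and the $-I$ case is reduced to it by observing $\cos x_2=0$ and $\tilde h^j_{L_n}(0,\pm\mathrm{i})=\tilde h^j_{L_n}(0,1)$ via Lemma \ref{pop:2}. Your dichotomy $\pm I$ at the end and your use of Lemmas \ref{pop:2}, \ref{le:31}, \ref{le:32} are in the right spirit, but without an exchange lemma between $H^j_0$ and $H^j_1$ (or some substitute giving a three-factor normal form) the argument does not go through.
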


The ensuring discussion up to Lemma \ref{le:13} proves the theorem.
Recall the definition for $\tilde{W}_{s}$ in Definition \ref{de:1}.
Consider the quotient group $\tilde{W}_{s}/H_{sym}$ until the end of
Lemma \ref{le:13}.  Note that $H_{sym}$ being central, this is well
defined. We continue to write $\tilde{h}^j_{L_n}(a,b)\bigl((a,b)\in
S^1_\CC\bigr)$ and $\tilde{w}^j_{L_n}(a)(a\in S^{m-n-1}_\CC)$ for
their images in $\tilde{W}_{s}/H_{sym}$. However, there is no
confusion in doing so.

\begin{lemma}\label{le:37} Let $g_1,g_2:[0,2\pi]^2\rightarrow S^1_\CC$ be defined as follows:
\begin{align*}
&g_1(a,b)=(\cos a\cos b-\sin a\sin b \emph{i},\cos a\sin b-\sin a\cos b \emph{i}),\\
&g_2(a,b)=(\cos a\cos b-\sin a\sin b \emph{i},\sin a\cos b+\cos
a\sin b \emph{i}).
\end{align*}
 For any $a,b,x\in
[0,2\pi]$, there exist $c,d,y,$ $c_1,d_1,y_1\in [0,2\pi]$ such that
\begin{align*}
&(1) \tilde{w}^j_{L_n}\bigl(g_2(a,b)\bigr)\tilde{w}^j_{L_n}(\cos
x,\sin
x)=\tilde{w}^j_{L_n}\bigl(g_1(c,d)\bigr)\tilde{w}^j_{L_n}(\cos
y,\sin y
\emph{i}),\\
&(2) \tilde{w}^j_{L_n}\bigl(g_1(a,b)\bigl)\tilde{w}^j_{L_n}(\cos
x,\sin x
\emph{i})=\tilde{w}^j_{L_n}\bigl(g_2(c_1,d_1)\bigr)\tilde{w}^j_{L_n}(\cos
y_1,\sin y_1).
\end{align*}
\end{lemma}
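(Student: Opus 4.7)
The plan is to adapt the strategy of Lemma~\ref{le:4} to the unitary setting: match the complex inner products of the two pairs of vectors, transport one configuration to the other via an $SU(2)$-symmetry supplied by Lemma~\ref{le:10}, and realize that symmetry by conjugation inside $\tilde{W}_s$ via Lemma~\ref{le:16}. All calculations take place in the quotient $\tilde{W}_s/H_{sym}$ set up in Definition~\ref{de:1}.

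A direct computation gives
\begin{align*}
\langle g_2(a,b),(\cos x,\sin x)\rangle &= \cos b\cos(x-a) + \textrm{i}\sin b\sin(x-a),\\
\langle g_1(c,d),(\cos y,\sin y\,\textrm{i})\rangle &= \cos d\cos(y+c) - \textrm{i}\sin d\sin(y+c).
\end{align*}
The first step is to show that for every $(a,b,x)\in[0,2\pi]^3$ there exist $(c,d,y)\in[0,2\pi]^3$ making these two complex numbers agree. Setting $\alpha=x-a$ and $\beta=y+c$ and taking $d=2\pi-b$ reduces the system to $\cos\beta=\cos\alpha$ and $\sin\beta=\sin\alpha$, solved by $\beta=\alpha$; the remaining freedom in $c$ lets us place $y\in[0,2\pi]$. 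Degenerate configurations (where $\cos b=0$, or where the two vectors are collinear/antipodal) must be handled by explicit choices, mirroring the first paragraph of the proof of Lemma~\ref{le:4}.

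Once the inner products match, Lemma~\ref{le:10} supplies $h\in SU(2)$ and $g\in S^0_\CC$ with $h\cdot g_2(a,b)=g\cdot g_1(c,d)$ and $h\cdot(\cos x,\sin x)=g\cdot(\cos y,\sin y\,\textrm{i})$. Since the matrices $\pi_1(\tilde{h}^j_{L_n}(u,v))$ for $(u,v)\in S^1_\CC$ generate the copy of $SU(2)$ acting on the $(2n+j+1,2n+j+2)$-coordinates (as exploited in the proof of Lemma~\ref{le:26}(iii)), there is $\tilde{h}\in\tilde{H}_{s_0}$ whose $\pi_1$-image realizes $\overline{h}$ on those coordinates. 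Conjugation by $\tilde{h}$, via Lemma~\ref{le:16}, gives
$$\tilde{h}\,\tilde{w}^j_{L_n}(g_2(a,b))\tilde{w}^j_{L_n}(\cos x,\sin x)\,\tilde{h}^{-1}=\tilde{w}^j_{L_n}(\overline{g}\cdot g_1(c,d))\,\tilde{w}^j_{L_n}(\overline{g}\cdot(\cos y,\sin y\,\textrm{i})),$$
and Lemma~\ref{pop:2} absorbs the scalar $\overline{g}\in S^0_\CC$ on each factor. Since $H_{sym}$ is central by Lemma~\ref{le:20} and we work modulo it, the outer conjugation by $\tilde{h}$ may be dropped, yielding (1). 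Part~(2) follows by the symmetric argument exchanging the roles of $g_1$ and $g_2$.

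The main obstacle is the first step: while the generic matching $d=2\pi-b$, $\beta=\alpha$ works cleanly, the degenerate cases require case-by-case verification, and one must confirm that all chosen parameters stay within $[0,2\pi]$. A secondary bookkeeping task is checking that the conjugation step is compatible with the quotient $\tilde{W}_s/H_{sym}$, which follows from the centrality of $H_{sym}$ together with the phase-absorption properties of Lemmas~\ref{le:31}(1) and~\ref{pop:2}.
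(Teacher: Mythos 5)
The decisive step of your argument---``since $H_{sym}$ is central \dots the outer conjugation by $\tilde h$ may be dropped''---is not valid, and it is exactly where the real work of the lemma lies. The quotient $\tilde W_s/H_{sym}$ is not abelian; centrality of $H_{sym}$ only says inner automorphisms fix $H_{sym}$ pointwise, so from $\tilde h P\tilde h^{-1}=Q$ you cannot conclude $P=Q$. In fact, with your choice of parameters the conclusion is false: since $H_{sym}\subseteq\ker\pi_1$, equality in $\tilde W_s/H_{sym}$ forces $\pi_1(\mathrm{LHS})=\pi_1(\mathrm{RHS})$, whereas matching the Hermitian inner products only makes the two $\pi_1$-images \emph{conjugate} in $SU(2)$ (same trace, by Fact~\ref{fact:2}), not equal. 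Concretely, take $a=0$, $b=\pi/4$, $x=0$: on the relevant $2\times2$ block (with the paper's identification of $\pi_1(\tilde w_{L_n})$ with $B$) the left-hand side of (1) has image $\bigl(\begin{smallmatrix}0&-\textrm{i}\\ -\textrm{i}&0\end{smallmatrix}\bigr)$, while your prescription $d=2\pi-b$, $y+c=x-a$ gives on the right-hand side the block $\bigl(\begin{smallmatrix}-\textrm{i}\sin 2c&\cos 2c\\ -\cos 2c&\textrm{i}\sin 2c\end{smallmatrix}\bigr)$; equality would force the real number $\cos 2c$ to equal $-\textrm{i}$, so (1) fails for every $c,y$ in your family.

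The paper's proof is organized the other way around, and both halves are needed. It first chooses $(z_1,z_2)$ and $y$, and then $(c,d)$ with $g_1(c,d)=(zz_1,zz_2)$ for a suitable phase $z$ (via Lemma~\ref{pop:2}), so that the $\pi_1$-images of the two sides are \emph{literally equal}; this is the explicit construction of the auxiliary matrix $R$ with the case analysis on $\alpha$, $\beta$. Only then does it invoke Fact~\ref{fact:2}, Lemma~\ref{le:10}, Lemma~\ref{le:16} and Lemma~\ref{pop:2} to produce a conjugator $h'\in H^j$ carrying one side to the other, and even at that point the conjugation is not discarded: after a further conjugation by $h_1\in H^j$ putting the product into the real form $\tilde w^j_{L_n}(\cos x_1,\sin x_1)\tilde w^j_{L_n}(\cos x_2,\sin x_2)$, one checks that $\pi_1(h_1h'h_1^{-1})$ centralizes this nontrivial rotation and hence lies in $\pi_1(H^j_0)$, so $h_1h'h_1^{-1}=hh_0$ with $h\in H^j_0$ and $h_0\in\ker\pi_1$ central; commutators with $h$ are symbols (Lemmas~\ref{le:31} and~\ref{le:32}) and die in the quotient, and only then can the conjugation be cancelled. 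Your exact matching of the inner product is a pleasant shortcut past the modulus statement of Fact~\ref{fact:2}, but without (i) choosing $(c,d,y)$ so that the $\pi_1$-images coincide exactly and (ii) the centralizer/symbol argument replacing ``drop the conjugation,'' the proof does not go through.
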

\begin{proof} (1) Fix $j$. At first, we want to show that there
exist $(z_1,z_2)\in S^1_{\CC}$ and $y\in[0,2\pi]$  such that
\begin{align}\label{for:31}
\pi_1\bigl(\tilde{w}^j_{L_n}(g_2(a,b))\tilde{w}^j_{L_n}(\cos x,\sin
x)\bigr)=\pi_1\bigl(\tilde{w}^j_{L_n}(z_1,z_2)\tilde{w}^j_{L_n}(\cos
y,\sin y \textrm{i})\bigr).
\end{align}
Suppose
$\pi_1\bigl(\tilde{w}^j_{L_n}(g_2(a,b))\tilde{w}^j_{L_n}(\cos x,\sin
x)\bigr)$ is given by the following matrix
$$ \begin{pmatrix}\alpha & \beta\\
-\overline{\beta} & \overline{\alpha}\\
\end{pmatrix}.$$
If $\alpha\in\RR$, $\beta \textrm{i}\in\RR$, there is nothing to
prove.

 Now suppose $(\alpha,\beta)\notin\RR\times \RR\textrm{i}$.
If $\alpha\notin\RR$, $\beta\notin\RR\textrm{i}$, let
$u=t(1-\alpha^2+\beta^2)$,
$v=\frac{u\overline{\beta}-\beta\overline{u}}{\alpha-\overline{\alpha}}$
where $t\in\RR$ satisfying $\abs{v}^2+\abs{u}^2=1$; if
$\alpha\notin\RR$, $\beta \textrm{i}\in\RR$ let
$u=t\alpha\textrm{i}$,
$v=\frac{u\overline{\beta}-\beta\overline{u}}{\alpha-\overline{\alpha}}$
where $t\in\RR$ satisfying $\abs{v}^2+\abs{u}^2=1$; if
$\alpha\in\RR$, $\beta\notin\RR\textrm{i}$, let $u=t\beta$,
$v=-\frac{u\overline{\alpha}+\alpha\overline{u}}{\overline{\beta}+\beta}$
where $t\in\RR$ satisfying $\abs{v}^2+\abs{u}^2=1$.

Let $R$ be the following matrix
$$ \begin{pmatrix}v & u\\
\overline{u} & -v\\
\end{pmatrix},$$
then we have $R\cdot(\alpha,-\overline{\beta})\in\RR\times\RR
\textrm{i}$. Thus there exists $y\in\RR$ such that
$R\cdot(\alpha,-\overline{\beta})=(-\cos 2y,\sin 2y \textrm{i})$.

Let
$(z_1,z_2)=(\sqrt{\frac{1}{2}(1-v)},-\frac{u}{2\sqrt{\frac{1}{2}(1-v)}})$
if $v\neq 1$; $(z_1,z_2)=(0,1)$ if $v=1$, then we have
$\pi_1\bigl(\tilde{w}^j_{L_n}(z_1,z_2)\bigl)=R$ and
\begin{align*}
\pi_1\bigl(\tilde{w}^j_{L_n}(g_2(a,b))\tilde{w}^j_{L_n}(\cos x,\sin
x)\bigr)=\pi_1\bigl(\tilde{w}^j_{L_n}(z_1,z_2)\tilde{w}^j_{L_n}(\cos
y,\sin y \textrm{i})\bigr).
\end{align*}
Next, we want  to show there exists $(c,d)\in\RR^2$ such that
$$\pi_1\bigl(\tilde{w}^j_{L_n}(g_1(c,d))\bigl)=\pi_1\bigl(\tilde{w}^j_{L_n}(z_1,z_2)\bigl).$$

By Lemma \ref{pop:2}, we just need to show there exist $(c,d)$ and
$z\in S^0_{\CC}$ such that $g_1(c,d)=(zz_1,zz_2)$. Notice
$z_1\in\RR$. Let $A$ denote
the following matrix:$$ \begin{pmatrix}\cos c & \sin c \textrm{i}\\
\sin c \textrm{i} & \cos c\\
\end{pmatrix}.$$
If $z_2\in\RR$, let $\sin c=0$, $z=1$, then $A\cdot(zz_1,zz_2)\in
S^1_{\RR}$. If $z_1=0$, let $\sin c=0$,
$z=\frac{\overline{z_2}}{\abs{z_2}}$, then $A\cdot(zz_1,zz_2)\in
S^1_{\RR}$.

Now suppose $z_1z_2\neq0$ and $z_2\notin\RR$.  If $z_2\in\RR
\textrm{i}$, then $(z_1,z_2)=(\cos r,\sin r \textrm{i})$ for some
$r\in\RR$. Let $z=1$, $c=-r$, then $A\cdot(zz_1,zz_2)\in S^1_{\RR}$.

Now suppose $z_1z_2\neq0$,  $z_2\notin\RR$ and $z_2\notin\RR
\textrm{i}$. Let
$z=\frac{\sqrt{z_1^2-\overline{z_2}^2}}{\abs{\sqrt{z_1^2-\overline{z_2}^2}}}$,
$\cot c=\frac{(zz_2+\overline{zz_2})
\textrm{i}}{z_1\overline{z}-zz_1}$(notice in this case
$z_1^2-\overline{z_2}^2\neq 0$ and $\overline{z}z_1-zz_1\neq 0$),
then we have $A\cdot(zz_1,zz_2)\in S^1_{\RR}$. Let $d\in\RR$
satisfying  $(\cos d,\sin d)=A\cdot(zz_1,zz_2)$, then
$g_1(c,d)=(zz_1,zz_2)$.

Hence we proved that for any given $a,b$, any $x$ we can find
$(c,d)\in [0,2\pi]$ and $y\in[0,2\pi]$ such that
\begin{align*}
\pi_1\bigl(\tilde{w}^j_{L_n}(g_2(a,b))\tilde{w}^j_{L_n}(\cos x,\sin
x)\bigr)=\pi_1\bigl(\tilde{w}^j_{L_n}(g_1(c,d))\tilde{w}^j_{L_n}(\cos
y,\sin y \textrm{i})\bigr).
\end{align*}
By Fact \ref{fact:2}
 we have
\begin{align*}
\abs{\bigl\langle g_2(c,d),(\cos x,\sin
x)\bigr\rangle}=\abs{\bigl\langle
 g_1(a,b),(\cos y,\sin y \textrm{i})\bigr\rangle}.
\end{align*}
There exists $z_1\in S_{\CC}^1$ such that
\begin{align*}
\bigl\langle g_2(c,d),(\cos x,\sin x)\bigr\rangle=\bigl\langle
 z_1g_1(a,b),(\cos y,\sin y \textrm{i})\bigr\rangle.
\end{align*}

By Lemma \ref{le:10}, there exist $z_0\in S^0_{\CC}$ and $h'\in H^j$
such that
\begin{align*}
\overline{\pi_1(h')}(g_2(c,d))=z_0z_1g_1(a,b)\text{ and
}\overline{\pi_1(h')}(\cos x,\sin x)=(z_0\cos y,z_0\sin y
\textrm{i}).
\end{align*}
 Then it follows
\begin{align}\label{for:16}
&h'\bigl(\tilde{w}^j_{L_n}(g_2(a,b))\tilde{w}^j_{L_n}(\cos x,\sin
x)\bigr)h'^{-1}\notag\\
&=\tilde{w}^j_{L_n}(z_0z_1g_1(a,b))\tilde{w}^j_{L_n}(z_0\cos
y,z_0\sin y
\textrm{i})\notag\\
&=\tilde{w}^j_{L_n}\bigl(g_1(a,b)\bigr)\tilde{w}^j_{L_n}(\cos y,\sin
y \textrm{i}).&(\text{ by }\text{ Lemma }\ref{pop:2})
\end{align}
If $\pi_1\bigl(\tilde{w}^j_{L_n}(g_2(a,b))\tilde{w}^j_{L_n}(\cos
x,\sin x)\bigr)=I_{m+n}$, there is nothing to prove.

 Now suppose
$\pi_1\bigl(\tilde{w}^j_{L_n}(g_2(a,b))\tilde{w}^j_{L_n}(\cos x,\sin
x)\bigr)\neq I_{m+n}$.

Let $x_1,x_2\in\RR$ such that $$\abs{\cos x_1\cos x_2+\sin x_1\sin
x_2}=\abs{\bigl\langle g_2(a,b),(\cos x,\sin x)\bigr\rangle}.$$ Then
there exists $z\in S^0_\CC$ such that
\begin{align*}
\bigl\langle(\cos x_1,\sin x_1),(\cos x_2,\sin
x_2)\bigr\rangle=\bigl\langle zg_2(a,b),(\cos x,\sin x)\bigr\rangle,
\end{align*}
and by Lemma \ref{le:10}, there exist $h_1\in H^j$ and $z_2\in
S^0_{\CC}$ satisfying
\begin{align*}
&\overline{\pi_1(h_1)}(zg_2(a,b))=(z_2\cos x_1,z_2\sin x_1),\\
&\overline{\pi_1(h_1)}(\cos x,\sin x)=(z_2\cos x_2,z_2\sin x_2 ).
\end{align*}
Thus following (\ref{for:16}) and Lemma \ref{pop:2} we have
\begin{align*}\label{for:9}
&h_1\bigl(\tilde{w}^j_{L_n}(g_1(a,b))\tilde{w}^j_{L_n}(\cos y,\sin y
\textrm{i})\bigr)h_1^{-1}\\
 &=h_1h'\bigl(\tilde{w}^j_{L_n}(g_2(a,b))\tilde{w}^j_{L_n}(\cos x,\sin
x)\bigr)h'^{-1}h_1^{-1}\notag\\
&=(h_1h'h_1^{-1})\bigl(h_1\tilde{w}^j_{L_n}(zg_2(a,b))\tilde{w}^j_{L_n}(\cos x,\sin x)h_1^{-1}\bigr)\cdot (h_1h'h_1^{-1})^{-1}\\
&=(h_1h'h_1^{-1})\bigl(\tilde{w}^j_{L_n}(z_2\cos x_1,z_2\sin x_1
)\tilde{w}^j_{L_n}(z_2\cos x_2,z_2\sin
x_2)\bigr)(h_1h'h_1^{-1})^{-1}\notag\\
&=(h_1h'h_1^{-1})\bigl(\tilde{w}^j_{L_n}(\cos x_1,\sin x_1
)\tilde{w}^j_{L_n}(\cos x_2,\sin x_2)\bigr)(h_1h'h_1^{-1})^{-1}.
\end{align*}
Notice
\begin{align*}
&\pi_1\bigl(h_1\tilde{w}^j_{L_n}(g_1(a,b))\tilde{w}^j_{L_n}(\cos
y,\sin y \textrm{i})h_1^{-1}\bigr)\\
&=\pi_1\bigl(h_1\tilde{w}^j_{L_n}(zg_2(a,b))\tilde{w}^j_{L_n}(\cos
x,\sin x)h_1^{-1}\bigr)\\
&=\pi_1\bigl(\tilde{w}^j_{L_n}(\cos x_1,\sin x_1
)\tilde{w}^j_{L_n}(\cos x_2,\sin x_2)\bigr).
\end{align*}
 Since $\pi_1(h_1h'h_1^{-1})$ commute with
$\pi_1\bigl(\tilde{w}^j_{L_n}(\cos x_1,\sin x_1
)\tilde{w}^j_{L_n}(\cos x_2,\sin x_2)\bigr)$ which is conjugate with
$\pi_1\bigl(\tilde{w}^j_{L_n}(g_1(a,b))\tilde{w}^j_{L_n}(\cos y,\sin
y \textrm{i})\bigr)\neq I_{m+n}$ and is in $\pi_1(H^j_0)$, hence
there exists $h\in H^j_0$ and $h_0\in\ker(\pi_1)$ such that
$h_1h'h_1^{-1}=hh_0$.

Hence we have
\begin{align*}
&h_1\tilde{w}^j_{L_n}(g_1(a,b))\tilde{w}^j_{L_n}(\cos y,\sin y
\textrm{i})h_1^{-1}\\
&=(h_1h'h_1^{-1})\bigl(\tilde{w}^j_{L_n}(\cos
x_1,\sin x_1 )\tilde{w}^j_{L_n}(\cos x_2,\sin x_2)\bigr)(h_1h'h_1^{-1})^{-1}\\
&=hh_0\bigl(\tilde{w}^j_{L_n}(\cos x_1,\sin x_1
)\tilde{w}^j_{L_n}(\cos x_2,\sin x_2)\bigl)(hh_0)^{-1}\\
&=\tilde{w}^j_{L_n}(\cos x_1,\sin x_1 )\tilde{w}^j_{L_n}(\cos x_2,\sin x_2)(\text{ by Lemma }\ref{le:32})\\
&=h_1\tilde{w}^j_{L_n}(g_2(a,b))\tilde{w}^j_{L_n}(\cos x,\sin
x)h_1^{-1}.
\end{align*}
Hence we have proved (1).

(2) Similar arguments hold for (2).
\end{proof}

 As a second step toward the proof of Theorem \ref{th:10}, we
prove

\begin{lemma}\label{le:13}
For any $\theta_1,\theta_2,\theta_3$, we can find $\beta_1, \beta_2,
\beta_3$, $\alpha_1, \alpha_2, \alpha_3$, such that
\begin{align*}
(1) &\tilde{h}^{j}_{L_n}(\cos\theta_1,\sin\theta_1\emph{i})\tilde{h}^{j}_{L_n}(\cos\theta_2,\sin\theta_2)\tilde{h}^{j}_{L_n}(\cos\theta_3,\sin\theta_3\emph{i})\\
&=
\tilde{h}^j_{L_n}(\cos\beta_1,\sin\beta_1)\tilde{h}^{j}_{L_n}(\cos\beta_2,\sin\beta_2\emph{i})\tilde{h}^j_{L_n}(\cos\beta_3,\sin\beta_3),\\
(2) &\tilde{h}^j_{L_n}(\cos\theta_1,\sin\theta_1)\tilde{h}^{j}_{L_n}(\cos\theta_2,\sin\theta_2\emph{i})\tilde{h}^j_{L_n}(\cos\theta_3,\sin\theta_3)\\
&=
\tilde{h}^{j}_{L_n}(\cos\alpha_1,\sin\alpha_1\emph{i})\tilde{h}^j_{L_n}(\cos\alpha_2,\sin\alpha_2)\tilde{h}^{j}_{L_n}(\cos\alpha_3,\sin\alpha_3\emph{i}).
\end{align*}
\end{lemma}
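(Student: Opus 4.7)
\smallskip

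The plan is to mirror the strategy used for Lemma \ref{fact:1} in the split case, but now exploiting Lemma \ref{le:37} to handle the alternation between the two families of $\tilde h^j_{L_n}$-elements whose second coordinate is real (the $H^j_0$ type) versus purely imaginary (the $H^j_1$ type). Throughout, we work in $\tilde W_s/(\ker\pi_1\cap H^1)$, where every $\tilde h^j_{L_n}$ admits the expansion $\tilde h^j_{L_n}(u,v)=\tilde w^j_{L_n}(u,v)\,\tilde w^j_{L_n}(-1,0)$ and where $H_{sym}$ is central.

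For part (1), first expand each of the three $\tilde h^j_{L_n}$-factors on the left-hand side into two $\tilde w^j_{L_n}$-factors, producing a six-fold product of $\tilde w$'s. Next, using the conjugation rule \eqref{for:14} of Lemma \ref{le:16}, push the outer $\tilde h^j_{L_n}(\cos\theta_1,\sin\theta_1\text{i})$ across the middle $\tilde w^j_{L_n}(\cos\theta_2,\sin\theta_2)$, and symmetrically for the right side. As in the proof of Lemma \ref{fact:1}, several sub-products assemble into elements of $H^j_0$ or $H^j_1$ whose images under $\pi_1$ match those of specific $\tilde h^j_{L_n}$-elements; by Lemma \ref{le:32} they are equal to those $\tilde h^j_{L_n}$-elements modulo $H_{sym}$, which is central. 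After this simplification the expression collapses to a two-term product
\[
\tilde w^j_{L_n}\bigl(g_2(a,b)\bigr)\,\tilde w^j_{L_n}(\cos x,\sin x)
\]
for explicit $(a,b,x)$ computed from $\theta_1,\theta_2,\theta_3$.

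Now invoke Lemma \ref{le:37}(1) to rewrite this product as $\tilde w^j_{L_n}(g_1(c,d))\,\tilde w^j_{L_n}(\cos y,\sin y\text{i})$. Reverse the process: using surjectivity of the parameterization $g_1$ together with Lemma \ref{pop:2} and the identity $\tilde w^j_{L_n}(v,w)=\tilde h^j_{L_n}(v,w)\tilde w^j_{L_n}(-1,0)$, one extracts parameters $\beta_1,\beta_2,\beta_3$ with
\[
\tilde w^j_{L_n}\bigl(g_1(c,d)\bigr)\tilde w^j_{L_n}(\cos y,\sin y\text{i})
=\tilde h^j_{L_n}(\cos\beta_1,\sin\beta_1)\tilde h^j_{L_n}(\cos\beta_2,\sin\beta_2\text{i})\tilde h^j_{L_n}(\cos\beta_3,\sin\beta_3),
\]
which is the desired conclusion. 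Part (2) follows from the same argument, interchanging the roles of $(\cos,\sin)$ and $(\cos,\sin\text{i})$ and using Lemma \ref{le:37}(2) in place of (1).

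The main obstacle is the bookkeeping in the middle step: organizing the six raw $\tilde w^j_{L_n}$-factors into a shape where the central residues are recognizable $\tilde h^j_{L_n}$-elements via Lemma \ref{le:32}, so that one can actually read off a two-term $g_2$-form to which Lemma \ref{le:37} applies. The key algebraic enabler is that each candidate residue lies in $H^j_0$ or $H^j_1$, whose kernels in $\pi_1$ are precisely the symbols already quotiented out, so the identifications are unambiguous and the $\beta_i$ (resp.\ $\alpha_i$) exist as claimed.
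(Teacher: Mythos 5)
Your outline follows the paper's strategy in broad terms---expand the middle factor into $\tilde{w}^j_{L_n}$'s, conjugate via Lemma \ref{le:16}, recognize the residual products as elements of $H^j_0$ or $H^j_1$ and replace them by single $\tilde{h}^j_{L_n}$-factors modulo the central symbol subgroup using Lemma \ref{le:32}, reduce to a two-factor product of $\tilde{w}$'s, switch its type with Lemma \ref{le:37}, and reassemble---but the middle of your argument is reversed in a way that breaks the final step. In case (1) the conjugating element $\tilde{h}^j_{L_n}(\cos\theta_1,\sin\theta_1\textrm{i})$ is of imaginary type, and applying $\overline{\pi_1(\,\cdot\,)}$ to the real vector $(\cos\theta_2,\sin\theta_2)$ produces a $g_1$-vector (namely $g_1(-2\theta_1,\theta_2)$), while the residual factor lies in $H^j_1$; so the left-hand side collapses to $\tilde{w}^j_{L_n}\bigl(g_1(-2\theta_1,\theta_2)\bigr)\tilde{w}^j_{L_n}(\cos x_1,\sin x_1\textrm{i})$, not to a $g_2$--real product as you claim. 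The conversion needed is therefore Lemma \ref{le:37}(2), not (1), which carries this to $\tilde{w}^j_{L_n}\bigl(g_2(2a,b)\bigr)\tilde{w}^j_{L_n}(\cos x_2,\sin x_2)$; and it is only from this $g_2$--real form that the target alternation can be read off, since $g_2(2a,b)$ is by construction the image of the imaginary vector $(\cos b,\sin b\,\textrm{i})$ under $\overline{\pi_1(\tilde{h}^j_{L_n}(\cos a,\sin a))}$, so that $\tilde{w}^j_{L_n}\bigl(g_2(2a,b)\bigr)=\tilde{h}^j_{L_n}(\cos a,\sin a)\tilde{w}^j_{L_n}(\cos b,\sin b\,\textrm{i})\tilde{h}^j_{L_n}(\cos a,\sin a)^{-1}$ and the trailing piece $\tilde{w}^j_{L_n}(1,0)\tilde{h}^j_{L_n}(\cos a,\sin a)^{-1}\tilde{w}^j_{L_n}(\cos x_2,\sin x_2)$ again lies in $H^j_0$. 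Your route goes the other way: from a (nonexistent) $g_2$--real collapse you pass via Lemma \ref{le:37}(1) to $\tilde{w}^j_{L_n}(g_1(c,d))\tilde{w}^j_{L_n}(\cos y,\sin y\,\textrm{i})$ and claim to extract $\beta_1,\beta_2,\beta_3$ there; but decomposing a $g_1$-vector via Lemma \ref{le:16} uses conjugation by an imaginary-type element and simply reproduces the original imaginary--real--imaginary pattern, and the leftover factor is then a mixed product of a real-type and an imaginary-type element, which Lemma \ref{le:32} cannot identify with a single $\tilde{h}^j_{L_n}$. Obtaining real--imaginary--real from the $g_1$--imaginary form is exactly the content of the lemma, so as written the last step is circular; swapping the roles of $g_1/g_2$ and of the two parts of Lemma \ref{le:37} (symmetrically in case (2)) gives the paper's proof.

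A second, smaller but genuine issue is the ambient quotient. The identity must be proved in $\tilde{W}_s/H_{sym}$, because Lemma \ref{le:13} is then combined with Lemma \ref{le:32} to prove Theorem \ref{th:10}, i.e.\ precisely that $\ker(\pi_1)\cap H^j=H_{sym}$. Declaring the working group to be $\tilde{W}_s/(\ker\pi_1\cap H^1)$ presupposes (or at best obscures) that equality: read literally it yields the identity only modulo an a priori larger central subgroup, which is too weak for the application and would make the proof of Theorem \ref{th:10} vacuous. Since your actual identifications are made modulo $H_{sym}$ via Lemma \ref{le:32}, the fix is only to state the quotient as the paper does, but it needs to be stated correctly.
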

\begin{proof}
(1) Let $\pi_1\bigl(\tilde{h}^{j}_{L_n}(\cos\theta_1,\sin\theta_1
\textrm{i})\bigl)=A$, we have
\begin{align*}
&\tilde{h}^{j}_{L_n}(\cos\theta_1,\sin\theta_1\textrm{i})\tilde{h}^{j}_{L_n}(\cos\theta_2,\sin\theta_2)\tilde{h}^{j}_{L_n}(\cos\theta_3,\sin\theta_3\textrm{i})\\
&=\tilde{h}^{j}_{L_n}(\cos\theta_1,\sin\theta_1\textrm{i})\tilde{w}^{j}_{L_n}(\cos\theta_2,\sin\theta_2)\tilde{w}^{j}_{L_n}(-1,0)\tilde{h}^{j}_{L_n}(\cos\theta_3,\sin\theta_3\textrm{i})\\
&=\tilde{w}^{j}_{L_n}\bigl(\overline{A}\cdot(\cos\theta_2,\sin\theta_2)\bigl)\tilde{h}^{j}_{L_n}(\cos\theta_1,\sin\theta_1\textrm{i})\\
&\cdot
\tilde{w}^{j}_{L_n}(-1,0)\tilde{h}^{j}_{L_n}(\cos\theta_3,\sin\theta_3\textrm{i})\tilde{w}^{j}_{L_n}(-1,0)\tilde{w}^{j}_{L_n}(1,0)\\
&=\tilde{w}^{j}_{L_n}\bigl(g_1(-2\theta_1,\theta_2)\bigl)\tilde{h}^{j}_{L_n}(\cos\theta_1,\sin\theta_1\textrm{i})\\
&\cdot
\tilde{w}^{j}_{L_n}(-1,0)\tilde{h}^{j}_{L_n}(\cos\theta_3,\sin\theta_3\textrm{i})\tilde{w}^{j}_{L_n}(-1,0)\tilde{w}^{j}_{L_n}(1,0).
\end{align*}
Since
$$\tilde{h}^{j}_{L_n}(\cos\theta_1,\sin\theta_1\textrm{i})\tilde{w}^{j}_{L_n}(-1,0)\tilde{h}^{j}_{L_n}(\cos\theta_3,\sin\theta_3\textrm{i})\tilde{w}^{j}_{L_n}(-1,0)\in
H^j_1,$$ there exists $x_1\in\RR$ such that
\begin{align*}
&\tilde{h}^{j}_{L_n}(\cos\theta_1,\sin\theta_1\textrm{i})\tilde{w}^{j}_{L_n}(-1,0)\tilde{h}^{j}_{L_n}(\cos\theta_3,\sin\theta_3\textrm{i})\tilde{w}^{j}_{L_n}(1,0)\\
&=\tilde{w}^{j}_{L_n}(\cos x_1,\sin
x_1\textrm{i})\tilde{w}^{j}_{L_n}(-1,0).
\end{align*}
Thus we have
\begin{align*}
&\tilde{h}^{j}_{L_n}(\cos\theta_1,\sin\theta_1\textrm{i})\tilde{h}^{j}_{L_n}(\cos\theta_2,\sin\theta_2)\tilde{h}^{j}_{L_n}(\cos\theta_3,\sin\theta_3\textrm{i})\\
&=\tilde{w}^{j}_{L_n}\bigl(g_1(-2\theta_1,\theta_2)\bigl)\tilde{w}^{j}_{L_n}(\cos
x_1,\sin x_1\textrm{i}).
\end{align*}
By Lemma \ref{le:37}, there exist $a,b,x_2\in [0,2\pi]$ such that
$$\tilde{w}^{j}_{L_n}\bigl(g_1(-2\theta_1,\theta_2)\bigl)\tilde{w}^{j}_{L_n}(\cos
x_1,\sin
x_1\textrm{i})=\tilde{w}^{j}_{L_n}\bigl(g_2(2a,b)\bigl)\tilde{w}^{j}_{L_n}(\cos
x_2,\sin x_2).$$ It follows
\begin{align*}
&\tilde{h}^{j}_{L_n}(\cos\theta_1,\sin\theta_1\textrm{i})\tilde{h}^{j}_{L_n}(\cos\theta_2,\sin\theta_2)\tilde{h}^{j}_{L_n}(\cos\theta_3,\sin\theta_3\textrm{i})\\
&=\tilde{w}^{j}_{L_n}\bigl(g_2(2a,b)\bigl)\tilde{w}^{j}_{L_n}(\cos x_2,\sin x_2)\\
&=\tilde{h}^{j}_{L_n}(\cos a,\sin b)\tilde{w}^{j}_{L_n}(\cos b,\sin
b\textrm{i})\tilde{h}^{j}_{L_n}(\cos a,\sin b)^{-1}\\
&\cdot \tilde{w}^{j}_{L_n}(\cos x_2,\sin x_2)\\
&=\tilde{h}^{j}_{L_n}(\cos a,\sin b)\tilde{w}^{j}_{L_n}(\cos b,\sin
b\textrm{i})\tilde{w}^{j}_{L_n}(-1,0)\\
&\cdot \tilde{w}^{j}_{L_n}(1,0)\tilde{h}^{j}_{L_n}(\cos a,\sin b)^{-1}\tilde{w}^{j}_{L_n}(\cos x_2,\sin x_2)\\
&=\tilde{h}^{j}_{L_n}(\cos a,\sin b)\tilde{h}^{j}_{L_n}(\cos b,\sin
b\textrm{i})\\
&\cdot \tilde{w}^{j}_{L_n}(1,0)\tilde{h}^{j}_{L_n}(\cos a,\sin
b)^{-1}\tilde{w}^{j}_{L_n}(\cos x_2,\sin x_2).
\end{align*}
Notice $$\tilde{w}^{j}_{L_n}(1,0)\tilde{h}^{j}_{L_n}(\cos a,\sin
b)^{-1}\tilde{w}^{j}_{L_n}(\cos x_2,\sin x_2)\in H^j_0$$ by Lemma
\ref{le:32} there exists $x\in[0,2\pi]$ such that
\begin{align*}
&\tilde{h}^{j}_{L_n}(\cos x,\sin x)\\
&=\tilde{w}^{j}_{L_n}(1,0)\tilde{h}^{j}_{L_n}(\cos a,\sin
b)^{-1}\tilde{w}^{j}_{L_n}(\cos x_2,\sin x_2).
\end{align*}
Thus we have proved
\begin{align*}
&\tilde{h}^{j}_{L_n}(\cos\theta_1,\sin\theta_1 \textrm{i})\tilde{h}^{j}_{L_n}(\cos\theta_2,\sin\theta_2)\tilde{h}^{j}_{L_n}(\cos\theta_3,\sin\theta_3\textrm{i})\\
&=\tilde{h}^{j}_{L_n}(\cos a,\sin a)\tilde{h}^{j}_{L_n}(\cos b,\sin
b\textrm{i})\tilde{h}^{j}_{L_n}(\cos x,\sin x).
\end{align*}
Let $\beta_1=a$, $\beta_2=b$ and $\beta_3=x$, we proved (1).

 (2) It follows almost the same manner as the
proof of (1).

We thus completely proved the lemma.
\end{proof}

\subsection{Proof of Lemma \ref{th:10}}\label{sec:14}
Observe that for $\forall j$, $\pi_1(H^j_0)$ and $\pi_1(H^j_1)$
generate a subgroup isomorphic to $SU(2)$. Using the same trick as
we used in proof of Corollary \ref{le:9} and Lemma \ref{le:25} etc.,
it follows $H^j$ is generated by $H^j_0$ and $H^j_1$. By Lemma
\ref{le:32} and Lemma \ref{le:13}, every element $h_0\in H_j$ can be
express as
$$h=\tilde{h}^j_{L_n}(\cos x_1,\sin x_1)\tilde{h}^j_{L_n}(\cos x_2,\sin
x_2 \textrm{i})\tilde{h}^j_{L_n}(\cos x_3,\sin x_3)h_0$$ for some
$x_1,x_2,x_3\in \RR$ and $h_0\in H_{sym}$.

If $\pi_1(h)=I_{m+n}$, we have
\begin{align*}
&\pi_1\bigl(\tilde{h}^j_{L_n}(\cos x_2,\sin x_2 \textrm{i})\bigl)=I_{m+n},\\
&\pi_1\bigl(\tilde{h}^j_{L_n}(\cos x_3,\sin
x_3)\tilde{h}^j_{L_n}(\cos x_1,\sin x_1)\bigl)=I_{m+n}.
\end{align*}
Or
\begin{align*}
&\pi_1\bigl(\tilde{h}^j_{L_n}(\cos x_2,\sin x_2 \textrm{i})\bigl)=\text{diag}(-1_{2n+j},-1_{2n+j+1}),\\
&\pi_1\bigl(\tilde{h}^j_{L_n}(\cos x_3,\sin
x_3)\tilde{h}^j_{L_n}(\cos x_1,\sin
x_1)\bigl)=\text{diag}(-1_{2n+j},-1_{2n+j+1}).
\end{align*}
For the former case, it is clear $h\in H_{sym}$. For the latter one,
we have $\cos x_2=\cos (x_1+x_3)=0$. Using Lemma \ref{pop:2} we have
$$\tilde{h}^j_{L_n}(\cos x_2,\sin x_2 \textrm{i})=\tilde{h}^j_{L_n}(0,\pm \textrm{i})=\tilde{h}^j_{L_n}(0,1).$$ Hence we also get $h\in
H_{sym}$ for this case. We thus have prove the theorem completely.
\begin{corollary}\label{cor:2}
If $m-n=2$, $\ker(\pi_1)=\bigl(\ker(\pi_1)\cap
\tilde{H}_{L_1-L_2}\bigr)\cdot \tilde{H}_0\cdot H_{sym}$.
\end{corollary}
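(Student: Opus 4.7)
The plan is to derive this corollary as an immediate specialization of Theorem \ref{th:9} (for $m\ge n+2$) once we identify $\tilde{H}_{s_0}$ with $H^1$ in the narrow case $m-n=2$ and then invoke Theorem \ref{th:10}.

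First I would unpack the definition of $\tilde{H}_{s_0}$ in the case $m-n=2$. Recall that $\tilde{H}_{s_0}$ is generated by $\tilde{h}^{j}_{L_n}(0,\sqrt{2}a,\sqrt{2}b)$ with $(a,b)\in S^{1}_{\CC}$ and $1\le j\le m-n-1$. For $m-n=2$ the index range collapses to $j=1$, so $\tilde{H}_{s_0}$ is generated solely by $\tilde{h}^{1}_{L_n}(0,\sqrt{2}a,\sqrt{2}b)$. Under the shorthand set up in Definition \ref{de:1}, this is exactly the group $H^{1}$. Hence $\tilde{H}_{s_0}=H^{1}$.

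Next I would apply Theorem \ref{th:10} with $j=1$, which gives $\ker(\pi_1)\cap H^{1}=H_{\mathrm{sym}}$. Combining with the identification $\tilde{H}_{s_0}=H^{1}$ from the previous step yields $\ker(\pi_1)\cap\tilde{H}_{s_0}=H_{\mathrm{sym}}$.

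Finally, since $m-n=2$ falls in the range $m\ge n+2$, Theorem \ref{th:9} is applicable and gives
\[
\ker(\pi_1)=\bigl(\ker(\pi_1)\cap \tilde{H}_{L_1-L_2}\bigr)\cdot \tilde{H}_0\cdot\bigl(\ker(\pi_1)\cap \tilde{H}_{s_0}\bigr).
\]
Substituting $\ker(\pi_1)\cap\tilde{H}_{s_0}=H_{\mathrm{sym}}$ into this decomposition produces the claimed equality. There is no real obstacle here: the statement is essentially bookkeeping, and the entire work was already done in Theorems \ref{th:9} and \ref{th:10}. The only point worth flagging is the trivial verification that the index range for $\tilde{H}_{s_0}$ really does degenerate to a single value when $m-n=2$, so that the symbol group on $S^{1}_{\RR}$ constructed inside $H^{1}$ suffices to describe all of $\ker(\pi_1)\cap\tilde{H}_{s_0}$.
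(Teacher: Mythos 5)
Your proposal is correct and follows the paper's own route: the paper derives the corollary directly from Theorem \ref{th:9} and Theorem \ref{th:10}, exactly as you do, with your observation that $\tilde{H}_{s_0}=H^1$ when $m-n=2$ being the (implicit) bookkeeping step.
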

\begin{proof}
The conclusion is clear by Theorem \ref{th:9} and Theorem
\ref{th:10}.
\end{proof}

\subsection{Structure of $\ker(\pi_1)\cap \tilde{H}_{s_0}$}
Let
$L_i=\{(t,a)|(t,a)\in\RR\times\CC^{i},\frac{1}{4}\abs{a}^4+t^2=1\}$.
Let $H=\bigl(\ker(\pi_1)\cap \tilde{H}_{L_1-L_2}\bigr)\cdot
\tilde{H}_0\cdot H_{sym}$. Denote
$$\tilde{w}^j(t,a,b,c)=\tilde{w}_{L_n}(t,\dots,a_{j+1},b_{j+2},c_{j+3},\dots,0)$$ where
$(t,a,b,c)\in L_3$. Let
$\tilde{h}^j(0,a,b)=\tilde{w}^j(0,a,b,0)\tilde{w}^j(t,-\sqrt{2},0,0)$
where $(0,a,b)\in L_2$. Notice if $t=0$, then $\abs{a}=\sqrt{2}$.
Thus the notation set here coincide with what was defined in
Definition \ref{de:1}.

The crucial step in proving Theorem \ref{th:4} is:
\begin{theorem}\label{th:11}
$\ker(\pi_1)\cap \tilde{H}_{s_0}\subseteq H$.
\end{theorem}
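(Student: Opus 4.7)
The plan is to mimic the orthogonal case treated in Section \ref{sec:3} (the proof of Theorem \ref{th:8}), but working modulo the larger central subgroup $H$ rather than modulo $\ker(\pi_1)\cap H^1$. Since $H$ is central in $\widetilde{G}$ (it lies in $\ker\pi_1$), we may pass to the quotient $\tilde{W}_s/H$ and retain the notation $\tilde{h}^j_{L_n}(a,b)$ and $\tilde{w}^j_{L_n}(a)$ for the images. Two essential ingredients are already in place: Theorem \ref{th:10} gives $\ker(\pi_1)\cap H^j = H_{sym}\subseteq H$, so each $H^j$ has trivial kernel in the quotient; and Lemma \ref{le:13} provides the three-factor normal form $H^j = H^j_0\cdot H^j_1\cdot H^j_0$ with $\ker(\pi_1)\cap H^j_\delta = H_{sym}\cap H^j_\delta$ (Lemma \ref{le:32}).

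First I would establish the SU-analog of Proposition \ref{po:1}, namely the commutation identity
\begin{equation*}
H^i\cdot H^{i+1}\cdot H^i \equiv H^{i+1}\cdot H^i\cdot H^{i+1} \pmod{H}.
\end{equation*}
Using Lemma \ref{le:13}, this reduces to moving factors of the form $\tilde{h}^i_{L_n}(\cos\theta,\sin\theta)$ and $\tilde{h}^i_{L_n}(\cos\theta,\sin\theta\textrm{i})$ past their $(i+1)$-indexed analogs, exactly as in Lemma \ref{fact:1}; the required analog of Lemma \ref{le:4} follows from Lemma \ref{le:37} together with the transitivity of the $SU(2)$-action on $S^3_{\CC}$ encoded by $\pi_1(H^j\cdot H^{j+1})$. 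All ``error terms'' that arise in the identifications live in $H_{sym}$ by Theorem \ref{th:10} and hence are absorbed in $H$.

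Second, combining this commutation identity with the trivial relation $[H^i,H^j]=e$ for $|i-j|\geq 2$ (which follows from (\ref{for:9}) as in the proof of Proposition \ref{le:15}), I would establish the ordered-product decomposition
\begin{equation*}
\tilde{H}_{s_0} \equiv \Big(\prod_{i=1}^{m-n-1} H^i\Big)\cdot\Big(\prod_{i=1}^{m-n-2}H^i\Big)\cdots\Big(H^1\Big)\pmod{H}
\end{equation*}
by induction on $m-n$, following line by line the argument of Proposition \ref{le:15}: one checks that conjugating a normalized word $y_k$ in $Y_k = \langle H^1,\dots,H^{k+1}\rangle$ by $\tilde{h}^{k+2}_{L_n}$, or appending $\tilde{h}^{j}_{L_n}$ on either side, can be rearranged into the prescribed triangular shape by a finite sequence of applications of the commutation identity and the inductive hypothesis.

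Finally, given the ordered normal form, any $h\in \ker(\pi_1)\cap\tilde{H}_{s_0}$ can be written modulo $H$ as a product $\prod_{j,i}\tilde{h}^i_{L_n}(u^j_i,v^j_i)$ with $(u^j_i,v^j_i)\in S^1_{\CC}$. The $(2n+m-n,\,2n+m-n)$-entry of $\pi_1(h)$ is $\abs{u^{m-n-1}_{m-n-1}}^2-\abs{v^{m-n-1}_{m-n-1}}^2$, which $\pi_1(h)=I_{m+n}$ forces into $S^0_{\CC}$; by Lemma \ref{pop:2} and Theorem \ref{th:10} the corresponding outermost factor lies in $\ker(\pi_1)\cap H^{m-n-1}=H_{sym}\subseteq H$ and can be absorbed. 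Stripping this factor and applying downward induction on the index $m-n-1,m-n-2,\dots,1$ shows $h\in H$. The hard part is the first step: the commutation identity modulo $H$ is where the unitary case genuinely differs from the orthogonal case, because the relevant $SU(2)$-action on $S^3_{\CC}$ is not directly accessible via reflection arguments on $S^2_{\RR}$, and one must route through the quaternionic product structure packaged in Lemmas \ref{le:31}, \ref{le:37} and \ref{le:13}.
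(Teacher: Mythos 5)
Your skeleton is the same as the paper's (a braid-type identity $H^iH^{i+1}H^i=H^{i+1}H^iH^{i+1}$ modulo a central subgroup, then the ordered product decomposition of $\tilde{H}_{s_0}$ as in Proposition \ref{le:15}/Lemma \ref{le:19}, then the corner-entry stripping induction), and your last two steps do match the paper. But the step you yourself flag as ``the hard part'' — the commutation identity between $H^i$ and $H^{i+1}$ — is exactly where your justification fails. You propose to get it from Lemma \ref{fact:1}-style manipulations, Lemma \ref{le:37}, and ``transitivity of the $SU(2)$-action,'' with all error terms absorbed into $H_{sym}$ via Theorem \ref{th:10}. Two things go wrong. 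First, Lemma \ref{le:37} is a single-index exchange (it moves $H^j_0$-type factors past $H^j_1$-type factors inside one $H^j$, and is used to prove Lemma \ref{le:13}); it says nothing about moving a reflection supported on coordinates $(j+1,j+2)$ past one supported on $(j,j+1)$. The paper's actual two-index exchange, Lemma \ref{le:33}, cannot stay inside $\tilde{W}_s$: the re-factorization forces elements $\tilde{w}^j(t,a,b,c)$ with $t\neq 0$, i.e.\ chain elements involving the $2L_n$ root direction (Lemmas \ref{le:25} and \ref{le:27}), and the subsequent bookkeeping in Lemma \ref{le:18} runs through the subgroups $H_j^0$, $\tilde{W}_{L_n}$ and $\tilde{H}_v$ rather than through any quaternionic structure of a single $H^j$. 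Your proposal has no analog of this and tacitly assumes that products of two reflections of the form $\tilde{w}_{L_n}(0,\cdot)$ in adjacent coordinates can be re-bracketed as such products again, which is precisely what is not available.

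Second, and more decisively, your claim that ``all error terms live in $H_{sym}$ by Theorem \ref{th:10}'' is unjustified and, in the form you use it, wrong in spirit: Theorem \ref{th:10} controls $\ker(\pi_1)$ only inside a single $H^j$. When you rearrange a word involving both $H^j$ and $H^{j+1}$ (and the $t\neq 0$ elements above) and identify two expressions by comparing their $\pi_1$-images, the discrepancy is a priori an arbitrary element of $\ker(\pi_1)$ of the subgroup they generate. The paper's device for this is Lemma \ref{le:39}, which conjugates the configuration into the first two coordinates and invokes Corollary \ref{cor:2}, i.e.\ the full kernel computation for $m-n=2$ coming from Theorem \ref{th:9}; the resulting error terms lie in $\bigl(\ker(\pi_1)\cap\tilde{H}_{L_1-L_2}\bigr)\cdot\tilde{H}_0\cdot H_{sym}$, not in $H_{sym}$ alone — this is the very reason the theorem is stated with the enlarged group $H$. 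Without an analog of Lemma \ref{le:39} (and hence without Corollary \ref{cor:2}), your braid identity modulo $H$, and with it the whole induction, is not established.
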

The ensuring discussion up to Lemma \ref{le:19} proves the theorem.
We prove a technical lemma at first.
\begin{lemma}\label{le:39}
If
$(t_1,a_1,b_1,0),(t_2,a_2,b_2,0),(t_3,a_3,b_3,0),(t_4,a_4,b_4,0)\in
L_3$, satisfying
$$\pi_1\bigl(\tilde{w}^j(t_1,a_1,b_1,0)\tilde{w}^j(t_2,a_2,b_2,0)\bigl)=\pi_1\bigl(\tilde{w}^j(t_3,a_3,b_3,0)\tilde{w}^j(t_4,a_4,b_4,0)\bigl),$$
\end{lemma}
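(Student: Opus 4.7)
The plan is to prove that under the stated hypothesis the two products coincide modulo the subgroup $H=\bigl(\ker(\pi_1)\cap\tilde{H}_{L_1-L_2}\bigr)\cdot\tilde{H}_0\cdot H_{sym}$, which is exactly what is needed to feed into the argument for Theorem \ref{th:11}. The strategy is to normalize both sides by conjugation inside $\tilde{H}_{s_0}$ and then read off equality using the equal $\pi_1$-images, with the ambiguity being precisely $\ker(\pi_1)\cap\tilde{H}_{s_0}$, which we control inductively.

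First I would reduce to a simpler form. Using Lemma \ref{le:16} applied to the ``vector part'' $(a_i,b_i,0)\in\CC^3$ and combining with Lemma \ref{pop:2}, I can find $h_L,h_R\in\tilde{H}_{s_0}$ such that, after conjugation, the second factor on each side becomes $\tilde{w}^j(t_2',a_2',0,0)$ (respectively $\tilde{w}^j(t_4',a_4',0,0)$) with $a_2',a_4'\in\RR_{\ge 0}$. During this step the non-zero $t_i$ contribution has to be tracked using the chain expression in Lemma \ref{le:25}, and the commutation of conjugation with the $2L_n$-part is governed by Lemma \ref{le:27}. The hypothesis on equal $\pi_1$-images is preserved under simultaneous conjugation.

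Next, with both right-factors in this normalized form, the equality $\pi_1\bigl(\tilde{w}^j(t_1',a_1',b_1',0)\tilde{w}^j(t_2',a_2',0,0)\bigr)=\pi_1\bigl(\tilde{w}^j(t_3',a_3',b_3',0)\tilde{w}^j(t_4',a_4',0,0)\bigr)$ translates, via Fact \ref{fact:2} and the explicit matrix formula following Lemma \ref{le:25}, into pointwise identities on $(t_i',a_i',b_i')$. These identities together with Lemma \ref{le:32} let me match the first factors: the difference between the normalized first factors lies in the central subgroup generated by the Steinberg-type symbols, i.e., in $H_{sym}$, and the $2L_n$-discrepancy lies in $\tilde{H}_0$ by Lemma \ref{le:26}(ii).

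Finally, the conjugating elements $h_L,h_R$ themselves are in $\tilde{H}_{s_0}$, so the discrepancy between the original LHS and RHS is the product of a $\ker(\pi_1)$-element of $\tilde{H}_{s_0}$ with the central discrepancies collected above, hence lies in $H$ by Theorem \ref{th:10} together with Lemma \ref{le:36}. The main obstacle I anticipate is Step~1: the chain for $\tilde{w}_{L_n}(t,a)$ with $t\ne 0$ mixes the short root $L_n$ with the long root $2L_n$, so a conjugation chosen to tidy the $L_n$-part will generally produce an uncontrolled $2L_n$-residue; overcoming this requires carefully using the formulas of Lemma \ref{le:27} (items 2--5) to absorb that residue into $\tilde{H}_0\subset H$ before proceeding to Steps~2 and~3.
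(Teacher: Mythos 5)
Your proposal has a genuine gap, and it sits in exactly the two places where the lemma has real content. First, in Step 2 the claim that equality of $\pi_1$-images of the two products yields ``pointwise identities'' on the parameters after normalization is unjustified: a given image in $G_\RR$ factors as a product of two such $w$-elements in many different ways, and the entire point of the lemma is to control the kernel discrepancy that this ambiguity creates, so you are asserting what must be proved. Second, the tools you invoke do not cover the discrepancy you produce. Lemma \ref{le:32} and Theorem \ref{th:10} only identify $\ker(\pi_1)\cap H^j_\delta$ and $\ker(\pi_1)\cap H^j$ for subgroups built from the $t=0$ generators $\tilde{h}^j_{L_n}$, whereas your factors $\tilde{w}^j(t,a,b,0)$ have $t\neq 0$ and are not elements of $\tilde{W}_s$ or $\tilde{H}_{s_0}$ at all; and the statement your last step really needs --- that such a kernel element lies in $H$ --- is in substance Theorem \ref{th:11}, which is proved \emph{using} Lemma \ref{le:39}, so the argument is circular. (There is also a smaller problem in Step 1: normalizing the two sides with different conjugators $h_L,h_R$ destroys the hypothesis of equal $\pi_1$-images, which is only preserved under conjugation by a common element.)

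The idea the proposal misses is the one the paper's proof turns on: the discrepancy $D=\tilde{w}^j(t_1,a_1,b_1,0)\tilde{w}^j(t_2,a_2,b_2,0)\bigl(\tilde{w}^j(t_3,a_3,b_3,0)\tilde{w}^j(t_4,a_4,b_4,0)\bigr)^{-1}$ lies in $\ker(\pi_1)$ by hypothesis and is therefore central, hence fixed by conjugation. Conjugating it factor-by-factor with an explicit element $w$ built from two $\tilde{w}_{L_n}(0,\cdot)$'s (using Lemma \ref{le:27}) shifts the index $j$ to $1$, so $D$ is literally equal to the same expression supported in two fixed complex coordinates. This puts $D$ into the rank-two situation where the kernel is already computed: Corollary \ref{cor:2} (which rests only on Theorems \ref{th:9} and \ref{th:10}, not on Theorem \ref{th:11}) gives $D\in\bigl(\ker(\pi_1)\cap\tilde{H}_{L_1-L_2}\bigr)\cdot\tilde{H}_0\cdot H_{sym}=H$, which is exactly the assertion; no normalization or matching of individual factors is needed. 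If you want to salvage your route, you would have to replace the appeal to Theorem \ref{th:10}/Lemma \ref{le:36} in the last step by an argument confined to the two-coordinate subgroup containing the discrepancy, which is precisely what the centrality-plus-conjugation trick accomplishes.
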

there exists $h_0\in H$ such that
\begin{align*}
\tilde{w}^j(t_1,a_1,b_1,0)\tilde{w}^j(t_2,a_2,b_2,0)=\tilde{w}^j(t_3,a_3,b_3,0)\tilde{w}^j(t_4,a_4,b_4,0)h_0.
\end{align*}
\begin{proof}
Let
$$w=\tilde{w}_{L_n}(0,0,-1,\dots,1_{j+2},0\dots,0)\cdot
\tilde{w}_{L_n}(0,-1,\dots,1_{j+1},0,\dots,0).$$ By Lemma
\ref{le:27} we have
\begin{align*}
&\tilde{w}^j(t_1,a_1,b_1,0)\tilde{w}^j(t_2,a_2,b_2,0)\bigl(\tilde{w}^j(t_3,a_3,b_3,0)\tilde{w}^j(t_4,a_4,b_4,0)\bigl)^{-1}\\
&=w\tilde{w}^j(t_1,a_1,b_1,0)\tilde{w}^j(t_2,a_2,b_2,0)w^{-1}\\
&\cdot w\bigl(\tilde{w}^j(t_3,a_3,b_3,0)\tilde{w}^j(t_4,a_4,b_4,0)\bigl)^{-1}w^{-1}\\
&=\tilde{w}^1(t_1,a_1,b_1,0)\tilde{w}^1(t_2,a_2,b_2,0)\\
&\cdot
\bigl(\tilde{w}^1(t_3,a_3,b_3,0)\tilde{w}^1(t_4,a_4,b_4,0)\bigl)^{-1}.
\end{align*}
Using Corollary \ref{cor:2} we get the conclusion.
\end{proof}
Now we consider the quotient group $\tilde{W}_{L_n}(t,a)/H$ where
$(t,a)\in L$ until Lemma \ref{le:19}. We continue to write
$\tilde{h}^j_{L_n}(0,a,b)\bigl((0,a,b)\in L\bigr)$ and
$\tilde{w}^j_{L_n}(t,a)\bigl((t,a)\in L\bigl)$ for their images in
$\tilde{W}_{L_n}(t,a)/H$ without confusion.
\begin{lemma}\label{le:33}
For $\forall(0,a,b,c)\in L_3$, $\forall(0,u,v)\in L_2$, we can find
$(t_1,a_1,b_1,c_1)$, $(t_2,a_2,b_2,c_2)\in L_3$, $(t_1,u_1,v_1)$,
$(t_2,u_2,v_2)\in L_2$ such that
\begin{align*}
&(1)\tilde{w}^j(0,a,b,c)\tilde{w}^j(0,0,u,v)=\tilde{w}^j(t_1,a_1,b_1,c_1)\tilde{w}^j(-t_1,u_1,v_1,0),\\
&(2)\tilde{w}^j(0,a,b,c)\tilde{w}^j(0,u,v,0)=\tilde{w}^j(t_2,a_2,b_2,c_2)\tilde{w}^j(-t_2,0,u_2,v_2).
\end{align*}
\end{lemma}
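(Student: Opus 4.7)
The plan is to follow the pattern established in Lemma \ref{le:37}, reducing both identities to a matrix equality at the $\pi_1$-level and then upgrading to equality modulo $H$ by Lemma \ref{le:39}. For part (1), it suffices to find $(t_1,a_1,b_1,c_1)\in L_3$ and $(t_1,u_1,v_1)\in L_2$ such that
\begin{align*}
\pi_1\bigl(\tilde{w}^j(0,a,b,c)\tilde{w}^j(0,0,u,v)\bigr)=\pi_1\bigl(\tilde{w}^j(t_1,a_1,b_1,c_1)\tilde{w}^j(-t_1,u_1,v_1,0)\bigr);
\end{align*}
once the $\pi_1$-images agree, I will conjugate both products by an appropriate element of $\tilde{W}_s$ (via Lemma \ref{le:16}) to bring them simultaneously into the zero-last-coordinate form required by Lemma \ref{le:39}, which then promotes the matrix equality to equality modulo $H$.

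To exhibit the RHS parameters, I will first read off the block form of $\pi_1(\tilde{w}_{L_n}(t,a))$ from the Remark following Lemma \ref{le:21} and Lemma \ref{le:27}. The LHS $\pi_1$-image is a product of two pseudo-reflections acting nontrivially only on the coordinates $n,2n,2n+j+1,2n+j+2,2n+j+3$, whose trace depends on the Hermitian inner product $\langle(a,b,c),(0,u,v)\rangle$ via Fact \ref{fact:2}. On the RHS, the parameter $t_1$ links the two factors through the constraints $\frac{1}{4}(\abs{a_1}^2+\abs{b_1}^2+\abs{c_1}^2)^2+t_1^2=1=\frac{1}{4}(\abs{u_1}^2+\abs{v_1}^2)^2+t_1^2$. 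Varying $t_1$ continuously trades mass between the $L_n$-direction and the $2L_n$-direction, providing exactly the extra degree of freedom needed to zero out the $(2n+j+3)$-component of the second factor while preserving the product's $\pi_1$-image. A real-analytic surjectivity argument on the resulting constrained parameter manifold then yields the desired tuple $(t_1,a_1,b_1,c_1,u_1,v_1)$.

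Part (2) is proved by an identical argument after applying Lemma \ref{le:16} with a $w\in\tilde{W}_s$ whose $\pi_1$-image swaps the coordinates $2n+j+2$ and $2n+j+3$, thereby reducing (2) to (1). The main obstacle will be the surjectivity claim in the second step: although a naive dimension count shows the domain and target have matching real dimensions, establishing that the parameter map actually hits the target fiber requires careful analysis of the Jacobian at a generic point combined with a connectedness argument on $L_3\times L_2$, analogous to the angle-matching trick used in the proof of Lemma \ref{le:4}.
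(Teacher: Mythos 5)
Your high-level skeleton --- match the $\pi_1$-images and then use Lemma \ref{le:39} to upgrade to an identity modulo $H$ --- is the same as the paper's, but your proposal leaves the actual content of the lemma unproved. The existence of parameters $(t_1,a_1,b_1,c_1)$ and $(t_1,u_1,v_1)$ with the prescribed $\pi_1$-image is exactly the heart of the statement, and you defer it to a ``real-analytic surjectivity argument'' based on a dimension count, a Jacobian at a generic point and connectedness. That is not a proof: equal dimensions plus generic nondegeneracy do not give surjectivity (no properness or degree argument is offered), and nothing in the sketch identifies which fibers are actually hit. The paper instead produces the parameters explicitly: in the non-collinear case it conjugates by an element $h$ of the subgroup generated by $H^j$ and $H^{j+1}$ that moves $(a,b,c)$ and $(0,u,v)$ into a coordinate $2$-plane (last coordinate zero), with the auxiliary vector normalized to $(0,1,0)$, and then realizes the resulting $2\times 2$ block concretely via the surjective map $f(t,t_1,t_2,g)=\bigl((\alpha+t_1^2)\overline{\alpha},\,-t_1t_2g\alpha\bigr)$ with $\alpha=-\sqrt{1-t^2}+t\,\mathrm{i}$; the collinear case is handled separately through Lemma \ref{pop:2}, a degenerate case your sketch does not treat at all.

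There is also an ordering problem in your route to Lemma \ref{le:39}. That lemma applies only when all four factors are supported on the two coordinates $j+1,j+2$ (last coordinate zero). If you first choose right-hand parameters solely by matching $\pi_1$-images, the four vectors involved need not lie in a common complex $2$-plane, so no single conjugation by an element of $\tilde{W}_s$ brings both products simultaneously into the form Lemma \ref{le:39} requires. The paper avoids this by fixing the conjugation $h$ first, from the left-hand data alone, and then building the right-hand factors inside that coordinate plane, so Lemma \ref{le:39} applies to the conjugated identity and the conjugation is undone at the end (which is why $(a_1,b_1,c_1)=\overline{\pi_1(h)^{-1}}\cdot(t_1,t_2g,0)$ may have nonzero last coordinate). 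Finally, your reduction of (2) to (1) should use an element swapping the coordinates $2n+j+1$ and $2n+j+3$ --- the two statements are mirror images in those slots --- not $2n+j+2$ and $2n+j+3$; with your swap the second factors are not carried to the required shape.
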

\begin{proof}
(1) If $(a,b,c)$ and $(0,u,v)$ are collinear, then
$(a,b,c)=\pm(0,u,v)$. We can assume $(a,b,c)=(0,-u,-v)$ by Lemma
\ref{pop:2}. Let $t_1=0$, $(a_1,b_1,c_1)=(-u_1,-v_1,0)$ we get the
conclusion.

If they are not collinear, there exists $(u_1',v_1')\in S^1_\CC$
such that $(u_1',v_1',0)$ is on the plane generated by $(a,b,c)$ and
$(0,u,v)$. Let $h$ be an element of the subgroup generated by $H^j$
and $H^{j+1}$ such that $\pi_1(h)$ maps $(a,b,c)$ and $(0,u,v)$ to
$xy$-plane in $\CC^3$ with last coordinate $0$ and
$\pi_1(h)\cdot(u_1',v_1',0)=(0,1,0)$. Let
$(u_1,v_1)=(\sqrt[4]{4-4t^2}u_1',\sqrt[4]{4-4t^2}v_1')$ where $0\leq
t\leq 1$.

Let $f:(L_2\cap\RR^3)\times S^0_\CC\rightarrow S^1_{\CC}$ defined as
follows:
$$f(t,t_1,t_2,g)=\bigl((a+t_1^2)\overline{a},-t_1t_2ga)$$
where $a=-\sqrt{1-t^2}+t \textrm{i}$. It is clear that $f$ is
surjective.

Since
$\pi_1\bigl(\tilde{w}^j(t,t_1,t_2g,0)\bigl)\cdot\pi_1\bigl(\tilde{w}^j(-t,\overline{\pi_1(h)}\cdot(u_1,v_1,0)\bigl)$
is given by the following matrix
$$ \begin{pmatrix}(a+t_1^2)\overline{a} & -t_1t_2ga\\
t_1t_2\overline{ga} & (\overline{a}+t_1^2)a\\
\end{pmatrix},$$
if we denote $\pi_1(h)=B$, by assumption about $h$, there exists
$(t,t_1,t_2,g)$ such that
\begin{align*}
&\pi_1\bigl(\tilde{w}^j(0,\overline{B}\cdot(a,b,c))\tilde{w}^j(0,\overline{B}\cdot(0,u,v))\bigl)\\
&=\pi_1\bigl(\tilde{w}^j(t,t_1,t_2g,0)\bigl)\pi_1\bigl(\tilde{w}^j(-t,\overline{\pi_1(h)}\cdot(u_1,v_1,0)\bigl).
\end{align*}
Hence by Lemma \ref{le:39}, it follows
$$\tilde{w}^j(t,t_1,t_2g,0)\tilde{w}^j\bigl(-t,\overline{B}\cdot(u_1,v_1,0)\bigl)=\tilde{w}^j\bigl(0,\overline{B}\cdot(a,b,c)\bigl)\tilde{w}^j\bigl(0,\overline{B}\cdot(0,u,v)\bigl).$$
 Let $(a_1,b_1,c_1)=\overline{\pi_1(h)^{-1}}(t_1,t_2g,0)$,  then we have
\begin{align*}
&h\tilde{w}^j(t,a_1,b_1,c_1)\tilde{w}^j(-t,u_1,v_1,0)h^{-1}\\
&=\tilde{w}^j(t,t_1,t_2g,0)\tilde{w}^j\bigl(-t,\overline{B}\cdot(u_1,v_1,0)\bigl)\\
&=\tilde{w}^j\bigl(0,\overline{B}\cdot(a,b,c)\bigl)\tilde{w}^j\bigl(0,\overline{B}\cdot(0,u,v)\bigl)\\
&=h\tilde{w}^j(0,a,b,c)\tilde{w}^j(0,0,u,v)h^{-1}.
\end{align*}
Thus we get
$$\tilde{w}^j(t,a_1,b_1,c_1)\tilde{w}^j(-t,u_1,v_1,0)=\tilde{w}^j(0,a,b,c)\tilde{w}^j(0,0,u,v).$$
Hence we proved (1).

(2) Similar arguments hold for (2).
\end{proof}
Let $H_j^0$ denote the subgroup generated by
$\tilde{w}^j(0,a,b,0)\tilde{w}^j(0,1,1,0)$ where
$\abs{a}=\abs{b}=1$. Observe that $\pi_1(H_j^0)$ is isomorphic to
set of all diagonal matrices in $SU(2)$.

\begin{lemma}\label{le:18}
\begin{align*}
H^{j}H^{j+1}H^{j}=H^{j+1}H^{j}H^{j+1}.
\end{align*}
\end{lemma}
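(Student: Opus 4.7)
The plan is to mimic the argument of Proposition \ref{po:1} from the orthogonal case, with Lemma \ref{le:33} playing the role that Lemma \ref{le:4} played there and with Lemma \ref{le:39} in place of Lemma \ref{le:12}. All computations are carried out in the quotient $\tilde{W}_{L_n}/H$, which is legitimate because $H$ is central (Corollary \ref{cor:2}). The reverse inclusion $H^{j+1}H^jH^{j+1}\subseteq H^jH^{j+1}H^j$ will follow by the symmetric argument swapping the roles of $j$ and $j+1$, so it suffices to establish $H^jH^{j+1}H^j\subseteq H^{j+1}H^jH^{j+1}$.

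First I would unfold a typical element $\tilde{h}^j(u_1,v_1)\tilde{h}^{j+1}(u_2,v_2)\tilde{h}^j(u_3,v_3)$ into a six-fold product of $\tilde{w}^j$-factors, using the definition of $\tilde{h}^j$ together with the identification $\tilde{w}^{j+1}(0,u,v,0)=\tilde{w}^j(0,0,u,v)$. Applying the conjugation formulas of Lemma \ref{le:16} and repeatedly using Lemma \ref{le:31}(1) (which kills the central factors $\tilde{h}^i_{L_n}(v,0)$), the outer two pairs of $\tilde{w}^j$-factors can be pushed through the inner pair to collapse the expression to a two-fold product of the form $\tilde{w}^j(0,a,b,c)\tilde{w}^j(0,0,u,v)$, up to an element of $H^j$ that is harmless on the left-hand side of the target inclusion. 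Lemma \ref{le:33}(1) then rewrites this two-fold product as $\tilde{w}^j(t,\alpha,\beta,\gamma)\tilde{w}^j(-t,\mu,\nu,0)$. An analogous unfolding of a typical element of $H^{j+1}H^jH^{j+1}$, followed by Lemma \ref{le:33}(2), yields a product of the form $\tilde{w}^j(t',\alpha',\beta',\gamma')\tilde{w}^j(-t',0,\mu',\nu')$.

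The remaining task is to match these two representations: to show that for any triple $(u_1,v_1),(u_2,v_2),(u_3,v_3)\in S^1_\CC$, one can choose a triple $(u_1',v_1'),(u_2',v_2'),(u_3',v_3')\in S^1_\CC$ so that the resulting $\pi_1$-images coincide. Once such a parameter match is achieved, Lemma \ref{le:39} certifies that the two two-fold products are equal modulo $H$, hence identified in the quotient, which completes the inclusion. This parameter-matching step is where I expect the main difficulty to lie: it reduces to a braid-type surjectivity statement inside the subgroup of $SU(m-n)$ generated by the two adjacent $SU(2)$-blocks acting on coordinates $(j+1,j+2)$ and $(j+2,j+3)$, i.e., to a computation inside $SU(3)$. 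The bookkeeping of the parameters $(t,\alpha,\beta,\gamma)$ through multiple applications of Lemma \ref{le:33} is delicate because the auxiliary parameter $t$ must be compatibly chosen so that the second factor on each side has the correct support pattern, but modulo this careful indexing the argument parallels the real-orthogonal case treated in Proposition \ref{po:1}.
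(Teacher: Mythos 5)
Your general skeleton is the right one and overlaps with the paper's argument up to a point: the paper also works modulo the central subgroup $H$, also reduces a product $h_1^jh_2^{j+1}h_3^j$ (written via Theorem \ref{th:10} in the normal form $\tilde{h}^j_{L_n}(0,a,b)h$ with $h$ in the diagonal part $H^0_j$, rather than as single generators) to a two-fold product of $\tilde{w}^j$-factors using the conjugation formulas of Lemma \ref{le:16}, and also invokes Lemma \ref{le:33} to rewrite that two-fold product. But your final step has a genuine gap. First, the ``parameter matching'' you defer to --- that every reduced form $\tilde{w}^j(t,\alpha,\beta,\gamma)\tilde{w}^j(-t,\mu,\nu,0)$ arising from $H^jH^{j+1}H^j$ has the same $\pi_1$-image as some reduced form $\tilde{w}^j(t',\alpha',\beta',\gamma')\tilde{w}^j(-t',0,\mu',\nu')$ arising from $H^{j+1}H^jH^{j+1}$ --- is essentially the statement of the lemma at the level of $\pi_1$-images (a nontrivial factorization statement in the subgroup of $U(m-n)$ generated by the two adjacent blocks), and you give no argument for it. Second, even granting it, Lemma \ref{le:39} cannot certify that equal $\pi_1$-images force equality modulo $H$ here: that lemma applies only to products $\tilde{w}^j(t_1,a_1,b_1,0)\tilde{w}^j(t_2,a_2,b_2,0)$ in which \emph{both} factors have vanishing third coordinate, precisely so that the conjugation in its proof lands in the situation covered by Corollary \ref{cor:2}. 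Your normal forms have a nonzero third coordinate in one factor, so invoking Lemma \ref{le:39} there is circular --- an ``equal image implies equal mod $H$'' statement for three-coordinate-supported products is part of what Lemma \ref{le:18} is building towards.

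The paper closes this gap differently: after Lemma \ref{le:33} it does not compare the two sides at all, but explicitly refactors the single reduced expression $\tilde{w}^j(t,a,b,c)\tilde{w}^j(-t,0,u,v)h'h_2$ into a product $h^{j+1}\cdot h^j\cdot h^{j+1}$, with a case analysis on the vanishing of $b$ and $c$: when $b=c=0$ it inserts the cancelling pair $\tilde{w}^j(t,0,a,0)\tilde{w}^j(-t,0,-a,0)$, and when $b\neq 0$ (resp. $c\neq 0$) it conjugates by an explicitly chosen $h_1\in H^{j+1}$ so as to rotate the support of the first factor back into a single block. Only then is Lemma \ref{le:39} applied, and only to two-fold products whose $\pi_1$-images lie in a single $SU(2)$ block, which is exactly the situation it covers. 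To repair your proposal you would either have to prove the $SU(3)$-level surjectivity \emph{and} extend Lemma \ref{le:39} to products with three-coordinate support, or replace the matching step by an explicit refactorization of the kind the paper carries out.
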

\begin{proof}
Fix $j$. Observe that $\pi_1\bigl(\tilde{h}^j_{L_n}(0,a,b)\bigl)$
where $(0,a,b)\in L_2$ and $\pi_1(H_j^0)$ generate a subgroup
isomorphic to $SU(2)$, by Theorem \ref{th:10}, every element in
$H_{j}$ can be expressed as $\tilde{h}^j_{L_n}(0,a,b)h$ where
$(0,a,b)\in L_2$ and $h\in H_j^0$.

We now prove $$H^{j}H^{j+1}H^{j}\subseteq H^{j+1}H^{j}H^{j+1}.$$

Let $h_1^j,h_3^j\in H^{j}$, $h_2^{j+1}\in H^{j+1}$. By above
analysis, there exist $(0,a_2,b_2)\in L_2$ and $h_2\in H_{j+1}^0$
such that $h_2^{j+1}=\tilde{h}^{j+1}_{L_n}(0,a_2,b_2)h_2$. Let
$\pi_1(h_1^j)=A$.

Keep using Lemma \ref{le:16}, we have
\begin{align*}
h_1^jh_2^{j+1}h_3^j&=h_1^j\tilde{h}^{j+1}_{L_n}(0,a_2,b_2)h_2h_3^j\\
&=h_1^j\tilde{w}^j(0,0,a_2,b_2)\tilde{w}^j(0,0,-1,0)h_2h_3^j\\
&=\tilde{w}^j\bigl(0,\overline{A}\cdot(0,a_2,b_2)\bigl)h_1^j\tilde{w}^j(0,0,-1,0)h_2h_3^j\\
&=\tilde{w}^j\bigl(0,\overline{A}\cdot(0,a_2,b_2)\bigl)h_1^j\tilde{w}^j(0,0,-1,0)(h_3^j)'h_2
\end{align*}
for some $(h_3^j)'\in H^j$ since $h_2H^jh_2^{-1}\subseteq H^j.$
Notice $$h_1^j\tilde{w}^j(0,0,-1,0)(h_3^j)'\tilde{w}^j(0,0,-1,0)\in
H^j,$$ by Theorem \ref{th:10} there exist $(0,a_1,b_1)\in L_2$ and
$h'\in H_j^0$ such that
\begin{align*}
h_1^j\tilde{w}^j(0,0,-1,0)(h_3^j)'\tilde{w}^j(0,0,-1,0)=\tilde{w}^j(0,a_1,b_1,0)h'\tilde{w}^j(0,0,-1,0)
\end{align*}
Continue, we have
\begin{align*}
h_1^jh_2^{j+1}h_3^j&=\tilde{w}^j\bigl(\overline{A}\cdot(0,a_2,b_2)\bigl)h_1^j\tilde{w}^j(0,0,-1,0)(h_3^j)'\tilde{w}^j(0,0,-1,0)\\
&\cdot \tilde{w}^j(0,0,1,0)h_2\\
&=\tilde{w}^j\bigl(\overline{A}\cdot(0,a_2,b_2)\bigl)\tilde{w}^j(0,a_1,b_1,0)h'h_2.
\end{align*}
By Lemma \ref{le:33}, there exist $(t,a,b,c)\in L_3$ and $(t,u,v)\in
L_2$ such that
\begin{align*}
&\tilde{w}^j\bigl(\overline{A}\cdot(0,a_2,b_2)\bigl)\tilde{w}^j(0,a_1,b_1,0)=\tilde{w}^j(t,a,b,c)\tilde{w}^j(-t,0,u,v).
\end{align*}
Hence we have
\begin{align*}
&h_1^jh_2^{j+1}h_3^j=\tilde{w}^j(t,a,b,c)\tilde{w}^j(-t,0,u,v)h'h_2.
\end{align*}
If $b=c=0$, let $t_1=-\frac{1}{2}\abs{a}^2+t \textrm{i}$, by Lemma
\ref{le:16} it follows
\begin{align*}
&h_1^jh_2^{j+1}h_3^j=\tilde{w}^j(t,a,b,c)\tilde{w}^j(-t,0,u,v)h'h_2\\
&=\bigl(\tilde{w}^j(t,a,b,c)\tilde{w}^j(-t,0,u,v)\tilde{w}^j(t,a,b,c)^{-1}\bigl)\tilde{w}^j(t,a,b,c)h'h_2\\
&=\tilde{w}^j(-t,0,t_1u,t_1v)\tilde{w}^j(t,a,0,0)h'h_2\\
 &=\tilde{w}^j(-t,0,t_1u,t_1v)\tilde{w}^j(t,0,a,0)\tilde{w}^j(-t,0,-a,0)\tilde{w}^j(t,a,0,0)h'h_2.
\end{align*}

Let $h^{j+1}_4=\tilde{w}^j(-t,0,t_1u,t_1v)\tilde{w}^j(t,0,a,0)$,
$h^j_5=\tilde{w}^j(-t,0,-a,0)\tilde{w}^j(t,a,0,0)h'$,
$h^{j+1}_6=h_2$, then we have
$$h_1^jh_2^{j+1}h_3^j=h^{j+1}_4h^j_5h^{j+1}_6.$$ Notice
$\pi_1(h^{j+1}_4),\pi_1(h^j_5)\in SU(2)$,  by Lemma \ref{le:39},
$h^{j+1}_4\in H^{j+1}$, $h^j_5\in H^j$. Hence we are though the case
$b=c=0$.

Suppose $b\neq 0$. Let $t_1=\sqrt{\abs{c}^2+\abs{b}^2}$,
$\alpha=t_1^{-1}b$, $\beta=-t_1^{-1}\overline{c}$,
$(x,y)=(a,b\alpha^{-1})$ and $h_1\in H^{j+1}$ such that $\pi_1(h_1)$
given by the following matrix
$$ \begin{pmatrix}\alpha & \beta\\
-\overline{\beta} & \overline{\alpha}\\
\end{pmatrix}.$$
Thus we have $\overline{\pi_1(h_1)}\cdot(x,y,0)=(a,b,c)$. We have
\begin{align*}
h_1^jh_2^{j+1}h_3^j&=\tilde{w}^j(t,a,b,c)\tilde{w}^j(-t,0,u,v)h'h_2\\
&=h_1\tilde{w}^j(t,x,y,0)h_1^{-1}\tilde{w}^j(-t,0,u,v)h'h_2\\
&=h_1\tilde{w}^j(t,x,y,0)\tilde{w}^j(-t,0,\sqrt[4]{4-4t^2},0)\\
&\cdot
\tilde{w}^j(t,0,-\sqrt[4]{4-4t^2},0)h_1^{-1}\tilde{w}^j(-t,0,u,v)h'h_2.
\end{align*}
Notice $h'H^{j+1}(h')^{-1}\subseteq H^{j+1}$, hence if we let
\begin{align*}
&h^{j}=\tilde{w}^j(t,x,y,0)\tilde{w}^j(-t,0,\sqrt[4]{4-4t^2},0)h',\\
&h^{j+1}=(h')^{-1}\tilde{w}^j(t,0,-\sqrt[4]{4-4t^2},0)h_1^{-1}\tilde{w}^j(-t,0,u,v)h'h_2,
\end{align*}
we have
$$h_1^jh_2^{j+1}h_3^j=h_1h^{j}h^{j+1}.$$ Notice $\pi_1(h^{j}),\pi_1(h^{j+1})\in
SU(2)$, by Lemma \ref{le:39}, $h^j\in H^j$ and $h^{j+1}\in H^{j+1}$.
Hence we are though the case $b\neq 0$. For case $c\neq 0$, we
follow exactly the same way as the proof of (1) except changing
$(x,y)=(a,b\alpha^{-1})$ to $(x,y)=(a,-c\overline{\beta^{-1}})$.

Thus we have proved
$$H^{j}H^{j+1}H^{j}\subseteq H^{j+1}H^{j}H^{j+1}.$$

The proof of inverse containment is similar.

\end{proof}

The last tool we need is the following
\begin{lemma}\label{le:19}
\begin{align*}
\tilde{H}_{s_0}=\bigl(\prod_{i=1}^{m-n-1}H^i\bigl)\bigl(\prod_{i=1}^{m-n-2}H^i\bigl),\dots,\bigl(\prod_{i=1}H^i\bigl)
\end{align*}
\end{lemma}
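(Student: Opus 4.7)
The plan is to mirror the inductive argument of Proposition \ref{le:15} almost verbatim, replacing the ingredients specific to $SO^+(m,n)$ with their $SU(m,n)$ analogues. The two essential tools are: (i) the braid-type relation $H^iH^{i+1}H^i = H^{i+1}H^iH^{i+1}$ established in Lemma \ref{le:18}, which takes over the role played by Proposition \ref{po:1} in the orthogonal case; and (ii) the far-commutation identity $[H^i, H^j] = e$ whenever $|i-j|\ge 2$, which corresponds to equation \eqref{for:9} in the proof of Proposition \ref{le:15}. The identity (ii) is verified by direct inspection using Lemma \ref{le:16}: for $|i-j|\ge 2$ the matrices $\pi_1(\tilde{h}^i_{L_n}(a,b))$ and $\pi_1(\tilde{h}^j_{L_n}(c,d))$ act on disjoint coordinate pairs, so their commutator is trivial under $\pi_1$, and Theorem \ref{th:10} lifts this triviality to $\widetilde{G}$ modulo $H_{sym}$, which is central and therefore can be absorbed.

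I will induct on $k = m-n-2 \ge 0$. The base case $k = 0$ is immediate because $\tilde{H}_{s_0}$ is then generated by $H^1$ alone. For $k = 1$ the same computation as in Proposition \ref{le:15} applies: an arbitrary element has the form $\tilde{h}^1_{L_n}\tilde{h}^2_{L_n}\tilde{h}^1_{L_n}\tilde{h}^2_{L_n}\dots$, and repeated application of Lemma \ref{le:18} together with Theorem \ref{th:10} compresses it to the required normal form $\bigl(\prod_{i=1}^{2}H^i\bigr)\cdot H^1$.

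For the inductive step, assume the claim for $m - n = 2+k$ and let $Y_k$ denote the subgroup generated by $H^1,\dots, H^{k+1}$, so that $\tilde{H}_{s_0}$ is generated by $Y_k \cup H^{k+2}$. By the inductive hypothesis every $y_k \in Y_k$ can already be written in the target form. It therefore suffices to track how the normal form of $y_k$ is affected by multiplying on either side by a generator $\tilde{h}^j_{L_n}$ with $j \le k+2$. When $|j - i| \ge 2$ the corresponding $H^j$-factor slides freely past $H^i$ by (ii); when $|j-i| = 1$ a single application of the braid relation (i) rewrites $H^jH^iH^j$ as $H^iH^jH^i$, reducing the new word to one of the form previously handled by the induction. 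Bookkeeping exactly as on pp.~50--52 in the proof of Proposition \ref{le:15}, and invoking the inductive hypothesis on the pieces that end up inside $Y_k$, produces the desired normal form.

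The main obstacle, as in the $SO^+$ case, is the delicate combinatorial bookkeeping rather than any conceptual difficulty: one has to verify that at every intermediate stage the word still respects the block structure $\bigl(\prod_{i=1}^{m-n-1}H^i\bigr)\bigl(\prod_{i=1}^{m-n-2}H^i\bigr)\cdots(H^1)$. Since both the braid move (i) and the commutation move (ii) preserve this block structure (they only redistribute generators within or between adjacent blocks), and since $H_{sym}$ being central means the passage to $\tilde{W}_s/(\ker\pi_1\cap H^1)$ never obstructs the manipulations, the bookkeeping used in Proposition \ref{le:15} carries over without substantive change.
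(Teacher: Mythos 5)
Your proposal is correct and follows essentially the same route as the paper, which simply declares the proof to be the same as that of Proposition \ref{le:15} with $\tilde{h}^j_{L_n}$ replaced by $H^j$, i.e.\ the identical induction using the braid relation of Lemma \ref{le:18} in place of Proposition \ref{po:1} together with the far-commutation of $H^i$ and $H^j$ for $\lvert i-j\rvert\ge 2$. Your appeal to Theorem \ref{th:10} for the far-commutation is unnecessary (Lemma \ref{le:16} already gives exact commutation in $\widetilde{G}$, as in \eqref{for:9}), but this does not affect the argument.
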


\begin{proof}
The proof is exactly the same as the proof of Lemma \ref{le:9} after
changing $h^j_{L_n}$ with $H^j$.
\end{proof}

\subsection{Proof of Theorem \ref{th:11}}
By Lemma \ref{le:19},  any element of $\Tilde{h}_{s_0}$ can be
written as
\begin{align*}
h=\bigl(\prod_{i=1}^{m-n-1}\tilde{h}^i_{m-n-1}\bigl)
\bigl(\prod_{i=1}^{m-n-2}
\tilde{h}^i_{m-n-2}\bigl),\dots,\tilde{h}^1_1h_0,
\end{align*}
where $\tilde{h}^i_k\in H^i$, $k\leq m-n-1$ and $h_0\in H$. Notice
the the element in the lower right corner of matrix $\pi_1(h)$ is
the same as that of matrix $\pi_1(\tilde{h}^{m-n-1}_{m-n-1})$. If
$\pi_1(h)=I_{m+n}$, we have
$\pi_1(\tilde{h}^{m-n-1}_{m-n-1})=I_{m+n}$, which means
$\tilde{h}^{m-n-1}_{m-n-1}\in H_{sym}$ by Theorem \ref{th:10}. By
induction it follows $h\in H$. Hence we proved the theorem.

\subsection{Proof of Theorem \ref{th:4}}

\begin{proof}
It is a direct conclusion of Theorem \ref{th:9} and Theorem
\ref{th:11}.
\end{proof}

\end{document}